\newcommand{\norm}[1]{\left| #1 \right|}
\renewcommand{\ge}{\geqslant}
\def\e{\epsilon}
\def\T{\mathcal{T}}
\def\leb{\textup{Leb}}
\newtheorem{theorem}{Theorem}[section]
\newtheorem{corollary}[theorem]{Corollary}
\newtheorem{lemma}[theorem]{Lemma}
\newtheorem{proposition}[theorem]{Proposition}
\newtheorem{definition}[theorem]{Definition}
\newtheorem*{remark}{Remark}
\title{Word length statistics for Teichm\"uller geodesics and singularity of harmonic measure}
\author{Vaibhav Gadre \and  Joseph Maher \and Giulio Tiozzo}
\begin{document}

\maketitle

\abstract{ %
  Given a measure on the Thurston boundary of Teichm\"uller space, 
  one can pick a geodesic ray joining some
  basepoint to a randomly chosen point on the boundary.  
  Different choices of measures may yield typical geodesics with different 
  geometric properties. In
  particular, we consider two families of measures: the ones which
  belong to the Lebesgue or visual measure class, and harmonic
  measures for random walks on the mapping class group generated by a
  distribution with finite first moment in the word metric. 
  We consider the ratio between the
  word metric and the relative metric of approximating mapping class
  group elements along a geodesic ray, and prove that this ratio tends
  to infinity along almost all geodesics with respect to Lebesgue
  measure, while the limit is finite along almost all geodesics with
  respect to harmonic measure.  As a corollary, we establish
  singularity of harmonic measure. We show the same result for
  cofinite volume Fuchsian groups with cusps. As an application, we
  answer a question of Deroin-Kleptsyn-Navas about the 
  vanishing of the Lyapunov expansion exponent. }

\tableofcontents

\section{Introduction}

We start by describing an elementary example. Consider the Poincar\'e 
disc $\mathbb{H}^2$, endowed with the hyperbolic metric of constant negative curvature.
There are several ways to select a \texttt{"}typical\texttt{"}
geodesic ray based at the origin. One way is to consider the unit
tangent space at the origin, which is a circle, with the rotationally
invariant measure, which we shall call \emph{visual measure} or
\emph{Lebesgue measure}. A choice of direction with respect to this
measure determines a unique geodesic ray.  Another way to choose a
geodesic ray is to take a random process starting at the origin, for
example Brownian motion. In this case, we obtain a path which converges to 
the boundary circle almost surely, and we can pick the geodesic ray 
joining the origin to the limit point on the boundary. 
The induced probability distribution on the boundary
is called \emph{hitting measure}, or \emph{harmonic measure}.
Note that in this particular case the visual and hitting measures are
equal, as there is a unique rotationally invariant probability measure
on the circle.  However, in general the harmonic measure is not
expected to coincide with the Lebesgue measure unless in presence of
very strong homogeneity, see for example Katok \cite{katok} and
Ledrappier \cite{led}.
An alternate way to construct harmonic measures on the boundary comes 
from random walks on groups, following Furstenberg \cite{Furstenberg}. 
Indeed, we can consider a random walk on a discrete group $G$ of isometries 
of the Poincar\'e disc, and define the harmonic measure to be  
the hitting measure of the walk on the boundary circle.
This time, the harmonic measure need not be rotationally invariant, 
so the two different ways of choosing geodesics may give rise to families 
of typical geodesics with different properties. 

In this paper, we shall study geometric properties of geodesics which are
typical with respect to Lebesgue measure and compare them to the 
properties of geodesics which are typical with respect to harmonic 
measures generated by random walks on the isometry group. 

We shall focus on two main examples: nonuniform lattices in $SL(2, \mathbb{R})$, 
and mapping class groups $Mod(S)$ of surfaces of finite type.
The two cases share the important feature that the group acts on 
a geodesic metric space, and the quotient by this action is not compact but has finite volume, 
i.e. it contains a \emph{cusp}.
In fact, we shall show that typical geodesics for the visual
measure penetrate more deeply into the cusp than typical geodesics for harmonic measure.

\subsection{Fuchsian groups} 

Let $G$ be a Fuchsian group, i.e. a discrete subgroup of $SL(2, \mathbb{R})$, 
and suppose the quotient  $G \backslash \mathbb{H}^2$
has finite volume but is not compact (such a group is also called 
a \emph{nonuniform lattice} in $SL(2, \mathbb{R})$).  

In order to measure the excursion into the cusp of typical geodesics, 
we shall consider two different metrics on the group $G$. 
As $G$ is finitely generated, we can endow it with a \emph{word metric} $d_G$
with respect to a finite set of generators. 
On the other hand, the group $G$ is hyperbolic relatively to 
the parabolic subgroups, in the sense of Farb \cite{Far}. 
Thus, $G$ can be also equipped with a \emph{relative metric} $d_{rel}$,
in which any distance in a subgroup fixing a cusp has constant length
(see section \ref{section:fuchsian}; note that this metric is usually not proper). 

Given a basepoint $x_0 \in \mathbb{H}^2 $, we may identify the unit tangent space at $x_0$ with the circle $S^1
= \partial \mathbb{H}^2$ at infinity, and the measure induced on the
boundary is absolutely continuous with respect to Lebesgue measure on the unit circle. 
Furstenberg
\cite{Furstenberg} showed that the image of a random walk on $G$ in
$\mathbb{H}^2$ under the orbit map $g \to g x_0$ converges almost
surely to the boundary, defining a harmonic measure $\nu$ on $S^1$ (see section \ref{section:fuchsian}).

Let $\gamma$ be a geodesic ray from the basepoint $x_0$, and $\gamma_t$ a
point at distance $t$ from the basepoint along $\gamma$. For each time
$t$, let $h_t$ be a group element such that $h_t x_0$ is a closest
element of the $G$-orbit of $x_0$ to $\gamma_t$. 
A way to measure the penetration into the cusp of the geodesic $\gamma_t$ is to consider the 
ratio $d_G/d_{rel}$ between the word and relative metrics, since consecutive powers
of parabolic elements increase the numerator but not the denominator.
We thus define the quantity 
\[ \rho(\gamma) := \lim_{t \to \infty} \frac{d_G(1, h_t)}{d_{rel}(1, h_t)}. \]
As we shall see, this limit exists for a full measure set of geodesics
in either measure. We shall show that the limit is finite for almost
all geodesics in harmonic measure, and infinite for almost all
geodesics in visual measure.

\begin{theorem}\label{h2}
Let $G < SL_2(\mathbb{R})$ be a Fuchsian group such that the quotient
$G \backslash \mathbb{H}^2$ is a non-compact, finite area hyperbolic
orbifold. Given a geodesic ray $\gamma$ starting at the basepoint
$x_0$, let
\[ \rho(\gamma) := \lim_{t \to \infty} \frac{d_G(1, h_t)}{d_{rel}(1,
  h_t)}, \]
where $h_t x_0$ is a closest element of the $G$-orbit of $x_0$ to
$\gamma_t$. Let $\leb$ be Lebesgue measure on the circle at infinity,
and let $\nu$ be the harmonic measure determined by a random walk
generated by a probability measure on $G$ with finite first moment in the word metric, and 
whose
support generates $G$ as a semigroup. Then there is a constant $c > 0$ such that
\[ \rho(\gamma) = \left\{ \begin{array}{cl} \infty & \text{for } \leb
\text{-almost all geodesics } \gamma \\ c & \text{for } \nu
\text{-almost all geodesics } \gamma.  \end{array} \right.  \]
\end{theorem}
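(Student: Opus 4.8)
The plan is to treat the two cases separately, since they have very different flavors: the Lebesgue statement is a cusp-excursion estimate for the geodesic flow, while the harmonic statement is an ergodic-theoretic statement about the random walk.

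For the Lebesgue part, I would exploit the classical dictionary between continued fractions and geodesics on the modular surface (and its finite-index generalizations for a general nonuniform lattice $G$). The key point is that if $\gamma$ corresponds to a boundary point with continued-fraction-type expansion $[a_1, a_2, \dots]$, then the excursions of $\gamma$ into the cusp are governed by the sizes of the partial quotients $a_n$, and the approximating group element $h_t$ after the $n$-th excursion is (coarsely) a product of parabolics of total word length $\asymp \sum_{k \le n} a_k$ but relative length $\asymp n$. Thus $\rho(\gamma) = \lim_n \frac{1}{n}\sum_{k\le n} a_k$ up to bounded multiplicative error. Since Lebesgue measure on the boundary pushes forward to the Gauss measure class, and the partial quotients have infinite expectation under the Gauss measure ($\int a_1 \, d\mu_{\text{Gauss}} = \infty$), the Birkhoff averages $\frac{1}{n}\sum a_k \to \infty$ almost surely; this is a standard consequence of the ergodic theorem for non-integrable observables (or directly the strong law failure / Borel–Cantelli argument that $a_n > n$ infinitely often). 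Combined with the geometric identification of $\rho$, this gives $\rho(\gamma) = \infty$ Leb-a.e. The main technical work here is making the coarse identification $d_G(1,h_t) \asymp \sum a_k$ and $d_{rel}(1,h_t) \asymp n$ uniform and honest — i.e. that the word length of a product of parabolic powers in $G$ really grows linearly in the exponents, which uses that the cusp subgroups are virtually cyclic and undistorted-in-exponent, and that the relative metric collapses each cusp excursion to bounded size.

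For the harmonic part, the strategy is to show $\rho$ is the ratio of two \emph{drifts}. By Furstenberg, the random walk $w_n = g_1 \cdots g_n$ converges a.s. to a boundary point $\xi \in S^1$ whose law is $\nu$, and the geodesic $\gamma$ from $x_0$ to $\xi$ stays within bounded (sublinear, in fact bounded) distance of the sample path $w_n x_0$ — this is the standard "tracking" property for random walks on hyperbolic-like spaces (here $\mathbb{H}^2$), so $h_t$ can be taken to be $w_{n(t)}$ for an appropriate reparametrization, with $n(t)/t \to 1/\ell$ where $\ell$ is the (positive, by positivity of drift since the support generates $G$ and the step distribution is nonelementary) hyperbolic speed. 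Then $d_G(1, h_t) = d_G(1, w_{n(t)})$ and $d_{rel}(1, h_t) = d_{rel}(1, w_{n(t)})$, and by Kingman's subadditive ergodic theorem applied to the stationary sequences $n \mapsto d_G(1,w_n)$ and $n \mapsto d_{rel}(1, w_n)$ — both subadditive, both with finite first moment (the word-metric moment is the hypothesis; the relative-metric moment is dominated by it since $d_{rel} \le d_G$) — we get a.s. limits $d_G(1,w_n)/n \to \ell_G > 0$ and $d_{rel}(1,w_n)/n \to \ell_{rel} \ge 0$. Positivity of $\ell_{rel}$ follows because the relative metric is quasi-isometric to the geodesic flow excursion-count, which grows linearly for a nonelementary walk (the walk makes deep cusp excursions with at most a bounded linear density, but also cannot linger in any single cusp forever, so $\ell_{rel} > 0$); this needs a short argument that the random walk's relative displacement has positive drift, e.g. via the fact that $d_{rel}$ is comparable to distance in a hyperbolic space (the relative / electrified space, or the curve-complex analogue) on which the walk acts with positive drift. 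Setting $c := \ell_G / \ell_{rel}$ gives $\rho(\gamma) = c$ for $\nu$-a.e. $\gamma$.

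The main obstacle I anticipate is the \emph{comparison between the approximating element $h_t$ along the geodesic and the random walk position $w_n$}, done simultaneously in both metrics. The word metric is sensitive to exactly how deep into the cusp the geodesic goes and for how long, while the sample path $w_n x_0$ only needs to track the geodesic coarsely in the \emph{hyperbolic} metric — a priori two elements that are a bounded hyperbolic distance apart can have wildly different word lengths if one of them sits deep in a cusp. So one must show that the tracking is good enough that the cusp excursions of the geodesic are (up to bounded additive error and finitely many exceptions per unit time) the same as those "seen" by the random walk, so that the word lengths agree up to a sublinear error. I expect this is handled by a logarithmic (or better) tracking estimate for the random walk combined with the fact that excursions of depth $D$ contribute $\asymp D$ to the word length but the number of deep excursions is controlled — essentially one needs that the walk does not, with positive probability density, produce arbitrarily long "unmatched" parabolic segments, which should follow from a tail estimate on single-step cusp excursions using the finite first moment hypothesis. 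Once that comparison is in place, the rest is the ergodic-theoretic machinery above, which is routine.
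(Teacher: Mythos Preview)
Your overall architecture matches the paper's --- split into Lebesgue and harmonic, aim for $d_G(1,h_T)/T \to \infty$ in the first case and a ratio of positive drifts $\ell_G/\ell_{rel}$ in the second --- but both halves diverge from the paper in ways worth flagging. For the Lebesgue part, the continued-fraction coding you sketch is really the $SL(2,\mathbb{Z})$ picture from the introduction; for a general nonuniform lattice there is no canonical Gauss map, and setting one up (Bowen--Series style) is more work than needed. The paper argues directly on the geodesic flow: it defines $\psi$ on $T^1 X$ measuring cusp depth via nested horoballs $\mathcal{H}_{2^n}$, uses that $\mathrm{vol}(\mathcal{H}_n) \asymp 1/n$ to get $\int \psi\, d\ell = \infty$, and by ergodicity the time average of $\psi$ along a.e.\ ray diverges. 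This average is comparable to the total excursion $\sum_H E(\gamma,H)$, which lower-bounds $d_G(1,h_T)$ because projected paths are quasigeodesic in the thick part $\widetilde N$. No coding is needed.

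For the harmonic part, you have correctly located the real difficulty: hyperbolic tracking of $w_n x_0$ by $\gamma$ does \emph{not} give $d_G(w_n, h_t)/n \to 0$, since two points close in $\mathbb{H}^2$ but deep in a horoball can be far in the word metric. Your proposed fix (logarithmic tracking plus tail estimates on single-step cusp excursions) is vague and it is not clear it closes the gap. The paper's solution bypasses hyperbolic tracking entirely: set $\varphi(w) := d_G(1, P(w))$ where $P(w)$ is the set of closest lattice points to the bi-infinite limit geodesic $\gamma_w$; by shift-equivariance $\varphi(\sigma^n w) = d_G(w_n, P(w))$, and $|\varphi(\sigma w) - \varphi(w)| \le d_G(1, g_1)$, which lies in $L^1$ precisely by the finite first moment hypothesis. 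A general ergodic lemma (if $f \ge 0$ and $f\circ T - f \in L^1$ then $f(T^n\omega)/n \to 0$ a.s.) then yields $d_G(w_n, P(w))/n \to 0$ directly in the word metric. This produces times $t_n$ with $d_G(w_n, h_{t_n})/n \to 0$. You are also missing a second ingredient: to pass from these special times $t_n$ to \emph{all} $T$, one needs coarse monotonicity of $T \mapsto d_G(1,h_T)$ along the ray, which the paper proves separately via the projected-path quasigeodesic property.
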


Recall that two measures are \emph{mutually singular} if there are
sets which have full measure with respect to one measure, and zero
measure with respect to the other. This result shows that the sets of
geodesics with differing limits for $\rho$ exhibit the mutual
singularity of the visual and harmonic measure, giving the following
corollary.

\begin{corollary} \label{cor:h2sing}
Let $G$ be a Fuchsian group $G$ which has cofinite volume, but is not cocompact,
and $\mu$ a probability distribution on $G$ with finite first moment in the word metric, 
and whose support generates $G$ as a semigroup.
Then the harmonic measure $\nu$ determined by $\mu$ is singular with respect to
Lebesgue measure on the boundary of hyperbolic plane. 
\end{corollary}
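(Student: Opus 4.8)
The plan is to derive Corollary \ref{cor:h2sing} as an immediate consequence of Theorem \ref{h2}, using the definition of mutual singularity. Theorem \ref{h2} produces a constant $c > 0$ and a dichotomy for the limit $\rho(\gamma)$: the set $A = \{\gamma : \rho(\gamma) = \infty\}$ has full Lebesgue measure, while the set $B = \{\gamma : \rho(\gamma) = c\}$ has full harmonic measure. Since $c < \infty$, the sets $A$ and $B$ are disjoint. Hence $\leb(B) \le \leb(\gamma\text{-space} \setminus A) = 0$ and $\nu(A) \le \nu(\gamma\text{-space} \setminus B) = 0$, so $A$ is a set of full Lebesgue measure and zero harmonic measure (equivalently, $B$ is a set of full harmonic measure and zero Lebesgue measure). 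This is exactly the definition of $\nu$ and $\leb$ being mutually singular, once we transport these statements from the space of geodesic rays to the boundary circle.

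The one genuinely necessary step is that transport: the corollary is phrased in terms of measures on $\partial \mathbb{H}^2 = S^1$, whereas Theorem \ref{h2} is phrased in terms of geodesic rays from $x_0$. I would note that, with basepoint $x_0$ fixed, the map sending a geodesic ray $\gamma$ starting at $x_0$ to its endpoint $\gamma_\infty \in S^1$ is a Borel bijection, and by construction both Lebesgue measure on $S^1$ and the harmonic measure $\nu$ on $S^1$ correspond under this identification to the measures on the space of rays used in Theorem \ref{h2}; indeed the quantity $\rho$ is really a function of the endpoint. Under this identification, the image $\overline{A} \subset S^1$ of $A$ satisfies $\leb(\overline{A}) = 1$ and $\nu(\overline{A}) = 0$, which is precisely the assertion that $\nu$ is singular with respect to $\leb$.

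I would also remark that the hypotheses of the corollary match those of the theorem: $G$ cofinite but not cocompact is exactly the hypothesis that $G \backslash \mathbb{H}^2$ is a non-compact finite-area hyperbolic orbifold, and the assumption that $\mu$ has finite first moment in the word metric with support generating $G$ as a semigroup is what is needed to invoke the harmonic-measure half of Theorem \ref{h2}. No further work is required.

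There is no real obstacle here; the content is entirely in Theorem \ref{h2}, and the corollary is a formal deduction. The only point demanding a line of care is making sure that the positivity and finiteness of $c$ is used — it is what forces $A \cap B = \emptyset$ — and that the endpoint map identifies the relevant measure spaces, so that a statement about a.e. geodesic ray becomes a statement about a.e. boundary point.
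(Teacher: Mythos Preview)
Your proposal is correct and matches the paper's approach exactly: the paper does not give a separate proof of the corollary but simply states (just before Corollary~\ref{cor:h2sing}) that the sets of geodesics with differing limits for $\rho$ exhibit the mutual singularity of the two measures, which is precisely the argument you spell out. Your added remarks about identifying geodesic rays with boundary points and matching the hypotheses are more explicit than what the paper writes, but entirely in the same spirit.
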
 

Guivarc'h and Le Jan \cites{GL90, GL} proved the singularity result
for the special case of the congruence subgroup $\Gamma(2)$ of $PSL(2,
\mathbb{Z})$ by studying the asymptotic winding around the cusp of the
geodesic flow on $\Gamma(2) \backslash \mathbb{H}^2$. 
The statistic $\rho(\gamma)$ that we study is similar in
spirit to asymptotic winding; our formulation in ``soft'' geometric terms 
replaces the analytic approach of \cite{GL90}  and makes it 
possible, as we shall see, to generalize the result to the mapping class group.
Alternate approaches to Corollary \ref{cor:h2sing} have been given for $SL(2, \mathbb{Z})$ 
by Deroin, Kleptsyn and Navas \cite{Der-Kle-Nav} and by Blach\`ere, Ha\"issinsky and Mathieu \cite{bhm}
for the general case.


As another application of Theorem \ref{h2}, we answer a question of
Deroin-Kleptsyn-Navas \cite{Der-Kle-Nav}. For any 
finitely generated group $G$ of circle diffeomorphisms and any point $p \in S^1$,
Deroin-Kleptsyn-Navas define the \emph{Lyapunov expansion exponent} of $G$ at
$p$ as
\begin{equation} \label{lyapexp}
\lambda_{exp}(p) := \limsup_{R \to \infty} \max_{g \in B(R)} \frac{1}{R} \log |g'(p)| 
\end{equation}
where $B(R)$ is a ball of radius $R$ in $G$ with respect to a word
metric for some finite generating set. 

\begin{theorem} \label{Lyap-intro}
For a Fuchsian group which is cofinite volume but not cocompact, 
we have 
\[
\lambda_{exp}(p) = 0
\]
for almost every $p \in S^1$ with respect to Lebesgue measure. 
\end{theorem}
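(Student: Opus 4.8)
The plan is to deduce Theorem \ref{Lyap-intro} from Theorem \ref{h2} by relating the derivative cocycle $\log|g'(p)|$ to the two metrics $d_G$ and $d_{rel}$ on $G$. First I would set up the standard dictionary between the dynamics of $G$ on $S^1 = \partial\mathbb{H}^2$ and the geometry of the $G$-action on $\mathbb{H}^2$: for $g \in G$ and $p \in S^1$, the quantity $-\log|g'(p)|$ is, up to bounded error, the Busemann cocycle based at $p$ evaluated at $g x_0$, i.e.\ essentially the signed distance by which $g$ moves the horoball-based coordinate at $p$. In particular, if $\gamma$ is the geodesic ray from $x_0$ to $p$ and $\gamma_t$ is the point at distance $t$ along it, then $\log|g'(p)| \asymp t - 2 d_{\mathbb{H}^2}(g x_0, \gamma_t)$ for $g x_0$ near $\gamma_t$; maximizing $\log|g'(p)|$ over the ball $B(R)$ in the word metric amounts to going as far as possible along $\gamma$ while keeping the word length of the approximating group element at most $R$. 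Thus $\lambda_{exp}(p)$ measures the linear rate at which one can travel down the ray $\gamma$ per unit of word length spent.

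Next I would make the key observation that travelling distance $t$ along $\gamma$ costs word length roughly $d_G(1, h_t)$, whereas it costs relative length roughly $d_{rel}(1, h_t) \asymp t$ (since in the relative metric the geodesic from $x_0$ to $\gamma_t$ has length comparable to $t$ up to the cusp excursions being collapsed — more precisely $d_{rel}(1,h_t)$ grows linearly in $t$ for a.e.\ geodesic by ergodicity of the geodesic flow and the fact that relative distance ignores time spent in the cusp only up to a logarithmic correction). Combining these, the best expansion rate satisfies, up to multiplicative and additive constants,
\[
\lambda_{exp}(p) \asymp \limsup_{t\to\infty} \frac{t}{d_G(1,h_t)} \asymp \frac{1}{\rho(\gamma)} \cdot \lim_{t\to\infty}\frac{d_{rel}(1,h_t)}{t}.
\]
By Theorem \ref{h2}, for Lebesgue-almost every $p$ we have $\rho(\gamma) = \infty$, and since $d_{rel}(1,h_t)/t$ stays bounded above (relative distance is coarsely Lipschitz in the hyperbolic distance, hence in $t$), the right-hand side forces $\lambda_{exp}(p) = 0$. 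So the heart of the argument is promoting the divergence $d_G(1,h_t)/d_{rel}(1,h_t) \to \infty$ to the statement that the reciprocal rate $t/d_G(1,h_t) \to 0$, which is immediate once one knows $d_{rel}(1,h_t)$ grows at most linearly in $t$.

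The steps, in order, would be: (1) prove the comparison $\log|g'(p)| = t - 2\,d_{\mathbb{H}^2}(gx_0,\gamma_t) + O(1)$ for $gx_0$ a closest orbit point to $\gamma_t$, using the Busemann-function interpretation of the derivative and the fact that the orbit $G x_0$ is coarsely dense; (2) show that maximizing over $g \in B(R)$ and letting $R \to \infty$ gives $\lambda_{exp}(p) = \limsup_{t} t / d_G(1, h_t)$ up to constants, by reparametrizing the ball radius $R$ by the word length $d_G(1,h_t)$ of the approximating elements; (3) establish $d_{rel}(1, h_t) \le C t + C$ for a constant $C$, which is elementary since $d_{rel} \le d_G' \le C\, d_{\mathbb{H}^2} + C$ along the orbit and $d_{\mathbb{H}^2}(x_0, h_t x_0) \le t + O(1)$; (4) invoke Theorem \ref{h2} to get $d_G(1,h_t)/d_{rel}(1,h_t) \to \infty$ for Lebesgue-a.e.\ $p$, and conclude $t / d_G(1,h_t) \le Ct / d_{rel}(1,h_t) \cdot (d_{rel}(1,h_t)/d_G(1,h_t)) \to 0$.

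The main obstacle I anticipate is step (1): making the identification between the analytic derivative $|g'(p)|$ (which depends on a choice of metric on $S^1$, e.g.\ the round metric from the disc model) and the purely metric Busemann cocycle precise enough to control the $\limsup$, rather than just up to a factor that might itself blow up. One has to be careful that the "$O(1)$" error is genuinely bounded uniformly in $g$ and $p$ — this is true because it comes from comparing the distance to a geodesic with the distance to the nearest orbit point, both of which are controlled by the (bounded) coarse density constant of $G x_0$ in $\mathbb{H}^2$ — but verifying it requires the standard hyperbolic-geometry estimate that for a point $y$ within bounded distance of the geodesic ray $[x_0, p)$ at parameter $t$, the Busemann function $b_p(y) = -t + O(1)$, together with the formula $\log|g'(p)| = b_p(g^{-1}x_0) - b_{g p}(x_0)$ up to bounded error. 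Once that cocycle identity is nailed down, the rest is the soft limiting argument above combined with Theorem \ref{h2}.
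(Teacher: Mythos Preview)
Your overall strategy is right and matches the paper's: one reduces $\lambda_{exp}(p)=0$ to the statement $T/d_G(1,h_T)\to 0$ for Lebesgue-a.e.\ $p$ (this is Proposition \ref{Fuchsian-lim} in the paper, which is in fact what drives the Lebesgue half of Theorem \ref{h2}, so routing through $d_{rel}$ as you do in steps (3)--(4) is a harmless detour). The Busemann-function interpretation of $\log|g'(p)|$ is also correct in spirit, though note it is $g^{-1}x_0$, not $gx_0$, whose position governs $|g'(p)|$; this is irrelevant for word-metric balls since $d_G(1,g)=d_G(1,g^{-1})$.

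The genuine gap is step (2). You assert that $\max_{g\in B(R)}\log|g'(p)|$ is realised, up to constants, by the approximating elements $h_t$ with $d_G(1,h_t)\le R$, but you only justify the derivative estimate \emph{for those particular elements}. What is missing is the converse: for an arbitrary $g\in B(R)$, why is $|g'(p)|$ no larger? Equivalently, why does every lattice point $y$ with $-b_p(y)\ge T$ (i.e.\ lying in the half-space beyond $\gamma_T$) satisfy $d_G(1,g)\gtrsim d_G(1,h_T)$? This is not automatic: the half-space is unbounded, and since $G$ is not cocompact, word length is \emph{not} quasi-isometric to hyperbolic distance, so a lattice point far from $\gamma_T$ on that half-space could a priori have word length unrelated to $d_G(1,h_T)$. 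The paper isolates exactly this statement as Proposition \ref{shadow}, and its proof is the main technical work of the section: it uses the projected-path quasi-geodesic lemma (Lemma \ref{lemma:projected path qg}) together with a case analysis according to whether $\gamma_T$ lies in a cusp horoball, comparing horocyclic excursions of the two projected paths. Your Busemann reformulation is a clean way to phrase the problem but does not bypass this difficulty; the ``$O(1)$'' you flag in your obstacle paragraph is bounded only for $g=h_t$, not uniformly over $B(R)$, and controlling the other elements is where the real content lies.
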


The theorem answers Question 3.3 in \cite{Der-Kle-Nav} in the
affirmative. The essential idea is that, given $p \in S^1$, the group 
elements realizing the maximum of the derivative in definition \eqref{lyapexp}
are the closest ones to the geodesic ray from the basepoint to $p$, and their 
derivative grows subexponentially by Theorem \ref{h2} (see section \ref{section:lyap}).

\subsection{Mapping class groups}

The observation that $\nu$-typical geodesics wind around cusps less
than Lebesgue-typical geodesics is the starting point for the main
result of the paper, namely the generalization of Theorem \ref{h2} to
mapping class groups.

Let $G = Mod(S)$ be the mapping class group of an orientable surface 
$S$ of finite type, which acts on the Teichm\"uller space
$\T(S)$ of marked hyperbolic metrics on $S$. 
The Teichm\"uller
metric on $\T(S)$ is preserved by the action of the mapping class
group, and the quotient \emph{moduli space} $\mathcal{M}(S) = G \backslash \T(S)$ has
finite volume and is not compact. 

We shall use Thurston's compactification of Teichm\"uller space, the
space of \emph{projective measured foliations} $\mathcal{PMF}$, as a
boundary for $\T(S)$. There is a natural \emph{Lebesgue measure class}
$\leb$ on $\mathcal{PMF}$ given by pulling back Lebesgue measure from
the charts defined using train track coordinates.  The space of unit
area quadratic differentials is the (co)-tangent space to
Teichm\"uller space, and the unit cotangent space at each point may be
identified with $\mathcal{PMF}$. There is an invariant measure for the
geodesic flow known as holonomy measure, and the conditional measure
on unit tangent spheres induced by this measure is absolutely
continuous with respect to Lebesgue measure.
Kaimanovich and Masur \cite{KM} showed that
if $\mu$ is a probability distribution on $G$, whose support generates
a non-elementary subgroup, then the image of a random walk on $G$
under the orbit map $g \mapsto g X_0$ converges to a point in
$\mathcal{PMF}$ almost surely. We let $\nu$ be the corresponding
hitting measure.

In general, a geodesic ray need not converge to a unique point in
$\mathcal{PMF}$, see for example Lenzhen \cite{lenzhen}. However, for
each uniquely ergodic foliation in $\mathcal{PMF}$ there is a unique
geodesic ray from any point in $\T(S)$ which converges to that
foliation.  The set of uniquely ergodic foliations has full measure
with respect to both measures $\leb$ and $\nu$, and so with respect to
either measure we may identify a full measure set of points in
$\mathcal{PMF}$ with Teichm\"uller geodesic rays from a basepoint
$X_0$.

The mapping class group is finitely generated, and we shall write
$d_G$ for a choice of word metric on $G$. We shall let $d_{rel}$ be
the word metric with respect to an infinite generating set, consisting
of adding to a finite generating set the stabilizers of simple closed
curves $\alpha_i$, where the $\alpha_i$ are a set of representatives
for orbits of simple closed curves under $G$, see Masur and Minsky
\cite{mm1}. The relative metric is also quasi-isometric to distance in
the \emph{curve complex} $\mathcal{C}(S)$.
Let $\mathcal{T}_\epsilon$ be the $\epsilon$-thin part of
Teichm\"uller space, i.e. the set of surfaces which contain a simple
closed curve of hyperbolic length at most $\epsilon$. In this case, we
restrict to taking limits over points $\gamma_t$ which do not lie in
the thin part $\T_\e$. The main result is the following:

\begin{theorem} \label{mcg} %
Let $G= Mod(S)$ be the mapping class group of a non-elementary surface $S$ of
finite type, and let $\T(S)$ be the Teichm\"uller space of $S$. Let
$\T_\e$ be the thin part of Teichm\"uller space, for some $\e > 0$
sufficiently small, and fix a basepoint $X_0 \notin \T_\e$. Given a
geodesic ray $\gamma$ starting at $X_0$, let
\[ \rho(\gamma) := \lim_{\stackrel{t \to \infty}{\gamma_t \not \in
    \T_\e}} \frac{d_G(1, h_t)}{d_{rel}(1, h_t)}, \]
where $h_t X_0$ is a closest element of the $G$-orbit of $X_0$ to
$\gamma_t$. Let $\leb$ be a measure on $\mathcal{PMF}$ in the Lebesgue
measure class, and let $\nu$ be the harmonic measure determined by a
random walk generated by a probability measure on
$G$ which has finite first moment in the word metric, and 
whose support generates a non-elementary subgroup of $G$ 
as a semigroup.
Then
there is a constant $c > 0$ such that
\[ \rho(\gamma) = \left\{ \begin{array}{cl} \infty & \text{for } \leb
\text{-almost all geodesics } \gamma \\ c & \text{for } \nu
\text{-almost all geodesics } \gamma.  \end{array} \right.  \]
\end{theorem}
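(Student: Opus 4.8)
The plan is to establish the two cases separately, exploiting the dichotomy between how Lebesgue-typical and harmonic-measure-typical geodesics behave with respect to excursions into the thin part $\T_\e$. The underlying principle is the same as in Theorem \ref{h2}: a deep excursion into the cusp — here, into a thin part associated to a short curve $\alpha$ — contributes a long segment to $d_G(1,h_t)$ (roughly the depth of the excursion, since a deep excursion forces $h_t$ to absorb a large power of the Dehn twist about $\alpha$) while contributing only a bounded amount to $d_{rel}(1,h_t)$ (the relative metric is quasi-isometric to the curst complex $\mathcal{C}(S)$, and an excursion around a single curve moves a bounded distance there). So $\rho(\gamma)$ measures the asymptotic ratio of total time spent in cusp excursions to the number of distinct excursions, weighted appropriately. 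I would first set up the dictionary making this precise: for a geodesic segment $\gamma|_{[0,t]}$ staying outside $\T_\e$, relate $d_G(1,h_t)$ to $t$ plus the sum of excursion depths, and relate $d_{rel}(1,h_t)$ to the number of times $\gamma$ changes ``which curve it is thin along,'' using the results of Masur--Minsky \cite{mm1}, \cite{mm2} and the Rafi combinatorial description of Teichm\"uller geodesics.

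For the Lebesgue case, the plan is to show that the excursions get arbitrarily deep infinitely often, at a rate making the depths dominate. The natural tool is the ergodic theory of the Teichm\"uller geodesic flow with respect to the (Lebesgue-class) holonomy/Masur--Veech measure, together with a logarithm-law for cusp excursions in the spirit of Sullivan and Masur (and its mapping class group version). Since the measure is infinite on each cusp neighborhood in the sense that the function ``depth of excursion'' is not integrable — parabolic/Dehn-twist cusps have the same non-integrability as the $SL(2,\mathbb{Z})$ cusp — a Birkhoff-type argument shows the time-averaged depth diverges, forcing $\rho=\infty$ Lebesgue-almost everywhere. I would import the corresponding statement from the Fuchsian proof of Theorem \ref{h2} and upgrade it: the key new input is that in $\T(S)$ there are finitely many cusps (orbits of curves) but the thin-part geometry near each is, up to the product regions theorem of Minsky, modeled on a horoball quotient, so the same divergence of Cesàro averages of excursion depth holds.

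For the harmonic case, the plan is to use that the random walk, having finite first moment in the word metric, tracks a Teichm\"uller geodesic sublinearly (the Tiozzo tracking / Kaimanovich--Masur convergence, refined by the fact that sample paths make definite linear progress in the curve complex, by Maher--Tiozzo), and crucially that the word-metric displacement $d_G(1,w_n)$ and the relative displacement $d_{rel}(1,w_n)$ both grow linearly in $n$ with positive speeds $\ell_G>0$ and $\ell_{rel}>0$. Since the tracking geodesic is the one converging to the harmonic-measure boundary point, and since along it $h_t$ is comparable to the random walk position at the appropriate time, $\rho(\gamma)=\ell_G/\ell_{rel}$ for $\nu$-almost every $\gamma$, a finite positive constant $c$. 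The ingredients are: (i) positivity of the relative speed, which follows from non-elementarity and the Maher--Tiozzo result that the walk makes linear progress in $\mathcal{C}(S)$; (ii) finiteness, i.e. positivity of $\ell_G$ and finiteness of $\ell_G/\ell_{rel}$, which uses the finite first moment hypothesis (subadditivity gives $\ell_G<\infty$) together with the fact that the walk spends a sublinear total time in thin parts — a deviation estimate saying harmonic-measure geodesics do not have excursions deep enough to spoil the ratio.

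The main obstacle I expect is the matching step for the Lebesgue case: reconciling the Teichm\"uller-flow ergodic statement (which naturally controls the behavior of the \emph{geodesic} $\gamma_t$ and its thin-part excursions) with the \emph{word-length} statistic $d_G(1,h_t)$, i.e. showing rigorously that an excursion of depth $D$ into the $\alpha$-thin part forces $d_G(1,h_t)\asymp D$ up to additive error and that these contributions add up without cancellation. This requires a careful geometric estimate relating progress of a Teichm\"uller geodesic to word length of nearby mapping classes — essentially a distance formula for $\T(S)$ matched against the Masur--Minsky distance formula for $d_G$ — and controlling the error terms uniformly over the finitely many cusp types. The harmonic case is comparatively softer, the only delicate point being the deviation inequality controlling thin-part sojourn times for sample paths, which I would obtain from exponential decay of the probability of large curve-complex displacement backtracking (Maher, Maher--Tiozzo) combined with the finite first moment assumption.
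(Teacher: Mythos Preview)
Your overall architecture is correct and matches the paper's: the Lebesgue case is handled by ergodicity of the Teichm\"uller flow together with non-integrability of an ``excursion depth'' function, and the harmonic case by linear progress in both metrics plus sublinear tracking. Two points are worth sharpening.

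\textbf{Lebesgue case.} The dictionary you propose is more elaborate than what is actually needed. The paper does not establish a two-sided estimate $d_G(1,h_t)\asymp t+\sum(\text{excursion depths})$, nor does it relate $d_{rel}(1,h_t)$ to a count of ``curve changes''. It uses only the one-sided bounds $d_G(1,h_T)\gtrsim E(\gamma,T)$ (total excursion) and $d_{rel}(1,h_T)\lesssim T$. The first comes from Rafi's estimate that the annular subsurface projection $d_\alpha(m_0,m_T)$ is comparable to the excursion $E(\gamma,H_\epsilon(\alpha))$, fed into the Masur--Minsky distance formula; the second is just that the curve-complex shadow of a Teichm\"uller geodesic is an unparameterized quasigeodesic. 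The ergodic input is then exactly what you say: the function $L(q)=\sum 1/\ell_q^2(\alpha)$ over short curves has infinite integral, so its time average diverges, and this dominates $E(\gamma,T)/T$. Minsky's product regions theorem is not used; Rafi's isolated-interval machinery does the work.

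\textbf{Harmonic case.} Here you misidentify the delicate step. Sublinear tracking in the word metric gives, for almost every sample path, a sequence of thick-part times $t_n$ with $d_G(w_n,h_{t_n})=o(n)$. This controls the ratio $d_G(1,h_T)/d_{rel}(1,h_T)$ only at those special times $T=t_n$; one still has to handle an arbitrary thick-part time $T$ with $t_n\le T\le t_{n+1}$. You propose to do this via a probabilistic deviation estimate bounding thin-part sojourn times of sample paths. The paper does not do this, and it is not clear such an estimate is readily available under a bare first-moment hypothesis. Instead, the paper proves a purely geometric fact: $d_G(1,h_t)$ is \emph{coarsely monotone} in $t$ along a Teichm\"uller geodesic (restricted to thick-part times). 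This follows from Rafi's reverse triangle inequality for subsurface projections combined with the Masur--Minsky distance formula, and it immediately gives $d_G(h_{t_n},h_T)\lesssim d_G(h_{t_n},h_{t_{n+1}})=o(n)$, closing the argument without any further probabilistic input. You should replace your deviation-estimate step with this monotonicity; it is the genuine new geometric ingredient in the harmonic half.
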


The theorem has the following corollary for the harmonic measure:

\begin{theorem} \label{T:singMCG}
Let $\mu$ be a measure on the mapping class group with finite first moment in the 
word metric, and such that the semigroup generated by its support 
is a non-elementary subgroup of $Mod(S)$. Then the corresponding harmonic measure $\nu$ 
on $\mathcal{PMF}$
is singular with respect to Lebesgue measure.
\end{theorem}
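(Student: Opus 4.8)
The plan is to deduce Theorem~\ref{T:singMCG} directly from Theorem~\ref{mcg} in exactly the way Corollary~\ref{cor:h2sing} follows from Theorem~\ref{h2}. The key observation is that the statistic $\rho(\gamma)$ is defined purely in terms of the geodesic ray $\gamma$, hence in terms of its endpoint in $\mathcal{PMF}$ (after restricting to the full-measure set of uniquely ergodic foliations, where the correspondence between endpoints and geodesic rays from $X_0$ is a bijection). Theorem~\ref{mcg} tells us that the function $\rho$ is equal to $+\infty$ on a set $E_{\leb} \subset \mathcal{PMF}$ of full Lebesgue measure, and equal to the finite constant $c$ on a set $E_\nu \subset \mathcal{PMF}$ of full $\nu$-measure. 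Since a function cannot simultaneously take the value $+\infty$ and the finite value $c$ at the same point, the sets $E_{\leb}$ and $E_\nu$ are disjoint. Thus $E_\nu$ is a set with $\nu(E_\nu) = 1$ and $\leb(E_\nu) \le \leb(\mathcal{PMF} \setminus E_{\leb}) = 0$, which is precisely the assertion that $\nu$ and $\leb$ are mutually singular.

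First I would recall the definition of mutual singularity and note that the hypotheses of Theorem~\ref{T:singMCG} — finite first moment in the word metric and support generating a non-elementary subsemigroup — are exactly the hypotheses needed to invoke Theorem~\ref{mcg}, so no additional work is required on that front. Then I would address the one genuine measure-theoretic subtlety: Theorem~\ref{mcg} asserts equalities of $\rho$ almost everywhere, but a priori $\rho$ is only defined on the set of geodesics for which the limit exists. I would point out that the preamble already records that this limit exists on a full-measure set for \emph{both} $\leb$ and $\nu$, and that uniquely ergodic foliations (for which the geodesic ray is unique) also form a full-measure set for both; intersecting these, $\rho$ is a well-defined $\mathcal{PMF}$-valued function on a set of full $\leb$- and full $\nu$-measure, and on that common domain the dichotomy of Theorem~\ref{mcg} applies verbatim.

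The argument has essentially no obstacle — it is a one-paragraph deduction — so the only thing to be careful about is bookkeeping: making sure the ``full measure'' qualifiers line up so that the set $E_\nu$ on which $\rho = c$ can be taken to be an honest measurable subset of $\mathcal{PMF}$ that is $\nu$-full and $\leb$-null. I would therefore phrase the proof as: let $A \subset \mathcal{PMF}$ be the set of uniquely ergodic foliations $\xi$ such that the geodesic ray $\gamma^\xi$ from $X_0$ to $\xi$ has $\rho(\gamma^\xi)$ well-defined; then $\leb(A) = \nu(A) = 1$, and by Theorem~\ref{mcg} the set $B := \{\xi \in A : \rho(\gamma^\xi) = c\}$ satisfies $\nu(B) = 1$ while $\leb(B) = 0$ (since $\rho = \infty \ne c$ $\leb$-a.e.). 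Hence $\nu(B) = 1$ and $\leb(\mathcal{PMF}\setminus B)=1$, so $\nu \perp \leb$. This completes the proof.
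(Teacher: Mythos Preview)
Your proposal is correct and is exactly the approach the paper takes: Theorem~\ref{T:singMCG} is presented as an immediate corollary of Theorem~\ref{mcg}, via the observation that the disjoint sets $\{\rho = \infty\}$ and $\{\rho = c\}$ have full Lebesgue and full harmonic measure respectively. Your version is, if anything, more careful about the measure-theoretic bookkeeping than the paper itself, which leaves the deduction entirely implicit.
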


The singularity of harmonic measure for random walks on $Mod(S)$ has been conjectured 
by Kaimanovich and Masur \cite{KM}. 

For general random walks on groups, this question has a long history 
(for a thorough discussion, see the introduction of Kaimanovich and Le Prince \cite{KLP}). 
In the context of lattices in Lie groups, Furstenberg \cites{fur71, fur73} first constructed random walks on discrete groups
whose hitting measure is absolutely continuous on the boundary. These examples
have finite first moment in the Riemannian metric on the Lie group, but 
do not have finite first moment in the word metric on the discrete subgroup (compare to Theorem \ref{h2}).

For the mapping class group, the corresponding question, i.e. whether it is possible to find a measure $\mu$
on $Mod(S)$ such that the hitting measure 
of the corresponding random walk
is absolutely continuous on $\mathcal{PMF}$, still appears to be open. 
As a consequence of 
Theorem \ref{T:singMCG}, such a measure $\mu$ cannot have finite first moment 
in the word metric on the mapping class group.
In \cite{Ga}, Gadre proved singularity of the harmonic measure for random 
walks on the mapping class group generated by measures $\mu$ with finite support.


For finitely supported random walks on discrete groups, 
on the other hand, the measure is expected to be singular; however, the question appears to be still open
even for the case of arbitrary cocompact lattices in $SL(2, \mathbb{R})$.


In this paper, we get the Lebesgue
measure statistics by using the ergodicity of the Teichm\"uller
geodesic flow, combined with estimates on the volume of the thin part
of the space of quadratic differentials.  The statistics for harmonic
measure follows from linear progress in the relative metric, combined
with sublinear tracking between geodesics and sample paths. 

Several authors have considered cusp excursions of Lebesgue-typical geodesics; 
in particular, Sullivan \cite{Sul} showed that on a non-compact hyperbolic manifold 
a generic geodesic ray 
ventures into the cusps
infinitely often with maximum depth in the cusps of about $\log t$,
where $t$ is the time along the geodesic ray. 
The same approach has been then adapted to the Teichm\"uller geodesic flow by Masur \cite{MasurLog}.



Our method uses essentially only the geometry of the cusp, so it is natural
to expect it to apply
to other group actions for which the orbit space 
is a non-compact manifold of finite volume and the geodesic flow is ergodic, 
e.g. for fundamental groups of higher-dimensional hyperbolic manifolds with cusps.

In the rest of the introduction, we first consider the special case of the
action of $SL(2, \mathbb{Z})$, and summarize how to proceed in the general case.

\subsection{The case of $SL(2, \mathbb{Z})$}

For the sake of exposition, we now describe in detail the case
of $SL(2, \mathbb{Z})$. This example can be described concretely 
in terms of continued fractions, and we shall see how Theorems \ref{h2}
and \ref{mcg} generalize its essential geometric features.

The group $SL(2, \mathbb{Z})$ acts on the 
hyperbolic plane $\mathbb{H}^2$ by M\"obius transformations,
preserving the \emph{Farey triangulation} of $\mathbb{H}^2$ (drawn below in the disc model).

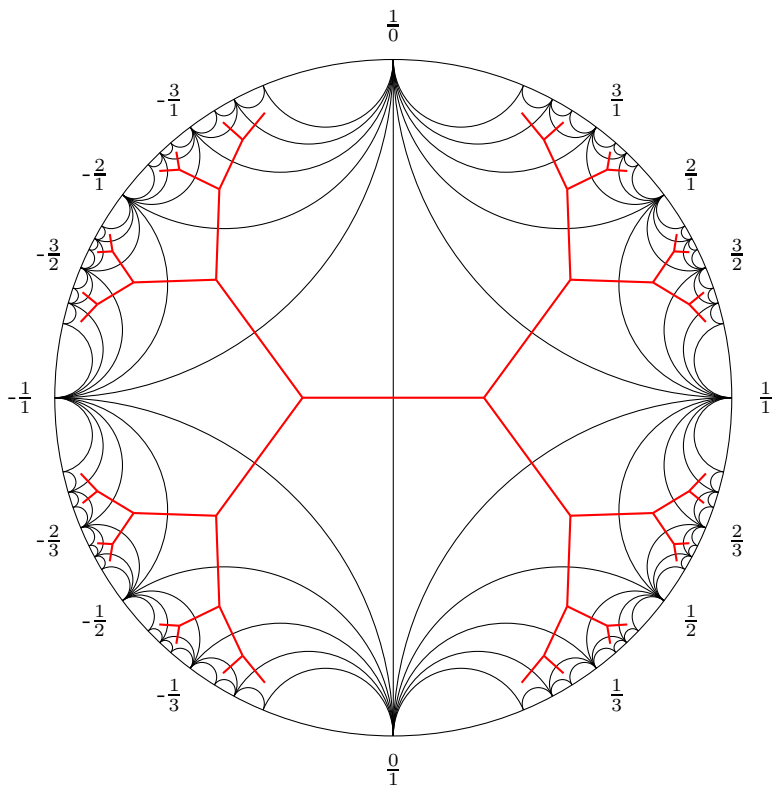
\begin{figure}[H]
\begin{center}
\begin{tikzpicture}[scale=4.5]
\input farey.tikz
\end{tikzpicture}
\end{center} \caption{The Farey triangulation in the disc model of
  $\mathbb{H}^2$.}
\end{figure}

The quotient $SL(2, \mathbb{Z})\backslash \mathbb{H}^2$ is a
hyperbolic orbifold with a cusp. It is often referred to as the
\emph{modular surface}, or the $(2, 3,\infty)$-triangle orbifold.
Given a basepoint $x_0$ in $\mathbb{H}^2$, we may
identify 
the circle at infinity $S^1 = \partial \mathbb{H}^2$ 
with the collection of geodesic rays
based at $x_0$, which is also 
also identified with the unit tangent
space at $x_0$. 
The unit tangent space has a natural measure arising from the
Riemannian metric, and we will refer to this measure as Lebesgue
measure. 

Alternatively, we may choose a geodesic by taking a random walk on the
group $SL(2, \mathbb{Z})$. By the \v{S}varc-Milnor lemma, the Cayley
graph of $SL(2, \mathbb{Z})$ is quasi-isometric to the infinite
trivalent tree that is dual to the Farey triangulation. For
simplicity, we assume that we are doing a simple random walk on this
dual tree. We may identify points on the boundary of the tree with
points on $S^1 = \partial \mathbb{H}^2$. A random walk on such a tree
converges to the boundary almost surely, and this gives a hitting
measure on $S^1$, which we shall call harmonic measure. We may then
choose a geodesic from the basepoint $x_0$ to the chosen point at
infinity.

The two measures on the boundary are in fact mutually singular, and
furthermore, we can describe sets which have full measure in one
measure, and measure zero in the other measure, in terms of the
behaviour of the geodesic rays in the modular surface.

\begin{figure}[H]
\begin{center}
\begin{tikzpicture}[scale=6]
\input geodesic.tikz
\end{tikzpicture}
\end{center} \caption{A geodesic in the Farey
  triangulation. Note that the cutting sequence starts with three right turns followed by two left turns, 
so $a_1 = 3$, and $a_2 \ge 2$, thus the endpoint of the geodesic on the circle at infinity lies 
somewhere between $\frac{1}{3}$ and $\frac{2}{7} = \frac{1}{3 + \frac{1}{2}}$. } \label{pic:farey geodesic}
\end{figure}
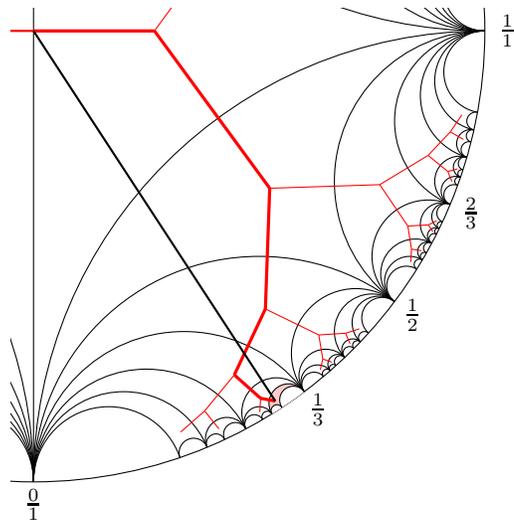

In this case, geodesic rays through the basepoint can be completely 
described in terms of continued fractions, following Series \cite{Ser}.
Indeed, a geodesic from the basepoint to a particular point $r \in S^1$
passes through some sequence of fundamental domains, or equivalently,
corresponds to a particular path in the trivalent tree converging to
$r$ at infinity (Figure \ref{pic:farey geodesic}). Starting from the basepoint, we may describe this path by a
\emph{cutting sequence}, i.e. a sequence of right and left turns depending 
on which branches of the tree the path is following. For instance, Figure \ref{pic:farey geodesic} shows 
a geodesic whose path in the tree starts off with three right turns followed by two left turns, so 
its cutting sequence starts with 
$$RRRLL\dots$$
(where $R$ stands for ``right turn'' and $L$ for ``left turn'').
The cutting sequence precisely determines the endpoint of the geodesic: indeed, 
if the geodesic ray ending at $r$ has cutting sequence
\[ \stackrel{a_0}{\overbrace{L L\dots L}} \ \stackrel{a_1}{\overbrace{RR\dots R}} \ \stackrel{a_2}{\overbrace{LL\dots L}} \ldots \]
then the continued fraction expansion of $r$ is precisely
\[ r =  a_0 + \frac{1}{a_1 + \frac{1}{a_2 + \cdots }}  \]
i.e. the $a_i$'s correspond to the number of consecutive right and
left turns along the path in the dual tree.
It is a classical result,
going back to Gauss, that for large $i$ the proportion (according to Lebesgue measure) 
of numbers with continued fraction expansions containing $a_i = n$ is about $1/n^2$. 
Since this distribution 
has infinite first moment, one gets by the ergodic theorem 
the well-known fact (see e.g. Khinchin \cite{Kh}) that for Lebesgue-almost 
all $r \in \mathbb{R}$ we have 
\begin{equation} \label{infmoment}
\lim_{n \to \infty} \frac{a_1(r) + \dots + a_n(r)}{n} = + \infty 
\end{equation}
where $a_i(r)$ is the $i$-th coefficient in the continued fraction
expansion of $r$.  Consider now instead a simple nearest neighbour
random walk on the dual tree.  For simplicity, let us consider the
measure which assigns probability $1/2$ to the right turn and $1/2$
the left turn, and let $\nu$ be its harmonic measure on $\partial
\mathbb{H}^2$.  Neglecting for now the possibility of backtracking,
the left and right turns occur independently with equal probability,
so the probability that $a_i = n$ is precisely the probability of
randomly choosing to turn in the same direction for $n$ consecutive
times, i.e. $1/2^n$.  As an exponential distribution has finite first
moment, then we have that for $\nu$-almost every $r$ the ratio in
equation \eqref{infmoment} converges to a finite number.

In terms of geodesics, we say that a geodesic ray 
has a \emph{$1/n^2$-distribution} if the proportion of coefficients $a_i = n$
in the continued fraction expansion of its endpoint equals $1/n^2$, up to multiplicative constants, 
that is
$$\lim_{N \to \infty} \frac{\#\{1 \leqslant i \leqslant N \ : \ a_i = n \}}{N} \cong \frac{1}{n^2},$$
and that a geodesic ray has an \emph{exponential distribution} if the proportion of 
coefficients $a_i = n$ 
is $O(1/2^n)$. 
These two sets of geodesics exhibit the singularity of the measures: the
geodesics with a $1/n^2$-distribution have measure one with respect to
Lebesgue measure, and measure zero with respect to harmonic measure;
moreover, the geodesics with an exponential distribution have measure
zero with respect to Lebesgue measure, and measure one with respect to
harmonic measure.

We want to show an analogue of this result for Fuchsian groups, and
the mapping class groups of surfaces. In order to generalize the result, 
we shall rewrite the ergodic average of equation \eqref{infmoment} in a
coding-free way, making use of two metrics on the group.

Let $\gamma$ be a geodesic ray from the basepoint $x_0$ to a point $r$ on the boundary; a 
point $\gamma_t$ on $\gamma$ is contained in a particular fundamental domain $D$, and let 
$g_t$ be the group element corresponding to such fundamental domain, i.e. such that $g_t x_0$
is contained in $D$. As we have seen, the geodesic segment from $x_0$ to $\gamma_t$ 
determines a finite sequence of left and right turns, and each turn
corresponds to adding a fixed number of generators to the word length,
so the word length $d_G(1, g_t)$ of $g_t$ is proportional to the sum of
the first continued fraction coefficients $a_i$ of the endpoint $r$:
$$d_G(1, g_t) \cong a_1 + a_2 + \dots + a_n.$$
Moreover, there is an alternative metric $d_{rel}$ on the group $G$, which is the metric arising 
from the Farey graph by treating
each edge as having length one. This metric is not proper, as the Farey
graph is not locally finite, and is referred to as a \emph{relative metric},
as it is quasi-isometric to
the word metric arising from the following infinite generating set: a
finite generating set, together with the elements from a single
parabolic subgroup. The distance in the relative metric is proportional to the
number of changes from consecutive right turns to consecutive left
turns, or vice versa, and so it is proportional to $n$, the number
of continued fraction coefficients. Thus, if we want to generalize the 
ratio of equation \eqref{infmoment}, we may consider the ratio
\begin{equation} 
\label{aiprop}
\rho_t := \frac{d_G(1, g_t)}{d_{rel}(1, g_t)} \cong \frac{1}{n}
\sum_{i=1}^n a_i 
\end{equation}
of the two metrics.
We may therefore use the set of geodesics for which $\rho_t$ stays bounded,
and the set of geodesics for which $\rho_t$ tends to infinity, to
exhibit the mutual singularity of harmonic measure and Lebesgue
measure.

\subsection{Outline of the paper}

We shall first (Sections \ref{section:prelim}-\ref{section:hitting}) treat the mapping 
class group case in detail, and then in the last 
two sections deal with Fuchsian groups, for which the arguments are essentially 
the same and usually slightly easier.
In Section \ref{section:prelim} we present background material on Teichm\"uller theory; 
in particular, we review the curve complex and marking complex, define the concept of excursion
 and use results of Rafi 
in order to prove the coarse monotonicity in the word metric of the approximating group elements 
along Teichm\"uller geodesics.
In Section \ref{section:lebesgue} we prove the asymptotic result for the Lebesgue measure, 
i.e. the first claim in Theorem \ref{mcg}. This is done by considering the ergodic average with respect 
to the Teichm\"uller flow of an appropriate function defined on the moduli space of 
quadratic differentials (Theorem \ref{theorem:asylength}) and then relate the 
average to the growth rate of the word metric along typical geodesics. 
In Section \ref{section:hitting} we prove the second claim in Theorem \ref{mcg}, namely 
the asymptotics for harmonic measure. 

We then turn to Fuchsian groups: in Section \ref{section:fuchsian} we prove Theorem \ref{h2}, 
while in Section \ref{section:lyap} we discuss the Lyapunov expansion exponent and prove 
Theorem \ref{Lyap-intro}.

\subsection{Notation} \label{section:notation}

We shall find it convenient to occasionally use \emph{big O}
notation. We say that $f(x) = O(g(x))$ if there are constants $A$ and
$B$ such that $\norm{ f(x) } \leqslant A \norm{ g(x) }$ for all $x
\geqslant B$. In particular, $f(x) = O(1)$ means that the function
$f(x)$ is bounded. 
We will also write $f(x) \lesssim g(x)$ to mean that the inequality 
holds up to additive and multiplicative constants, i.e. there are constants $K$ and $c$ such that
\[f(x) \leqslant K g(x) + c, \]
and similarly $ f(x) \asymp g(x)$ will mean that there exist constants $K, c$ such that 
%
\[ \frac{1}{K} g(x) - c \leqslant f(x) \leqslant K g(x) + c. \]



\section{Preliminaries from Teichm\"uller theory} \label{section:prelim}

In Sections \ref{section:quadratic}--\ref{section:short} we review
some background material on quadratic differentials, subsurface
projections and short markings. In Sections \ref{section:projections}
and \ref{section:excursion} we review in detail some results of Rafi
\cite{Raf07} which relate subsurface projection distance first to the
twist parameter along a Teichm\"uller geodesic, and then to the
excursion distance along the geodesic. In Section
\ref{section:coarse}, we use results of Rafi \cite{Raf10} to show that
word length grows coarsely monotonically along Teichm\"uller
geodesics, and finally in Section \ref{section:projection}, we show
that a similar result holds for the nearest lattice points to the
geodesic, if they lie in the thick part of Teichm\"uller space.

\subsection{Quadratic differentials and Teichm\"uller discs} \label{section:quadratic}

Let $S$ be a hyperbolic surface of finite type, i.e. a surface of
finite area which may have boundary components or punctures. We say
such a surface $S$ is \emph{sporadic} if it is a sphere with at most
four punctures or boundary components, or a torus with at most one
puncture or boundary component.  We shall primarily be interested in
non-sporadic surfaces, as in the sporadic cases the Teichm\"uller
spaces are either trivial, or isometric to $\mathbb{H}^2$, and covered
by the Fuchsian case.

Let $S$ be a non-sporadic surface with no boundary components, but
which may have punctures. We will write $\mathcal{T}(S)$ for the
Teichm\"uller space of a surface $S$, or just $\mathcal{T}$ if we do
not need to explicitly refer to the surface. We shall consider
$\mathcal{T}$ together with the Teichm\"uller metric
\[ d_{\mathcal{T}}(x, y) = \tfrac{1}{2} \inf_f \log
K(f), \]
where the infimum is taken over all quasiconformal maps $f \colon x
\to y$, and $K(f)$ is the quasiconformal constant for the map $f$. The
mapping class group $G = \textup{Mod}(S)$ of the surface acts by isometries on
$\mathcal{T}$, and we shall write $\mathcal{T}_\epsilon$ for the
\emph{thin part} of Teichm\"uller space, i.e. all surfaces which
contain a curve of hyperbolic length at most $\epsilon$. We shall
write $\mathcal{M}$ for the quotient $G \backslash \mathcal{T}$, which
is known as moduli space.  The thin part of Teichm\"uller space is
mapping class group invariant, and we shall write
$\mathcal{M}_\epsilon$ for the subset of moduli space given by $G
\backslash \mathcal{T}_\epsilon$.

Let $\mathcal{Q}$ be the space of unit area quadratic differentials,
which may be identified with the unit cotangent bundle to
Teichm\"uller space \cite{hm}. We shall write $\pi$ for the projection
$\pi : \mathcal{Q} \to \mathcal{T}$ which sends a quadratic
differential to its underlying Riemann surface, and we shall write
$\mu_{\text{hol}}$ for the Masur-Veech measure, also known as the
holonomy measure, as it may be defined in terms of holonomy
coordinates. The measure $\mu_{\text{hol}}$ is mapping class group
invariant, and so gives a measure on the moduli space of unit area
quadratic differentials $\mathcal{MQ} = G \backslash \mathcal{Q}$,
which has finite volume \cites{M1, veech}.

A quadratic differential $q$ determines a flat structure on the
surface, which may be thought of as a union of polygons glued together
along parallel sides, where the vertices of the polygons may
correspond to points of cone angle $n \pi$, for $n \geqslant 1$. If $n \geqslant
2$, then the vertex corresponds to a zero of order $n-2$ for the
quadratic differential $q$, and for $n = 1$ the vertices correspond to
cone points of angle $\pi$ which are simple poles for the quadratic
differential, and correspond to the punctures of the surface. There is
an affine action of $SL(2, \mathbb{R})$ on the flat surface, which
gives rise to a new quadratic differential. The orbits of quadratic
differentials under the action of $SL(2, \mathbb{R})$ give a
foliation of $\mathcal{Q}$ by copies of $SL(2,\mathbb{R})$, and we
shall write $\widetilde D_q$ for the orbit of the quadratic
differential $q$. We shall write $D_q$ for the image of $\widetilde
D_q$ in $\mathcal{T}$, and this is called a Teichm\"uller disc, which
is geodesically embedded in $\mathcal{T}$. With the metric induced
from the Teichm\"uller metric, $D_q$ is isometric to the hyperbolic plane of
constant curvature $-4$, and it will be convenient for us to use
coordinates coming from the disc model of hyperbolic plane, with the
initial quadratic differential $q$ corresponding to the origin.

The group of rotations of $\mathbb{R}^2$ acts on flat surfaces, and
hence on $\mathcal{Q}$. In terms of quadratic differentials, rotation
by angle $\theta$ in $\mathbb{R}^2$ sends $q \mapsto e^{-2 i \theta} q$, and
this action is trivial on Teichm\"uller space $\mathcal{T}$.  It
follows from the definition that holonomy measure is invariant
under rotation, i.e.  $\mu_{\text{hol}}(U) = \mu_{\text{hol}}(e^{i
  \theta} U)$, for all $\theta$, for any subset $U \subset \mathcal{Q}$. In
particular, this means that if we consider the conditional measure
from $\mu_{\text{hol}}$ on the image of a point $q \in \mathcal{Q}$
under rotation, i.e. $\{ e^{i \theta} q : \theta \in [0, 2 \pi] \}$,
then this is precisely the invariant Haar or Lebesgue measure on the
circle.

Finally, given $X \in \mathcal{T}$, the space $\mathcal{Q}(X)$ of unit
area quadratic differentials on $X$ is the unit cotangent space at
$X$, and we can denote by $s_X$ the conditional measure induced by the
holonomy measure on $\mathcal{Q}(X)$.  The map $\mathcal{Q}(X) \to
\mathcal{PMF}$ which associates to each quadratic differential on $X$
the projective class of its vertical foliation pushes forward the
measure $s_X$ to a measure in the Lebesgue measure class, so we can
indifferently use $s_X$ and Lebesgue measure on $\mathcal{PMF}$ when
discussing sets of full measure.  For a thorough review of the
different measures on $\mathcal{T}(S)$, $\mathcal{PMF}$ and related
spaces, we refer the reader to Athreya, Bufetov, Eskin and Mirzakhani
\cite{abem}*{Section 2} and Dowdall, Duchin and Masur
\cite{ddm}*{Section 3}.

\subsection{Curve complex and subsurface projections} \label{section:curve}

In this section we review the properties we will use of two
combinatorial objects associated with a surface, namely the \emph{curve
complex} and the \emph{marking complex}.

We say a simple closed curve on a surface $S$ is \emph{essential} if
it does not bound a disc, and is not parallel to a puncture or
boundary component.  The \emph{curve complex} $\mathcal{C}(S)$ is a
finite dimensional but locally infinite simplicial complex whose
vertices are isotopy classes of essential simple closed curves on $S$,
and whose simplices consist of collections of curves which can be
realised disjointly on the surface. For the non-sporadic surfaces, the
curve complex is a non-empty connected simplicial complex. In the case
of a torus with one puncture or boundary component, or a sphere with
four punctures or boundary components, the definition above gives a
complex with no edges, so we alter the definition to connect two
vertices if their corresponding curves can be realised by curves which
intersect at most once (in the case of the once punctured torus) or at
most twice (in the case of the four punctured sphere). In the case of
the annulus the curve complex is defined to be the infinite graph with
vertices consisting of arcs connecting the two boundary components of
the annulus modulo isotopy fixing the endpoints with edges between two
arcs if they can be realized disjointly. The curve complex of the
annulus is quasi-isometric to $\mathbb{Z}$ with a quasi-isometry given
by the algebraic intersection number. We define the curve complex to
be empty for the remaining sporadic surfaces.

We say a subsurface $Y \subseteq S$ is \emph{essential} if each
boundary component is an essential simple closed curve in $S$.  Given
an essential subsurface $Y \subseteq S$, which is not a disc or a
three-punctured sphere, one can also consider $\mathcal{C}(Y)$, the
complex of curves of $Y$. There is a coarsely well-defined
\emph{subsurface projection} $\pi_Y \colon \mathcal{C}(S) \rightarrow
\mathcal{C}(Y)$ which we now describe. Choose an element in the
isotopy class of the curve $\gamma$ which has the minimal possible
number of intersections with $Y$, and then take a regular
neighbourhood of the union of the boundary of $Y$ with the
intersection of the curve $\gamma$ with $Y$, i.e. $N(\partial Y \cup
(\gamma \cap Y) )$. Choose a component of the boundary of this regular
neighbourhood to be $\pi_Y(\gamma)$. This is coarsely well defined.

To define the annular projection $\pi(\gamma)$ of a curve $\gamma$
with essential intersection with an annulus $A$ essentially one passes
to the annulus cover $\widetilde{S}$ of $S$ given by the core curve
$\alpha$ of $A$ and chooses $\pi_A(\gamma)$ to be a component of the
lift of $\gamma$ that is an arc running from one boundary component of
$\widetilde{S}$ to the other. The set of components of the lift of
$\gamma$ that satisfy this property form a finite diameter set in the
curve complex of the annulus $\widetilde{S}$ and so the projection is
coarsely well-defined. Finally the map $\pi_A$ has the property that if
$D_\alpha$ denotes the Dehn twist about $\alpha$, then
\begin{equation}\label{equivariance}
d_{\mathcal{C}(A)}(\pi_A(D_\alpha^n(\gamma)),\pi_A(\gamma)) = 2 + |n|.
\end{equation}
Thus, defining the projection this way achieves the desired property
of recording the twisting around $\alpha$. There is a natural
$\mathbb{Z}$ action on $\mathcal{C}(A)$ by Dehn twisting around the
core curve of the annular cover $\widehat{S}$. The group $\mathbb{Z}$
also has an inclusion into the mapping class group of $S$ as Dehn
twists around $\alpha$, and so it acts on $\mathcal{C}(A)$ through
this inclusion. The projection map $\pi_A$ is coarsely
equivariant with respect to the two $\mathbb{Z}$ actions. we will
often abuse notation and write $\pi_{\alpha}$ to mean the subsurface
projection to an annulus whose core curve is $\alpha$.

A \emph{marking} consists of a collection of simple closed curves
$\alpha_i$ forming a maximal simplex in the curve complex, or
equivalently, a pants decomposition of the surface, together with a
\emph{transverse curve} $\tau_i$ for each pants curve $\alpha_i$,
which is an element of the annular curve complex corresponding to
$\alpha_i$. The curves $\alpha_i$ are known as the \emph{base curves}
of the marking. We remark that the definition we give here corresponds
to the definition of a \emph{complete} marking from \cite{MM}. They
consider more general markings, in which the set of base curves does
not need to form a maximal simplex in $\mathcal{C}(S)$, and all base
curves are not required to have a transversal. However, complete
markings suffice for our purposes.

If $\alpha$ is a simple closed curve in $S$, then a \emph{clean
  transverse curve} for $\alpha$ is a simple closed curve $\beta$,
such that a regular neighbourhood of $\alpha \cup \beta$, isotoped to
have minimal intersection, is either a sphere with four boundary
components, or a torus with a single boundary component.  A
\emph{clean marking} is a marking $(\alpha_i, \tau_i)$, such that each
transverse curve $\tau_i$ is of the form $\pi_{\alpha_i}(\beta_i)$,
for some clean transverse curve $\beta_i$, which is disjoint from the
union of the other base base curves $ \cup \alpha_j$, for $j \not =
i$. A clean marking $m' = (\alpha_i, \beta_i)$ is compatible with a
marking $m = (\alpha_i, \tau_i)$, if the base of $m$ is equal to the
base of $m'$, and for each simple closed curve $\alpha_i$ in the base,
$d_{\mathcal{\alpha}_i}( \tau_i, \pi_{\alpha_i} \beta_i)$ is
minimal. There are only finitely many clean markings $m'$ compatible
with a given marking $m$.

The \emph{marking complex} $M(S)$ is a graph whose points are clean
markings, and whose edges are given by \emph{elementary moves} as
defined by Masur and Minsky \cite{MM}. These moves are called twists
and flips. In a \emph{twist}, a transverse curve $\beta_i$ is replaced
by the image of the transverse curve under a Dehn twist along its
corresponding pants curve $D_{\alpha_i}(\beta_i)$. In a \emph{flip}, a
transverse curve $\beta_i$ and its corresponding base curve $\alpha_i$
are interchanged, i.e. a new clean marking is chosen which is
compatible with the marking formed by replacing $(\alpha_i,
\pi_{\alpha_i}(\beta_i))$ with $(\beta_i, \pi_{\beta_i}(\alpha))$. The
mapping class group acts on the marking complex and the space of
orbits is finite. We will write $d_M$ for the induced metric on the
marking complex obtained by setting the length of each edge equal to
one.

The mapping class group is finitely generated, so a choice of
generating set gives rise to a word metric, in which the length of a
group element is the shortest length of any product of generators
representing the group element. Different generating sets give rise to
quasi-isometric metrics. We shall assumed we have fixed a generating
set, and we shall write $d_G$ for the word metric distance in the
mapping class group. Masur and Minsky showed that the distance $d_M$
in the marking complex is quasi-isometric to the word metric $d_G$ in
the mapping class group.

\begin{proposition}\cite{MM}*{Theorems 6.10 and 7.1} \label{mod-marking}
Fix a complete clean marking $m_0$ and a system of generators for
$\textup{Mod}(S)$. Then there exist constants $C_1, C_2$ such that for
each $g \in \textup{Mod}(S)$
$$C_1^{-1} d_G(1, g) - C_2 \leqslant d_M(m_0, g m_0) \leqslant C_1 d_G(1, g) +
C_2. $$
\end{proposition}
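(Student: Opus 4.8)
The plan is to deduce this from the \v{S}varc--Milnor lemma applied to the isometric action of $G=\textup{Mod}(S)$ on the marking graph $M(S)$. The ingredients I would assemble are: (i) $M(S)$, with the metric $d_M$ in which each edge has length one, is a connected graph, hence a geodesic metric space; (ii) $G$ acts on $M(S)$ by isometries, since it permutes the clean markings and carries elementary moves to elementary moves; (iii) the action is cocompact, since --- as recalled above --- there are only finitely many $G$-orbits of clean markings, so $G\backslash M(S)$ is a finite graph; and (iv) the action is metrically proper, i.e. $\{h\in G:d_M(m_0,hm_0)\le C\}$ is finite for every $C$. Granting these, the lemma asserts that the orbit map $g\mapsto gm_0$ is a quasi-isometry $(G,d_G)\to(M(S),d_M)$, and the resulting pair of inequalities is exactly the statement of the Proposition, with $C_1,C_2$ the quasi-isometry constants.

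For (iv) I would argue in two parts. First, $M(S)$ is locally finite: a clean marking $m=(\alpha_i,\beta_i)$ admits only finitely many elementary moves, because a twist replaces some $\beta_i$ by $D_{\alpha_i}^{\pm1}(\beta_i)$ --- again a clean transverse curve disjoint from the other base curves --- while a flip at $\alpha_i$ produces a clean marking compatible with the marking obtained by interchanging $\alpha_i$ and $\beta_i$, of which there are only finitely many. Hence balls in $M(S)$ are finite. Second, the stabilizer $\textup{Stab}_G(m_0)$ of a complete clean marking is finite: any element fixing $m_0$ fixes the pants decomposition given by the base curves, hence lies in the (virtually abelian) setwise stabilizer of that pants decomposition, and then by the twisting equivariance \eqref{equivariance} it cannot also fix all the transverse curves unless it has no twisting, leaving only a finite group. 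Combining the two, $\{h:d_M(m_0,hm_0)\le C\}=\bigcup_{v\in B(m_0,C)}\{h:hm_0=v\}$ is a finite union of cosets of a finite group, hence finite, which is (iv).

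The two inequalities can then also be written out by hand, reproving \v{S}varc--Milnor in this instance. For the upper bound, writing $g=s_1\cdots s_n$ with $n=d_G(1,g)$ and $s_i$ generators, the $G$-invariance of $d_M$ gives $d_M(m_0,gm_0)\le\sum_i d_M(m_0,s_im_0)\le n\max_s d_M(m_0,sm_0)$, the maximum being finite since there are finitely many generators and $M(S)$ is connected. For the lower bound, a $d_M$-geodesic from $m_0$ to $gm_0$ of length $k=d_M(m_0,gm_0)$, together with a uniform bound $R$ on the distance from any vertex of $M(S)$ to the orbit $Gm_0$ (which exists by (iii)), lets one write $g$ as a product of $k$ elements of the finite set $F=\{h:d_M(m_0,hm_0)\le 2R+1\}$, whence $d_G(1,g)\le k\max_{h\in F}d_G(1,h)$; absorbing constants gives the claim. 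The only steps that are not purely formal are the two parts of (iv) --- local finiteness of $M(S)$ and finiteness of the stabilizer of a complete marking --- and I expect the latter (which I would prove using \eqref{equivariance}) to be the main point, everything else being the standard \v{S}varc--Milnor argument applied to the structure of $M(S)$ already set up above.
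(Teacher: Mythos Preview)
Your argument via the \v{S}varc--Milnor lemma is correct and is essentially the route Masur and Minsky themselves take in \cite{MM}; the paper under review does not prove this proposition at all but simply quotes it as Theorems~6.10 and~7.1 of \cite{MM}, so there is no in-paper proof to compare against.

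One small remark on your verification of properness: your two-step argument (local finiteness of $M(S)$ plus finiteness of point stabilizers) is fine, but the finiteness of $\textup{Stab}_G(m_0)$ is usually dispatched more directly by observing that the curves of a complete clean marking \emph{fill} $S$ (the complement of $\bigcup_i(\alpha_i\cup\beta_i)$ is a union of discs and once-punctured discs), and then invoking the Alexander method. Your route through \eqref{equivariance} works too, once you note that the setwise stabilizer of the pants decomposition is a finite extension of the abelian group generated by the $D_{\alpha_i}$, so that fixing all the transversals forces all twisting exponents to vanish.
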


There is a coarsely well-defined map from the marking complex $M(S)$
to the curve complex $\mathcal{C}(S)$, which takes a marking to one of
the short curves in the marking.  In particular, for any essential
subsurface $Y \subseteq S$, this gives us a map from the marking space
$M(S)$ to $\mathcal{C}(Y)$, given by composing $\pi$ and
$\pi_Y$. Given markings $m$ and $n$, denote by $d_Y(m, n)$ the
diameter in $\mathcal{C}(Y)$ of the union of the projections of $m$ and $n$.  If
$\alpha$ is a simple closed curve, then $d_\alpha$ will denote the
distance in the curve complex of the annulus with core curve
$\alpha$.

Masur and Minsky \cite{MM}*{Theorem 6.12} proved a distance formula
expressing the distance in the marking complex $M(S)$, and hence by
Proposition \ref{mod-marking}, the distance in $\textup{Mod}(S)$ in
the word metric, in terms of subsurface projections. We now describe
their formula, using the cutoff function $\lfloor x \rfloor_A$,
defined by
\begin{equation} \label{floor}
\lfloor x \rfloor_A = \left\{ \begin{array}{cc} x & \text{if}
    \hskip 5pt x \geqslant A \\ 0 & \text{otherwise.} \end{array}
\right.  
\end{equation}

\begin{theorem}[\cite{MM} Quasi-distance formula] \label{Quasi-Distance} %
There exists a constant $A_0 >0$, which depends only on the topology
of the surface $S$, such that for any $A \geqslant A_0$, there are constants
$C_1$ and $C_2$, which depend only on $A$ and the topology of $S$,
such that for any pair of clean markings $m$ and $m'$ in $M(S)$,
\begin{equation*}\label{eq:quasi-distance}
C_1^{-1} d_M(m,m') - C_2 \leqslant  \sum_{Y \subseteq S}
 \lfloor d_Y(m,m') \rfloor_A \leqslant  C_1 d_M(m, m') + C_2
\end{equation*}
where the sum runs over all subsurfaces $Y$ of $S$, including $S$.
\end{theorem}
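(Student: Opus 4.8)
This is the Masur--Minsky quasi-distance formula, and the plan is to recall the structure of their argument via hierarchies of tight geodesics in the curve complexes of subsurfaces. The passage from the word metric $d_G$ to the marking-complex metric $d_M$ is already handled by Proposition \ref{mod-marking}, so (as the statement is phrased) it suffices to prove the two-sided bound between $d_M(m,m')$ and $\sum_{Y \subseteq S} \lfloor d_Y(m,m') \rfloor_A$.

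\textbf{Lower bound} ($\sum_Y \lfloor d_Y(m,m')\rfloor_A \lesssim d_M(m,m')$). Fix a geodesic path $m = m_0, m_1, \dots, m_N = m'$ of elementary moves, so $N = d_M(m,m')$. For a fixed essential subsurface $Y$, a single elementary move (twist or flip) changes the projection of the marking to $\mathcal{C}(Y)$ by a uniformly bounded amount: for an annulus around the twisting curve this is equation \eqref{equivariance}, and in general the regular-neighbourhood definition of $\pi_Y$ only involves the one transverse curve (and possibly one base curve) being modified. Hence, by the triangle inequality in $\mathcal{C}(Y)$, the number $n_Y$ of indices $i$ at which the move $m_i \to m_{i+1}$ moves $\pi_Y$ nontrivially satisfies $n_Y \gtrsim d_Y(m,m')$ whenever $d_Y(m,m')$ is large. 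One then observes that a single move changes $\pi_Y$ nontrivially for only boundedly many $Y$: it affects $\pi_Y$ only when $\partial Y$ is disjoint from all the \emph{unmodified} curves of the clean marking, so $Y$ lies in a bounded family determined by the affected curves together with the fixed pants decomposition. The cutoff $A$ is what eliminates the subsurfaces whose projections never grow. Summing over $Y$ gives $\sum_Y \lfloor d_Y(m,m')\rfloor_A \lesssim \sum_Y n_Y \lesssim N$.

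\textbf{Upper bound} ($d_M(m,m') \lesssim \sum_Y \lfloor d_Y(m,m')\rfloor_A$). Here one must exhibit an efficient path. Choose a tight geodesic $g_S$ in $\mathcal{C}(S)$ from a base curve of $m$ to a base curve of $m'$, and recursively, for each component domain $Y$ along a geodesic already built whose projection distance $d_Y(m,m')$ exceeds a fixed threshold, adjoin a tight geodesic $g_Y$ in $\mathcal{C}(Y)$ joining the relevant projections; this yields a hierarchy $H$ with initial and terminal markings $m$ and $m'$. The two facts to quote from \cite{MM} are: (i) up to the threshold, the domains occurring in $H$ are exactly the $Y$ with $d_Y(m,m') \ge A$, and for each such $Y$ one has $|g_Y| \asymp d_Y(m,m')$; and (ii) $H$ admits a \emph{resolution} into a sequence of clean markings $m = n_0, n_1, \dots, n_L = m'$ with each $n_j \to n_{j+1}$ an elementary move and $L \asymp \sum_{Y \in H} |g_Y|$. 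Combining (i) and (ii) bounds $d_M(m,m') \le L \lesssim \sum_Y \lfloor d_Y(m,m')\rfloor_A$.

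\textbf{Main obstacle.} The genuinely hard part is the construction and resolution of the hierarchy: this rests on the hyperbolicity of every curve complex $\mathcal{C}(Y)$ and on the Bounded Geodesic Image Theorem, which forces the relevant nested and disjoint domains to organize themselves into a tree-like structure carrying a coherent time order, and then that this time order is compatible with a resolution whose consecutive slices differ by a single elementary move. Proving those statements is precisely the content of \cite{MM}, which we use as a black box; everything else in the argument above is bookkeeping with the triangle inequality and the cutoff function $\lfloor \cdot \rfloor_A$.
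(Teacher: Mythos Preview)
The paper does not prove this theorem at all: it is stated with attribution to Masur--Minsky \cite{MM} and used thereafter as a black box, so there is no ``paper's own proof'' to compare against. Your sketch is a fair outline of the original Masur--Minsky argument, and your closing paragraph correctly identifies that the substance lies in the hierarchy construction and its resolution, which you too are quoting from \cite{MM}.

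One point to watch in your lower-bound sketch: the claim that a single elementary move changes $\pi_Y$ nontrivially for only \emph{boundedly many} subsurfaces $Y$ is not quite right as stated. Even after a twist along $\alpha_i$, the modified transversal $\beta_i$ can project differently to infinitely many annuli inside the four-holed sphere or one-holed torus containing $\alpha_i$. What is true is that each individual $d_Y(m_i,m_{i+1})$ is uniformly bounded, and that once the threshold $A$ is large enough the combinatorics of the pants decomposition force only boundedly many $Y$ to register a jump above the cutoff; alternatively, one can avoid this counting entirely and deduce both inequalities simultaneously from the hierarchy, as Masur--Minsky in fact do. Either way the direction is the easy one, but the argument as you wrote it needs a little more care.
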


\subsection{Short curves and short markings} \label{section:short}

Given a hyperbolic surface $X$, there is a \emph{systole} map from
Teichm\"uller space $\mathcal{T}(S)$ to the curve complex
$\mathcal{C}(S)$ given by sending $X$ to a shortest curve on $X$. This
map is coarsely well defined: there may be multiple shortest curves,
but there are only finitely many choices, and they are a bounded
distance apart in the curve complex, where these bounds depend only on
the topology of $S$. This follows from the fact that by Bers' Lemma,
for any surface $S$ there is a constant $L$ depending on $S$ such that
any hyperbolic metric on $S$ contains a simple closed curve of length
at most $L$, and the collar lemma says that for any simple closed
curve $\gamma$ of length $L$, there is an $\e > 0$, depending on $L$,
such that an $\e$-neighbourhood of $\gamma$ is embedded, and so this
bounds the number of intersections of any pair of curves of length
$L$. In particular, for any Teichm\"uller geodesic $\gamma_t$, this
gives a sequence of simple closed curves $\alpha_t$.

A \emph{reparameterization} of $\mathbb{R}$ is a continuous,
monotonically increasing function $\phi \colon \mathbb{R} \to
\mathbb{R}$, which need not be onto. We say a function $f$ from
$\mathbb{R}$ to a metric space is an \emph{unparameterized $(K,
  c)$-quasigeodesic} if there is a reparameterization $\phi$ such that
$f \circ \phi$ is a $(K, c)$-quasigeodesic, which may be of finite
length.

Masur and Minsky \cite{MM} showed that the image of a Teichm\"uller
geodesic under the shortest curve map is an unparameterized
quasigeodesic in the curve complex. Rafi \cite{Raf10} showed that the
composition of subsurface projection with the shortest curve map gives
an unparameterized quasigeodesic in the curve complex of the
subsurface.

\begin{theorem}[\cite{Raf10}*{Theorem B}] \label{unparameterized}
There are constants $K$ and $c$, which only depend on the surface $S$,
such that for any Teichm\"uller geodesic $\gamma$, and any subsurface
$Y \subseteq S$, the sequence of curves $\pi_Y(\alpha_t)$ arising from
the projection of the shortest curves $\alpha_t$ to $\mathcal{C}(Y)$
is an unparameterized $(K, c)$-quasigeodesic in $\mathcal{C}(Y)$.
\end{theorem}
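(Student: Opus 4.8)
The plan is to split the Teichm\"uller geodesic $\gamma$ into the time interval where the subsurface $Y$ is \emph{active} and its complement, and to treat each piece separately. Following Rafi's earlier work, $Y$ is active at time $t$ when the shortest curve $\alpha_t$ has nontrivial projection to $\mathcal{C}(Y)$ and $\partial Y$ is short on $\gamma_t$; outside a single coarse interval $I_Y$, the curve $\alpha_t$ either contains a component of $\partial Y$ or is disjoint from $Y$, so $\pi_Y(\alpha_t)$ is constant up to bounded error. Thus the content of the theorem is concentrated on $I_Y$: once we know that $t \mapsto \pi_Y(\alpha_t)$ is a reparameterized quasigeodesic on $I_Y$, concatenating with the coarsely constant maps on either side still yields an unparameterized quasigeodesic with controlled constants.

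For the base case $Y = S$ I would invoke the Masur--Minsky theorem that the systole map sends a Teichm\"uller geodesic to an unparameterized quasigeodesic in $\mathcal{C}(S)$. The engine is the contraction property of Teichm\"uller geodesics: the closest-point projection of $\mathcal{T}(S)$ onto $\gamma$, measured in the curve complex, is coarsely Lipschitz and nondegenerate, so the shadow of $\gamma$ to $\mathcal{C}(S)$ makes definite progress and cannot backtrack more than a bounded amount. Equivalently, any curve that is short somewhere on $\gamma$ lies within bounded curve-complex distance of the shadow, and consecutive short curves are a bounded distance apart, which is precisely the unparameterized quasigeodesic condition.

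The heart of the argument is the active interval $I_Y$ for a proper subsurface $Y$. Here $\partial Y$ is $\e$-short along $\gamma_t$ for $t \in I_Y$, so by Minsky's product regions theorem the Teichm\"uller metric on the relevant thin region is, up to bounded additive error, the sup metric on a product of $\mathcal{T}(Y)$, $\mathcal{T}(S \setminus Y)$ and horoball factors for the components of $\partial Y$. The restriction of $\gamma$ to $I_Y$ therefore projects to a path in $\mathcal{T}(Y)$ that is a reparameterized quasigeodesic (each coordinate projection of a sup-metric geodesic is a quasigeodesic after reparameterization), and the restriction of $\alpha_t$ to $Y$ coarsely agrees with the systole of this projected path; applying the $Y=S$ case inside $\mathcal{T}(Y)$ gives that $\pi_Y(\alpha_t)$ traces a reparameterized quasigeodesic in $\mathcal{C}(Y)$. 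When $Y$ is an annulus with core $\alpha$ there is no sub-Teichm\"uller space to appeal to; instead one uses Rafi's explicit analysis of twisting: on the horoball factor for $\alpha$ the geodesic is a hyperbolic geodesic in an upper half plane whose real part is, up to bounded error, the twisting coordinate of $\gamma_t$ about $\alpha$, and this real part is monotone along a geodesic that dips into and out of the cusp, so $\pi_\alpha(\alpha_t)$ progresses coarsely monotonically through $\mathcal{C}(A) \simeq \mathbb{Z}$ --- again an unparameterized quasigeodesic.

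Finally one assembles the regimes and checks uniformity: the product-regions and contraction constants depend only on the topology of $S$, and there are finitely many topological types of essential subsurface $Y \subseteq S$, so the constants $(K,c)$ can be taken to depend on $S$ alone. The main obstacle I expect is the active-interval step: one must make precise that ``$\gamma$ restricted to $I_Y$ projects to a quasigeodesic in $\mathcal{T}(Y)$'' with uniform constants, controlling both the reparameterization and the comparison between the systole of $\gamma_t$ in $Y$ and the systole of the projected path, and then patching the reparameterizations across $\partial I_Y$ while preserving the monotonicity demanded of an unparameterized quasigeodesic.
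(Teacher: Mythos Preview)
The paper does not prove this statement: it is quoted from Rafi \cite{Raf10}*{Theorem B} and used as a black box, so there is no proof here to compare your proposal against. Your outline is broadly the strategy of Rafi's own argument --- the active-interval decomposition via Proposition \ref{disjointinterval}, Minsky's product regions theorem on $I_Y$ to reduce to a lower-complexity Teichm\"uller space, the Masur--Minsky $Y=S$ base case, and the separate twisting analysis for annuli --- and you correctly flag the delicate step (uniform control of the projection to $\mathcal{T}(Y)$ and comparison of systoles). As a sketch it is sound, but since the paper itself offers nothing beyond the citation, there is no divergence or gap to report relative to the paper.
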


Given a hyperbolic surface $X$, let us define the \emph{shortest
  marking} $m(X)$ in the following way. First, choose a pants
decomposition by picking the shortest simple closed curves in the
hyperbolic metric, using the greedy algorithm. To be precise, start by
choosing one of the shortest curves on the surface, then choose one
of the shortest curves on the complementary surface, and continue
until you have a pants decomposition of the original surface.  Then,
for each curve $\alpha_i$ of the pants decomposition choose a
transverse curve $\tau_i$ which is perpendicular to $\alpha_i$ in the
hyperbolic metric. If there are multiple shortest curves, then the
shortest marking may not be unique, but there are only a finite number
of choices, with a bound depending on the topology of the surface.
This gives a map from $\T(S)$ to $M(S)$, which is coarsely
well-defined, and we shall write $m_t$ for the image of a point on a
Teichm\"uller geodesic $\gamma_t$ under this map.

\subsection{Projections and isolated intervals} \label{section:projections}

Rafi \cite{Raf07} shows that for any Teichm\"uller geodesic $\gamma$,
and any subsurface $Y$, there is a (possibly empty) interval $I_Y$
during which $Y$ is \emph{isolated}, i.e. the boundary components of
$Y$ are short in the hyperbolic metric. In order to make this statement precise,
let us pick a constant $\epsilon_0 > 0$ which is smaller than the Margulis 
constant. Given a Teichm\"uller geodesic $\gamma(t)$, and a simple closed curve
$\alpha$, we shall write $L_t(\alpha)$ for the length of $\alpha$ in
the hyperbolic metric $\gamma(t)$.


\begin{proposition}[\cite{Raf07}*{Corollary 3.3}] \label{disjointinterval}
Let $\epsilon_0 > 0$ be sufficiently small. Then there exists 
$\epsilon_1 \leqslant \epsilon_0$ such that, for any geodesic
in the Teichm\"uller space and any curve $\alpha$ in $S$, there exists
a connected (perhaps empty) interval $I_\alpha$ such that 
\begin{enumerate}
 \item for $t \in I_\alpha$, $L_{t}(\alpha) \leqslant \epsilon_0$;
\item for $t \notin I_\alpha$, $L_{t}(\alpha) \geqslant \epsilon_1$.
\end{enumerate}
\end{proposition}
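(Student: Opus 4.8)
The plan is to deduce the statement from the behaviour of \emph{extremal length} along a Teichm\"uller geodesic, which varies much more tamely than hyperbolic length; we write $\textup{Ext}_X(\alpha)$ for the extremal length of a simple closed curve $\alpha$ on a Riemann surface $X$, and $\textup{Ext}_t(\alpha) := \textup{Ext}_{\gamma(t)}(\alpha)$. All comparison constants below depend only on the topology of $S$, and we only indicate the argument, the full details being those of \cite{Raf07}.

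\textbf{Reduction to extremal length.} By Maskit's comparison of hyperbolic and extremal length together with the collar lemma, there are a threshold $s_0 > 0$ and nondecreasing functions $\phi, \psi$ with $\phi(s), \psi(s) \to 0$ as $s \to 0^+$ such that, for every hyperbolic surface $X$ and every simple closed curve $\alpha$,
\[
\textup{Ext}_X(\alpha) \le s_0 \ \Longrightarrow\ L_X(\alpha) \le \phi\big(\textup{Ext}_X(\alpha)\big),
\qquad
L_X(\alpha) \le s_0 \ \Longrightarrow\ \textup{Ext}_X(\alpha) \le \psi\big(L_X(\alpha)\big).
\]
Hence, as long as $\epsilon_0$ is taken smaller than $s_0$ (and than the Margulis constant), it suffices to produce thresholds $\delta_0 \ge \delta_1 > 0$ and a connected interval $I_\alpha$ with $\textup{Ext}_t(\alpha) \le \delta_0$ on $I_\alpha$ and $\textup{Ext}_t(\alpha) \ge \delta_1$ off $I_\alpha$; feeding this back through $\phi$ and $\psi$ yields the constant $\epsilon_1 \le \epsilon_0$ and the interval of the Proposition.

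\textbf{Key input: quasi-convexity of extremal length.} The geometric heart of the matter is that $t \mapsto \textup{Ext}_t(\alpha)$ is \emph{$K$-quasi-convex} along every Teichm\"uller geodesic, for a constant $K$ depending only on $S$: that is,
\[
\textup{Ext}_t(\alpha) \ \le\ K\big(\textup{Ext}_{t_1}(\alpha) + \textup{Ext}_{t_2}(\alpha)\big)
\qquad\text{whenever } t_1 \le t \le t_2 .
\]
Heuristically, writing $q_t$ for the quadratic differential along $\gamma$ with horizontal and vertical foliations $\mathcal{F}_h, \mathcal{F}_v$, the flat length of $\alpha$ in $q_t$ is comparable to $e^{t}\, i(\alpha, \mathcal{F}_h) + e^{-t}\, i(\alpha, \mathcal{F}_v)$, so $\textup{Ext}_t(\alpha)$ is comparable to $\max\{ e^{2t} a,\ e^{-2t} b \}$ for suitable constants $a, b \ge 0$, a convex function of $t$; making this precise (in particular near the scales where $\alpha$ is thin, where flat length and extremal length can differ) is exactly the content of Kerckhoff's formula combined with the flat-geometric estimates of Gardiner--Masur and Minsky, and is where \cite{Raf07} does the work. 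Then fix $K$ as above, choose $\delta_0 \le s_0$ with $\phi(\delta_0) \le \epsilon_0$, set $\delta_1 := \delta_0 / (2K)$, and choose $\epsilon_1 \le \epsilon_0$ with $\psi(\epsilon_1) \le \delta_1$.

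\textbf{Extracting the interval, and the main obstacle.} Let $I_\alpha$ be the smallest closed interval containing $\{\, t : L_t(\alpha) < \epsilon_1 \,\}$ (with $I_\alpha = \emptyset$ if this set is empty). If $t \notin I_\alpha$ then in particular $L_t(\alpha) \ge \epsilon_1$, which is (2). If $t \in I_\alpha$ then $t$ lies between points $t_1 \le t \le t_2$ (or is a limit of such) with $L_{t_i}(\alpha) < \epsilon_1$, hence $\textup{Ext}_{t_i}(\alpha) \le \psi(\epsilon_1) \le \delta_1$, so by quasi-convexity $\textup{Ext}_t(\alpha) \le 2K\delta_1 = \delta_0$ and therefore $L_t(\alpha) \le \phi(\delta_0) \le \epsilon_0$, which is (1). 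The main obstacle is not this bookkeeping but the quasi-convexity estimate itself: all of the ingredients---hyperbolic versus extremal length, flat length versus extremal length, and the control of how $\textup{Ext}_t(\alpha)$ can rise and fall along the geodesic---are coarse, with multiplicative and additive errors, and it is precisely this coarseness that forces two distinct constants $\epsilon_0 > \epsilon_1$ and requires care at the endpoints of $I_\alpha$ (and in the degenerate cases where $I_\alpha$ is empty or a point). Establishing the quasi-convexity uniformly over all curves and all geodesics is the delicate part, and we therefore refer to \cite{Raf07}.
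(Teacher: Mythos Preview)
The paper does not supply a proof of this proposition at all; it is quoted verbatim as Corollary~3.3 of \cite{Raf07} and used as a black box. Your proposal, by contrast, sketches an actual argument: reduce to extremal length via Maskit and the collar lemma, invoke the $K$-quasi-convexity of $t \mapsto \textup{Ext}_t(\alpha)$ along a Teichm\"uller geodesic, and then take $I_\alpha$ to be the convex hull of the sub-$\epsilon_1$ set. The bookkeeping you give (choosing $\delta_0$, then $\delta_1 = \delta_0/(2K)$, then $\epsilon_1$ with $\psi(\epsilon_1) \le \delta_1$, then defining $I_\alpha$) is correct and cleanly separates the coarse comparison from the convexity input.

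One comment on attribution: the quasi-convexity of extremal length in the form $\textup{Ext}_t(\alpha) \le K(\textup{Ext}_{t_1}(\alpha) + \textup{Ext}_{t_2}(\alpha))$ is not stated quite this way in \cite{Raf07}; it is isolated and proved explicitly in later work of Lenzhen--Rafi. In \cite{Raf07} itself the interval $I_\alpha$ is produced more directly from the flat-geometric analysis of the quadratic differential metric (the two-variable formula $\ell_t^2 \asymp e^{2t}h^2 + e^{-2t}v^2$ and its relation to extremal length via the size of the expanding annulus), rather than by first establishing quasi-convexity as a standalone lemma. Your route is a legitimate and arguably cleaner repackaging of the same ingredients, but you should cite Lenzhen--Rafi for the quasi-convexity step rather than \cite{Raf07}.
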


Outside the active interval $I_\alpha$, the map from the Teichm\"uller
geodesic to the curve complex of the annulus corresponding to $\alpha$
is coarsely constant.

\begin{proposition}[\cite{Raf07}*{Proposition 3.7}] \label{noprogress}
There is a constant $K$, depending only on the topology of the surface
$S$, and the choices for the constants $\epsilon_0$ and $\epsilon_1$,
such that if $[r, s] \cap I_\alpha = \emptyset$, then
$$ d_\alpha( m_r, m_s ) \leqslant K. $$
\end{proposition}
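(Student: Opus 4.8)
The plan is to deduce the bound on $d_\alpha(m_r,m_s)$ from Theorem \ref{unparameterized} together with Proposition \ref{disjointinterval}. First I would replace the shortest markings $m_r,m_s$ by the shortest curves $\alpha_r,\alpha_s$ up to bounded error: since $m_t$ contains a short curve of $\gamma(t)$, the projections $\pi_\alpha(m_t)$ and $\pi_\alpha(\alpha_t)$ are a uniformly bounded distance apart in $\mathcal{C}(A)$ (this uses only that $d_\alpha$ is coarsely well defined on markings containing $\alpha_t$, with bound depending on the topology of $S$). So it suffices to bound $d_\alpha(\alpha_r,\alpha_s)$.

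Next, by Theorem \ref{unparameterized} the sequence $t\mapsto\pi_\alpha(\alpha_t)$ is an unparameterized $(K,c)$-quasigeodesic in $\mathcal{C}(A)$, which is quasi-isometric to $\mathbb{Z}$; so along any subinterval on which the endpoints land far apart in $\mathcal{C}(A)$, the path must make definite progress. The key geometric input is that this progress can only happen while $\alpha$ is short: I would invoke the structure behind Rafi's analysis (Proposition \ref{disjointinterval}) to the effect that outside $I_\alpha$ the vertical and horizontal foliations of the quadratic differential restricted to the annular cover do not twist, so $\pi_\alpha(\alpha_t)$ moves by a bounded amount. Concretely, for $t\notin I_\alpha$ one has $L_t(\alpha)\ge\epsilon_1$, and the twisting $d_\alpha$ of curves of definite length is controlled by the flat geometry, which on $[r,s]$ with $[r,s]\cap I_\alpha=\emptyset$ changes by at most a constant depending on $\epsilon_0,\epsilon_1$ and the topology.

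The main obstacle is the second step: carefully extracting, from Rafi's framework, the statement that annular projection is coarsely constant off the active interval. One clean route is to combine the unparameterized quasigeodesic property with the fact that $\mathcal{C}(A)\cong\mathbb{Z}$ quasi-isometrically, so that an unparameterized quasigeodesic from $\pi_\alpha(\alpha_r)$ to $\pi_\alpha(\alpha_s)$ has length at least $\tfrac1K d_\alpha(\alpha_r,\alpha_s)-c$; then one must argue that an unparameterized quasigeodesic confined to a region where $\alpha$ is long cannot be long, which is exactly where Rafi's twisting estimates enter. Since we are told we may cite results stated earlier, I would simply quote Proposition \ref{disjointinterval} and the quasigeodesic property, chain the bounded errors, and absorb everything into a single constant $K$ depending on $S$, $\epsilon_0$, $\epsilon_1$.
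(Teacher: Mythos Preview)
The paper does not give its own proof of this proposition: it is simply quoted from Rafi \cite{Raf07}*{Proposition 3.7} as a black box, so there is no argument in the paper to compare against. Rafi's actual proof is a direct flat-geometry computation showing that when $L_t(\alpha)\ge\epsilon_1$ the twist parameter of the vertical/horizontal foliations about $\alpha$ can change only by a bounded amount; this is essentially the content of the later Proposition~\ref{twistvstwist} combined with equation~\eqref{projection3}, and does not use the unparameterized-quasigeodesic statement at all.

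Your proposal has a genuine gap. The unparameterized quasigeodesic property of Theorem~\ref{unparameterized} gives you \emph{no} upper bound on $d_\alpha(\alpha_r,\alpha_s)$ in terms of anything you control: an unparameterized quasigeodesic in $\mathbb{Z}$ can traverse an arbitrarily long distance over the interval $[r,s]$, since the reparameterization $\phi$ is completely uncontrolled. Your sentence ``an unparameterized quasigeodesic from $\pi_\alpha(\alpha_r)$ to $\pi_\alpha(\alpha_s)$ has length at least $\tfrac1K d_\alpha(\alpha_r,\alpha_s)-c$'' is either vacuous (if ``length'' means target length) or false (if it means $s-r$). You correctly identify that the missing ingredient is that ``annular projection is coarsely constant off the active interval'' and that ``Rafi's twisting estimates enter'' here --- but that statement \emph{is} Proposition~\ref{noprogress}. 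So the argument is circular: every step except the last is bookkeeping, and the last step is the proposition itself. (There is also a risk of literature-level circularity, since Rafi's proof of Theorem~\ref{unparameterized} in \cite{Raf10} draws on the machinery of \cite{Raf07}.) To actually prove the proposition you would need to carry out the flat-geometry twist estimate directly, as in Section~\ref{section:excursion} of the present paper.
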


In the next section we show that the length of the active interval for
an annulus is roughly $\log$ of the projection distance of the
endpoints of the geodesic into the subsurface.

\subsection{Excursions and twist parameter} \label{section:excursion}

The material in this section is due to Rafi \cites{Raf07,
  Raf10}. However, we need versions of his results in terms of the
excursion parameter, and we use some of the contents of the proofs,
not just the main stated results, so we write out all of the details
for the convenience of the reader.

A horoball $H$ in the hyperbolic plane is a subset of the plane which
in the Poincar\'e disc model corresponds to a Euclidean disc whose
boundary circle is tangent to the boundary at infinity.  Given a
horoball $H$ and a geodesic $\gamma$ which spends a finite amount of
time in $H$, let us define the \emph{excursion} $E(\gamma, H)$ of
$\gamma$ in $H$ as the \texttt{"}relative visual size\texttt{"} of the set of rays
which go deeper than $\gamma$ inside $H$. Namely, consider a basepoint
$X_0$ on the Teichm\"uller disc in $\T$, and let $\gamma_H$ be the
geodesic through $X_0$ which tends to the cusp of $H$, and $\gamma_T$
a geodesic through $X_0$ which is tangent to $H$. Let $\phi_0$ be the
angle between $\gamma$ and $\gamma_H$, and $\phi_{max}$ be the angle
between $\gamma_H$ and $\gamma_T$ (see Figure \ref{horoball}). Then

\begin{definition}
The \emph{excursion} of the geodesic $\gamma$ in the horoball $H$ is defined as
\begin{equation} \label{visualsize}
E(\gamma, H) := \frac{\phi_{max}}{\phi_0}.
\end{equation}

\end{definition}

It turns out that $E(\gamma, H)$ is, up to an additive error, also the
hyperbolic length of the projection of $\gamma \cap H$ to the
complement of $H$.

\begin{figure}[H]
\begin{center}
\begin{tikzpicture}[scale=4]

\clip (1, 1) rectangle (-1, -0.3);

\draw (0, 0) circle (1);
\draw (0, 0.66666) circle (0.33333);

\filldraw[black] (0,0) circle (0.01);

\draw (0, 0) -- (0, 1);
\draw (0, 0) -- (0.5, 0.86603);
\draw[thick] (0, 0) -- (0.2, 0.97980);

\path (0,-0.1) node {$X_0$};
\path (-0.27,0.33) node {$H$};

\path (0.23,0.15) node {$\phi_{\text{max}}$};
\path (0.05,0.6) node {$\phi_0$};

\path (0.2,0.75) node {$\gamma$};
\path (-0.05,0.85) node {$\gamma_H$};
\path (0.5,0.7) node {$\gamma_T$};

\draw[thick] (0,0) ++(60:0.2) arc (60:90:0.2);
\draw[thick] (0,0) ++(78:0.52) arc (78:90:0.52);

\end{tikzpicture}
\end{center} \caption{Excursion in the horoball $H$.}
\label{horoball}
\end{figure}
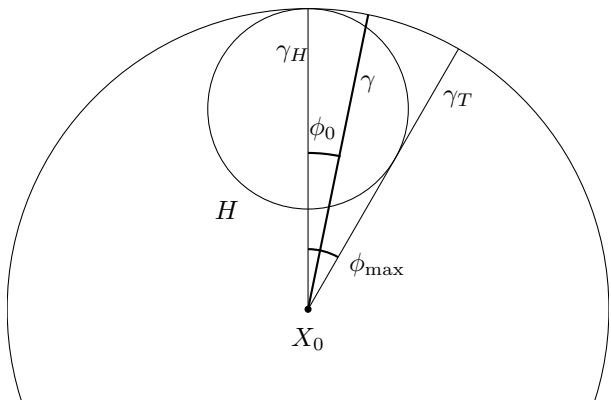

Let $(X, q)$ be a quadratic differential on $X$, and $\alpha$ a simple
closed curve on $X$.  The choice of $q$ determines a Teichm\"uller
geodesic $\gamma$ and a pair $(F^+, F^-)$ of contracting and
expanding foliations.  Each $t$ determines a new quadratic
differential $q_t$ and hence a flat metric on $X$, which we will call
the $q_t$-metric.

For a given $t$, $\alpha$ is realized by a family of parallel flat
geodesics, and we will denote as $\beta_t$ the perpendicular to
$\alpha$ in the $q_t$-metric.  The \emph{twist parameter}
$tw^+_t(\alpha)$ is the highest intersection number between a leaf of
$F^+$ and the transversal $\beta_t$, and similarly we define
$tw^-_t(\alpha)$.

Given a simple closed curve $\alpha$ corresponding to metric cylinder,
there is a unique rotation $e^{i \theta_\alpha}$ which takes the
metric cylinder to a vertical metric cylinder. The endpoint of the
geodesic ray corresponding to the quadratic differential $e^{i \theta_\alpha} q$ 
determines a point $\xi_\alpha$ on the boundary at infinity of the Teichm\"uller disc $\mathbb{D}$. 
We shall write $H_\epsilon(\alpha)$ as the set of points in the disc for which  
$\alpha$ is short in the flat metric:
$$H_\epsilon(\alpha) := \{ q \in \mathbb{D} \ : \ell_q^2(\alpha) \leqslant \epsilon \}.$$
As seen in the disc, this set is a horoball tangent to the
boundary at infinity at $\xi_\alpha$.  The fundamental estimate is the
following:

\begin{proposition} \label{twist}
Let $H = H_\epsilon(\alpha)$ as above, and let $t_1$ and $t_2$ respectively be the entry time and 
exit time from $H$ (i.e. $t_1 \leqslant t_2$) along the Teichm\"uller geodesic $\gamma$. Let moreover
$A$ be the area of the maximal flat cylinder in $(X, q_0)$ with core curve $\alpha$. 
Then we have, up to universal multiplicative and additive constants,
$$tw^-_{t_2}(\alpha) - tw^-_{t_1}(\alpha) \asymp \frac{A}{\epsilon} E(\gamma, H).$$
\end{proposition}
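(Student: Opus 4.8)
The plan is to work entirely inside the Teichm\"uller disc $\mathbb{D}=D_q$, which by the discussion in Section~\ref{section:quadratic} is a copy of hyperbolic plane with the point $q_0$ at the origin and the horoball $H=H_\epsilon(\alpha)$ tangent to the boundary at $\xi_\alpha$. The key geometric principle is that the twist parameter $tw^-_t(\alpha)$ measures, via intersection of $F^+$ with the $q_t$-perpendicular $\beta_t$ to $\alpha$, how much the expanding foliation $F^+$ wraps around the $\alpha$-cylinder, and this is governed by the shape (modulus and circumference) of the maximal flat cylinder with core $\alpha$ in the metric $q_t$. So the first step is to recall (from Rafi \cite{Raf07}, or by a direct flat-geometry computation) the formula relating $tw^-_t(\alpha)$ to the modulus of the $\alpha$-cylinder and to the angle the geodesic ray to $\xi_\alpha$ makes: if $q_t$ is parametrized so that $q_0$ sits at the origin and $\xi_\alpha$ on the boundary circle, then moving along $\gamma$ changes the ``horocyclic coordinate'' at $\xi_\alpha$ linearly, and this horocyclic coordinate is precisely the relative twisting of $F^+$ around $\alpha$.

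Second, I would translate the statement $E(\gamma,H)=\phi_{\max}/\phi_0$ into the same coordinates: $\phi_0$ is the angle at the origin between $\gamma$ and the ray $\gamma_H$ to $\xi_\alpha$, and $\phi_{\max}$ is the angle between $\gamma_H$ and the tangent ray $\gamma_T$ to $H$. A short hyperbolic trigonometry computation in the disc model shows that the amount of horocyclic coordinate that $\gamma$ traverses between its entry time $t_1$ and exit time $t_2$ into $H$ is, up to universal bounded multiplicative and additive error, equal to $\phi_{\max}/\phi_0$; this is essentially the statement, noted just before Proposition~\ref{twist}, that $E(\gamma,H)$ is, up to additive error, the hyperbolic length of the projection of $\gamma\cap H$ to $\partial H$. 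So we obtain
\[
tw^-_{t_2}(\alpha)-tw^-_{t_1}(\alpha) \asymp (\text{horocyclic length traversed}) \cdot (\text{conversion factor}),
\]
and it remains to identify the conversion factor with $A/\epsilon$.

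Third, I would pin down the conversion factor. The horoball $H=H_\epsilon(\alpha)$ is defined by $\ell_q^2(\alpha)\le\epsilon$, so its Euclidean size in the disc model is determined by $\epsilon$ and by the area $A$ of the maximal $\alpha$-cylinder in $(X,q_0)$: the circumference $\ell$ and the modulus $\mathrm{Mod}=A/\ell^2$ of the cylinder transform under the Teichm\"uller flow, and $\alpha$ becomes $\epsilon$-short exactly when the flat length squared drops to $\epsilon$, which happens deep in the cylinder where the modulus is large, of size comparable to $A/\epsilon$. One unit of horocyclic travel along $\partial H$ corresponds to one Dehn twist of $F^+$ around $\alpha$ scaled by this modulus, so $tw^-$ changes by roughly $(A/\epsilon)$ per unit of normalized excursion, giving the claimed $\asymp \frac{A}{\epsilon}E(\gamma,H)$.

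The main obstacle I expect is bookkeeping the normalizations: keeping straight the factor of $2$ in the curvature $-4$ of $D_q$, the square in $\ell_q^2(\alpha)\le\epsilon$ versus $L_t(\alpha)$ as used in Propositions~\ref{disjointinterval}--\ref{noprogress}, and the distinction between $tw^+$ and $tw^-$ (the minus sign corresponds to the expanding foliation, which is the one whose wrapping accumulates as we run forward through the cylinder). All of these only affect universal constants, which is why the statement is phrased up to multiplicative and additive error, but one must be careful that the ``additive'' error genuinely stays additive — this is where the hypothesis that $t_1,t_2$ are the actual entry and exit times (so that the relevant interval is comparable to the isolation interval $I_\alpha$ of Proposition~\ref{disjointinterval}, of length $\asymp\log E(\gamma,H)$) is used to control the contribution of the portions of $\gamma$ outside $H$ via Proposition~\ref{noprogress}.
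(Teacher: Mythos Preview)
Your outline follows the same route as the paper: compute $tw^-_t(\alpha)$ from the flat geometry of the $\alpha$-cylinder, then compare the change across $[t_1,t_2]$ to the visual-angle ratio $\phi_{\max}/\phi_0$ via elementary hyperbolic trigonometry in the Teichm\"uller disc. The paper makes your first step completely explicit: writing $h_t=\ell_0\sin\theta_0\,e^t$ and $v_t=\ell_0\cos\theta_0\,e^{-t}$ for the horizontal and vertical components of $\alpha$ in the $q_t$-metric, a direct count of intersections in the universal cover of the flat annulus gives $tw^-_t(\alpha)=\dfrac{A}{\ell_t^2}\tan\theta_0\,e^{2t}+O(1)$. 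Setting $\ell_{t_i}^2=\epsilon$ turns this into a quadratic in $e^{2t_i}$, and the difference $e^{2t_2}-e^{2t_1}$ combined with the identity $\sin\phi_{\max}=\epsilon/\ell_0^2$ (your ``short hyperbolic trigonometry computation'') yields the result. Your horocyclic-coordinate language is the right intuition, but the paper's argument is this concrete calculation rather than an appeal to an abstract coordinate; if you keep your write-up, you should actually derive the displayed formula for $tw^-_t$ rather than cite it.

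One small correction: your last paragraph invokes Propositions~\ref{disjointinterval} and~\ref{noprogress} to control the additive error outside $H$, but this proposition concerns only the entry and exit times $t_1,t_2$ themselves, so there is nothing outside $H$ to control here. Those propositions enter only in the next step (Proposition~\ref{subsurface}), where one passes from $[t_1,t_2]$ to an arbitrary $[0,t]$.
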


\begin{proof}
Consider the universal cover of the flat cylinder corresponding to
$\alpha$ at time $t$, in the flat metric $q_t$. We shall assume that
the contracting foliation is vertical, and the expanding foliation is
horizontal. Let $\ell_t$ be the length of $\alpha$ at time $t$, and
let $\theta_t$ be the angle $\alpha_t$ makes with the vertical
contracting foliation, as illustrated below in Figure
\ref{intersections}. 

\begin{figure}[H]
\begin{center}
\begin{tikzpicture}[y=-1cm, scale=0.4]

\usetikzlibrary{arrows}

\draw[thick,black] (14.95,8.4) ++(-28:2) arc (-28:-78:2);
\draw[thick,black] (15.0578,15.24667) +(-117:0.76725) arc (-117:-189:0.76725);
\draw[thick,black] (9.21556,15.42222) +(0:0.75) arc (0:55:0.75);

\draw[semithick,black] (18.03333,6.53333) -- ( 21.36633,11.1120034);
\draw[semithick,black] (16.04889,7.98667) -- (19.34889,12.52);
\draw[semithick,black] (14.08256,9.3746667) -- (17.41556,13.95333);
\draw[semithick,black] (12.04922,10.8746667) -- (15.38222,15.45333);
\draw[semithick,black] (9.98222,12.38667) -- (13.28222,16.92);
\draw[semithick,black] (8.01556,13.82) -- (11.31556,18.35333);
\draw[semithick,black] (5.98222,15.28667) -- (9.28222,19.82);
\draw[semithick,black] (5.03333,15.4) -- (22.4,15.4);
\draw[semithick,black] (15.36667,5.14444) -- (15.4,20.87778);
\draw[semithick,black] (8.16667,20.64444) -- (22.36667,10.41111);
\draw[semithick,black] (4.96667,15.97778) -- (19,5.81111);

\path (16.13333,6.56667) node[text=black,anchor=base west] {$\theta_t$};
\path (13.33333,14.5) node[text=black,anchor=base west] {$\theta_t$};
\path (8.74222,16.45333) node[text=black,anchor=base west] {$\theta_t$};

\draw[semithick,black] (17.41556,13.95333) -- (17.41556,15.4);
\draw[semithick,black] (12.2,15.44889) -- (10.22889,16.9);

\draw[very thick, latex-latex, black] (6.1,14.96667) -- (15.36667,14.96667);
\draw[very thick, latex-latex, black] (9.21556,15.42222) -- (12.16889,15.37778);

\draw[very thick,latex-latex,black] (16,7.93333) -- (19.36667,12.56667) -- (21.36667,11.13333);

\path (15.93333,14.25) node[text=black,anchor=base west] {$\ell_t$};
\path (16,16.26667) node[text=black,anchor=base west] {$h_t$};
\path (17.5,14.73333) node[text=black,anchor=base west] {$v_t$};
\path (9.6,14.43333) node[text=black,anchor=base west] {$H_t$};
\path (12.63333,11.9) node[text=black,anchor=base west] {$w_t$};
\path (17.66667,9.9) node[text=black,anchor=base west] {$\beta_t$};
\path (20.63333,12.73333) node[text=black,anchor=base west] {$\alpha$};
\path (11.16222,16.9) node[text=black,anchor=base west] {$\ell_t$};
\path (9.8,16.15) node[text=black,anchor=base west] {$H'_t$};

\end{tikzpicture}%
\end{center} \caption{Estimating intersections in the flat annulus.}
\label{intersections}
\end{figure}
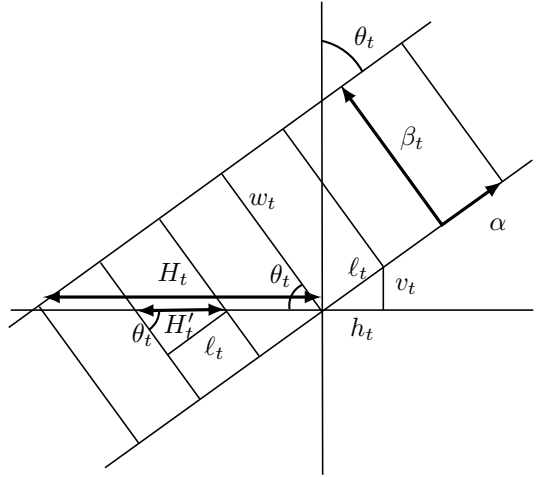

Let $h_t$ and $v_t$ be the horizontal and vertical lengths of
$\ell_t$ in the $q_t$ metric, i.e.
$$h_t = h_0 e^t = \ell_0 \sin \theta_0 e^t$$
$$v_t = v_0 e^{-t} = \ell_0 \cos \theta_0 e^{-t}.$$
Let $w_t$ be the length of $\beta_t$, which is the width of the flat
annulus. Let $H_t$ be the length of the intersection of a leaf of the
horizontal foliation with the universal cover of the flat annulus, and
let $H'_t$ be the length of the intersection of the horizontal leaf
with two adjacent translates of $\beta_t$.

Up to constant additive error, $tw^-_t(\alpha)$, which is the maximum
number of intersections between the horizontal leaf of the foliation
and $\beta$, is given by $H_t / H'_t$. Therefore
\[ tw^-_t(\alpha) = \frac{H_t}{H'_t} + O(1) =  \frac{w_t \sin
  \theta_t}{\ell_t \cos \theta_t} + O(1). \]
The area of the annulus is $A = w_t \ell_t$, and $\tan \theta_t = \tan
\theta_0 e^{2t}$, so this implies that
\begin{equation} \label{projection3}
tw^-_t(\alpha) = \frac{A }{ \ell_t^2}\tan \theta_0 e^{2t} + O(1).
\end{equation}
The total length of $\alpha$ is given by 
\begin{equation} \label{length}
\ell_t^2 = h_t^2 + v_t^2 = \ell_0^2 (\sin^2 \theta_0 e^{2t} + \cos^2 \theta_0 e^{-2t}),
\end{equation}
and recall that we choose $t_i$ such that $\ell^2_{t_i} = \epsilon$,
which by \eqref{projection3} implies
\begin{equation} \label{projectiondiff} tw^-_{t_2}(\alpha) -
  tw^-_{t_1}(\alpha) = \frac{A }{ \epsilon}\tan \theta_0(e^{2t_2}
  - e^{2t_1}) + O(1).
\end{equation}
Note that by definition the $t_i$ are solutions to the equation 
$$\ell_{t_i}^2 = \ell_0^2 (\sin^2 \theta_0 e^{2t_i} + \cos^2 \theta_0 e^{-2t_i}) = \epsilon \qquad i = 1, 2$$
If we set $X_i := e^{2 t_i}$, then $X_i$ are the solutions to 
\begin{equation} \label{eqX}
X^2 - \frac{\epsilon^2}{\ell_0^2 \sin^2 \theta_0} X + \frac{1}{\tan^2 \theta_0} = 0
\end{equation}
hence 
\begin{equation} \label{twist1}
e^{2t_2} - e^{2t_1} = X_2 - X_1 = \sqrt{\frac{\epsilon^2}{\ell_0^4 \sin^4 \theta_0} - \frac{4}{\tan^2 \theta_0} }
\end{equation}
and putting \eqref{projectiondiff} and \eqref{twist1} together
\begin{equation} \label{twist2}
tw^-_{t_2}(\alpha) - tw^-_{t_1}(\alpha) = \frac{A \tan \theta_0}{ \epsilon}(e^{2t_2} - e^{2t_1}) + O(1) = 
\frac{A }{ \epsilon}\sqrt{\frac{\epsilon^2}{\ell_0^4 \sin^2 \theta_0 \cos^2 \theta_0} - 4} + O(1). 
\end{equation}
Let us now relate this quantity to the excursion in the horoball $H$. 

\begin{lemma} \label{phimax} 
Let $\phi_{max}$ be the angle between a geodesic $\gamma_T$ tangent to
$H$ and the geodesic $\gamma_H$ which goes straight into the cusp of
$H$. Then
$$\sin \phi_{max} = \frac{\epsilon}{\ell_0^2},  $$
where $\ell_0$ is the length of $\alpha$ at time $t=0$. 
\end{lemma}

\begin{proof}
When $\theta_0 = \theta_{max}$ then the geodesic is tangent to the horoball $H$, hence 
$t_1 = t_2$ in equation \eqref{twist1}, so
$$\frac{\epsilon^2}{\ell_0^4 \sin^4 \theta_{max}} = \frac{4}{\tan^2 \theta_{max}}.$$
The claim follows by recalling that a rotation of angle $\theta$ in the flat metric picture
corresponds to multiplying the quadratic differential by $e^{2 i \theta}$, hence $\phi_{max} = 2 \theta_{max}$.
\end{proof}

The proposition now follows easily from the lemma, equation \eqref{twist2} and the fact that $\phi_0 = 2 \theta_0$:
$$tw^-_{t_2}(\alpha) - tw^-_{t_1}(\alpha) = 
\frac{2A }{ \epsilon} \frac{\sin \phi_{max}}{\sin \phi_0} 
\sqrt{1 - \frac{\sin^2 \phi_0}{\sin^2 \phi_{max} }} + O(1) \asymp \frac{A}{\epsilon} E(\gamma, H)$$ 
where in the last equality we used equation \eqref{visualsize} and the fact that $\sin \phi \asymp \phi$
(note that we can assume $\sin \phi_0 \leq \frac{1}{2} \sin \phi_{max}$, otherwise the claim 
is trivially verified).

\end{proof}

\begin{remark}
Note that one can also relate the twist parameter to the time spent by the geodesic inside the horoball, namely
$$tw^-_{t_2}(\alpha) - tw^-_{t_1}(\alpha) \asymp \frac{A}{\epsilon} (e^{t_2-t_1} - e^{t_1 - t_2}).$$
\end{remark}

The distance between the projections from the marking complex to the
complex of the annulus can be compared to the excursion in the
horoball:
\begin{proposition} \label{subsurface} Let  
$\epsilon > 0$ sufficiently small, and $(X_0, q)$ a unit area quadratic differential, which 
determines the geodesic ray $\gamma_t$. Let $A$ be the $q$-area of the maximal flat cylinder with core curve 
$\alpha$, and suppose that $\alpha$ is not short in the $q$-metric (i.e. $\ell^2_q(\alpha) \geq \epsilon$). 
If the geodesic $\gamma$ crosses the horoball $H = H_\epsilon(\alpha)$ and $t$ is larger than the exit time of $\gamma$ 
from $H$, then 
$$d_\alpha(m_0, m_{t}) \asymp \frac{A}{\epsilon} E(\gamma, H)$$
where $m_t$ is the shortest marking on $\gamma_t$, and the quasi-isometry constants depend only on $X_0$, $\epsilon$ and the topology of $S$.
\end{proposition}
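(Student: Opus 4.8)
\emph{Proof proposal.} The plan is to cut the time interval $[0,t]$ at the entry time $t_1$ and the exit time $t_2 \ge t_1$ of $\gamma$ from the horoball $H = H_\epsilon(\alpha)$, show that the portions $[0,t_1]$ and $[t_2,t]$ make no essential contribution to the annular projection, and identify the contribution of $[t_1,t_2]$ with the change $tw^-_{t_2}(\alpha)-tw^-_{t_1}(\alpha)$ of the twist parameter, which Proposition \ref{twist} already estimates. First I would observe that, since $\alpha$ is not short in the $q$-metric at time $0$ (and, after shrinking $\epsilon$ or enlarging $\epsilon_0$ if necessary, not short in the hyperbolic metric either), Rafi's comparison between the flat and the hyperbolic length of a curve along a Teichm\"uller geodesic shows that the active interval $I_\alpha$ of Proposition \ref{disjointinterval} agrees with the horoball window $[t_1,t_2]$ up to a uniformly bounded error in time; in particular $[0,t_1]$ and $[t_2,t]$ are, up to bounded error, disjoint from $I_\alpha$. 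Applying Proposition \ref{noprogress} (together with the fact that over a bounded amount of Teichm\"uller time $d_\alpha(m_r,m_s)$ changes only boundedly) gives a constant $K$ with $d_\alpha(m_0,m_{t_1}) \le K$ and $d_\alpha(m_{t_2},m_t)\le K$, whence by the triangle inequality
$$ \norm{ d_\alpha(m_0,m_t) - d_\alpha(m_{t_1},m_{t_2}) } \le 2K, $$
and it suffices to estimate $d_\alpha(m_{t_1},m_{t_2})$.

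Next I would relate the combinatorial quantity $d_\alpha(m_{t_1},m_{t_2})$ to the geometric twist parameter. For $s$ in the flat–short window the transverse curve of the shortest marking $m_s$ — the hyperbolic perpendicular to $\alpha$ — is, as a vertex of $\mathcal{C}(A)$, within bounded distance of the $q_s$–flat perpendicular $\beta_s$, again by Rafi, and $tw^-_s(\alpha)$ is, up to additive error, the maximal intersection number of $\beta_s$ with a leaf of $F^-$, i.e.\ the $\mathcal{C}(A)$–distance from $\pi_\alpha(m_s)$ to a fixed reference point determined by $F^-$ (using the equivariance formula \eqref{equivariance}). By Theorem \ref{unparameterized} applied to $Y=A$ the sequence $\pi_\alpha(\alpha_s)$, and hence $\pi_\alpha(m_s)$, is an unparameterized quasigeodesic in $\mathcal{C}(A)\cong\mathbb{Z}$, which by Proposition \ref{noprogress} is coarsely constant outside $I_\alpha$ and therefore moves coarsely monotonically away from the $F^-$–endpoint as $s$ runs across $I_\alpha$. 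Consequently
$$ d_\alpha(m_{t_1},m_{t_2}) \asymp tw^-_{t_2}(\alpha) - tw^-_{t_1}(\alpha). $$

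Finally, Proposition \ref{twist} applied to the entry/exit times $t_1\le t_2$ gives $tw^-_{t_2}(\alpha)-tw^-_{t_1}(\alpha) \asymp \frac{A}{\epsilon}E(\gamma,H)$, and combining this with the two displays above yields $d_\alpha(m_0,m_t)\asymp \frac{A}{\epsilon}E(\gamma,H)$, the bounded terms being absorbed into the additive constant of $\asymp$; all constants depend only on $X_0$, $\epsilon$ and the topology of $S$, since these are exactly the dependencies in Propositions \ref{disjointinterval}, \ref{noprogress}, \ref{twist} and in Rafi's flat–hyperbolic comparison. The main obstacle is the pair of comparisons underlying the first two paragraphs: matching the flat–short horoball crossing with Rafi's hyperbolic–short active interval $I_\alpha$ up to bounded error (needed to invoke Proposition \ref{noprogress} on the complementary intervals), and identifying the flat–geometric twist parameter $tw^-_s(\alpha)$ with the combinatorial annular projection $\pi_\alpha(m_s)$ of the shortest hyperbolic marking. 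Once these are in place the remainder is routine bookkeeping with the triangle inequality.
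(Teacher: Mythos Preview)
Your proposal is correct and follows essentially the same route as the paper: reduce to the horoball window via Proposition~\ref{noprogress}, identify $d_\alpha(m_{t_1},m_{t_2})$ with the change in twist parameter, and invoke Proposition~\ref{twist}. Two minor simplifications worth noting: the paper avoids matching $I_\alpha$ with $[t_1,t_2]$ up to bounded \emph{time} error by simply choosing $\epsilon_0$ so that $\ell_q^2(\alpha)=\epsilon$ forces $L_\sigma(\alpha)\ge\epsilon_0$, which gives the containment $I_\alpha\subseteq[t_1,t_2]$ directly; and for the second step it quotes Proposition~\ref{twistvstwist} (Rafi's $tw_\sigma^\pm = tw_q^\pm + O(1/L_\sigma(\alpha))$, bounded here since $L_{\sigma_{t_i}}(\alpha)\ge\epsilon_0$) to pass from the hyperbolic to the flat twist, without needing the quasigeodesic statement of Theorem~\ref{unparameterized} --- since $\mathcal{C}(A)$ is quasi-isometric to $\mathbb{Z}$, the distance is automatically the absolute difference of positions relative to a fixed leaf of $F^\pm$, so no separate monotonicity argument is required.
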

Before proving the proposition, let us recall the definition of extremal length:
$$\textup{Ext}_\sigma(\alpha) := \sup_{\rho} \frac{(\ell_\rho(\alpha))^2}{A(\rho)}$$
where the sup is taken over all metrics $\rho$ in the same conformal
class as $\sigma$.  For any quadratic differential $q$ with area $1$
and any curve $\alpha$,
$$(\ell_q(\alpha))^2 \leqslant \textup{Ext}_\sigma(\alpha) \leqslant
\frac{L_\sigma(\alpha)}{2} e^{L_\sigma(\alpha)/2}$$
where the left-hand side is by definition, while the right-hand side
is due to Maskit \cite{Ma}, and $L_\sigma(\alpha)$ is the length of
$\alpha$ in the hyperbolic metric corresponding to the conformal
structure $\sigma$.

Recall $tw_q^\pm$ denotes the twist parameter in the flat metric 
associated to $q$, as defined in the previous section. Analogously, given a hyperbolic metric 
$\sigma$ on $S$ and a simple closed curve $\alpha$, we can define a twist parameter $tw_\sigma^\pm(\alpha)$
with respect to the hyperbolic metric by taking a curve $\beta$
perpendicular to $\alpha$ with respect to the hyperbolic metric and
letting
$$tw_\sigma^\pm(\alpha) := i(F^\pm, \beta).$$
The following proposition of Rafi relates the two twist parameters:

\begin{proposition}[\cite{Raf07}*{Theorem 4.3}] \label{twistvstwist} %
The two twist parameters are the same up to an additive error
comparable to $1/L_\sigma(\alpha)$. That is,
$$tw^{\pm}_\sigma(\alpha) = tw^\pm_q(\alpha) + O \left( \frac{1}{L_\sigma(\alpha)} \right).$$
\end{proposition}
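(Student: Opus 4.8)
I will prove the statement, namely Proposition \ref{twistvstwist}, by reducing the comparison of the two twist parameters to a bound on the \emph{relative twisting} of the two perpendicular transversals around $\alpha$, and then estimating that relative twisting via the conformal geometry of the annulus about $\alpha$. Both parameters are intersection numbers $i(F^\pm,\beta)$, where $\beta$ is a geodesic transversal perpendicular to $\alpha$: in the flat metric $q$ for $tw^\pm_q(\alpha)$, realised by an arc $\beta_q$, and in the hyperbolic metric $\sigma$ for $tw^\pm_\sigma(\alpha)$, realised by an arc $\beta_\sigma$. The first step is to pass to the annular cover $\widetilde{S}$ corresponding to $\alpha$ and to coordinatise the region around the core as a flat strip $\mathbb{R}\times[0,H]$, with the core direction periodic of period $c=\ell_q(\alpha)$ and $M=H/c$ the modulus. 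A leaf of $F^\pm$ is a straight segment of some horizontal displacement $s$ across the strip, and an elementary count shows that a transversal of horizontal displacement $d$ meets the periodic translates of this leaf exactly $|s-d|/c+O(1)$ times. Since the flat perpendicular $\beta_q$ has zero horizontal displacement, this gives $tw^\pm_q(\alpha)=|s|/c+O(1)$ and $tw^\pm_\sigma(\alpha)=|s-d_\sigma|/c+O(1)$, whence
\[
\left| tw^\pm_\sigma(\alpha)-tw^\pm_q(\alpha)\right|\;\leqslant\;\frac{|d_\sigma|}{c}+O(1)\;=\;|\tau|+O(1),
\]
where $\tau:=d_\sigma/c$ is the relative twisting of $\beta_\sigma$ around $\beta_q$. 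Everything reduces to proving $|\tau|=O\!\left(1/L_\sigma(\alpha)\right)$.

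The second step bounds $\tau$ geometrically. The crucial observation is that both $|q|^{1/2}$ and $\sigma$ are conformal to the underlying complex structure, so they measure the \emph{same} angles; hence $\beta_\sigma$ may be regarded as the flat-perpendicular to the hyperbolic geodesic representative $\alpha_\sigma$, and the two transversals differ only because $\alpha_\sigma$ and the flat geodesic representative $\alpha_q$ are distinct curves in the class. A collar argument shows that these representatives have uniformly bounded angular discrepancy, so $\beta_\sigma$ crosses the flat annulus at an angle bounded away from horizontal by a universal constant. A transversal crossing a flat annulus of modulus $M$ at bounded angle $\theta$ from the perpendicular direction has horizontal displacement $H\tan\theta=Mc\tan\theta=O(Mc)$, and therefore relative twisting $|\tau|=O(M)$, where $M$ is the modulus of an embedded annulus containing the region where the two perpendiculars disagree.

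The third step converts this into the desired length bound through extremal length. Any embedded annulus with core in the class $\alpha$ has modulus at most $1/\textup{Ext}_\sigma(\alpha)$, so $M\leqslant 1/\textup{Ext}_\sigma(\alpha)$. Combining the Maskit upper bound $\textup{Ext}_\sigma(\alpha)\leqslant \tfrac{1}{2}L_\sigma(\alpha)e^{L_\sigma(\alpha)/2}$ recalled above with the complementary lower bound $\textup{Ext}_\sigma(\alpha)\gtrsim L_\sigma(\alpha)$ (equivalently, the fact that the maximal collar about $\alpha$ has modulus $\asymp 1/L_\sigma(\alpha)$) gives $\textup{Ext}_\sigma(\alpha)\asymp L_\sigma(\alpha)$ for short curves, and hence $M\lesssim 1/L_\sigma(\alpha)$ in all cases. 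Together with $|\tau|=O(M)$ and the reduction of the first step, this yields $\left| tw^\pm_\sigma(\alpha)-tw^\pm_q(\alpha)\right|=O\!\left(1/L_\sigma(\alpha)\right)$, which is the claim.

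The hard part will be the second step, namely extracting the \emph{exact} $1/L_\sigma(\alpha)$ dependence rather than a weaker estimate such as $O(1/\ell_q(\alpha))$. The delicate point is that the collar about a short $\alpha$ has width growing like $\log(1/L_\sigma(\alpha))$, so one must show that the hyperbolic perpendicular does not spiral as it traverses the full collar and that its total horizontal drift is genuinely governed by the modulus and not by this much larger width; this is where one must exploit the near-rotational symmetry of the hyperbolic metric inside the collar and the exponential decay of its deviation from the symmetric model. One must also control the cone-point structure of the flat metric near $\alpha$, so that ``perpendicular'' and ``bounded angular discrepancy'' remain meaningful when $\alpha_q$ is not smooth. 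These are precisely the flat-versus-hyperbolic comparisons in the thin part carried out by Rafi \cite{Raf07}, whose analysis I would follow to complete the argument.
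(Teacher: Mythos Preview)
The paper does not prove this proposition: it is quoted from Rafi \cite{Raf07}*{Theorem 4.3} with no argument given, so there is no in-paper proof to compare your proposal against.

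Assessing your proposal on its own: the reduction in your first step (the difference of twist parameters is bounded by the relative twisting $|\tau|$ of the two perpendiculars, up to $O(1)$) is correct, and the extremal-length/modulus identifications in your third step are standard. The real content lies in your second step, and there the argument as written does not go through. You claim that $\beta_\sigma$ crosses the flat strip at a uniformly bounded angle from the vertical, via conformality of the two metrics and a ``bounded angular discrepancy'' between $\alpha_\sigma$ and $\alpha_q$. But conformality only controls angles pointwise: $\beta_\sigma$ is a hyperbolic geodesic, hence a curved arc in flat strip coordinates, with no single crossing angle, and its total horizontal drift across the strip --- which is what determines $\tau$ --- is not bounded by any one-point angle condition. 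The assertion that $\alpha_\sigma$ and $\alpha_q$ have bounded angular discrepancy is likewise unproved and not obviously meaningful, since $\alpha_\sigma$ need not even lie inside the flat cylinder.

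You do name the correct mechanism in your closing paragraph: the near-rotational symmetry of the hyperbolic metric in the collar, which in conformal strip coordinates on the annular cover forces the hyperbolic perpendicular to be essentially vertical through the long collar region, with only the bounded-modulus ends contributing horizontal drift. Making this precise --- controlling the drift outside the collar, handling cone points of $q$, and treating curves that are not cylinder curves --- is exactly what Rafi's proof does. Since both you and the authors ultimately defer to \cite{Raf07} for it, your proposal is an honest outline pointing at the right argument rather than an independent proof.
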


\begin{proof}[Proof of Proposition \ref{subsurface}]
Let us choose $\epsilon_0$ in such a way that if $\ell^2_q(\alpha) = \epsilon$, then 
$L_\sigma(\alpha) \geqslant \epsilon_0$. 
Let $t_1$ and $t_2$ be the times when the $q_t$-length of $\alpha$ is exactly $\epsilon$, 
and let $t > t_2$.
By the previous choice, $L_{\sigma_{t_i}}(\alpha) \geqslant \epsilon_0$ for $i = 1, 2$, so
by Proposition \ref{disjointinterval} $[0, t_1]$ and $[t_2, t]$ are disjoint from $I_\alpha$, 
hence by Proposition \ref{noprogress}

$$d_\alpha(m_0, m_t) = d_\alpha(m_{t_1}, m_{t_2}) + O(1).$$
On the other hand, the progress in subsurface projection across the horoball is comparable to the progress in the twist parameter 
for the hyperbolic metric, 
$$d_\alpha(m_{t_1}, m_{t_2}) =  
|i_\alpha(\beta_{t_1}, F^+) - i_\alpha(\beta_{t_2}, F^+)| +O (1)= 
|tw_{\sigma_1}^+(\alpha) - tw_{\sigma_2}^+(\alpha)| + O(1)$$
and by Proposition \ref{twistvstwist} it is also comparable to the progress in the twist parameter defined via the flat metric:
$$|tw_{\sigma_1}^+(\alpha) - tw_{\sigma_2}^+(\alpha)| = |tw_{q_1}^+(\alpha) - tw_{q_2}^+(\alpha)| + O\left( \frac{1}{L_{\sigma_1}(\alpha)} \right) + O\left( \frac{1}{L_{\sigma_2}(\alpha)} \right)$$
and since $L_{\sigma_i}(\alpha) \geqslant \epsilon_0$
$$ d_\alpha(m_0, m_t) = |tw_{q_1}^+(\alpha) - tw_{q_2}^+(\alpha)| + O(1).$$
Finally, by Proposition \ref{twist} the twist is comparable to the excursion, thus
$$d_\alpha(m_0, m_t) \asymp \frac{A}{\epsilon}E(\gamma, H).$$
\end{proof}

\subsection{Coarse monotonicity for the word metric} \label{section:coarse}

In \cite{Raf10}, Rafi shows the following non-backtracking or reverse
triangle inequality for subsurface projections along a Teichm\"uller
geodesic. Recall that given a Teichm\"uller geodesic $\gamma_t$ we write $m_t$ for
the shortest marking at $\gamma_t$, and we write $d_Y(m_s, m_t)$ to
mean the distance in the curve complex $\mathcal{C}(Y)$ between the
images of $m_s$ and $m_t$ under subsurface projection to $Y$.

\begin{theorem}[\cite{Raf10}*{Theorem 6.1}] \label{theorem:reverse
  triangle} %
There exists a constant $C$, only depending on the topology of $S$,
such that for every Teichm\"uller geodesic $\gamma$, and every
subsurface $Y$,
\begin{equation} \label{eq:reverse triangle}
d_Y(m_{r}, m_{s}) + d_Y(m_s, m_t) \leqslant d_Y(m_r, m_t) +
C,
\end{equation}
for all constants $r \leqslant s \leqslant t$. 
\end{theorem}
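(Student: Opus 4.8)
\emph{Proof proposal.} The plan is to deduce the inequality from two facts already available: first, Theorem~\ref{unparameterized}, which says $t\mapsto\pi_Y(\alpha_t)$ is an unparameterized $(K,c)$-quasigeodesic in $\mathcal{C}(Y)$ with $K,c$ depending only on $S$; and second, the Masur--Minsky theorem that $\mathcal{C}(Y)$ is $\delta$-hyperbolic, with $\delta$ depending only on the topology of $Y$ and hence, since there are only finitely many topological types of essential subsurface of $S$, only on the topology of $S$. First I would reduce to the shortest \emph{curves}: the shortest marking $m_t$ contains a curve a bounded distance in $\mathcal{C}(S)$ from $\alpha_t$, and the subsurface projection $\pi_Y$ is coarsely Lipschitz on the set of curves meeting $Y$, so $d_Y(m_t,\pi_Y(\alpha_t))=O(1)$ with constant depending only on $S$. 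It therefore suffices to prove $d_Y(\alpha_r,\alpha_s)+d_Y(\alpha_s,\alpha_t)\le d_Y(\alpha_r,\alpha_t)+C'$ and then absorb the error into the final $C$.

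The core is the elementary observation that distance is coarsely monotone along a quasigeodesic in a hyperbolic space: if $X$ is $\delta$-hyperbolic and $g\colon J\to X$ is a $(K,c)$-quasigeodesic, then for $a\le b\le c'$ in $J$ one has $d(g(a),g(b))+d(g(b),g(c'))\le d(g(a),g(c'))+C_0$ with $C_0=C_0(\delta,K,c)$. This is immediate from the Morse stability lemma: $g(b)$ lies within Hausdorff distance $R=R(\delta,K,c)$ of a geodesic $[g(a),g(c')]$, so choosing $p$ on that geodesic with $d(g(b),p)\le R$ gives $d(g(a),g(b))+d(g(b),g(c'))\le d(g(a),p)+d(p,g(c'))+2R=d(g(a),g(c'))+2R$. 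To apply this to $f(t):=\pi_Y(\alpha_t)$, use that by the definition of unparameterized quasigeodesic there is a monotone increasing surjection $h$ from some interval $J$ onto the parameter interval of $\gamma$ such that $f\circ h$ is a $(K,c)$-quasigeodesic. Given $r\le s\le t$, choose preimages $r'\in h^{-1}(r)$, $s'\in h^{-1}(s)$, $t'\in h^{-1}(t)$; since each fiber of $h$ is an interval and fibers over smaller values lie to the left, we may take $r'\le s'\le t'$. Applying the lemma to $g=f\circ h$ at $r'\le s'\le t'$, together with $f(h(r'))=f(r)$ and similarly for $s,t$, gives $d_Y(\alpha_r,\alpha_s)+d_Y(\alpha_s,\alpha_t)\le d_Y(\alpha_r,\alpha_t)+C_0$, and combining with the reduction above yields the claim with $C$ depending only on $S$.

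The main thing to be careful about is matching conventions: one must check that the notion of unparameterized quasigeodesic supplied by Theorem~\ref{unparameterized} does permit choosing order-respecting preimages (equivalently, that the projected path backtracks by at most a bounded amount), and one must handle parameters $t$ for which $\alpha_t$ fails to cross $Y$ essentially. In the latter case the projection is coarsely locally constant—equal to its value at the nearest parameter where $\partial Y$ is not short—so such $t$ contribute nothing new and do not affect the constants. An alternative route that sidesteps the reparameterization bookkeeping is to split at $s$ according to whether $s$ lies in the active interval $I_Y$ of $Y$: for $s$ outside $I_Y$, one of the two left-hand terms is $O(1)$ by the ``no progress'' estimate (Proposition~\ref{noprogress} and its non-annular analogue), reducing to the ordinary triangle inequality, while for $s$ inside $I_Y$ the projected path genuinely tracks a quasigeodesic and the Morse argument applies directly.
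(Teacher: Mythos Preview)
The paper does not give its own proof of this statement: Theorem~\ref{theorem:reverse triangle} is simply quoted from Rafi \cite{Raf10}*{Theorem 6.1} as a black box, with no argument supplied. So there is nothing to compare your proposal against in the paper itself.

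Your argument is a legitimate derivation of the reverse triangle inequality from Theorem~\ref{unparameterized} (also quoted from \cite{Raf10}) together with hyperbolicity of $\mathcal{C}(Y)$: the Morse-lemma step is correct, and the paper's definition of ``reparameterization'' as continuous and monotone increasing does allow you to choose order-respecting preimages $r'\le s'\le t'$ on the interval where $\phi$ is defined. The reduction from $m_t$ to $\alpha_t$ is the weakest link, since $\alpha_t$ may be disjoint from $Y$ (for instance equal to a component of $\partial Y$ during the active interval), and then $\pi_Y(\alpha_t)$ is undefined while $\pi_Y(m_t)$ still makes sense; you flag this, and it is handled by Rafi's convention of extending the projection constantly across such parameters, but it does require a sentence of justification rather than a parenthetical.

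The one caveat worth stating explicitly is logical dependence: in \cite{Raf10} the no-backtracking inequality (Theorem 6.1 there) is part of the machinery used to establish the unparameterized-quasigeodesic statement (Theorem~B there, which is Theorem~\ref{unparameterized} here). So while your deduction is perfectly valid inside \emph{this} paper, where both results are imported as known, it would be circular as a stand-alone proof of Rafi's theorem. Your alternative route via the active interval $I_Y$ and Proposition~\ref{noprogress} is closer in spirit to how Rafi actually proves it, and does not have this issue.
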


The above theorem along with the Masur-Minsky quasi-distance formula
\eqref{eq:quasi-distance} implies that the distance in the marking complex is coarsely monotonic
along a Teichm\"uller ray.

\begin{proposition}\label{coarse-mono}
There exists constants $C_1 > 0$ and $C_2$ that depend only on $S$
such that along a Teichm\"uller geodesic $\gamma_t$, for $0 < s < t$
the distance in the marking complex satisfies
\[d_M(m_0, m_s) \leqslant C_1 d_M(m_0, m_t) + C_2. \]
\end{proposition}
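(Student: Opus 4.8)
The plan is to combine the Masur--Minsky quasi-distance formula (Theorem \ref{Quasi-Distance}) with Rafi's reverse triangle inequality (Theorem \ref{theorem:reverse triangle}) in the most direct way possible. First I would invoke the quasi-distance formula to write, up to the multiplicative and additive constants $C_1, C_2$,
\[
d_M(m_0, m_s) \asymp \sum_{Y \subseteq S} \lfloor d_Y(m_0, m_s) \rfloor_A,
\qquad
d_M(m_0, m_t) \asymp \sum_{Y \subseteq S} \lfloor d_Y(m_0, m_t) \rfloor_A,
\]
where the sums run over all subsurfaces $Y$ of $S$ including $S$ itself, and $A \geqslant A_0$ is a fixed threshold.

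The key step is then to show that each term on the left is controlled by the corresponding term on the right: for every subsurface $Y$,
\[
\lfloor d_Y(m_0, m_s) \rfloor_A \lesssim \lfloor d_Y(m_0, m_t) \rfloor_A.
\]
To see this, apply Theorem \ref{theorem:reverse triangle} with $r = 0$ and the given $0 < s < t$, which gives $d_Y(m_0, m_s) \leqslant d_Y(m_0, m_s) + d_Y(m_s, m_t) \leqslant d_Y(m_0, m_t) + C$ since curve-complex distances are nonnegative. Hence $d_Y(m_0, m_s) \leqslant d_Y(m_0, m_t) + C$ for a uniform constant $C$. Now I would do a small case analysis on the cutoff: if $d_Y(m_0, m_s) < A$, the left-hand cutoff term is $0$ and there is nothing to prove; if $d_Y(m_0, m_s) \geqslant A$, then $d_Y(m_0, m_t) \geqslant A - C$, and after possibly enlarging $A$ (equivalently replacing $A$ by $\max(A_0, 2C)$, still a legitimate threshold for the distance formula) one gets $d_Y(m_0, m_t) \geqslant A/2 \geqslant A_0$, so its cutoff is just $d_Y(m_0, m_t)$ itself, and $\lfloor d_Y(m_0, m_s) \rfloor_A = d_Y(m_0, m_s) \leqslant d_Y(m_0, m_t) + C \leqslant 2 d_Y(m_0, m_t) = 2 \lfloor d_Y(m_0, m_t) \rfloor_A$. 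Summing this term-by-term inequality over all subsurfaces $Y$ and feeding the result back through the two instances of the quasi-distance formula produces the claimed coarse inequality $d_M(m_0, m_s) \leqslant C_1 d_M(m_0, m_t) + C_2$ with constants depending only on $S$.

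The main obstacle, such as it is, is the bookkeeping around the cutoff function $\lfloor \cdot \rfloor_A$: one must be careful that the threshold $A$ used in the distance formula can be taken large enough (relative to the constant $C$ from the reverse triangle inequality) so that the additive error $C$ cannot push a term from above the cutoff to below it in a way that breaks the comparison, and that enlarging $A$ only changes $C_1, C_2$ by controlled amounts, which is exactly what Theorem \ref{Quasi-Distance} guarantees. There is no genuine geometric difficulty beyond this; the real content is entirely in Rafi's Theorem \ref{theorem:reverse triangle}, which says that projections do not backtrack along a Teichm\"uller geodesic, and the proposition is essentially a formal consequence of that plus the distance formula.
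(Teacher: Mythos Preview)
Your overall approach is exactly the paper's: combine Rafi's reverse triangle inequality with the Masur--Minsky distance formula, and absorb the additive error $C$ by adjusting the cutoff threshold. There is, however, a genuine bookkeeping slip in your cutoff case analysis. From $d_Y(m_0,m_s) \geqslant A$ and $d_Y(m_0,m_t) \geqslant d_Y(m_0,m_s) - C$ you only conclude $d_Y(m_0,m_t) \geqslant A - C \geqslant A/2$, \emph{not} $d_Y(m_0,m_t) \geqslant A$. So it can happen that $d_Y(m_0,m_t) \in [A/2, A)$, in which case $\lfloor d_Y(m_0,m_t) \rfloor_A = 0$ while $\lfloor d_Y(m_0,m_s) \rfloor_A \geqslant A$, and your claimed termwise inequality $\lfloor d_Y(m_0,m_s) \rfloor_A \leqslant 2\lfloor d_Y(m_0,m_t) \rfloor_A$ fails.

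The fix, which is precisely what the paper does, is to use \emph{two different thresholds}: with $A > 2C$ one has
\[
\lfloor d_Y(m_0,m_t) \rfloor_A \;\geqslant\; \lfloor d_Y(m_0,m_s) - C \rfloor_A \;\geqslant\; \lfloor \tfrac{1}{2} d_Y(m_0,m_s) \rfloor_A \;=\; \tfrac{1}{2}\lfloor d_Y(m_0,m_s) \rfloor_{2A},
\]
then sum over $Y$ and apply the distance formula at threshold $A$ on the $t$-side and at threshold $2A$ on the $s$-side (both legitimate by Theorem~\ref{Quasi-Distance}, with possibly different quasi-isometry constants). Equivalently, you could write $\lfloor d_Y(m_0,m_s) \rfloor_A \leqslant 2\lfloor d_Y(m_0,m_t) \rfloor_{A/2}$ after ensuring $A/2 \geqslant A_0$. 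Once this threshold mismatch is handled, your argument is complete and coincides with the paper's.
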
 

\begin{proof}
Let $C$ be the constant in Rafi's reverse triangle inequality,
Theorem \ref{theorem:reverse triangle}.  Assume $0 < s < t$, then
\eqref{eq:reverse triangle} implies
\begin{equation} \label{eq:mono1}  
d_Y(m_0, m_t) \geqslant d_Y(m_0, m_s) - C
\end{equation}
for all subsurfaces $Y \subseteq S$. The Masur-Minsky quasi-distance
formula (Theorem \ref{Quasi-Distance}) holds for all floor constants
sufficiently large, though the quasi-isometry constants depend on
$A$. Choose a floor constant $A > 2 C$, and let $K_1$ and $K_2$ be
the associated quasi-isometry constants.  By the definition of the
floor function, if $\lfloor x \rfloor_A$ is non zero, then $x \geqslant
A$. This implies that $ x - A/2 \geqslant x/2 $, and as the floor
function is monotonic,
\begin{equation} \label{eq:floor} %
\lfloor x - A/2 \rfloor_A \geqslant \lfloor x/2 \rfloor_A.
\end{equation} 
As we have chosen $A > 2 C$, combining \eqref{eq:mono1} and
\eqref{eq:floor} implies
\begin{equation} \label{eq:mono2} %
\lfloor d_Y(m_0, m_t) \rfloor_A \geqslant \lfloor \tfrac{1}{2}
d_Y(m_0, m_s) \rfloor_A,
\end{equation}
again for all subsurfaces $Y \subseteq S$. Now summing
\eqref{eq:mono2} over all subsurfaces $Y \subseteq S$, the
quasi-distance formula implies
\begin{align*}
d_M(m_0, m_t) & \geqslant \frac{1}{K_1} \left( \sum \lfloor
\tfrac{1}{2} d_Y(m_0, m_s)\rfloor_A - K_2 \right). \\
\intertext{By definition of the floor function, $\lfloor \tfrac{1}{2}
  x \rfloor_A = \tfrac{1}{2} \lfloor x \rfloor_{2A}$, so}
d_M(m_0, m_t) & \geqslant \frac{1}{2 K_1} \left( \sum
\lfloor d_Y(m_0, m_s) \rfloor_{2A} - 2 K_2
\right). \\
\intertext{The quasi-distance formula holds for all $A$ sufficiently
  large, so in particular holds for $2A$, though with different
  quasi-isometry constants, which we shall denote $K_3$ and
  $K_4$. This implies that}
d_M(m_0, m_t) & \geqslant \frac{1}{2K_1 K_3} \left(
d_M(m_0, m_s) - K_3 K_4 - K_2 \right)
\end{align*}
whence the result.
\end{proof}

\subsection{Projection to closest Teichm\"uller lattice point} \label{section:projection}

Let $q$ be a quadratic differential, let $q_t$ be the image of $q$
under the Teichm\"uller geodesic flow after time $t$, and let $X_t$ be the image of $q_t$
in $\T$. The orbit of $X_0$ under the mapping class group is called a
\emph{Teichm\"uller lattice}, and let $h_t X_0$ be a choice of closest
lattice point in $\T$ to $X_t$, i.e. such that 
\[ d_\mathcal{T}(h_t X_0, X_t) \leqslant d_{\mathcal{T}}(h X_0, X_t) 
\text{ for all } h \in \textup{Mod}(S). \]
For any given point $X_t$, there are at most finitely many closest
lattice points, however it is possible that the number of closest
lattice points increases as you choose points deeper in the thin part.
Let $m_t$ be a shortest marking on $X_t$, and $d_G$ the word metric on
the mapping class group with respect to some choice of generators.

\begin{lemma} \label{latticepoint} If $X_0$ and $X_t$ both lie in the thick part $\T
  \setminus \mathcal{T}_\epsilon$, then
$$ d_G(1, h_t) \asymp d_M(m_0, m_t)$$
%
where the quasi-isometry constants only depend on $X_0$, the choice of $\epsilon$ and 
the generating set for the mapping class group.
\end{lemma}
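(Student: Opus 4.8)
The plan is to express everything through the Masur--Minsky machinery: word length is comparable to distance in the marking complex (Proposition \ref{mod-marking}), which in turn is governed by the quasi-distance formula (Theorem \ref{Quasi-Distance}). The geometric content will be that moving a bounded Teichm\"uller distance between two thick surfaces moves the shortest marking a bounded distance in $M(S)$, so that the closest lattice point $h_tX_0$ and the point $X_t$ have essentially the same shortest marking.

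\emph{Locating the lattice point.} Since the thick part $\mathcal{M}\setminus\mathcal{M}_\epsilon$ of moduli space is compact, the orbit $G\cdot X_0$ is $R$-dense in $\mathcal{T}\setminus\mathcal{T}_\epsilon$ for the Teichm\"uller metric, for some $R=R(\epsilon,X_0)$. As $X_t\in\mathcal{T}\setminus\mathcal{T}_\epsilon$, any closest lattice point satisfies $d_\mathcal{T}(h_tX_0,X_t)\le R$, and $h_tX_0$ also lies in the thick part because $\mathcal{T}_\epsilon$ is $G$-invariant and $X_0$ is thick. By Proposition \ref{mod-marking}, $d_G(1,h_t)\asymp d_M(m_0,h_tm_0)$, and since the shortest-marking map is coarsely $G$-equivariant (it is built from the $G$-invariant hyperbolic geometry), $h_tm_0$ lies within bounded $d_M$-distance of a shortest marking $m(h_tX_0)$ on $h_tX_0$. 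So, by the triangle inequality in $M(S)$ together with Proposition \ref{mod-marking} again, it suffices to show
\[ d_M\bigl(m(h_tX_0),\,m_t\bigr)=d_M\bigl(m(h_tX_0),\,m(X_t)\bigr)\le K \]
for a constant $K$ depending only on $\epsilon$ and $X_0$; this gives $d_M(m_0,h_tm_0)=d_M(m_0,m_t)+O(1)$, hence $d_G(1,h_t)\asymp d_M(m_0,m_t)$.

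\emph{The bound for nearby thick surfaces.} I claim that if $W_1,W_2\in\mathcal{T}\setminus\mathcal{T}_\epsilon$ and $d_\mathcal{T}(W_1,W_2)\le R$, then $d_Y(m(W_1),m(W_2))$ is bounded by a constant $K_0=K_0(R,\epsilon)$ for \emph{every} subsurface $Y\subseteq S$. Granting this, applying Theorem \ref{Quasi-Distance} with a floor constant $A>\max\{A_0,K_0\}$ makes every term of the distance sum vanish, so $d_M(m(W_1),m(W_2))$ is bounded, as required. For the claim, let $\gamma$ be the Teichm\"uller geodesic segment from $W_1$ to $W_2$; it has length at most $R$, and along it extremal lengths vary by a factor at most $e^{2R}$ (so only finitely many curves can occur in a shortest marking of a surface on $\gamma$, as in the discussion showing the systole map is coarsely well-defined). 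For a non-annular $Y$, progress in $\mathcal{C}(Y)$ is made at a bounded rate in Teichm\"uller time along a geodesic --- this is part of Rafi's analysis of subsurface projections along Teichm\"uller geodesics \cites{Raf07,Raf10} --- so $d_Y(m(W_1),m(W_2))=O(R)$. For an annulus $Y$ with core $\alpha$, the curve $\alpha$ has length $\ge\epsilon$ at $W_1$ and $W_2$; by Propositions \ref{disjointinterval} and \ref{noprogress}, $d_\alpha(m(W_1),m(W_2))$ equals, up to an additive constant, the twisting accumulated over the isolated interval of $\alpha$ along $\gamma$, and this sub-interval has length at most $|\gamma|\le R$; Proposition \ref{subsurface} and the Remark after Proposition \ref{twist} then bound the accumulated twisting (and hence $d_\alpha$) by a constant multiple of $e^{R}/\epsilon_0$.

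The main difficulty is the annular case of the last step: one must rule out that a large amount of twisting around some curve is produced while the intervening Teichm\"uller geodesic dips briefly into the thin part. This is precisely where Rafi's excursion estimates from the preceding subsections are needed, and where it is essential that $R$ is a \emph{fixed} constant: the bound $K_0(R,\epsilon)$ genuinely grows with $R$ --- exponentially, as the $SL(2,\mathbb{Z})$ picture in the introduction already shows --- so there is no uniform coarse-Lipschitz estimate for the shortest-marking map on the thick part, and one really needs the closest-lattice-point hypothesis (via compactness of the thick part of moduli space) to keep $R$ bounded.
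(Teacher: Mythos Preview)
Your argument is essentially correct, but you take a much longer route than the paper does, and in doing so you lean on a claim that is not quite what the cited results say.

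The paper's proof is a two-line compactness argument. Since $d_\mathcal{T}(h_tX_0,X_t)\le K_1$ (the diameter of the thick part of moduli space), translating by $h_t^{-1}$ gives $d_\mathcal{T}(X_0,h_t^{-1}X_t)\le K_1$. The point is that in the \emph{fixed} Teichm\"uller ball of radius $K_1$ about $X_0$ only finitely many markings arise as shortest markings, so $d_M(m_0,h_t^{-1}m_t)\le K_2$ for some $K_2$ depending only on $K_1$. Equivariance then gives $|d_M(m_0,m_t)-d_M(m_0,h_tm_0)|\le K_2$, and Proposition \ref{mod-marking} finishes. No quasi-distance formula, no Rafi, no analysis of the geodesic segment from $h_tX_0$ to $X_t$.

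Your approach instead proves a general statement: any two thick surfaces within Teichm\"uller distance $R$ have shortest markings within bounded $d_M$-distance. This is true and is a reasonable thing to know, but your justification of the non-annular case (``progress in $\mathcal{C}(Y)$ is made at a bounded rate in Teichm\"uller time'') is not what Theorem \ref{unparameterized} asserts --- that theorem only gives an \emph{unparameterized} quasigeodesic, with no rate control. The correct elementary argument for that step is the one you allude to but do not carry out: extremal lengths change by at most $e^{2R}$, so the base curves of the two shortest markings have uniformly bounded intersection number, which bounds all non-annular subsurface projections directly. Your annular step via Propositions \ref{disjointinterval}, \ref{noprogress} and the excursion estimates is fine in outline but also heavier than needed.

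In short: the paper bypasses all of this by translating back to a fixed ball, where finiteness is immediate. Your route works once patched, and does give a slightly more portable statement, but the translation trick is the natural proof here.
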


\begin{proof}
Let $K_1$ be the diameter of the thick part $\mathcal{T} \setminus \mathcal{T}_{\epsilon}$ in moduli space; 
then, by definition there exists 
a group element $g$ such that in Teichm\"uller space $d_\mathcal{T}(gX_0, X_t) \leqslant K_1$, so 
by definition of $h_t$
%
$$ d_\mathcal{T}(h_t X_0, X_t) \leqslant K_1. $$
Hence by group invariance
$$ d_\mathcal{T}(X_0, h_t^{-1}X_t) \leqslant K_1. $$
In the Teichm\"uller ball of radius $K_1$ only finitely many markings
appear as short markings, hence there exists $K_2$, depending only on
$K_1$, and the surface $S$, such that the distance in the marking complex is bounded:
$$ d_M(m_0, h_t^{-1} m_t) \leqslant K_2. $$
As a consequence, 
$$|d_M(m_0, m_t) - d_M(m_0, h_tm_0)| \leqslant
d_M( h_tm_0, m_t) = d_M(m_0, h_t^{-1} m_t)
\leqslant K_2.$$
Finally, the distance in the word metric $d_G(1, h_t)$ is quasi-isometric to the distance $d_M(m_0, h_t m_0)$
in the marking complex by Proposition \ref{mod-marking}.
\end{proof}

By combining the previous lemma with the coarse monotonicity statement of Proposition \ref{coarse-mono}, we
get that the word length of the closest point projection to the Teichm\"uller lattice is coarsely monotone 
along the thick part of a Teichm\"uller ray:

\begin{proposition} \label{coarse-mono2}
There exists constants $C_1 > 0$ and $C_2$, that depend only on $X_0$ and $\epsilon_0$ and the choice of generators, 
such that along a  Teichm\"uller geodesic $\gamma_t$, for $0 < s < t$ the word metric satisfies
\[d_G(1, h_s) \leqslant C_1 d_G(1, h_t) + C_2 \]
whenever $\gamma_0$, $\gamma_s$ and $\gamma_t$ all lie in the thick part $\T \setminus \T_{\epsilon_0}$.
\end{proposition}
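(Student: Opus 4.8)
The plan is to obtain the statement by directly composing the two results just established: Lemma~\ref{latticepoint}, which converts the word length of the closest lattice point into marking distance (valid precisely on the thick part), and Proposition~\ref{coarse-mono}, which gives coarse monotonicity of the marking distance along \emph{any} Teichm\"uller geodesic. No new geometric input is needed; the work is purely in assembling these two facts and tracking constants.

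Concretely, I would proceed in three steps. First, since by hypothesis $\gamma_0 = X_0$ and $\gamma_s$ both lie in the thick part $\T \setminus \T_{\epsilon_0}$, Lemma~\ref{latticepoint} applies at time $s$ and gives $d_G(1, h_s) \asymp d_M(m_0, m_s)$; in particular $d_G(1, h_s) \leqslant K_1 d_M(m_0, m_s) + K_2$ for constants $K_1, K_2$ depending only on $X_0$, $\epsilon_0$ and the generating set. Second, Proposition~\ref{coarse-mono} --- which carries no thick-part hypothesis --- yields $d_M(m_0, m_s) \leqslant K_3 d_M(m_0, m_t) + K_4$ with $K_3, K_4$ depending only on $S$. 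Third, since $\gamma_0$ and $\gamma_t$ also lie in the thick part, Lemma~\ref{latticepoint} applies at time $t$ and gives $d_M(m_0, m_t) \asymp d_G(1, h_t)$, in particular $d_M(m_0, m_t) \leqslant K_5 d_G(1, h_t) + K_6$. Chaining the three inequalities produces $d_G(1, h_s) \leqslant C_1 d_G(1, h_t) + C_2$ with $C_1 = K_1 K_3 K_5$ and $C_2$ an explicit combination of the $K_i$, all of which depend only on $X_0$, $\epsilon_0$ and the choice of generators, exactly as claimed.

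I do not expect any genuine obstacle here; this is a bookkeeping argument. The only two points requiring a little care are: (i) the hypothesis must supply all three of $\gamma_0, \gamma_s, \gamma_t$ in the thick part, which is precisely what is needed to invoke Lemma~\ref{latticepoint} at both $s$ and $t$ --- one cannot drop the thick-part condition, since the shortest marking can fail to coarsely track the closest lattice point deep in the thin part, where the number of closest lattice points may proliferate; and (ii) the dependence of constants must be tracked, since Lemma~\ref{latticepoint}'s constants depend on $X_0$ and $\epsilon_0$, so the final $C_1, C_2$ inherit exactly this dependence. An alternative route --- proving a reverse-triangle/non-backtracking inequality directly for the word metric between closest lattice points --- would be more delicate and is unnecessary once Lemma~\ref{latticepoint} is available.
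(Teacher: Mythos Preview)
Your proposal is correct and is exactly the approach the paper takes: the paper simply states that Proposition~\ref{coarse-mono2} follows by combining Lemma~\ref{latticepoint} with Proposition~\ref{coarse-mono}, without writing out the chain of inequalities. Your three-step assembly and constant-tracking are precisely the omitted details.
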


\section{Lebesgue measure sampling} \label{section:lebesgue}

The goal of this section is to study the asymptotic behaviour of
typical Teichm\"uller geodesics with respect to Lebesgue measure,
proving the first part of Theorem \ref{mcg}.  More precisely, we want to keep track
of short curves in the flat metric as the metric changes under the
action of Teichm\"uller flow, and prove an asymptotic result, Theorem
\ref{theorem:asylength}. In Section \ref{section:metric} we recall
results of Masur \cite{MasurLog} and Eskin and Masur
\cite{eskin-masur} which show that the growth rate of the number of
metric cylinders with area bounded below is quadratic. In Section
\ref{section:asymptotic} we consider the function given by the sum of
the squares of the reciprocals of the short curves, and show that the
average value of this function tends to infinity along almost every
Teichm\"uller geodesic with respect to Lebesgue measure. Then in
Section \ref{section:average} we show that this function gives a lower
bound for the average of the sums of the excursions along the
geodesic. Finally in Section \ref{wmetric} we show that the sum of the
excursions is a lower bound for the word metric along the
Teichm\"uller geodesic, and so the word metric along the geodesic has
faster than linear growth, which completes the proof of the Theorem \ref{mcg}
for the Lebesgue measure.

\subsection{Metric cylinders with bounded area} \label{section:metric}

Let $q$ be a quadratic differential of unit area. A \emph{metric cylinder} for $q$ 
is a cylinder in the flat metric associated to $q$ which is the union of freely homotopic closed 
trajectories of $q$. We shall label each metric cylinder by the homotopy class $\alpha$ of the corresponding
closed trajectory. 

Let us now fix some $0 < \delta < 1$, and let $C_q(\delta)$ be the set of metric cylinders
for the $q$-metric with area bounded below by $\delta$.
Moreover, let us denote by $C_q(\delta, \epsilon)$ the set of cylinders whose area is bounded below by $\delta$ 
and whose core curve has length shorter than the square root of $\epsilon$:  
$$C_q(\delta, \epsilon) := \{ \alpha \in C_q(\delta) \ : \ \ell_q^2(\alpha) \leqslant \epsilon \}.$$

\begin{lemma} \label{disj}
Suppose $\epsilon < \delta$. Then any two distinct elements of $C_q(\delta, \epsilon)$ are disjoint on $q$.
As a corollary, the cardinality of $C_q(\delta, \epsilon)$ is bounded above by a constant which depends only 
on the topology of $S$.
\end{lemma}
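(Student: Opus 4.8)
The plan is to establish the disjointness claim first and then derive the cardinality bound as an immediate corollary from basic surface topology.

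For the disjointness, suppose $\alpha$ and $\beta$ are two distinct metric cylinders in $C_q(\delta,\epsilon)$. Each is a flat cylinder whose core curve has $q$-length at most $\sqrt{\epsilon}$, and each has $q$-area at least $\delta$. The key observation is that a flat cylinder of area $A$ and core length $\ell$ has width (circumference of the perpendicular direction) equal to $A/\ell \geqslant \delta/\sqrt{\epsilon}$. Since $\epsilon < \delta$, we have $\delta/\sqrt{\epsilon} > \sqrt{\delta} > \sqrt{\epsilon} \geqslant \ell_q(\beta)$; in other words, the width of the cylinder $\alpha$ strictly exceeds the length of the core of $\beta$. If $\beta$ intersected $\alpha$ essentially, then a closed geodesic representative of $\beta$ would have to cross $\alpha$ transversally, traversing the cylinder $\alpha$ from one boundary to the other at least once, which forces $\ell_q(\beta) \geqslant \mathrm{width}(\alpha) > \ell_q(\beta)$, a contradiction. (If $\beta$ crosses $\alpha$ but is homotopic into $\alpha$ or into a boundary component, then either $\beta$ is parallel to the core of $\alpha$ — but then $\beta = \alpha$ as homotopy classes, contrary to assumption — or $\beta$ can be isotoped off $\alpha$ entirely.) Hence $\alpha$ and $\beta$ are disjoint. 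The main obstacle here is handling the degenerate cases carefully: one should phrase the argument in terms of the minimal intersection number $i(\alpha,\beta)$ and note that if $i(\alpha,\beta)>0$ then any representative of $\beta$ in minimal position with the cylinder core must cross the full width of the embedded open cylinder at least $i(\alpha,\beta)$ times.

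For the corollary, once we know the elements of $C_q(\delta,\epsilon)$ are pairwise disjoint simple closed curves (and no two are homotopic, since they are distinct homotopy classes, and none is null-homotopic or boundary-parallel since it is the core of an embedded flat cylinder), they form a collection of disjoint non-isotopic essential simple closed curves on $S$. By a standard fact about surfaces of finite type, the maximal size of such a collection is $3g-3+p$ where $g$ is the genus and $p$ the number of punctures (i.e. the number of curves in a pants decomposition), a quantity depending only on the topology of $S$. Therefore $\# C_q(\delta,\epsilon) \leqslant 3g-3+p$, which proves the corollary. This second step is entirely routine.
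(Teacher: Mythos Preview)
Your proof is correct and follows essentially the same approach as the paper: both argue that any curve crossing the cylinder of $\alpha$ must traverse its full width $A(\alpha)/\ell_q(\alpha) \geqslant \delta/\sqrt{\epsilon} > \sqrt{\epsilon}$, hence cannot itself have short core length. The paper compresses this into the single inequality $\delta \leqslant \ell_q(\alpha)\ell_q(\tau) \leqslant \sqrt{\epsilon}\,\ell_q(\tau)$ and omits both the discussion of degenerate intersection cases and the explicit pants-decomposition bound $3g-3+p$ for the corollary, but the substance is identical.
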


\begin{proof}
We follow the argument in \cite{MasurLog}*{ Lemma 2.2}. Denote by
$\alpha$ the core curve of some cylinder which belongs to $C_q(\delta,
\epsilon)$. Since the metric cylinder of $\alpha$ has area $A(\alpha)
\geqslant \delta$, any curve $\tau$ which crosses $\alpha$ is such
that $\delta \leqslant \ell_q(\alpha) \ell_q(\tau) \leqslant
\ell_q(\tau) \sqrt{\epsilon}$, hence $\ell_q(\tau) > \sqrt{\epsilon}$,
so $\tau$ cannot belong to $C_q(\delta, \epsilon)$.
\end{proof}

Given the quadratic differential $q$, let us denote as $N_q(\delta, T)$ 
the number of cylinders in the $q$-metric which have area bounded below by $\delta$ and 
length smaller than $T$. As Eskin and Masur showed, 
$N_q(\delta, T)$ grows quadratically as a function of $T$:

\begin{theorem} \label{theorem:QuadAsymp}
There exists $0 < \delta < 1$ and a constant $c_\delta >0$ such that, for almost every quadratic differential $q$ of unit area, we have 
$$\lim_{T \to \infty}\frac{N_q(\delta, T)}{T^2} = c_\delta. $$ 
\end{theorem}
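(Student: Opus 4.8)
The plan is to deduce the result directly from the work of Eskin and Masur \cite{eskin-masur} on counting problems for flat surfaces, combined with the ergodicity of the Teichm\"uller geodesic (or $SL(2,\mathbb{R})$) flow on the moduli space of quadratic differentials. First I would recall that Eskin and Masur prove, for almost every quadratic differential $q$ of unit area, an asymptotic of the form $N_q(\delta, T) \sim c(\delta)\, T^2$ as $T \to \infty$, where $N_q(\delta,T)$ counts the relevant geometric objects (here, metric cylinders with core curve of flat length at most $T$ and area at least $\delta$) of flat length at most $T$; this is the quadratic growth / Siegel--Veech type asymptotics established via the ergodic theory of the $SL(2,\mathbb{R})$ action. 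The key point is that the counting function associated to cylinders of area $\geq \delta$ falls under the class of functions to which their main theorem applies: the set of holonomy vectors of such cylinders forms a discrete subset of $\mathbb{R}^2$ with the right equivariance, and the Siegel--Veech transform of its indicator is integrable on $\mathcal{MQ}$.

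The key steps, in order, would be: (1) Fix $\delta$ small enough that the Eskin--Masur counting theorem applies to the family of cylinders of area at least $\delta$; this determines the constant $c_\delta$ as (a normalization of) the corresponding Siegel--Veech constant, which is positive because cylinders of definite area do occur with positive frequency on a positive-measure set of surfaces. (2) Invoke the Eskin--Masur theorem to conclude that for $\mu_{\text{hol}}$-almost every $q$, $N_q(\delta,T)/T^2 \to c_\delta$. (3) Observe that "almost every quadratic differential $q$ of unit area" in the statement refers to the holonomy (Masur--Veech) measure, which is exactly the measure for which Eskin--Masur prove their result, so no further transfer of measure is needed; if one wanted the statement for $s_X$-almost every foliation direction, one would additionally use that the conditional measures $s_X$ are equivalent to the Lebesgue class and that the bad set is $SL(2,\mathbb{R})$-invariant, hence intersects almost every Teichm\"uller disc in a Lebesgue-null set.

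The main obstacle is essentially bookkeeping rather than substance: one must make sure that the precise counting function appearing here — cylinders with area bounded below by $\delta$, counted by core-curve length — matches one of the counting functions handled in \cite{eskin-masur}, rather than, say, the count of all saddle connections or all cylinders regardless of area. The area cutoff is in fact helpful here: by Lemma \ref{disj} the cylinders of area $\geq\delta$ that are simultaneously short are uniformly bounded in number, which controls multiplicity issues, and more importantly restricting to area $\geq\delta$ makes the relevant Siegel--Veech transform bounded (hence integrable), so the hypotheses of the Eskin--Masur equidistribution argument are satisfied without delicate integrability estimates. Thus the only real work is to cite the correct version of their theorem and to identify $c_\delta$ with the associated (finite, positive) Siegel--Veech constant; the quadratic exponent $T^2$ is then immediate from the homogeneity of the $SL(2,\mathbb{R})$ action on holonomy coordinates.
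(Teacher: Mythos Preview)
Your approach is essentially the same as the paper's: invoke the Eskin--Masur counting theorem \cite{eskin-masur}*{Theorem 2.1} for the family of cylinders with area $\geq \delta$ to get the existence of the limit $c_\delta$ almost everywhere, and then argue separately that $c_\delta > 0$ for some $\delta$. The one place where the paper is sharper is the positivity step: rather than appealing loosely to cylinders of definite area occurring ``with positive frequency,'' the paper cites \cite{MasurLog}*{Proposition 2.5}, which says that for \emph{every} $q$ there is some $\delta$ with $\liminf_{T\to\infty} N_q(\delta,T)/T^2 > 0$; applying this to a generic $q$ (one for which the Eskin--Masur limit exists for all $\delta$) forces $c_\delta > 0$ for that $\delta$.
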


\begin{proof}
Let $0 < \delta < 1$. By the general counting argument of Eskin-Masur \cite{eskin-masur}*{Theorem 2.1} applied to the set of metric 
cylinders with area bounded below by $\delta$, we get the existence of the limit $c_\delta$ almost everywhere. 
On the other hand, by \cite{MasurLog}*{Proposition 2.5}, for every quadratic differential there exists some $\delta > 0$
such that $\liminf_{T \to \infty} \frac{N_q(\delta, T)}{T^2} > 0$, so the constant $c_\delta$ must be positive for some $\delta$. 
\end{proof}

A finer statement, at least in the case of translation surfaces, is due to Vorobets \cite{Vo}.

\subsection{Asymptotic length of short curves} \label{section:asymptotic}

Let us now quantify the idea of keeping track of short curves in the
flat metric. For the rest of the paper, we will fix some $\delta > 0$
for which Theorem \ref{theorem:QuadAsymp} holds, and some $\epsilon <
\delta$.  Let us define the function $L : \mathcal{QM} \to \mathbb{R}$
as
$$L(q):= \sum_{\alpha \in C_q(\delta, \epsilon)} \frac{1}{\ell^2_q(\alpha)}.$$
Note that by Lemma \ref{disj} the number of terms in the sum is always
finite, so the function is well-defined. Let us fix denote by $q_t$
the image of the quadratic differential $q$ under the Teichm\"uller
geodesic flow after time $t$. Our goal is to prove that the ergodic
average of $L$ is infinite:

\begin{theorem} \label{theorem:asylength}
For $\mu_{hol}$-a.e. quadratic differential $q$ of unit area, we have
$$\lim_{T \to \infty} \frac{\int_0^T L(q_t) \ dt}{T} = \infty.$$
\end{theorem}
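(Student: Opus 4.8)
The plan is to show the integral $\int_0^T L(q_t)\,dt$ grows faster than linearly by relating it to the counting function $N_q(\delta,T)$ from Theorem \ref{theorem:QuadAsymp} and exploiting that the cumulative contribution of each metric cylinder to the integral is, up to constants, the same regardless of the cylinder's length. Fix $q$ generic enough that both the ergodicity statements and Theorem \ref{theorem:QuadAsymp} hold. The key observation is that a fixed homotopy class $\alpha$ of a metric cylinder of area $A(\alpha) \ge \delta$ at time $0$ becomes short (i.e.\ satisfies $\ell^2_{q_t}(\alpha) \le \epsilon$) precisely for $t$ in some interval $I_\alpha$, on which its length profile is given by \eqref{length}, namely $\ell_t^2(\alpha) = \ell_0^2(\sin^2\theta_0 e^{2t} + \cos^2\theta_0 e^{-2t})$. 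Integrating $1/\ell_t^2(\alpha)$ over $I_\alpha$ (where $\alpha$ contributes to $L(q_t)$) gives a quantity comparable to $\frac{A(\alpha)}{\epsilon}E(\gamma,H_\epsilon(\alpha))$ by Proposition \ref{twist} and the Remark following it — in particular the integral of the reciprocal-length over one excursion is bounded \emph{below} by a universal positive constant, since an excursion into a horoball always has $E(\gamma,H) \gtrsim 1$ whenever the geodesic enters $H$ at all.

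First I would swap the order of summation and integration: $\int_0^T L(q_t)\,dt = \sum_\alpha \int_{[0,T]\cap I_\alpha^{(\epsilon)}} \frac{dt}{\ell_t^2(\alpha)}$, where the sum is over homotopy classes of metric cylinders with area $\ge \delta$ and $I_\alpha^{(\epsilon)}$ is the subinterval of $[0,T]$ on which $\ell_t^2(\alpha) \le \epsilon$. Each summand, if the cylinder actually goes short in $[0,T]$, is $\gtrsim 1$ by the estimate above (this uses exactly the computation in the proof of Proposition \ref{twist}: $\int 1/\ell_t^2 \asymp \frac{A}{\epsilon}(e^{t_2-t_1} - e^{t_1-t_2}) \gtrsim \frac{A}{\epsilon} \gtrsim 1$). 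Hence $\int_0^T L(q_t)\,dt \gtrsim \#\{\alpha : \alpha \text{ has area} \ge \delta \text{ and } \ell^2_{q_t}(\alpha) \le \epsilon \text{ for some } t \in [0,T]\}$. So it remains to show this count grows superlinearly in $T$.

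To count cylinders that go short before time $T$, I would use that a cylinder of area $\ge \delta$ whose core curve $\alpha$ has horizontal and vertical components $h_0 = \ell_0 \sin\theta_0$, $v_0 = \ell_0 \cos\theta_0$ becomes $\epsilon$-short at some finite time iff the minimum of $\ell_t^2(\alpha)$ over $t \in \mathbb{R}$ is $\le \epsilon$, i.e.\ iff $2 h_0 v_0 \le \epsilon$ (up to constants, by AM--GM applied to \eqref{length}); moreover the time at which the minimum is achieved is $t_* = \frac12\log(v_0/h_0)$, which lies in $[0,T]$ roughly when $h_0/v_0 \in [e^{-2T}, 1]$. Combined with $A(\alpha) = h_0\ell_0 \cdot(\text{transverse width})$ and the unit-area normalization, these constraints force $\ell_0(\alpha) = \sqrt{h_0^2+v_0^2}$ to be roughly of size between a constant and $e^{T}$ — so the cylinders contributing are (comparable to) those counted by $N_{q}(\delta', c e^{T})$ for suitable constants, which by Theorem \ref{theorem:QuadAsymp} grows like $e^{2T}$. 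Thus $\int_0^T L(q_t)\,dt \gtrsim e^{2T}$, which is certainly not $O(T)$, giving the claim — in fact it shows the ergodic average $\frac1T\int_0^T L(q_t)\,dt \to \infty$. I expect the main obstacle to be the bookkeeping in this last step: making precise which cylinders go $\epsilon$-short within the time window $[0,T]$ (as opposed to some later time), controlling the dependence on the angle $\theta_0$ uniformly, and matching the resulting count cleanly to the hypotheses of the Eskin--Masur counting theorem (Theorem \ref{theorem:QuadAsymp}) — in particular one must be slightly careful that the relevant count is of cylinders of the \emph{flat metric at time $0$} rather than at time $t_*$, but since the $SL(2,\mathbb{R})$-action permutes such cylinders and preserves areas this is only a matter of tracking the normalization.
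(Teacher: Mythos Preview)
Your argument has a genuine gap in the counting step. You correctly show that each cylinder $\alpha$ going $\epsilon$-short contributes $\gtrsim 1$ to $\int_0^T L(q_t)\,dt$, so the integral is bounded below by the number of such cylinders. But the claim that this count is comparable to $N_q(\delta', c e^T) \asymp e^{2T}$ is wrong: you have only shown that the contributing cylinders lie \emph{among} those of length $\lesssim e^T$, not that all such cylinders contribute. In fact the count is only linear in $T$. Parametrize cylinders by their holonomy $(h_0,v_0) = (\ell_0\sin\theta_0,\ell_0\cos\theta_0)$; going $\epsilon$-short at some time in $[0,T]$ forces $2h_0v_0 \lesssim \epsilon$ and $\tfrac12\log(v_0/h_0)\in[0,T]$, and this region in the plane has Euclidean area $\asymp \epsilon T$ (change variables to $u=hv$, $w=v/h$). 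Since Eskin--Masur equidistribution says holonomy vectors have asymptotic density $c_\delta$ in the plane, the count is $\asymp T$, not $e^{2T}$. Equivalently: by Lemma~\ref{disj} at most boundedly many cylinders are simultaneously short, so integrating over $[0,T]$ the total time-with-multiplicity is $\lesssim T$, and since each $|I_\alpha|$ is bounded below when the excursion is nontrivial, the number of cylinders is $\lesssim T$ as well. With only $\asymp T$ terms each $\gtrsim 1$ you get $\int_0^T L \gtrsim T$, which says nothing about the time average.

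What is actually needed is the \emph{depth} of the excursions, not their number: the contribution of $\alpha$ is $\asymp E(\gamma,H_\epsilon(\alpha))$, and it is the sum of excursions that grows superlinearly. The paper does not attempt to count cylinders along a single ray; instead it shows $\int_{\mathcal{MQ}} L\,d\mu_{\mathrm{hol}} = \infty$ directly via a dyadic truncation $\Psi_N \le 4\epsilon L$ and ergodicity. The crucial estimate (Lemma~\ref{angularest}) is angular rather than temporal: for each fixed $t$ large, the total angular measure of directions $\theta$ for which $q_{t,\theta}$ lies in some $H_{\epsilon 2^{-n}}(\alpha)$ is $\gtrsim \epsilon 2^{-n}$, so $\int_{S^1}\Psi_N(q_{t,\theta})\,d\theta \gtrsim N$. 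Averaging over $t$ and invoking ergodicity gives $\int \Psi_N\,d\mu_{\mathrm{hol}} \gtrsim N$ for every $N$. Your approach could in principle be repaired by keeping track of $E(\gamma,H_\alpha) \asymp \epsilon/(2h_0v_0)$ rather than discarding it for the constant $1$, but making the resulting weighted lattice-point sum rigorous would require finer equidistribution input than Theorem~\ref{theorem:QuadAsymp} provides.
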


In the proof of Theorem \ref{theorem:asylength}, we will make use of
the following relations between metric cylinders and the geometry of
Teichm\"uller discs. Let us fix a base point $q_0$ in the space of
quadratic differentials, and call $D_{q_0}$ the Teichm\"uller disc
given by the $SL_2(\mathbb{R})$-orbit of $q_0$. For every metric
cylinder $\alpha$ on $q_0$, there is an angle $\theta_\alpha$ such
that $\alpha$ is vertical in the quadratic differential $e^{i
  \theta_\alpha}q_0$. The angle $\theta_\alpha$ determines a point in
the circle at infinity of $D_{q_0}$.  For each metric cylinder on
$q_0$ with core curve $\alpha$, let us define the set
$$H_\epsilon(\alpha) := \{ q \in D_{q_0} \ : \ \ell_q^2(\alpha)
\leqslant \epsilon \}, $$
of points in the Teichm\"uller disc for which the length of $\alpha$
is less than the square root of $\epsilon$. Recall the metric induced
on $D_{q_0}$ by the Teichm\"uller metric is the hyperbolic metric of
constant curvature $-4$, and $H_\epsilon(\alpha)$ is a horoball for
that metric.

\begin{lemma} \label{horosize}
The Euclidean diameter $s$ of the horoball $H_\epsilon(\alpha)$ is
$$  s = \frac{2 \epsilon}{\epsilon + \ell_{q_0}^2(\alpha)}$$
where $\ell_{q_0}(\alpha)$ is the length of $\alpha$ in the flat metric
associated to the quadratic differential $q_0$.  
\end{lemma}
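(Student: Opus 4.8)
The plan is to reduce the statement to a one-variable computation in the disc model of $D_{q_0}$, in which $q_0$ sits at the origin $0 \in \mathbb{D}$ and the Teichm\"uller metric is the hyperbolic metric of curvature $-4$. We already know that $H_\epsilon(\alpha)$ is a horoball tangent to $\partial\mathbb{D}$ at $\xi_\alpha$, and a horoball tangent to $\partial\mathbb{D}$ at a boundary point has Euclidean diameter $s = 1 - r_0$, where $r_0$ is the Euclidean radius of the point at which the geodesic through $0$ and $\xi_\alpha$ meets the bounding horocycle. So the only thing to determine is the Euclidean radius of the point where the geodesic ray from $q_0$ to $\xi_\alpha$ enters $H_\epsilon(\alpha)$.

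First I would identify that geodesic ray concretely. By construction $\xi_\alpha$ is the forward endpoint of the Teichm\"uller geodesic obtained by applying the flow $g_t = \matr{e^t}{0}{0}{e^{-t}}$ to the rotated differential $e^{i\theta_\alpha}q_0$, in which $\alpha$ is vertical. Since rotation is an isometry of the flat metric, the holonomy vector of $\alpha$ in $e^{i\theta_\alpha}q_0$ is vertical of length $\ell_{q_0}(\alpha)$, and applying $g_t$ scales its vertical coordinate by $e^{-t}$; hence along this ray the flat length of $\alpha$ is $e^{-t}\ell_{q_0}(\alpha)$, so that $\ell^2(\alpha) = e^{-2t}\ell_{q_0}^2(\alpha)$ at Teichm\"uller time $t$. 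The ray therefore enters $H_\epsilon(\alpha)$ precisely when $e^{-2t}\ell_{q_0}^2(\alpha) = \epsilon$, that is at $t_0 = \tfrac12\log(\ell_{q_0}^2(\alpha)/\epsilon)$. In the curvature $-4$ disc model the radial metric is $dr/(1-r^2)$, so a point at distance $t$ from $0$ along a radius has Euclidean radius $\tanh t$; hence the entry point lies at Euclidean radius
\[
r_0 = \tanh t_0 = \frac{e^{2t_0}-1}{e^{2t_0}+1} = \frac{\ell_{q_0}^2(\alpha)-\epsilon}{\ell_{q_0}^2(\alpha)+\epsilon}.
\]

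Then $s = 1 - r_0 = \dfrac{2\epsilon}{\epsilon+\ell_{q_0}^2(\alpha)}$, which is the asserted formula. As a sanity check, realizing $D_{q_0}$ as the upper half-plane with $q_0$ at $i$ one can compute directly from the $SL(2,\mathbb{R})$-action on holonomy that $\ell_z^2(\alpha) = \ell_{q_0}^2(\alpha)\,|z|^2/\operatorname{Im} z$; the sublevel sets are then visibly horocycles tangent to the real axis at $0$, and transporting by the Cayley transform $z\mapsto (z-i)/(z+i)$ gives the same diameter. The calculation is short, and the only point that requires genuine care is the curvature $-4$ normalization: it is exactly the relation $r = \tanh t$ — rather than the curvature $-1$ relation $r = \tanh(t/2)$ — that produces the factor $2\epsilon$ in the numerator, so a wrong normalization would give a wrong constant.
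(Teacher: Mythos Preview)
Your argument is correct and is essentially the same as the paper's: both compute the Teichm\"uller distance from $q_0$ to the boundary of $H_\epsilon(\alpha)$ along the radial geodesic toward $\xi_\alpha$, using that $\ell^2(\alpha)$ decays like $e^{-2t}$ along that ray and that the curvature $-4$ radial metric on the disc is $dr/(1-r^2)$. The paper writes the same integral as $d(q_0,H_\epsilon(\alpha))=\int_0^{1-s}\frac{dx}{1-x^2}=\tfrac12\log\frac{2-s}{s}$ and solves $e^{-2d}\ell_{q_0}^2(\alpha)=\epsilon$ for $s$, which is just your relation $r_0=\tanh t_0$ rewritten in the variable $s=1-r_0$.
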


\begin{proof}
By integrating the hyperbolic metric of curvature $-4$ we have 
$$d(q_0, H_\epsilon(\alpha)) = \int_0^{1-s} \frac{dx}{1-x^2} = \frac{1}{2} \log \frac{2-s}{s}$$
and, since the Teichm\"uller map exponentially shrinks the curve $\alpha$, 
$$e^{-2 d(q_0, H_\epsilon(\alpha))} \ell_{q_0}^2(\alpha) = \epsilon$$
hence the claim.
\end{proof}

We will need the following estimate from elementary Euclidean geometry:

\begin{lemma} \label{angeucl} %
In the unit disc, let $\theta(r, R)$ be the angle at the center of the
disc corresponding to the intersection of the circle of radius $R
\geqslant \frac{1}{2}$ centered at the origin, with a circle of radius
$r \leqslant \frac{1}{2}$ tangent to the boundary, with
$R+2r-1\geqslant 0$. Then there is a constant $K$ such that
\[ \frac{1}{K} \sqrt{(1-R)(R+2r-1)} \leqslant \theta(r, R) \leqslant K
\sqrt{(1-R)(R+2r-1)}. \]
\end{lemma}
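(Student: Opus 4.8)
The plan is to reduce this to a direct computation in Euclidean geometry in the unit disc. Let me set up coordinates: place the small circle of radius $r$ tangent to the boundary of the unit disc at the point $(1,0)$, so its center is at $(1-r, 0)$. The large circle of radius $R$ is centered at the origin. Two circles of radii $R$ and $r$ with centers distance $d = 1-r$ apart intersect (in two points symmetric about the $x$-axis) precisely when $|R - r| \le d \le R + r$, i.e., when $R + 2r - 1 \ge 0$ and $R - 2r + 1 \ge 0$ (the latter holds automatically here since $R \le 1$ and $r \le \tfrac12$ give $R - 2r + 1 \ge R \ge 0$... more carefully $R \le 1$, $2r \le 1$, so $R - 2r + 1 \ge 0$). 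The angle $\theta(r,R)$ at the origin subtended by these two intersection points is $2\psi$, where $\psi$ is the angle the segment from the origin to an intersection point makes with the $x$-axis.

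First I would find the $x$-coordinate of the intersection points. Subtracting the two circle equations $x^2 + y^2 = R^2$ and $(x - (1-r))^2 + y^2 = r^2$ gives a linear equation for $x$, yielding $x_0 = \frac{R^2 - r^2 + (1-r)^2}{2(1-r)} = \frac{R^2 + 1 - 2r}{2(1-r)}$. Then $\cos\psi = x_0 / R$, and since $\theta = 2\psi$ with $\psi$ small when the intersection is near the $x$-axis, I have $\theta(r,R) \asymp \psi \asymp \sin\psi = \sqrt{1 - \cos^2\psi} = \frac{1}{R}\sqrt{R^2 - x_0^2}$. So up to the bounded factor $1/R$ (which lies between $1$ and $2$ since $R \ge \tfrac12$), it suffices to show $\sqrt{R^2 - x_0^2} \asymp \sqrt{(1-R)(R + 2r - 1)}$.

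The heart of the matter is therefore the algebraic identity: $R^2 - x_0^2$ factors, up to bounded multiplicative error, as $(1-R)(R + 2r - 1)$. Compute $R - x_0 = \frac{2R(1-r) - R^2 - 1 + 2r}{2(1-r)} = \frac{-(R^2 - 2R + 1) + 2r(1 - R)}{2(1-r)} \cdot(-1)$... let me just note $2R(1-r) - R^2 - 1 + 2r = -(R-1)^2 + 2r - 2rR = -(1-R)^2 + 2r(1-R) = (1-R)(2r - (1-R)) = (1-R)(R + 2r - 1)$, so $R - x_0 = \frac{(1-R)(R + 2r - 1)}{2(1-r)}$. Similarly $R + x_0 = \frac{2R(1-r) + R^2 + 1 - 2r}{2(1-r)} = \frac{(R+1-2r) + R^2 ... }{}$; one computes $2R(1-r) + R^2 + 1 - 2r = (R+1)^2 - 2r(R+1) = (R+1)(R+1-2r)$, so $R + x_0 = \frac{(R+1)(R+1-2r)}{2(1-r)}$. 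Hence $R^2 - x_0^2 = \frac{(1-R)(R+2r-1)(R+1)(R+1-2r)}{4(1-r)^2}$. The factor $\frac{(R+1)(R+1-2r)}{4(1-r)^2}$ is bounded above and below by positive constants on the allowed parameter range (using $R \ge \tfrac12$, $r \le \tfrac12$: the numerator factors lie in bounded intervals and $1 - r \ge \tfrac12$), which gives $\sqrt{R^2 - x_0^2} \asymp \sqrt{(1-R)(R+2r-1)}$ and hence the lemma.

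The main obstacle is purely bookkeeping: making sure the "bounded factors" $\frac{1}{R}$, $\frac{(R+1)(R+1-2r)}{4(1-r)^2}$, and the comparison $\psi \asymp \sin\psi$ are all genuinely bounded above and below by absolute constants given only the stated constraints $R \ge \tfrac12$, $r \le \tfrac12$, $R + 2r - 1 \ge 0$ — in particular one should check that $R$ stays bounded away from both $0$ and (when needed) that $x_0$ is nonnegative so that $\psi \le \pi/2$ and the small-angle comparison applies; if $x_0$ could be negative one instead works with $\cos\psi$ directly, but the factorization above shows $R + x_0 \ge 0$ always, and the sign of $x_0$ itself only affects which constant $K$ one gets. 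No sharp constants are claimed, so all of this is routine once the factorization $R^2 - x_0^2 = \frac{(1-R)(R+2r-1)(R+1)(R+1-2r)}{4(1-r)^2}$ is in hand.
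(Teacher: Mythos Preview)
Your proof is correct and is essentially the same approach as the paper's: the paper's one-line proof invokes the law of cosines $r^2 = (1-r)^2 + R^2 - 2R(1-r)\cos(\theta/2)$ in the triangle with vertices the origin, the center $(1-r,0)$, and an intersection point, which rearranges exactly to your formula $\cos\psi = x_0/R$. You have simply carried out in full the ``standard algebraic manipulation and approximation'' that the paper leaves to the reader, including the nice factorization $R^2 - x_0^2 = \frac{(1-R)(R+2r-1)(R+1)(R+1-2r)}{4(1-r)^2}$.
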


\begin{proof} 
By the law of cosines, $r^2 = (1-r)^2 + R^2 - 2R (1-r) \cos
(\theta/2)$. The claim follows by standard algebraic manipulation and
approximation.
\end{proof}

Let $q_{t, \theta}$ denote the quadratic differential given by flowing
the quadratic differential $e^{i \theta} q_0$ for time $t$.

\begin{lemma} \label{angularest}
For almost every quadratic differential $q_0$ there exists a constant $c > 0$,  
such that for each $\epsilon > 0$ there exists a time $t_\epsilon$
such that
$$\sum_{\alpha \in C_{q_0}(\delta)} \leb(\{ \theta \in [0, 2\pi] \ : \ q_{t,
  \theta} \in H_{\epsilon}(\alpha) \})  \geqslant c \epsilon \qquad \forall
\ t \geqslant t_\epsilon, $$
where $\leb$ denotes Lebesgue measure on the circle. 
\end{lemma}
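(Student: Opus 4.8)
The plan is to reduce the sum to a problem of elementary Euclidean geometry in the Teichm\"uller disc $D_{q_0}$, which is handled by Lemmas~\ref{horosize} and~\ref{angeucl}, and then to add up the contributions using the quadratic growth of the number of metric cylinders provided by Theorem~\ref{theorem:QuadAsymp}.

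First I would fix the geometric dictionary. Identify $D_{q_0}$ with the unit disc carrying the hyperbolic metric of curvature $-4$, with $q_0$ at the origin. Since the rotations $e^{i\theta}$ fix the origin, for a fixed $t$ the points $q_{t,\theta}$, $\theta\in[0,2\pi)$, all lie at Teichm\"uller distance $t$ from $q_0$, hence trace out the Euclidean circle $C_t$ centred at the origin of Euclidean radius $R_t=\tanh t$; moreover $\theta\mapsto q_{t,\theta}$ runs around $C_t$ at constant angular speed (it is a rotation about the origin by an angle proportional to $\theta$). Therefore $\leb(\{\theta:q_{t,\theta}\in H_\epsilon(\alpha)\})$ equals, up to a fixed universal multiplicative constant, the angle at the origin subtended by $C_t\cap H_\epsilon(\alpha)$. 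By Lemma~\ref{horosize}, $H_\epsilon(\alpha)$ is the Euclidean disc tangent to the boundary at $\xi_\alpha$ of radius $r_\alpha:=\dfrac{\epsilon}{\epsilon+\ell_{q_0}^2(\alpha)}$, so in the notation of Lemma~\ref{angeucl} this subtended angle is $\theta(r_\alpha,R_t)$, and it vanishes precisely when $R_t+2r_\alpha-1<0$. Lemma~\ref{angeucl} then yields, whenever $R_t\ge\tfrac12$, $r_\alpha\le\tfrac12$ and $R_t+2r_\alpha-1\ge0$,
\[ \leb(\{\theta:q_{t,\theta}\in H_\epsilon(\alpha)\}) \ \gtrsim\ \sqrt{(1-R_t)(R_t+2r_\alpha-1)}. \]

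Next I would match scales and count. Given $\epsilon$, set $T(t):=\tfrac12\sqrt\epsilon\, e^{t}$ and restrict to the cylinders $\alpha\in C_{q_0}(\delta)$ with $\sqrt\epsilon\le\ell_{q_0}(\alpha)<T(t)$. For $t$ large (depending on $\epsilon$) such an $\alpha$ has $r_\alpha\le\tfrac12$, because $\ell_{q_0}^2(\alpha)\ge\epsilon$, and, using $e^{-2t}\le 1-\tanh t\le 2e^{-2t}$,
\[ r_\alpha\ \ge\ \frac{\epsilon}{\epsilon+T(t)^2}\ =\ \frac{4}{4+e^{2t}}\ \ge\ 2e^{-2t}\ \ge\ 1-R_t, \]
whence $R_t+2r_\alpha-1=2r_\alpha-(1-R_t)\ge 2e^{-2t}$ and always $1-R_t\ge e^{-2t}$, so each such $\alpha$ contributes at least a universal constant times $\sqrt{2}\,e^{-2t}$ to $\sum_{\alpha\in C_{q_0}(\delta)}\leb(\{\theta:q_{t,\theta}\in H_\epsilon(\alpha)\})$. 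By Theorem~\ref{theorem:QuadAsymp}, for $\mu_{hol}$-a.e.\ $q_0$ there is $T_1=T_1(q_0)$ with $N_{q_0}(\delta,T)\ge\tfrac12 c_\delta T^2$ for $T\ge T_1$, while by Lemma~\ref{disj} the number of $\alpha\in C_{q_0}(\delta)$ with $\ell_{q_0}^2(\alpha)\le\epsilon$ is bounded by a constant $n_0$ depending only on the topology of $S$. Hence for $t$ large enough that $T(t)\ge T_1$ and $\tfrac12 c_\delta T(t)^2\ge 2n_0$, the number of admissible cylinders is at least $\tfrac12 c_\delta T(t)^2-n_0\ge\tfrac14 c_\delta T(t)^2=\tfrac{1}{16}c_\delta\epsilon e^{2t}$. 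Multiplying the per-cylinder bound by this count cancels the powers of $e^{t}$ and gives $\sum_{\alpha\in C_{q_0}(\delta)}\leb(\{\theta:q_{t,\theta}\in H_\epsilon(\alpha)\})\ge c\,\epsilon$ for all $t\ge t_\epsilon$, with $c>0$ depending only on $c_\delta$ and the universal constants above and $t_\epsilon$ depending on $\epsilon$ and $q_0$.

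The delicate step is the first one: one must be careful that the chosen identification of $D_{q_0}$ with the curvature $-4$ disc is simultaneously the one under which $\{q_{t,\theta}\}_\theta$ is a Euclidean circle centred at the origin and under which $H_\epsilon(\alpha)$ is exactly the horoball of Lemma~\ref{horosize}, and that the parametrization $\theta\mapsto q_{t,\theta}$ has constant angular speed, so that the Lebesgue measure in $\theta$ is a fixed multiple of the subtended angle. (This involves keeping track of a factor of $2$ coming from the relation $q\mapsto e^{-2i\theta}q$ between rotations of the flat picture and of the disc.) Once this dictionary is in place the argument is the scale-matching computation above, whose only genuine input is the quadratic asymptotics of Theorem~\ref{theorem:QuadAsymp}; the finitely many cylinders that are already short are harmless since we only need a lower bound.
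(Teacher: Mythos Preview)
Your proof is correct and follows essentially the same approach as the paper: identify $D_{q_0}$ with the disc, use Lemma~\ref{horosize} to convert cylinder lengths into horoball radii, use Lemma~\ref{angeucl} to bound from below the angle each horoball subtends on the circle of radius $R_t$, and use Theorem~\ref{theorem:QuadAsymp} to count enough cylinders so that the product of count and per-cylinder angle is at least $c\epsilon$. Your version is a bit more explicit---you write $R_t=\tanh t$ and work directly with the length cutoff $T(t)=\tfrac12\sqrt\epsilon\,e^{t}$, and you handle the finitely many already-short cylinders by subtracting the bound $n_0$ from Lemma~\ref{disj}, whereas the paper phrases the cutoff in terms of the horoball diameter $s\ge\tfrac32(1-R)$ and absorbs the short-cylinder issue into a bounded multiplicity constant $M$---but these are cosmetic differences, not a different route.
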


\begin{proof}
Let $\frac{1}{2} < R < 1$, and consider the set of horoballs of the collection
$H_\epsilon(\alpha)$ with $\alpha \in C_{q_0}(\delta)$ and Euclidean diameter $s \geqslant
\frac{3}{2}(1-R)$. By Lemma \ref{horosize}, these horoballs correspond
precisely to metric cylinders with core curve $\alpha$ such that
$$\ell_{q_0}^2(\alpha) \leqslant \frac{3R+1}{3(1-R)} \epsilon.$$ 
By Theorem \ref{theorem:QuadAsymp}, the number of such cylinders is, 
for $R$ large, at least $\frac{c_\delta}{2} \frac{3R+1}{3(1-R)} \epsilon$.
By Lemma \ref{angeucl}, every corresponding
horoball intersects the circle of Euclidean radius $R$ centered at the
origin in an arc of visual angle
$$\theta \geqslant \frac{1}{K \sqrt{2}} (1-R)$$
and by Lemma \ref{disj} every quadratic differential belongs to at
most a universally bounded number $M$ of horoballs, hence the total visual
angle is at least $\frac{c_\delta}{6 KM \sqrt{2}} \epsilon$.
\end{proof}

In order to prove Theorem \ref{theorem:asylength}, let us first define
a discretized version of $L$. Namely, for each $n$ and $\alpha$ we
denote as $H_n(\alpha)$ the horoball
$$H_n(\alpha) := \{ q \in D_{q_0} \ : \ell_q^2(\alpha) \leqslant 2^{-n} \epsilon \}.$$
Now, the function $\Psi : \mathcal{QM} \to \mathbb{R}$ is defined as 
$$\Psi(q) := \sum_{\alpha \in C_q(\delta)} \sum_{n = 1}^\infty 2^n \chi_{H_n(\alpha)}.$$
It is easy to see that $\Psi$ is bounded above by a multiple of $L$: 

\begin{lemma}  \label{comparable}
For each quadratic differential $q$, we have
$$\Psi(q) \leqslant 4\epsilon  L(q).$$
\end{lemma}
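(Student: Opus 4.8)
The goal is to prove that $\Psi(q) \leqslant 4\epsilon L(q)$ for every quadratic differential $q$, where
$$\Psi(q) = \sum_{\alpha \in C_q(\delta)} \sum_{n=1}^\infty 2^n \chi_{H_n(\alpha)}(q), \qquad L(q) = \sum_{\alpha \in C_q(\delta,\epsilon)} \frac{1}{\ell_q^2(\alpha)}.$$

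The plan is to evaluate the inner sum over $n$ for a fixed cylinder $\alpha$. For $q$ itself, the point $q$ lies in the horoball $H_n(\alpha)$ precisely when $\ell_q^2(\alpha) \leqslant 2^{-n}\epsilon$, i.e. when $2^n \leqslant \epsilon/\ell_q^2(\alpha)$. So the inner sum $\sum_{n\geqslant 1} 2^n \chi_{H_n(\alpha)}(q)$ is a geometric sum $2 + 4 + \cdots + 2^{N}$ where $N = N(\alpha)$ is the largest integer with $2^N \leqslant \epsilon/\ell_q^2(\alpha)$ (and the sum is empty, contributing $0$, if no such $N\geqslant 1$ exists). This geometric sum equals $2^{N+1} - 2 \leqslant 2^{N+1} \leqslant 2\cdot \epsilon/\ell_q^2(\alpha) = 2\epsilon/\ell_q^2(\alpha)$. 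Wait — I should double-check whether the horoballs $H_n(\alpha)$ are defined relative to the Teichm\"uller disc $D_{q_0}$ or evaluated at $q$; since $\Psi$ is a function on $\mathcal{QM}$ defined by the characteristic functions $\chi_{H_n(\alpha)}$ pulled back appropriately, the condition $q \in H_n(\alpha)$ is $\ell_q^2(\alpha)\leqslant 2^{-n}\epsilon$, which is exactly what we want.

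Next I would address the discrepancy between the index sets: $\Psi$ sums over $\alpha \in C_q(\delta)$ while $L$ sums only over $\alpha \in C_q(\delta,\epsilon)$. But this is harmless: if $\alpha \in C_q(\delta)$ but $\alpha \notin C_q(\delta,\epsilon)$, then $\ell_q^2(\alpha) > \epsilon$, so $\epsilon/\ell_q^2(\alpha) < 1$ and hence there is no $n\geqslant 1$ with $2^n \leqslant \epsilon/\ell_q^2(\alpha)$; the entire inner sum over $n$ vanishes. Thus only the cylinders in $C_q(\delta,\epsilon)$ contribute to $\Psi(q)$, and for each such $\alpha$ the contribution is at most $2\epsilon/\ell_q^2(\alpha)$. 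Summing over $\alpha \in C_q(\delta,\epsilon)$ gives $\Psi(q) \leqslant 2\epsilon \sum_{\alpha} 1/\ell_q^2(\alpha) = 2\epsilon L(q) \leqslant 4\epsilon L(q)$, which is even slightly better than claimed.

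I do not anticipate a genuine obstacle here — the statement is a bookkeeping inequality. The only point requiring a little care is making sure the bound $2^{N+1}-2 \leqslant 2\epsilon/\ell_q^2(\alpha)$ is correctly justified from the definition of $N$ as the largest integer with $2^N \leqslant \epsilon/\ell_q^2(\alpha)$, and checking the edge case where the inner sum is empty. The factor $4$ rather than $2$ in the statement presumably just leaves room for a cruder estimate or a different convention, so proving the sharper constant $2$ suffices a fortiori. I would write the proof in two or three sentences along exactly these lines.

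\begin{proof}
Fix a quadratic differential $q$ and a cylinder $\alpha \in C_q(\delta)$. By definition, $q \in H_n(\alpha)$ if and only if $\ell_q^2(\alpha) \leqslant 2^{-n}\epsilon$, i.e. if and only if $2^n \leqslant \epsilon/\ell_q^2(\alpha)$. Hence the set of integers $n \geqslant 1$ for which $q \in H_n(\alpha)$ is $\{1, 2, \dots, N(\alpha)\}$, where $N(\alpha)$ is the largest integer with $2^{N(\alpha)} \leqslant \epsilon/\ell_q^2(\alpha)$, with the convention that this set is empty if no such $N(\alpha) \geqslant 1$ exists. In the latter case the inner sum $\sum_{n \geqslant 1} 2^n \chi_{H_n(\alpha)}(q)$ is zero; this happens in particular whenever $\ell_q^2(\alpha) > \epsilon$, that is, whenever $\alpha \notin C_q(\delta,\epsilon)$. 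When the set is non-empty we have
$$\sum_{n=1}^\infty 2^n \chi_{H_n(\alpha)}(q) = \sum_{n=1}^{N(\alpha)} 2^n = 2^{N(\alpha)+1} - 2 \leqslant 2 \cdot 2^{N(\alpha)} \leqslant \frac{2\epsilon}{\ell_q^2(\alpha)}.$$
Summing over $\alpha$ and discarding the cylinders not in $C_q(\delta,\epsilon)$, which contribute zero, gives
$$\Psi(q) = \sum_{\alpha \in C_q(\delta,\epsilon)} \sum_{n=1}^\infty 2^n \chi_{H_n(\alpha)}(q) \leqslant 2\epsilon \sum_{\alpha \in C_q(\delta,\epsilon)} \frac{1}{\ell_q^2(\alpha)} = 2\epsilon L(q) \leqslant 4\epsilon L(q).$$
\end{proof}
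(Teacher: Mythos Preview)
Your proof is correct and follows essentially the same approach as the paper: for each cylinder $\alpha$, bound the inner geometric sum $\sum_n 2^n \chi_{H_n(\alpha)}(q)$ by a constant times $\epsilon/\ell_q^2(\alpha)$, then sum over $\alpha$. Your handling of the edge case $\alpha \notin C_q(\delta,\epsilon)$ is slightly more explicit than the paper's, and your choice of $N(\alpha)$ as the largest $n$ with $q \in H_n(\alpha)$ yields the sharper constant $2\epsilon$ rather than the paper's $4\epsilon$ (the paper bounds the sum up to index $M$ where $2^{-M}\epsilon \leqslant \ell_q^2(\alpha) \leqslant 2^{-M+1}\epsilon$, which can overshoot by one term), but the argument is the same in substance.
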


\begin{proof}
 
Let $\alpha \in C_q(\delta)$ be a short curve on $q$: then there
exists a positive integer $M$ such that
$$2^{-M} \epsilon \leqslant \ell_q^2(\alpha) \leqslant 2^{-M+1} \epsilon.$$
Now, since $q$ lies in $H_1(\alpha) \cup \dots \cup H_M(\alpha)$, 
$$\sum_{n = 1}^\infty 2^n \chi_{H_n(\alpha)} \leqslant 1 + 2 + \dots + 2^M \leqslant 2 \cdot 2^M \leqslant \frac{4 \epsilon}{\ell^2_q(\alpha)}$$ 
and summing over $\alpha$ yields the claim.
\end{proof}

\begin{proof}[Proof of Theorem \ref{theorem:asylength}]
By Lemma \ref{comparable}, it is enough to prove the statement for $\Psi$.
Let us now truncate the function $\Psi$ by defining, for each $N$, 
$$\Psi_N(q) := \sum_{\alpha \in C_q(\delta)} \sum_{n = 1}^N 2^n \chi_{H_n(\alpha)}.$$
Let us now fix $N$. By Lemma \ref{disj}, $\Psi_N$ is bounded on the moduli space $\mathcal{MQ}$ 
of unit area quadratic differentials, hence $\mu_{hol}$-integrable; by ergodicity of the
geodesic flow, for a generic Teichm\"uller disc for almost all radial
directions $\theta$ the ergodic average of $\Psi_N$ along the flow
tends to its integral:
$$\lim_{T \to \infty} \int_0^T \frac{\Psi_N(q_{t, \theta}) \ dt }{T} =
\int_{\mathcal{MQ}} \Psi_N(q) \ d\mu_{hol} \qquad \textup{for
  a.e. }\theta.$$
Then, if we integrate both sides w.r.t. to the angular measure $d\theta$ and apply the dominated convergence theorem, 
$$\lim_{T \to \infty} \int_{S^1} d\theta  \int_0^T \frac{\Psi_N(q_{t, \theta}) \ dt }{T} = 
\int_{\mathcal{MQ}} \Psi_N(q) \ d\mu_{hol}$$
and by Fubini
$$\lim_{T \to \infty} \frac{  \int_0^T dt \int_{S^1}  \Psi_N(q_{t, \theta}) \ d\theta  }{T} = 
\int_{\mathcal{MQ}} \Psi_N(q) \ d\mu_{hol}.$$
Now, by Lemma \ref{angularest}, for each $t > T_{2^{-N}}$ 
$$\int_{S^1}  \Psi_N(q_{t, \theta}) \ d\theta \geqslant \sum_{n = 1}^N 2^n \cdot c2^{-n} = cN$$
hence
$$\int_{\mathcal{MQ}} \Psi_N(q) \ d\mu_{hol} = \lim_{T \to \infty} \frac{  \int_0^T dt \int_{S^1}  \Psi_N(q_{t, \theta}) \ d\theta  }{T} \geqslant c N.$$
Since the previous estimate works for all $N$, then also
$$\int_{\mathcal{MQ}} L(q) \ d\mu_{hol} = \infty$$
hence the ergodic average tends to infinity almost everywhere: 
$$\lim_{T \to \infty} \int_0^T \frac{L(q_t) \ dt }{T} =
\int_{\mathcal{MQ}} L(q) \ d\mu_{hol} = \infty \qquad \textup{for
  a.e. }q \in \mathcal{MQ}. $$
\end{proof}

\subsection{Average excursion} \label{section:average}

Let us now turn the asymptotic estimate of the previous section into an asymptotic about excursions.
If $q$ is a quadratic differential, let us denote as $\gamma_q$ the corresponding Teichm\"uller geodesic ray.
We now define the concept of \emph{total excursion} traveled by the geodesic $\gamma_q$ inside the horoballs up to time $T$:

\begin{definition}
Given a quadratic differential $q$, the \emph{total excursion} $E(q, T)$ is 
the sum of all excursions in all horoballs crossed by the geodesic ray $\gamma_q$ up to time $T$:
$$E(q, T) := \sum_{\gamma_q([0, T]) \cap H_\epsilon(\alpha) \neq \emptyset} E(\gamma_q, H_\epsilon(\alpha)).$$
\end{definition}

Our goal is to prove that also the average total excursion is infinite. 

\begin{theorem} \label{total_excursion}
For $\mu_{hol}$-almost every quadratic differential $q$ of unit area, we have
$$\lim_{T \to \infty} \frac{E(q, T)}{T} = \infty.$$
\end{theorem}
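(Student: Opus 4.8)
The plan is to deduce Theorem \ref{total_excursion} from Theorem \ref{theorem:asylength} by showing that the total excursion $E(q,T)$ along a geodesic ray is, up to multiplicative and additive constants, bounded below by the integral $\int_0^T L(q_t)\,dt$ of the short-curve function. The basic mechanism is Proposition \ref{twist} (and its associated picture): whenever the geodesic $\gamma_q$ crosses a horoball $H_\epsilon(\alpha)$ with entry time $t_1$ and exit time $t_2$, the excursion $E(\gamma_q, H_\epsilon(\alpha))$ is comparable to $\frac{\epsilon}{A}\big(tw^-_{t_2}(\alpha) - tw^-_{t_1}(\alpha)\big)$, and by the Remark following Proposition \ref{twist} this in turn is comparable to $e^{t_2-t_1}$. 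So the key point is to compare the amount of ``excursion mass'' a geodesic accumulates while passing through a horoball to the time-integral of $1/\ell_{q_t}^2(\alpha)$ over that passage.

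The first step is to carry out this comparison for a single horoball crossing. Fix a metric cylinder $\alpha$ with $q$-area $A = A(\alpha) \ge \delta$, and let $[t_1,t_2]$ be the time interval during which $\ell^2_{q_t}(\alpha) \le \epsilon$, i.e. $\gamma_q$ is inside $H_\epsilon(\alpha)$. Using the explicit length formula \eqref{length}, $\ell^2_{q_t}(\alpha) = \ell_0^2(\sin^2\theta_0\, e^{2t} + \cos^2\theta_0\, e^{-2t})$ (taking the entry to the disc as time $0$), one computes $\int_{t_1}^{t_2} \frac{1}{\ell^2_{q_t}(\alpha)}\,dt$ by an elementary change of variables, and checks that it is comparable to $\frac{1}{\epsilon}(e^{t_2 - t_1} - e^{t_1-t_2})$ — the same quantity appearing in the Remark after Proposition \ref{twist}. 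Concretely, near the bottom of the horoball the length is roughly minimized and the integrand is of order $1/\epsilon$ over a time interval of length of order $t_2 - t_1$, while the exponential tails of $1/\ell^2$ near the entry and exit contribute the bulk when $t_2 - t_1$ is large; in all cases $\int_{t_1}^{t_2} \ell^{-2}_{q_t}(\alpha)\,dt \asymp \frac{1}{\epsilon}\, e^{t_2-t_1} \asymp \frac{A}{\epsilon^2}\, E(\gamma_q, H_\epsilon(\alpha))$, since $A$ is pinned between $\delta$ and a topological upper bound. Thus for each horoball passage, $E(\gamma_q, H_\epsilon(\alpha)) \asymp \frac{\epsilon^2}{A}\int_{t_1}^{t_2} \ell^{-2}_{q_t}(\alpha)\,dt$, up to constants depending only on $\epsilon$, $\delta$ and the topology of $S$.

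The second step is to sum over all horoballs met up to time $T$. By Lemma \ref{disj}, at any given time $t$ the point $q_t$ lies in at most a bounded number (depending only on the topology) of horoballs $H_\epsilon(\alpha)$ with $\alpha \in C_{q_t}(\delta)$; the curves contributing to $L(q_t)$ are exactly those $\alpha \in C_{q_t}(\delta,\epsilon)$, i.e. those cylinders of area $\ge\delta$ which are currently short. Hence, up to a bounded multiplicative factor, $\int_0^T L(q_t)\,dt = \int_0^T \sum_{\alpha \in C_{q_t}(\delta,\epsilon)} \ell^{-2}_{q_t}(\alpha)\,dt$ equals $\sum_\alpha \int_{[0,T]\cap I_\alpha} \ell^{-2}_{q_t}(\alpha)\,dt$, where the outer sum is over all metric cylinders ever becoming short and $I_\alpha$ is the corresponding active interval. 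Restricting to the horoballs fully crossed by time $T$ and applying the single-passage estimate of step one, this is $\asymp \sum_\alpha \frac{A(\alpha)}{\epsilon^2} E(\gamma_q, H_\epsilon(\alpha)) \le \frac{K}{\epsilon^2} \sum_\alpha E(\gamma_q, H_\epsilon(\alpha)) = \frac{K}{\epsilon^2} E(q,T)$ (one must also handle the at most boundedly-many horoballs active at the terminal time $T$ but not yet exited — these contribute a possibly large but lower-order term that can only increase $E(q,T)$, or can be absorbed by replacing $T$ with $T+O(1)$). Combining, $E(q,T) \ge c\,\epsilon^2 \int_0^T L(q_t)\,dt - C$ for constants depending only on $\epsilon,\delta,S$.

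Dividing by $T$ and letting $T\to\infty$, Theorem \ref{theorem:asylength} gives $\liminf_{T\to\infty} E(q,T)/T \ge c\,\epsilon^2 \cdot \infty = \infty$ for $\mu_{hol}$-a.e. $q$, which is the claim. The main obstacle I expect is the bookkeeping in step two: matching ``active intervals'' $I_\alpha$ of Proposition \ref{disjointinterval} with the horoball crossings $H_\epsilon(\alpha)$ on the Teichm\"uller disc, since a cylinder may be short and then return to being long (the length function \eqref{length} is unimodal in $t$ on the disc, which is what makes each $I_\alpha$ a single interval, so this is manageable but needs care), and controlling the boundary contribution from horoballs only partially traversed by time $T$. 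A secondary technical point is making the single-passage integral estimate uniform: the constants in $\int_{t_1}^{t_2}\ell^{-2}_{q_t}(\alpha)\,dt \asymp \frac{1}{\epsilon}e^{t_2-t_1}$ must not degenerate as the entry angle $\theta_0 \to 0$, which is exactly the regime $t_2 - t_1 \to \infty$; there one checks directly from \eqref{twist1} that $e^{t_2-t_1} - e^{t_1-t_2} = \sqrt{\epsilon^2/(\ell_0^4\sin^2\theta_0\cos^2\theta_0) - 4}$ and that the integral has the same order, so uniformity holds.
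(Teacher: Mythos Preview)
Your proposal is correct and follows the paper's overall strategy exactly: reduce to Theorem \ref{theorem:asylength} by establishing the inequality $\int_0^T L(q_t)\,dt \lesssim E(q,T)$, proved by a single-horoball crossing estimate summed over all horoballs met by time $T$.

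The only real difference is in how you carry out the single-crossing estimate. The paper uses a dyadic argument: it slices the horoball $H_\epsilon(\alpha)$ into nested levels $H_n(\alpha) = \{\ell_q^2(\alpha) \le 2^{-n}\epsilon\}$, observes that the time spent in each shell $H_{n+1}\setminus H_n$ is uniformly bounded, and so $\int_{T_1}^{T_2}\ell_{q_t}^{-2}(\alpha)\,dt \le C_1\epsilon^{-1}\sum_{n\le N}2^n \asymp \epsilon^{-1}2^N$, where $2^N\asymp \epsilon/\tilde\epsilon \asymp E(\gamma_q,H)$ with $\tilde\epsilon$ the minimal flat length. You instead integrate $\ell_{q_t}^{-2}(\alpha)$ directly using the explicit formula \eqref{length} and match the answer to the Remark after Proposition \ref{twist}. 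Both routes give the same inequality with the same dependence on $\epsilon$; the paper's dyadic bound is slightly quicker and avoids the uniformity check you flag at the end, while your computation gives the two-sided comparison. Your handling of horoballs only partially traversed at time $T$ is a bit loose; the paper simply restricts the statement to times $T$ for which $\gamma_q(T)$ lies outside all horoballs, which suffices since almost every geodesic is recurrent to that set.
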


Theorem \ref{total_excursion} follows from Theorem \ref{theorem:asylength} and the following

\begin{proposition}
Let $q$ be a quadratic differential with geodesic ray $\gamma_q$, and let $T > 0$ be such that 
both $q$ and $\gamma_q(T)$ lie outside all horoballs of the type $H_\epsilon(\alpha)$. Then
$$\int_0^T L(q_t) \ dt \leqslant \frac{C}{\epsilon} E(q, T)$$
for some universal constant $C$.
\end{proposition}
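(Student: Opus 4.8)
The plan is to relate the integral of $L(q_t)$ over $[0,T]$ to a sum of excursions, one horoball at a time. Fix a metric cylinder $\alpha \in C_{q_0}(\delta)$, where $q_0 = q$ is the base quadratic differential, and consider the corresponding horoball $H_\epsilon(\alpha)$ in the Teichm\"uller disc. The geodesic ray $\gamma_q$ either misses $H_\epsilon(\alpha)$ entirely, in which case that cylinder contributes nothing to either side, or it crosses it during a time interval $[t_1(\alpha), t_2(\alpha)]$. During this interval the flat length of $\alpha$ satisfies $\ell^2_{q_t}(\alpha) \leqslant \epsilon$, so $\alpha$ contributes the term $1/\ell^2_{q_t}(\alpha)$ to $L(q_t)$; outside $[t_1(\alpha), t_2(\alpha)]$ it contributes nothing (to the part of $L$ coming from $\alpha$). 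Hence, interchanging sum and integral,
\[ \int_0^T L(q_t) \, dt = \sum_{\alpha} \int_{t_1(\alpha)}^{t_2(\alpha)} \frac{dt}{\ell^2_{q_t}(\alpha)}, \]
where the sum runs over cylinders $\alpha \in C_q(\delta, \epsilon)$ whose horoball meets $\gamma_q([0,T])$; there are only finitely many by Lemma \ref{disj}. Note the hypothesis that $q$ and $\gamma_q(T)$ lie outside all horoballs $H_\epsilon(\alpha)$ guarantees that each crossing interval $[t_1(\alpha), t_2(\alpha)]$ is entirely contained in $[0, T]$, so no boundary truncation issues arise.

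The heart of the argument is then the single-horoball estimate
\[ \int_{t_1(\alpha)}^{t_2(\alpha)} \frac{dt}{\ell^2_{q_t}(\alpha)} \leqslant \frac{C}{\epsilon} E(\gamma_q, H_\epsilon(\alpha)) \]
for a universal constant $C$. For this I would use the explicit formula \eqref{length} for $\ell^2_t(\alpha)$ from the proof of Proposition \ref{twist}, writing $\ell^2_t(\alpha) = \ell_0^2(\sin^2\theta_0 \, e^{2t} + \cos^2\theta_0 \, e^{-2t})$ where $\theta_0$ is the angle with the contracting foliation and $\ell_0 = \ell_{q_0}(\alpha)$ (after the rotation normalizing $\alpha$ to be the core of a cylinder as in the setup of Proposition \ref{twist}). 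Substituting $X = e^{2t}$, the integral becomes $\tfrac12 \int_{X_1}^{X_2} \frac{dX}{X(\ell_0^2 \sin^2\theta_0 \, X + \ell_0^2 \cos^2\theta_0/X)} = \tfrac12 \int_{X_1}^{X_2} \frac{dX}{\ell_0^2(\sin^2\theta_0 \, X^2 + \cos^2\theta_0)}$, which is an elementary arctangent-type integral, and $X_1, X_2$ are precisely the roots of \eqref{eqX}. One evaluates this in closed form; the resulting expression should be bounded above by $\tfrac{C}{\epsilon}(X_2 - X_1) = \tfrac{C}{\epsilon}(e^{2t_2} - e^{2t_1})$ up to constants (the denominator is at least $\ell_0^2 \cos^2\theta_0$, and the entry/exit conditions $\ell^2_{t_i}(\alpha) = \epsilon$ pin down the geometry), and by equation \eqref{twist1} together with the final identity in the proof of Proposition \ref{twist} this is $\asymp \tfrac{A}{\epsilon} E(\gamma_q, H_\epsilon(\alpha))$, where $A$ is the cylinder area. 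Since $q$ has unit area, $A \leqslant 1$, absorbing it into the constant.

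Summing the single-horoball bound over all relevant $\alpha$ and comparing with the first display gives
\[ \int_0^T L(q_t)\, dt \leqslant \frac{C}{\epsilon} \sum_{\alpha} E(\gamma_q, H_\epsilon(\alpha)) = \frac{C}{\epsilon} E(q, T), \]
which is exactly the claim. The main obstacle I anticipate is bookkeeping rather than conceptual: making sure the reduction to the normalized picture of Proposition \ref{twist} (rotating so the cylinder is vertical, tracking that $\theta_0$ and $\ell_0$ transform appropriately, and that the excursion $E(\gamma_q, H_\epsilon(\alpha))$ is rotation-invariant in the relevant sense) is done cleanly, and verifying that the elementary integral is genuinely dominated by $(e^{2t_2} - e^{2t_1})$ uniformly — one must check the degenerate regime where $\theta_0$ is small (deep excursion) separately from the near-tangent regime, but in both cases the arctangent integral is controlled since its integrand is comparable to a constant multiple of $1/\epsilon$ over a window of length comparable to $X_2 - X_1$. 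Everything else follows from linearity of the integral and the disjointness bound of Lemma \ref{disj}.
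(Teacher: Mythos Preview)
Your overall strategy --- decompose the integral horoball by horoball and bound each piece by $\tfrac{C}{\epsilon}E(\gamma_q, H_\epsilon(\alpha))$, then sum --- is exactly what the paper does. The difference lies only in how the single-horoball estimate is established. The paper uses a dyadic argument: the time the geodesic spends in each shell $H_n(\alpha)\setminus H_{n+1}(\alpha)$ is bounded by a universal constant $C_1$, so $\int_{T_1}^{T_2}\tfrac{dt}{\ell_{q_t}^2(\alpha)} \leqslant \sum_{n=1}^N \tfrac{2^n}{\epsilon}\,C_1 \leqslant \tfrac{2C_1}{\epsilon}\,2^N$, and then $2^N \asymp \epsilon/\tilde\epsilon \asymp E(\gamma_q, H_\epsilon(\alpha))$, where $\tilde\epsilon$ is the minimum of $\ell_{q_t}^2(\alpha)$ along $\gamma_q$ and the last equivalence comes from two applications of Lemma~\ref{phimax}.

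Your direct-integration route can also be made to work, but your execution has a gap precisely in the deep-excursion regime you flagged. After the substitution $X=e^{2t}$ the integral evaluates exactly to $\tfrac{1}{\ell_0^2\sin\phi_0}\bigl[\arctan(X\tan\theta_0)\bigr]_{X_1}^{X_2}$. Your bound via ``the denominator is at least $\ell_0^2\cos^2\theta_0$'' does give $I \lesssim (X_2-X_1)/\epsilon$, but the subsequent claim that $X_2-X_1 \asymp A\cdot E$ is incorrect: combining \eqref{projectiondiff} with Proposition~\ref{twist} yields $X_2-X_1 \asymp E/\tan\theta_0$, so your chain of inequalities overshoots by the unbounded factor $1/\tan\theta_0$ when $\theta_0$ is small. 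The remedy is immediate: bound the arctangent difference by $\pi/2$ rather than by its increment. Then $I \leqslant \tfrac{\pi}{2\,\ell_0^2\sin\phi_0}$, and since $E \asymp \sin\phi_{max}/\sin\phi_0 = \epsilon/(\ell_0^2\sin\phi_0)$ by Lemma~\ref{phimax}, this is $\lesssim E/\epsilon$ as required.
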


\begin{proof}
Let $\alpha \in C_q(\delta)$ be a curve which has become short before
time $T$, i.e. such that $\gamma_q([0, T]) \cap H_\epsilon(\alpha)$ is
non-empty. Let $T_1$ be the time the geodesic enters
$H_\epsilon(\alpha)$, and $T_2$ the time the geodesic exits. Moreover,
let $N$ be the maximum integer $k$ such that the geodesic enters
$H_k(\alpha)$. Note that there is a universal constant $C_1$ such that for
each $n \geqslant 1$ and each $\alpha$
$$ \leb ( \{ t \in [0, T] \ : \ q_t \in H_{n+1}(\alpha) \setminus H_n(\alpha) \} ) \leqslant C_1.$$
Then
$$\int_{T_1}^{T_2} \frac{1}{\ell^2_{q_t}(\alpha)} \ dt \leqslant \sum_{n = 1}^N  \frac{2^n}{\epsilon} 
\leb(\{ t \in [0, T] \ : \ q_t \in H_{n+1}(\alpha) \setminus H_n(\alpha) \} ) \leqslant \frac{C_1 \cdot 2^{N+1}}{\epsilon}.$$
In order to compare the right hand side with the excursion, let us denote by $\tilde{\epsilon}$ the smallest value of
 $\ell_{q_t}^2(\alpha)$ along the geodesic ray $\gamma_q$. By the definition of $N$, we have 
$\tilde{\epsilon} \asymp 2^{-N} \epsilon$. Now, by the definition of excursion and Lemma \ref{phimax},
$$E(\gamma_q, H_\epsilon(\alpha)) = \frac{\phi_{max}}{\phi_0} \asymp \frac{\sin \phi_{max}}{\sin \phi_0} = 
\frac{\epsilon}{\tilde{\epsilon}} \asymp 2^N $$
(where all the approximate equalities hold up to multiplicative constants), hence the claim follows.
\end{proof}

\begin{remark}
A precise analysis of how $E(q,T)$ grows along Leb-typical geodesics is carried out in \cite{Ga2}. It culminates in a strong law analogous to the one established by Diamond and Vaaler for continued fractions \cite{Dia-Vaa}.
\end{remark}

\subsection{The word metric} \label{wmetric}

Let us complete the proof of Theorem \ref{mcg} for the Lebesgue measure by proving that
the word metric is bounded below by the total excursion. Let us pick
$\epsilon_0$ to define the thick part as in section
\ref{section:projections}, and let us choose $\delta$ so that Theorem
\ref{theorem:QuadAsymp} holds.  Finally, we choose $\epsilon$ so that
if $X$ belongs to the thick part $\T \setminus \T_{\epsilon_0}$ and
$\alpha$ is the core curve of a metric cylinder of $q$-area larger
than $\delta$ on $X$, then $\ell_q^2(\alpha) \geqslant \epsilon$.

Let $X_0$ lie in the thick part $\mathcal{T} \setminus
\mathcal{T}_{\epsilon_0}$, and let $\gamma$ be a Teichm\"uller
geodesic with $\gamma(0) = X_0$. Recall for each time $t$, $h_t$ is a
closest point projection of $\gamma(t)$ to the Teichm\"uller lattice.

\begin{proposition} \label{thickexcursion}
If $\gamma(T)$ lies in the thick part $\mathcal{T} \setminus \mathcal{T}_{\epsilon_0}$, then %
$$d_G(1, h_T) \geqslant C_1 E(\gamma, T) - C_2$$
where the constants depend only on $X_0$, the choice of $\epsilon_0$ and 
the choice of generating set for $Mod(S)$. 
\end{proposition}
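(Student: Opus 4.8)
The plan is to bound the word metric $d_G(1,h_T)$ below by the distance $d_M(m_0, m_T)$ in the marking complex (using Lemma \ref{latticepoint}, which applies since both $X_0$ and $\gamma(T)$ lie in the thick part), and then to bound $d_M(m_0, m_T)$ below by the total excursion $E(\gamma, T)$ using the Masur-Minsky quasi-distance formula (Theorem \ref{Quasi-Distance}) together with the estimates relating annular subsurface projections to excursions in horoballs (Proposition \ref{subsurface}). So the heart of the argument is: each metric cylinder $\alpha$ whose horoball $H_\epsilon(\alpha)$ is crossed by $\gamma$ before time $T$ contributes a term $d_\alpha(m_0, m_T) \asymp \frac{A_\alpha}{\epsilon} E(\gamma, H_\epsilon(\alpha))$ to the sum $\sum_Y \lfloor d_Y(m_0,m_T)\rfloor_A$, and since $A_\alpha \ge \delta$ these contributions are each at least a constant multiple of $E(\gamma, H_\epsilon(\alpha))$.

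More concretely, first I would note that, by the choice of $\epsilon$ in terms of $\epsilon_0$ and $\delta$ made just before the statement, every core curve $\alpha$ of a metric cylinder of $q$-area $\ge \delta$ on a surface in the thick part $\T\setminus\T_{\epsilon_0}$ satisfies $\ell_q^2(\alpha) \ge \epsilon$, so Proposition \ref{subsurface} applies to each such $\alpha$: for $t$ past the exit time of $\gamma$ from $H_\epsilon(\alpha)$ we get $d_\alpha(m_0, m_t) \asymp \frac{A_\alpha}{\epsilon} E(\gamma, H_\epsilon(\alpha))$. Summing only over the annular subsurfaces $\alpha \in C_q(\delta)$ whose horoballs are crossed by $\gamma([0,T])$, and discarding all other terms (the sum in the quasi-distance formula is over all subsurfaces and all terms are nonnegative once we apply the floor), we obtain
\[
\sum_{Y\subseteq S}\lfloor d_Y(m_0,m_T)\rfloor_A \ \ge\ \sum_{\gamma([0,T])\cap H_\epsilon(\alpha)\neq\emptyset}\lfloor d_\alpha(m_0,m_T)\rfloor_A.
\]
For each such $\alpha$, since $A_\alpha \ge \delta$, we have $d_\alpha(m_0, m_T) \ge \frac{c\delta}{\epsilon} E(\gamma, H_\epsilon(\alpha)) - c'$ for universal constants; excursions that are too small to survive the floor cutoff $A$ contribute a bounded amount each, and there are only boundedly many cylinders short at any given time by Lemma \ref{disj}, so replacing $\lfloor \cdot \rfloor_A$ by $\cdot$ costs only a bounded multiplicative and additive error in the relevant regime. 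Hence $\sum_Y\lfloor d_Y(m_0,m_T)\rfloor_A \gtrsim E(\gamma, T)$, and the quasi-distance formula gives $d_M(m_0, m_T) \gtrsim E(\gamma, T)$.

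Finally, combining $d_G(1, h_T) \asymp d_M(m_0, m_T)$ from Lemma \ref{latticepoint} (valid because $\gamma(0) = X_0$ and $\gamma(T)$ are both thick) with the previous inequality yields $d_G(1, h_T) \ge C_1 E(\gamma, T) - C_2$ with constants depending only on $X_0$, $\epsilon_0$, $\delta$ (which is fixed once and for all) and the generating set, as claimed. The main obstacle I anticipate is bookkeeping with the cutoff function $\lfloor\cdot\rfloor_A$: one must ensure that discarding the small excursions does not destroy the lower bound, which requires knowing either that the large excursions dominate, or using the freedom to rescale as in the proof of Proposition \ref{coarse-mono} — alternatively, one absorbs the small-excursion error into the additive constant $C_2$ and handles the borderline excursions (those of size comparable to $A$) by noting they are at worst a bounded multiplicative distortion. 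A secondary technical point is making Proposition \ref{subsurface} applicable uniformly: its quasi-isometry constants depend on $X_0$, $\epsilon$ and the topology of $S$, which is exactly the dependence allowed in the statement, so this causes no trouble, but one should be careful that the proposition is stated for a single horoball and we are summing over all crossed horoballs, using the reverse triangle inequality (Theorem \ref{theorem:reverse triangle}) implicitly to know that the projections add up without cancellation.
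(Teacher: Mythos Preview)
Your approach matches the paper's exactly: reduce to marking distance via Lemma \ref{latticepoint}, apply the quasi-distance formula keeping only the annular terms for cylinders of area $\geq \delta$ whose horoballs are crossed, and then invoke Proposition \ref{subsurface} for each such annulus. The reverse triangle inequality is not needed here; Proposition \ref{subsurface} already controls $d_\alpha(m_0,m_T)$ for each crossed horoball individually, and you are summing nonnegative terms over distinct annuli.

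The one genuine gap is the handling of the cutoff $\lfloor\cdot\rfloor_A$, which you rightly identify as the main obstacle but do not resolve. Your proposed fixes do not work as stated: absorbing the small excursions into the additive constant $C_2$ fails because the number of horoballs crossed by $\gamma$ on $[0,T]$ can grow linearly in $T$ (Lemma \ref{disj} bounds only the number of simultaneously short cylinders, not the total number crossed), so the discarded mass is $O(T)$ rather than $O(1)$. The rescaling manoeuvre from Proposition \ref{coarse-mono} takes you from $\lfloor K_1 E - K_2\rfloor_B$ to $\tfrac{K_1}{2}\lfloor E\rfloor_{2B/K_1}$, but this still leaves a floor on each individual excursion.

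The paper's fix is to change the horoball parameter rather than the threshold: choose $\tilde\epsilon<\epsilon$ small enough that any geodesic entering $H_{\tilde\epsilon}(\alpha)$ necessarily has $E(\gamma,H_\epsilon(\alpha))\geq 2B/K_1$. Then $\lfloor E(\gamma,H_\epsilon(\alpha))\rfloor_{2B/K_1}\geq E(\gamma,H_{\tilde\epsilon}(\alpha))$ for every $\alpha$ (the left side either equals $E(\gamma,H_\epsilon(\alpha))\geq E(\gamma,H_{\tilde\epsilon}(\alpha))$, or vanishes, in which case $\gamma$ does not enter $H_{\tilde\epsilon}(\alpha)$ at all). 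Summing over horoballs crossed at level $\tilde\epsilon$ now gives the total excursion $E(q,T)$ computed with parameter $\tilde\epsilon$, with no floor and no error term; since Theorem \ref{total_excursion} holds for any sufficiently small parameter, this is enough.
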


\begin{proof}
Since $\gamma(0)$ and $\gamma(T)$ lie in the thick part, by Lemma \ref{latticepoint}
$$d_G(1, h_T) \asymp d_M(m_0, m_T).$$
By the Masur-Minsky quasi-distance formula (Theorem \ref{Quasi-Distance}), for any $B$ large enough 
$$ d_M(m_0, m_T) \asymp \sum_{Y \subseteq S} \lfloor d_Y(m_0,m_T) \rfloor_B
 \geqslant \sum_{\gamma([0, T]) \cap H_\epsilon(\alpha) \neq \emptyset}  \lfloor d_\alpha(m_0, m_T) \rfloor_B$$
where on the right-hand side we only consider projections to annuli of area bounded below, and whose core curve 
becomes short before time $T$. Now by Proposition \ref{subsurface}, for some constants $K_1$ and $K_2$, 
$$ d_\alpha(m_0, m_T) \geqslant K_1 E(\gamma, H_\epsilon(\alpha)) - K_2 $$
so if $B  \geqslant K_2$
$$ \lfloor d_\alpha(m_0, m_T) \rfloor_B \geqslant \lfloor K_1 E(\gamma, H_\epsilon(\alpha)) -K_2 \rfloor_B \geqslant 
\frac{K_1}{2} \lfloor E(\gamma, H_\epsilon(\alpha)) \rfloor_\frac{2B}{K_1} $$ 
and we can choose $\tilde{\epsilon}$ a bit smaller than $\epsilon$ so that $\lfloor E(\gamma, H_\epsilon(\alpha)\rfloor_{\frac{2B}{K_1}} \geqslant E(\gamma, H_{\tilde{\epsilon}}(\alpha))$,
hence
$$\sum_{\gamma([0, T]) \cap H_\epsilon(\alpha) \neq \emptyset} \lfloor d_\alpha(m_0, m_T) \rfloor_B \geqslant 
\sum_{\gamma([0, T]) \cap H_{\tilde{\epsilon}}(\alpha) \neq \emptyset} E(\gamma, H_{\tilde{\epsilon}}(\alpha)) = E(q, T).$$ 
\end{proof}

\begin{proof}[Proof of Theorem \ref{mcg} (Lebesgue measure)]
By Theorem \ref{unparameterized}, the relative metric is a lower bound for Teichm\"uller distance, i.e. there exist constants
$K, c$, depending only on the topology of $S$, such that 
\[
d_{rel}(1, h_T) \leqslant K T + c.
\]
The first part of Theorem \ref{mcg} then follows from Theorem \ref{total_excursion} and Proposition \ref{thickexcursion}.
\end{proof}

\section{Hitting measure sampling} \label{section:hitting}

In Section \ref{section:random} we review some background material
from the theory of random walks, and recall some previous results
which show that the ratio between the word metric and the relative
metric along the locations $w_n X_0$ of a sample path of the random
walk remains bounded for almost all sample paths. This means that if a
location $w_n X_0$ of the sample path is close to the geodesic, then this
ratio is also bounded for points on the geodesic close to $w_n X_0$.
However, the results of the previous section apply to all points along
the geodesic which lie in the thick part of Teichm\"uller space, so we
need to extend the bounds to these other points.  In Section
\ref{section:distance} we use some results of \cite{T12} to show that
the distance between the locations of the sample path and the
corresponding geodesic grows sublinearly, and then in Section
\ref{section:intermediate} we use the coarse monotonicity of word
length along the geodesic to show that this also bounds the ratio
between word length and relative length for all points along the
geodesic which lie in the thick part.

\subsection{Random walks} \label{section:random}

Let $\mu$ be a measure on the mapping class group
$G = \textup{Mod}(S)$. We say that $\mu$ has \emph{finite first moment} with respect 
to the word metric on $G$ if 
$$\int_{G} d_G(1, g) \ d\mu(g) < \infty$$
where $d_G$ is a word metric on $G$ with respect to a choice of finite set of generators (note that the finiteness does not 
depend on this choice). 
The \emph{step space} is the infinite product
$G^\mathbb{N}$ with the product measure $\mathbb{P} := \mu^\mathbb{N}$. Let $w_n =
g_1 g_2 \ldots g_n$ be the location of the random walk after $n$
steps. The \emph{path space} is $G^\mathbb{N}$, with the pushforward
of the product measure under the map
\[ (g_1, g_2, g_3, \dots) \mapsto ( w_1, w_2, w_3, \ldots). \]
It will also be convenient to consider \emph{bi-infinite} sample
paths. In this case the step space is the set $G^\mathbb{Z}$ of bi-infinite 
sequences of group elements with the product measure. The location of the random walk is given by $w_0 = 1$, and
$w_n = g_1 g_2 \ldots g_n$ if $n$, if $n$ is positive, and $w_n =
g_{0}^{-1} g_{-1}^{-1} \ldots g_{n-1}^{-1}$, if $n$ is negative.  The
path space is $G^{\mathbb{Z}}$, as a set, but with measure coming from
the pushforward of $\mathbb{P}$ under the map
\[ ( \dots, g_{-1}, g_0 , g_1, g_2, \dots) \mapsto ( \ldots w_{-1},
w_0, w_1, w_2, \ldots). \]

Let us fix a base point $X_0 \in \mathcal{T}$, and consider the image
of the sample paths $w_n X_0$ in $\mathcal{T}$. 
Kaimanovich and Masur
showed that almost every sample path converges to a uniquely ergodic
foliation in the space $\mathcal{PMF}$ of projective measured foliations,
Thurston's boundary for Teichm\"uller space.
Recall that the \emph{harmonic measure} $\nu$ on $\mathcal{PMF}$ is defined as the hitting 
measure of the random walk, i.e. for any measurable subset $A \subseteq \mathcal{PMF}$, 
$$\nu(A) := \mathbb{P}( w_n \ : \lim_{n \to \infty} w_n X_0 \in A  ).$$

\begin{theorem}[Kaimanovich and Masur \cite{KM}] 
Let $\mu$ be a probability distribution on the mapping class group
whose support generates a non-elementary subgroup. Then almost every
sample path $(w_n)_{n \in \mathbb{N}}$ converges to a uniquely ergodic
foliation in $\mathcal{PMF}$, and the resulting hitting measure $\nu$
is the unique non-atomic $\mu$-stationary measure on $\mathcal{PMF}$.
\end{theorem}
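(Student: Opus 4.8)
The plan is to carry out Furstenberg's construction of hitting measures on boundaries, with Thurston's compactification and Masur's theory of the $\mathrm{Mod}(S)$-action on $\mathcal{PMF}$ playing the role that the Lie-theoretic structure plays in the classical semisimple case. Since $\mathcal{PMF}$ is compact and metrizable, the space $P(\mathcal{PMF})$ of Borel probability measures is weak-$*$ compact and convex, and the convolution $\lambda\mapsto\mu*\lambda:=\int_G g_*\lambda\,d\mu(g)$ is a continuous affine self-map of it; any weak-$*$ accumulation point of the Ces\`aro averages $\frac1n\sum_{k<n}\mu^{*k}*\lambda_0$ is then a $\mu$-stationary probability measure $\nu$ on $\mathcal{PMF}$. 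On the path space, for every $f\in C(\mathcal{PMF})$ the sequence $n\mapsto\int f\,d((w_n)_*\nu)$ is a bounded martingale for the filtration $\mathcal F_n=\sigma(g_1,\dots,g_n)$ — this is immediate from $\mathbb E[(g_{n+1})_*\nu]=\mu*\nu=\nu$ — hence converges almost surely, and by separability of $C(\mathcal{PMF})$ we obtain a random measure $\nu_\omega\in P(\mathcal{PMF})$ with $(w_n)_*\nu\to\nu_\omega$ weak-$*$ almost surely and $\mathbb E[\nu_\omega]=\nu$. The whole statement then reduces to showing that $\nu_\omega$ is almost surely a Dirac mass carried by a uniquely ergodic foliation, that $w_nX_0$ converges to that foliation, and that $\nu$ is the unique stationary measure.

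A soft dynamical argument shows $\nu$ has no atoms: otherwise the finite set $A$ of atoms of maximal mass $c>0$ (so $\#A\leqslant 1/c$) would satisfy $g^{-1}A=A$ for every $g\in\mathrm{supp}\,\mu$ by stationarity, so the non-elementary group $G$ would preserve a finite subset of $\mathcal{PMF}$; but $G$ contains two independent pseudo-Anosov elements, each having only its two fixed foliations as finite orbits, with disjoint fixed-point sets, so no nonempty finite $G$-invariant subset of $\mathcal{PMF}$ exists. The same remark shows $G$ fixes no finite set of isotopy classes of multicurves, which lets us delete the non-minimal foliations from the support of $\nu$: the set $\mathcal C_1\subset\mathcal{PMF}$ of foliations possessing a closed-leaf component is $G$-invariant, so if $\nu(\mathcal C_1)>0$ then the normalized restriction of $\nu$ to $\mathcal C_1$ is again $\mu$-stationary, and its image under the $G$-equivariant map sending such a foliation to the multicurve formed by its closed-leaf components is a $\mu$-stationary probability measure on the countable set of multicurves; such a measure is purely atomic and would therefore yield a finite $G$-invariant set of multicurves, a contradiction. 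Hence $\nu$ is concentrated on minimal foliations.

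Upgrading ``minimal'' to ``uniquely ergodic'' is the geometric heart of the proof. Pass to the bi-infinite path space, where the shift is ergodic and the forward and backward increments are independent; almost every bi-infinite path determines limit foliations $F_-(\omega),F_+(\omega)$ of its backward and forward trajectories (both minimal, applying the previous paragraph also to the reflected walk) together with a quasi-geodesic in $\T$ shadowing $(w_nX_0)_{n\in\mathbb Z}$. Independence forces $F_-(\omega)$ and $F_+(\omega)$ to fill $S$ almost surely, so they span a bi-infinite Teichm\"uller geodesic, and an ergodic argument (the frequency of visits to a suitable fixed compact set is positive, the whole configuration being shift-invariant) shows this geodesic returns infinitely often to the thick part $\T\setminus\T_\e$; Masur's criterion then forces $F_\pm(\omega)$ to be uniquely ergodic. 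Since uniquely ergodic foliations are \emph{attracting} for the action on $\T\cup\mathcal{PMF}$ — if $g_kX_0\to\xi$ with $\xi\in\mathcal{PMF}$ uniquely ergodic then $(g_k)_*\lambda\to\delta_\xi$ for every $\lambda$ giving no mass to the foliations non-transverse to the limiting backward direction, and $\lambda=\nu$ qualifies by the non-atomicity and minimality already established — and since the sample path leaves every bounded region of $\T$ (transience of the walk on the non-amenable group $G$), we get $\nu_\omega=\delta_{F_+(\omega)}$ with $F_+(\omega)\in\mathcal{PMF}$ uniquely ergodic. A now-standard comparison of two $\nu$-generic geodesics and of the foot of their common perpendicular then upgrades this to honest convergence $w_nX_0\to F_+(\omega)$ in $\T\cup\mathcal{PMF}$. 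Finally $\mathbb E[\nu_\omega]=\nu$ identifies the law of $F_+$ with $\nu$, so $\nu$ is exactly the hitting measure; and since $F_+(\omega)$ depends only on the sample path, rerunning the argument with any other stationary measure $\nu'$ in place of $\nu$ yields $\nu'=\mathrm{law}(F_+)=\nu$, so $\nu$ is the only $\mu$-stationary measure, a fortiori the unique non-atomic one.

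The main obstacle is precisely this passage from minimality to unique ergodicity, together with its consequence that $\nu_\omega$ is a point mass. Unlike the Lie group setting there is no homogeneous structure on $\mathcal{PMF}$ to exploit, so the statement cannot be reduced to algebra; instead it rests entirely on the interplay between the ergodic theory of the bi-infinite walk, the recurrence of the associated quasi-geodesic to the thick part of moduli space, and Masur's criterion for unique ergodicity. This is where essentially all of the mapping-class-group-specific input is concentrated, and it is also the step most sensitive to the hypothesis that the support of $\mu$ generate a non-elementary subgroup.
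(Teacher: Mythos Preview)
The paper does not prove this theorem at all: it is stated with attribution to Kaimanovich and Masur \cite{KM} and used as a black box, with no argument given. So there is no ``paper's own proof'' to compare your proposal against.

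That said, what you have written is a reasonable high-level sketch of the original Kaimanovich--Masur argument: the Furstenberg martingale construction of the limit measures $\nu_\omega$, the elimination of atoms and of non-minimal foliations via the absence of finite $G$-invariant sets, the upgrade to unique ergodicity through recurrence of the associated bi-infinite Teichm\"uller geodesic to the thick part together with Masur's criterion, and the identification of $\nu_\omega$ as a Dirac mass forcing uniqueness. The step you correctly flag as the main obstacle --- recurrence to the thick part --- is genuinely the delicate one, and your one-line justification (``the frequency of visits to a suitable fixed compact set is positive'') hides real work: in \cite{KM} this is Lemma 1.4.4 and the surrounding discussion, where one must first establish that the function $D(w)=d_\mathcal{T}(X_0,\gamma_w)$ is measurable and almost surely finite, and then use ergodicity of the shift to get infinitely many returns. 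Your sketch also slightly conflates two distinct assertions: convergence of $(w_n)_*\nu$ to a point mass (which follows from the attracting property once unique ergodicity is known) and honest convergence of $w_nX_0$ in the Thurston compactification (which requires an additional geometric argument, since $\mathcal{PMF}$ is not the visual boundary of $\mathcal{T}$). But as an outline of the strategy, your proposal is faithful to the original.
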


The mapping class group is finitely generated, and let $d_G$ be the
word metric on the mapping class group with respect to some choice of
generating set. Since the mapping class group is non-amenable, the
random walk makes linear progress in the word metric $d_G$, or indeed
in any metric quasi-isometric to the word metric.

\begin{theorem}[Kesten \cite{kesten}, Day \cite{day}] \label{linword}
Let $\mu$ be a probability distribution on a group, whose support
generates a non-amenable subgroup. Then there exists a constant $c_1 >
0$ such that for almost all sample paths
\begin{equation} \label{linearword}
\lim_{n \to \infty} \frac{d_{G}(1, w_n)}{n} = c_1.
\end{equation}
\end{theorem}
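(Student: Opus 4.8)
The plan is to split the statement into two independent parts: the \emph{existence} of the limit $c_1$, and its \emph{positivity}, which is where all the content lies.

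For existence I would apply Kingman's subadditive ergodic theorem on the step space $(G^{\mathbb N},\mathbb P)$ with the shift $T$. The functions $f_n(g_1,g_2,\dots):=d_G(1,g_1g_2\cdots g_n)=d_G(1,w_n)$ form a subadditive cocycle: left-invariance of the word metric gives $d_G(1,w_{n+m})\le d_G(1,w_n)+d_G(w_n,w_{n+m})$, and $d_G(w_n,w_{n+m})=d_G(1,g_{n+1}\cdots g_{n+m})$ is realized as $f_m\circ T^n$, so $f_{n+m}\le f_n+f_m\circ T^n$. Since $T$ is a Bernoulli shift, hence ergodic, Kingman's theorem yields $f_n/n\to c_1$ almost surely, with $c_1=\inf_n\tfrac{1}{n}\mathbb E[f_n]$ a deterministic constant; in the setting of this paper $\mu$ has finite first moment, so $\mathbb E[f_1]=\int_G d_G(1,g)\,d\mu(g)<\infty$ and $c_1$ is finite.

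For positivity I would invoke Kesten's characterization of amenability via the spectral radius of the random walk: a countable group is non-amenable if and only if every (equivalently, some) symmetric admissible probability measure $\sigma$ on it has spectral radius $\rho(\sigma)=\limsup_n\sigma^{*n}(e)^{1/n}<1$. Applying this to the symmetrization $\mu*\check\mu$ — whose support generates a non-amenable group, one commensurable with $\langle\mathrm{supp}\,\mu\rangle$ — a Cauchy–Schwarz estimate gives, uniformly over $g$,
\[
\mu^{*n}(g)\le\|\mu^{*n}\|_2=\big((\mu*\check\mu)^{*n}(e)\big)^{1/2}\le\rho_0^n,\qquad \rho_0:=\rho(\mu*\check\mu)^{1/2}<1.
\]
Combining this with the fact that balls in a finitely generated group grow at most exponentially, say $\#B_G(R)\le e^{vR}$, one obtains
\[
\mathbb P\big(d_G(1,w_n)\le\epsilon n\big)\le\#B_G(\epsilon n)\cdot\max_g\mu^{*n}(g)\le e^{v\epsilon n}\rho_0^n.
\]
Choosing $\epsilon<\tfrac{1}{v}\log(1/\rho_0)$ makes the right-hand side summable in $n$, so Borel–Cantelli forces $d_G(1,w_n)>\epsilon n$ for all large $n$ almost surely, whence $c_1\ge\epsilon>0$. (Alternatively one can cite Day's theorem directly, or deduce positivity from the fundamental inequality $h(\mu)\le c_1 v$ together with positivity of the Avez entropy on non-amenable groups.)

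The main obstacle is precisely this positivity step: the equality $c_1=0$ characterizes amenability (on an amenable group one can build an admissible $\mu$ with $c_1=0$), so any proof must feed in a genuine amenability criterion. I would take Kesten's spectral-radius theorem as a black box; the rest is the elementary counting and Borel–Cantelli packaging above.
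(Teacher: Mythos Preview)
The paper does not give a proof of this theorem: it is quoted as a result of Kesten and Day, with citations, and used as a black box in Section~\ref{section:random}. So there is no ``paper's own proof'' to compare against.

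That said, your sketch is correct and is essentially the standard argument. A couple of minor remarks. First, the theorem as stated in the paper does not explicitly assume finite first moment, and without it Kingman only gives a limit in $[0,\infty]$; you are right that in the paper's setting the moment hypothesis is in force, so $c_1<\infty$. Second, in the positivity step you symmetrize to $\mu*\check\mu$; you should be slightly careful that the group generated by $(\mathrm{supp}\,\mu)(\mathrm{supp}\,\mu)^{-1}$ coincides with the group $H$ generated by $\mathrm{supp}\,\mu$ (it does: once you fix any $s_0\in\mathrm{supp}\,\mu$, every generator $s\in\mathrm{supp}\,\mu$ is $(ss_0^{-1})s_0$, and $s_0\in H$ since $H$ is a group), so Kesten's criterion indeed applies and gives $\rho(\mu*\check\mu)<1$. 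With those points noted, the Cauchy--Schwarz bound $\mu^{*n}(g)\le\rho_0^n$, the exponential ball-count, and Borel--Cantelli combine exactly as you wrote to force $c_1>0$.
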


Even though the relative metric is smaller than the word metric, 
more recent results prove that the growth rate is still linear in the
number of steps.

\begin{theorem}[Maher \cite{Mah2}, Maher-Tiozzo \cite{MT}] \label{linrelative}
Let $\mu$ be a probability distribution on the mapping class group which 
has finite first moment in the word metric, and such that the semigroup 
generated by its support is a non-elementary subgroup. 
Then there is a constant $c_2 > 0$ such that for almost all sample paths
$$\lim_{n \to \infty} \frac{d_{rel}(1, w_n)}{n} = c_2.$$
\end{theorem}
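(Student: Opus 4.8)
The first step is a standard subadditivity argument. Since $d_{rel}$ is quasi-isometric to the curve complex $\mathcal{C}(S)$ and the relative generating set contains the fixed finite one, we have $d_{rel}(1,g) \le C_1 d_G(1,g) + C_2$, so the finite first moment hypothesis on $\mu$ in the word metric passes to $d_{rel}$. Working on the bi-infinite path space with the ergodic, measure-preserving shift $\sigma$, set $a_{m,n}(\omega) := d_{rel}(w_m, w_n)$. Left-invariance of $d_{rel}$ and the triangle inequality give $a_{0,n} \le a_{0,m} + a_{m,n}$, and $a_{m,n}$ depends only on the increments $g_{m+1},\dots,g_n$, so $a_{m,n} = a_{0,n-m}\circ \sigma^m$. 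Kingman's subadditive ergodic theorem then yields that $\lim_n d_{rel}(1,w_n)/n$ exists almost surely and, by ergodicity of $\sigma$, equals a constant $c_2 \ge 0$. Note this is exactly where finite first moment is needed, and that since $d_{rel}$ is \emph{not} quasi-isometric to $d_G$, Theorem \ref{linword} does not directly give $c_2>0$; positivity is the real content.

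\textbf{Set-up for positivity.} Since the semigroup generated by $\mathrm{supp}\,\mu$ is non-elementary it contains two independent pseudo-Anosov mapping classes, which act as independent loxodromic isometries on the $\delta$-hyperbolic space $\mathcal{C}(S)$; hence the induced walk on $\mathcal{C}(S)$ is non-elementary. By the theorem of Kaimanovich and Masur \cite{KM} the sample path converges almost surely to a uniquely ergodic foliation, which under Klarreich's identification of $\partial \mathcal{C}(S)$ corresponds to a boundary point $\xi_+\in\partial\mathcal{C}(S)$; its hitting measure is non-atomic, being a pushforward of the non-atomic measure $\nu$. On the bi-infinite path space let $\xi_-$ be the corresponding limit of $w_{-n}$. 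Conditioned on $w_0$, the forward and backward increments are independent, so $\xi_-$ and $\xi_+$ have independent non-atomic laws and thus are distinct almost surely. Fix a bi-infinite geodesic $\gamma=[\xi_-,\xi_+]$ in $\mathcal{C}(S)$; then the distance from the basepoint to $\gamma$ is comparable to the Gromov product $(\xi_-\mid\xi_+)_1$, which is almost surely finite.

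\textbf{The progress argument.} Let $\pi_\gamma$ denote nearest-point projection to $\gamma$. Because $w_n\to\xi_+$ and $\gamma$ is asymptotic to $\xi_+$, hyperbolicity forces $\pi_\gamma(w_n)$ to eventually lie on the $\xi_+$-ray of $\gamma$ and to march toward $\xi_+$ up to bounded error, while $d_{\mathcal{C}}(1,w_n)\ge d_{\mathcal{C}}(1,\pi_\gamma(w_n)) - O(\delta)$. Thus it suffices to show $d_{\mathcal{C}}(1,\pi_\gamma(w_n))$ grows linearly. Let $F(\omega)=d_{\mathcal{C}}(1,\gamma(\omega))$; this is a.s.\ finite, so there is $R$ with $\mathbb{P}(F\le R)=p>0$, and on that event the basepoint is a ``pivot'' within $R$ of $\gamma$. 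Applying the ergodic theorem to the indicator of the pivot event along the orbit $\{\sigma^n\omega\}$ produces times $n_1<n_2<\cdots$ of positive lower density with $w_{n_i}$ within $O(R+\delta)$ of $\gamma$; a reverse-triangle/thin-triangles estimate in $\mathcal{C}(S)$ then gives $d_{\mathcal{C}}(1,w_{n_{i+1}}) - d_{\mathcal{C}}(1,w_{n_i}) \ge$ a definite positive amount, so $d_{\mathcal{C}}(1,w_{n_i})$, and hence $d_{rel}(1,w_{n_i})$, grows linearly in $i$ and therefore in $n$. Combined with the limit from Step~1 this yields $c_2>0$.

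\textbf{Main obstacle.} The delicate point is the progress argument: $\mathcal{C}(S)$ is not locally compact, so there is no appeal to compactness of $\partial\mathcal{C}(S)$ or properness, and everything must be run inside the framework of separable Gromov hyperbolic spaces. One must carefully coordinate the shift-equivariance — the geodesic ``seen from $w_n$'' is $g_1\cdots g_n$ applied to the geodesic of the shifted path, and one must check these fit together to a single $\gamma$ asymptotic to the common limit $\xi_+$ — while using only non-atomicity of $\nu$ together with the finite first moment to control deviations and the recurrence of pivots. This technical core is precisely what is carried out in \cite{Mah2} and \cite{MT}.
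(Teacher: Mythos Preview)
The paper does not prove this theorem; it is quoted as a black-box input from \cite{Mah2} and \cite{MT}, with the remark that \cite{Mah2} requires the support of $\mu$ to be bounded in the relative metric while \cite{MT} removes that hypothesis. So there is no ``paper's own proof'' to compare against --- the authors simply cite the result and use it.

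That said, your sketch has the right architecture (Kingman for existence, hyperbolic geometry of $\mathcal{C}(S)$ for positivity), but the progress argument contains a genuine gap. You produce, via the ergodic theorem, a positive-density set of times $n_i$ at which $w_{n_i}$ lies within $O(R+\delta)$ of the bi-infinite geodesic $\gamma$. From this you assert that ``a reverse-triangle/thin-triangles estimate'' gives
\[
d_{\mathcal{C}}(1,w_{n_{i+1}}) - d_{\mathcal{C}}(1,w_{n_i}) \ge \text{a definite positive amount}.
\]
But proximity to $\gamma$ at two times says nothing about \emph{forward} displacement along $\gamma$ between those times: the projections $\pi_\gamma(w_{n_i})$ and $\pi_\gamma(w_{n_{i+1}})$ could coincide, or the walk could have backtracked. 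Hyperbolic thin-triangle estimates only tell you that $w_{n_i}$ and $w_{n_{i+1}}$ are close to their projections; they do not by themselves rule out that the projections stay bounded. To get positive drift one needs an additional mechanism --- in \cite{Mah2} this comes from estimates on shadows and the (weak) acylindricality of the action, and in \cite{MT} from a careful analysis of the Gromov boundary and stationary measure in the non-proper setting --- and it is exactly this step that carries the content of the theorem. Your ``Main obstacle'' paragraph gestures at this, but the body of the argument as written claims more than it delivers.
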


Note that in \cite{Mah2}, the result is proven under the additional condition that 
the support of $\mu$ is bounded in the relative metric, while
such condition is not needed in 
\cite{MT}. 
 
From these results it follows that the quotient between the word
metric and the relative metric converges to $c_1 / c_2$ for almost
every sample path, i.e.
\[ \lim_{n \to \infty} \frac{ d_G(1, w_n) }{ d_{rel}(1, w_n) } =
\frac{c_1}{c_2} \]
for almost all sample paths.  The limit above is a limit taken along
the locations $(w_n)_{n \in \mathbb{N}}$ of the random walk. In order
to compare this to the previous statistic we need to relate locations
of the random walk to points on a Teichm\"uller geodesic.

By the work of Kaimanovich and Masur \cite{KM}, for almost every
bi-infinite sample path $w \in G^\mathbb{Z}$, there are well-defined
maps
$$ F^{\pm} : G^\mathbb{Z} \to \mathcal{PMF}$$
given by
\begin{align*}
F^+(w) & :=  \lim_{n \to \infty} w_n X_0 \\
\intertext{and}
F^-(w) & :=  \lim_{n \to \infty} w_{-n} X_0. \\
\end{align*}

Furthermore, the two foliations $F^+(w)$ and $F^-(w)$ are almost
surely uniquely ergodic and distinct, so there is a unique oriented
Teichm\"uller geodesic $\gamma_w$ whose forward limit point in
$\mathcal{PMF}$ is $F^+(w)$ and whose backward limit point is
$F^-(w)$. There is also a unique geodesic ray $\rho_w$ starting at the
basepoint $X_0$ whose forward limit point is $F^+$.  We shall always
parameterize $\rho_w$ as unit speed geodesic with $\rho_w(0) = X_0$.
As $F^+$ is uniquely ergodic, the distance between $\gamma_w$ and
$\rho_w$ tends to zero, by Masur \cite{MasurUniq}, and we shall
parameterize $\gamma_w$ such that $d_\T(\rho_w(t), \gamma_w(t)) \to
0$.

For each bi-infinite sample path we can define the function
$$D : G^\mathbb{Z} \rightarrow \mathbb{R}$$
given by
$$D(w) := d_\mathcal{T}(X_0, \gamma_w)$$
which represents the Teichm\"uller distance between the base point
$X_0$ and the geodesic $\gamma_w$. This is well-defined and
measurable, by Lemma 1.4.4 of \cite{KM}. In particular, this implies
that for any $\epsilon > 0$ there is a constant $M$ such that the
probability that $D(w) \leqslant M$ is at least $1 - \epsilon$.

The shift map $\sigma$ maps the step space to itself by incrementing
the index of each step by one, i.e. 
\[\sigma \colon (g_n)_{n \in \mathbb{Z}} \mapsto (g_{n+1})_{n \in
  \mathbb{Z}}. \]
This is a measure preserving ergodic transformation on the step space,
and the induced action of $\sigma$ on the path space is given by
\[ \sigma \colon (w_n)_{n \in \mathbb{Z}} \mapsto (w_1^{-1}
w_{n+1})_{n \in \mathbb{Z}}. \]

\subsection{Distance between geodesic and sample path} \label{section:distance}

The geodesic $\gamma_w$ is determined by its endpoints $F^+(w)$ and
$F^-(w)$, and the distribution of these pairs is given by harmonic
measure $\nu$ and reflected harmonic measure $\check \nu$
respectively.

The distance from a location $w_n$ to the corresponding geodesic
$\gamma_w$ is given by 
\begin{align*}
d_\mathcal{T}(w_n X_0, \gamma_w) & = d_\mathcal{T}(X_0, w_n^{-1}
\gamma_w) 
\intertext{since the mapping class group
acts on $\mathcal{T}$ by isometries, and by the definition of the shift map,}
d_\mathcal{T}(w_n X_0, \gamma_w) & = d_\mathcal{T}(X_0,
\gamma_{\sigma^n w}). 
\end{align*}

As already noted in \cite{KM}, if $\epsilon$ is sufficiently small,
almost every geodesic with respect to harmonic measure returns to the
$\epsilon$-thick part $\T \setminus \T_\e$ infinitely often.

Our goal is to show that every step of the random walk lies within
sublinear distance in the word metric from some point in the thick
part of the limit geodesic. 

In \cite{T12}, sublinear tracking is proven in the 
Teichm\"uller metric: we will adapt the argument to the word metric. 
The fundamental argument for sublinear tracking in \cite{T12} is the 
following lemma.

\begin{lemma}[Tiozzo \cite{T12}] \label{L:st}
Let $T : \Omega \to \Omega$ a measure-preserving, ergodic transformation
of the probability measure space $(\Omega, \lambda)$, and let $f : \Omega \to \mathbb{R}^{\ge 0}$
any measurable, non-negative function. If the function 
$$g(\omega) := f(T\omega) - f(\omega)$$
belongs to $L^1(\Omega, \lambda)$, then for $\lambda$-almost every $\omega \in \Omega$ one has
$$\lim_{n \to \infty} \frac{f(T^n \omega)}{n} = 0.$$
\end{lemma}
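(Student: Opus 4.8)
The plan is to recognize the partial sums of $g$ along the orbit as a telescoping sum and combine this with Birkhoff's ergodic theorem. Since $g \in L^1(\Omega, \lambda)$ and $T$ is measure-preserving and ergodic, Birkhoff's theorem gives, for $\lambda$-almost every $\omega$,
\[ \lim_{n\to\infty} \frac{1}{n}\sum_{k=0}^{n-1} g(T^k\omega) = \int_\Omega g\, d\lambda. \]
On the other hand, from $g(\omega) = f(T\omega) - f(\omega)$ the partial sums telescope: $\sum_{k=0}^{n-1} g(T^k\omega) = f(T^n\omega) - f(\omega)$. Combining these two facts, for almost every $\omega$,
\[ \lim_{n\to\infty} \frac{f(T^n\omega)}{n} = \lim_{n\to\infty}\frac{f(T^n\omega) - f(\omega)}{n} = \int_\Omega g\, d\lambda =: c, \]
and since $f \geq 0$ the constant $c$ is non-negative. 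It remains to show $c = 0$.

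To see this, I would truncate $f$. The naive identity $\int_\Omega g\, d\lambda = \int_\Omega f\circ T\, d\lambda - \int_\Omega f\, d\lambda = 0$ is not available, because $f$ is not assumed integrable; this is precisely the point requiring care. Instead, set $f_N := \min(f, N)$ and $g_N := f_N\circ T - f_N$. A short case analysis according to whether $f(\omega)$ and $f(T\omega)$ lie above or below the level $N$ shows that $|g_N(\omega)| \leq |g(\omega)|$ pointwise, and also that $g_N(\omega) \to g(\omega)$ as $N \to \infty$ for each fixed $\omega$ (eventually $N$ exceeds both $f(\omega)$ and $f(T\omega)$). Each $f_N$ is bounded, hence in $L^1(\Omega,\lambda)$, so invariance of $\lambda$ gives $\int_\Omega g_N\, d\lambda = 0$ for every $N$. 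Applying the dominated convergence theorem with dominating function $|g| \in L^1$ yields $c = \int_\Omega g\, d\lambda = \lim_{N\to\infty}\int_\Omega g_N\, d\lambda = 0$, which finishes the argument.

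The only genuinely delicate step is this last one: because $f$ need not be integrable, one cannot conclude $\int_\Omega g\, d\lambda = 0$ directly from the $T$-invariance of $\lambda$, and the truncation-plus-dominated-convergence device is exactly what circumvents the obstruction. The rest is a routine combination of Birkhoff's theorem with the telescoping structure of $g$.
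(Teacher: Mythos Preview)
Your argument is correct. The telescoping identity combined with Birkhoff's theorem gives the existence of the limit $c = \int_\Omega g\,d\lambda$, and your truncation argument is exactly the right way to show $c=0$: the pointwise bound $|g_N|\leqslant |g|$ follows from the elementary inequality $|\min(a,N)-\min(b,N)|\leqslant |a-b|$ for non-negative reals, and dominated convergence then applies.

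Note, however, that the paper does not actually prove this lemma: it is quoted from \cite{T12} and used as a black box. So there is no ``paper's own proof'' to compare against here. Your write-up is a clean, self-contained proof of the cited result, and the approach (Birkhoff plus telescoping, with truncation to handle non-integrability of $f$) is the standard one. An alternative way to conclude $c=0$ is to observe that $c>0$ would force $f(T^n\omega)\to\infty$ almost surely, hence by the ergodic theorem $\lambda(\{f>M\})=1$ for every $M$, contradicting that $f$ is real-valued; but your truncation argument is just as efficient.
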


We now explain how to apply the lemma above in the current setting.
Given a point $X \in \mathcal{T}$, let us denote as $\textup{proj}(X)$
the set of lattice points at minimal distance from $X$:
$$\textup{proj}(X) := \{ h \in G :
d_\mathcal{T}( hX_0, X) \textup{ is minimal} \}.$$
Such a projection may possibly vary wildly if $X$ lies in the thin
part, but it is controlled in the thick part: namely, given $\epsilon
> 0$ there is a constant $K(\epsilon)$ such that
$$d_\mathcal{T}(X, h X_0) \leqslant K(\epsilon), \qquad \forall X \notin
\mathcal{T}_\epsilon \quad \forall h \in \textup{proj}(X). $$
We now associate to almost every sample path $w$ a subset $P(w)$ of the
mapping class group, which we now describe.  Almost
every bi-infinite sample path $w \in G^\mathbb{Z}$ determines two
uniquely ergodic foliations, $F^\pm(w)$. Let $\gamma_w$ be the
bi-infinite Teichm\"uller geodesic joining them. Now, let us define
$P(w)$ as the set of mapping class group elements $h \in G$ such that
$hX_0$ is the closest projection from some point $X$ in $\gamma_w
\setminus \T_\e$, i.e.
$$ P(w) := \bigcup_{X \in \gamma_w \setminus
  \mathcal{T}_\epsilon} \textup{proj}(X). $$
This is illustrated in Figure \ref{pic:P(w)}.

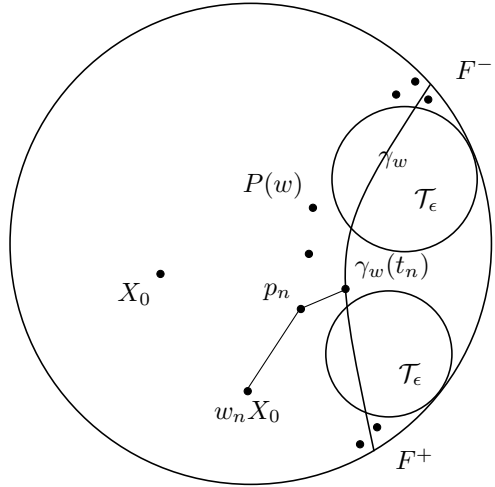
\begin{figure}[H] 
\centering
\begin{tikzpicture}[y=-1cm, scale=0.4]

\draw[semithick,black] (17.59111,13.66) circle (2.09556cm) node [below right] {$\mathcal{T}_\epsilon$};
\draw[semithick,black] (18.11333,7.84667) circle (2.41333cm) node [below right] {$\mathcal{T}_\epsilon$};

\filldraw[black] (18.9,5.2) circle (0.12cm);
\filldraw[black] (17.83333,5.03333) circle (0.12cm);
\filldraw[black] (18.46667,4.6) circle (0.12cm);
\filldraw[black] (17.2,16.1) circle (0.12cm);
\filldraw[black] (16.63333,16.66667) circle (0.12cm);
\filldraw[black] (10,11) circle (0.12cm) node [below left] {$X_0$};
\filldraw[black] (12.9,14.9) circle (0.12cm) node [below] {$w_n X_0$};
\filldraw[black] (14.66667,12.16667) circle (0.12cm) node [above left]{$p_n$};
\filldraw[black] (15.06667,8.8) circle (0.12cm) node [above left] {$P(w)$};
\filldraw[black] (14.93333,10.33333) circle (0.12cm);
\filldraw[black] (16.15,11.5) circle (0.12cm) node [above right] {$\gamma_w(t_n)$};

\draw[black] (16.2,11.5) -- (14.63333,12.16667) -- (12.86667,14.86667);

\draw[semithick,black] (18.96667,4.7) .. controls (15.6,9.86667) .. (17.1,16.9);

\path (19.5,4.5) node[text=black,anchor=base west] {$F^-$};
\path (17.5,17.5) node[text=black,anchor=base west] {$F^+$};
\path (16.93333,7.3) node[text=black,anchor=base west] {$\gamma_w$};

\draw[semithick,black] (13,10) circle (8cm);

\end{tikzpicture}%
\caption{Sample path locations and basepoint orbits close to the geodesic.}
\label{pic:P(w)}
\end{figure}

The key result is the following:

\begin{proposition} \label{sublinear} %
Fix $\epsilon > 0$, sufficiently small. Then for almost every sample
path $(w_n)_{n \in \mathbb{N}}$, with corresponding Teichm\"uller ray
$\rho_w$, there exists a sequence of times $t_n \to \infty$ with
$\rho_w({t_n}) \in \rho_{w} \setminus \mathcal{T}_\epsilon $, such that
\[ \lim_{n \to \infty} \frac{d_G(w_n, h_n)}{n} = 0 \]
for any $h_n \in \textup{proj}(\rho_w({t_n}))$.
\end{proposition}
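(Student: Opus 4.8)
The plan is to apply Lemma \ref{L:st} with the probability space $(\Omega, \lambda)$ equal to the path space $(G^{\mathbb{Z}}, \mathbb{P})$ equipped with the shift map $\sigma$, and with a suitable non-negative function $f$ whose value at $w$ measures (in the word metric) how far the basepoint $X_0$ is from the part of $\gamma_w$ where a thick-part projection is taken. The natural candidate is
\[ f(w) := \min_{h \in P_0(w)} d_G(1, h), \]
where $P_0(w)$ is the (nonempty, since $\gamma_w$ meets the thick part infinitely often) set of projections of points of $\gamma_w \setminus \mathcal{T}_\epsilon$ that are closest to $X_0$ among all such points; equivalently one can take $f(w) = d_G(1, h)$ for a fixed measurable choice of $h \in \mathrm{proj}(X)$ where $X$ is a point of $\gamma_w \setminus \mathcal{T}_\epsilon$ realizing (coarsely) the minimal Teichm\"uller distance to $X_0$. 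The point of working with the closest such point is that $f$ is then controlled by the distance function $D(w) = d_\mathcal{T}(X_0, \gamma_w)$ studied in Section \ref{section:random}: if $X \in \gamma_w \setminus \mathcal{T}_\epsilon$ is within bounded Teichm\"uller distance of the closest point of $\gamma_w$ to $X_0$, then by Lemma \ref{latticepoint} (applied with $X_0$ and $X$ both in the thick part) we get $f(w) \asymp d_M(m_0, m_X) \asymp d_\mathcal{T}(X_0, X)$ up to quasi-isometry constants, so $f(w)$ is finite almost everywhere. One must check here that the thick-part return happens at bounded depth-escape, i.e. that there is a thick-part point of $\gamma_w$ at bounded distance from the projection of the closest point; this uses that almost every $\nu$-typical geodesic returns to the thick part infinitely often and a compactness argument on the thick part, as already noted after the Kaimanovich--Masur theorem.

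The core estimate is to show $g(w) := f(\sigma w) - f(w) \in L^1(G^{\mathbb{Z}}, \mathbb{P})$. Here $\sigma w$ corresponds to the geodesic $\gamma_{\sigma w} = w_1^{-1}\gamma_w$ (translated so that its distinguished thick point near $X_0$ is the translate by $w_1^{-1}$ of the one for $\gamma_w$). Using the triangle inequality in the word metric together with the identity $d_G(1, w_1^{-1} h) \le d_G(1, h) + d_G(1, w_1)$ and its symmetric counterpart, one bounds
\[ |f(\sigma w) - f(w)| \le d_G(1, w_1) + (\text{a bounded error}), \]
where the bounded error absorbs the quasi-isometry constants from Lemma \ref{latticepoint} and from the coarse well-definedness of the shortest marking. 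Since $g_1 = w_1$ has law $\mu$ and $\mu$ has finite first moment in the word metric by hypothesis, $\int d_G(1, w_1)\, d\mathbb{P} < \infty$, so $g \in L^1$. Lemma \ref{L:st} then gives $f(\sigma^n w)/n \to 0$ for $\mathbb{P}$-a.e.\ $w$.

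Finally one has to translate this back to a statement along the ray $\rho_w$ and its locations $w_n$. By the shift identity $d_\mathcal{T}(w_n X_0, \gamma_w) = d_\mathcal{T}(X_0, \gamma_{\sigma^n w})$ from Section \ref{section:distance}, the quantity $f(\sigma^n w)$ is, up to the uniform quasi-isometry of Lemma \ref{latticepoint}, the word distance from $w_n$ to a thick-part projection $h_n' \in \mathrm{proj}(X)$ of the point $X$ of $\gamma_w \setminus \mathcal{T}_\epsilon$ closest to $w_n X_0$; hence $d_G(w_n, h_n')/n \to 0$. To upgrade from $\gamma_w$ to the ray $\rho_w$ one uses that $d_\mathcal{T}(\rho_w(t), \gamma_w(t)) \to 0$ (Masur \cite{MasurUniq}), so for large $n$ the relevant thick point can be taken on $\rho_w$ itself, giving times $t_n \to \infty$ with $\rho_w(t_n) \notin \mathcal{T}_\epsilon$ and $d_G(w_n, h_n)/n \to 0$ for any $h_n \in \mathrm{proj}(\rho_w(t_n))$; that $t_n \to \infty$ follows because $d_\mathcal{T}(X_0, w_n X_0) \to \infty$ by linear progress (Theorem \ref{linword}) while the projection stays near the geodesic.

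The main obstacle I expect is not the application of Lemma \ref{L:st} itself but the measurability and uniform-control bookkeeping: making a genuinely measurable choice of the distinguished thick point of $\gamma_w$ (and hence of $h \in P(w)$) that is equivariant with respect to the shift up to a bounded and integrable error, and ensuring the quasi-isometry constants in Lemma \ref{latticepoint} can be taken uniform along the whole geodesic (they depend only on $X_0$, $\epsilon$ and the generating set, which is exactly what is needed). Once $f$ is set up with these properties, the $L^1$ bound on $g$ is the only place the finite first moment hypothesis enters, and everything else is soft.
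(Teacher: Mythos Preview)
Your overall strategy---apply Lemma \ref{L:st} to a word-metric ``distance to a thick-part projection'' function on the bi-infinite path space, verify the $L^1$ increment bound via the finite first moment, then transfer from $\gamma_w$ to $\rho_w$ using Masur's convergence---is exactly the paper's, and your endgame (the passage from $\gamma_w$ to $\rho_w$, the argument that $t_n\to\infty$) matches what the paper does.

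The one substantive difference is your choice of $f$, and this is precisely the obstacle you flag at the end. The paper does \emph{not} select a distinguished (Teichm\"uller-closest) thick point. Instead it sets
\[
\varphi(w):=d_G\bigl(1,P(w)\bigr),\qquad P(w):=\bigcup_{X\in\gamma_w\setminus\mathcal{T}_\epsilon}\textup{proj}(X),
\]
the word-metric distance from $1$ to the \emph{entire set} of thick-part projections. Because $P$ is defined purely in terms of $\gamma_w$, one has the exact set identity $P(\sigma^n w)=w_n^{-1}P(w)$, hence $\varphi(\sigma^n w)=d_G(w_n,P(w))$ on the nose, and the triangle inequality for distance-to-a-set gives
\[
|\varphi(\sigma w)-\varphi(w)|=|d_G(w_1,P(w))-d_G(1,P(w))|\le d_G(1,w_1)
\]
with no additive error, no measurable selection, and no appeal to Lemma \ref{latticepoint} at this stage.

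Your bound $|f(\sigma w)-f(w)|\le d_G(1,w_1)+O(1)$, by contrast, does not follow from the triangle inequality you invoke: the $h$ realizing $f(w)$ is the projection of the thick point closest to $X_0$, while $f(\sigma w)$ involves the projection of the thick point closest to $w_1X_0$, and these can be distinct points of $\gamma_w$ (for instance on opposite sides of a deep thin-part excursion) whose lattice projections are far apart in the word metric. So the ``bounded error'' you worry about is not obviously bounded, and the measurable-selection/equivariance bookkeeping you anticipate is real for your $f$. The paper's insight is that minimizing over \emph{all} thick-part projections, rather than over those of a selected thick point, dissolves this obstacle rather than having to overcome it.
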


\begin{proof}
Let us fix $\epsilon > 0$ sufficiently small. Recall that $P(w)$ is
the collection of group elements corresponding to closest lattice
points to points on the geodesic $\gamma_w$ which lie in the thick
part of Teichm\"uller space. Note that, since the mapping class group
acts by isometries with respect to both the Teichm\"uller and word
metrics, then $P$ is equivariant, in the sense that
$$P(\sigma^n w) = w_n^{-1}P(w).$$
Let us now define the function $\varphi : G^\mathbb{Z} \rightarrow
\mathbb{R}$ on the space of bi-infinite sample paths as
$$\varphi(w) := d_G(1, P(w))$$
i.e. the minimal word-metric distance between the base point $X_0$ and
the set of closest projections from the thick part of the geodesic
$\gamma_w$.  
The shift map $\sigma : G^\mathbb{Z} \to G^\mathbb{Z}$ acts on
the space of sequences, ergodically with respect to the product
measure $\mu^\mathbb{Z}$.
By the equivariance of $P$, we have for each $n$ the equality 
\begin{equation}\label{eq:shift}
\varphi(\sigma^n w) = d_G(w_n, P(w)).
\end{equation}

We shall now apply Lemma \ref{L:st}, setting $(\Omega, \lambda) = (G^\mathbb{Z}, \mu^\mathbb{Z})$, 
$T = \sigma$, and $f = \varphi$. The only condition to be checked is the $L^1$-condition 
on the function $g(\omega) = f(T\omega)-f(\omega)$, which in this case becomes
$$g(\omega) = \varphi(\sigma w) - \varphi(w) = d_G(1, P(\sigma w)) - d_G(1, P(w)).$$
Now, using \eqref{eq:shift} we have 
$$|d_G(1, P(\sigma w)) - d_G(1, P(w))| = 
|d_G(w_1, P(w)) - d_G(1, P(w))| \le d_G(1, w_1)$$
which has finite integral precisely by the finite first moment assumption.
Thus, it follows from Lemma \ref{L:st} that for almost all bi-infinite paths $w$ one gets
$$\lim_{n \to \infty} \frac{d_G(w_n, P(w))}{n} = 0.$$
By definition of $P(w)$, there exists a sequence of times $t_n$, such that $\gamma_w(t_n)$ lies in
$\gamma_w \setminus \mathcal{T}_\epsilon$, the $\epsilon$-thick part
of the geodesic $\gamma_w$, and group elements $p_n \in G$ such that
$p_n \in \textup{proj}(\gamma_w(t_n))$, and furthermore
\begin{equation} \label{subpn}
\lim_{n \to \infty} \frac{d_G(w_n, p_n)}{n} = 0. 
\end{equation}
Now let $F^+$ be the terminal foliation of the geodesic $\gamma_w$,
and denote as $\rho_w$ the geodesic ray through $X_0$ with terminal
foliation $F^+$. We have obtained a sequence of points lying in the
intersection of the geodesic $\gamma_w$ with the thick part $\T
\setminus \T_\e$, and we now show how to obtain a sequence of points
lying in the intersection of the geodesic $\rho_w$ with the thick part
$\T \setminus \T_\e$.

Recall that since $\gamma_w$ and $\rho_w$ have the same terminal
foliation $F^+$, and $F^+$ is almost surely uniquely ergodic, then the
distance between the positive ray $\rho_w$ and the geodesic $\gamma_w$
tends to zero, and we have chosen parameterizations such that
$d_\T(\gamma_w(t), \rho_w(t)) \to 0$. In particular, after discarding
finitely many initial values, we may assume
$$ d_\mathcal{T}(\gamma_w(t_n), \rho_w(t_n)) \leqslant \frac{\log 2}{2}, $$
for all $n$. Now for each $n$ sufficiently large consider the sequence
take $\rho_w(t_n)$. Then:
\begin{enumerate}
\item By Wolpert's lemma, $\rho_w({t_n})$ lies in the $\frac{\epsilon}{2}$-thick part;
\item if $h_n \in \textup{proj}(\rho_w({t_n}))$, then
$d_\mathcal{T}(\rho_w({t_n}), h_n X_0) \leqslant K(\epsilon/2)$ so
\begin{align*}
d_\mathcal{T}(h_n X_0, p_n X_0) & \leqslant d_\mathcal{T}(h_n X_0,
\rho_w({t_n})) + d_\mathcal{T}(\rho_w({t_n}), \gamma_w(t_n)) +
d_\mathcal{T}(\gamma_w(t_n), p_n X_0) \\
& \leqslant 1 + 2 K(\epsilon/2),
\end{align*}
hence $d_G(h_n, p_n) \leqslant K'$, so by equation \eqref{subpn} we have also 
$$\lim_n \frac{d_G(w_n, h_n)}{n} \to 0.$$
\end{enumerate}
This completes the proof of Proposition \ref{sublinear}.
\end{proof}

\subsection{Intermediate times} \label{section:intermediate}

So far, we have shown that every step of the sample path is close
enough to some point on the thick part of the geodesic, hence the
closest projection to the lattice will behave like the sample path.
However, we still need to deal with the case in which there are points
in the thick part of the Teichm\"uller geodesic which are not close to
the sample path.

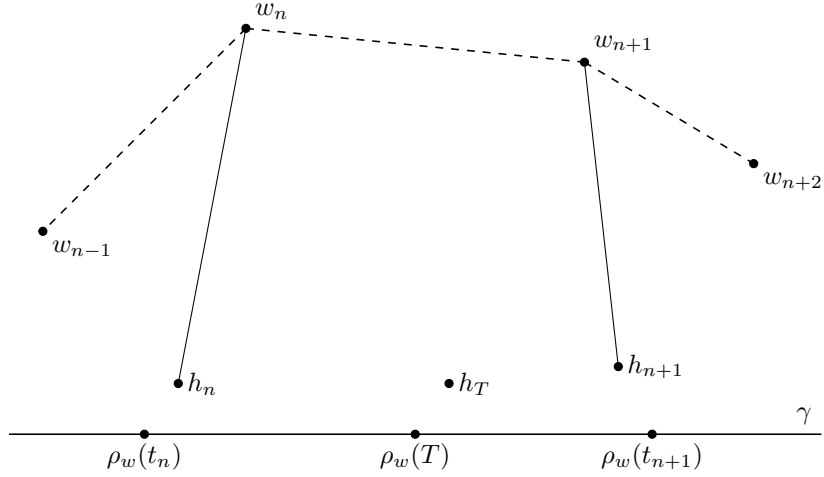
\begin{figure}[H]
\centering
\begin{tikzpicture}[y=-1cm, scale=0.45]


\filldraw[black] (5,14) circle (0.12cm) node [below right] {$w_{n-1}$};
\filldraw[black] (11,8) circle (0.12cm) node [above right] {$w_n$};
\filldraw[black] (21,9) circle (0.12cm) node [above right] {$w_{n+1}$};
\filldraw[black] (26,12) circle (0.12cm) node [below right] {$w_{n+2}$};
\filldraw[black] (8,20) circle (0.12cm) node [below] {$\rho_w(t_n)$};
\filldraw[black] (23,20) circle (0.12cm) node [below] {$\rho_w(t_{n+1})$};
\filldraw[black] (16,20) circle (0.12cm) node [below] {$\rho_w(T)$};
\filldraw[black] (9,18.5) circle (0.12cm) node [right] {$h_n$};
\filldraw[black] (22,18) circle (0.12cm) node [right] {$h_{n+1}$};
\filldraw[black] (17,18.5) circle (0.12cm) node [right] {$h_T$};
\draw[semithick,black] (4,20) -- (28,20) node [above left] {$\gamma$};
\draw[semithick,dashed,black] (11,8) -- (21,9);
\draw[semithick,dashed,black] (11,8) -- (5,14);
\draw[semithick,dashed,black] (21,9) -- (26,12);
\draw[black] (11,8) -- (9,18.5);
\draw[black] (21,9) -- (22,18);

\end{tikzpicture}%
\caption{Intermediate times.}
\end{figure}

\begin{proof}[Proof of Theorem \ref{mcg} (harmonic measure)]
Given a sample path $w$, let $\rho_w$ be the geodesic ray joining the base point $X_0$ 
to the limit foliation $F^+(w)$, and let $t_n$ be the sequence of times given by Proposition \ref{sublinear}.
Let now $T > 0$ be a time for which the geodesic $\rho_w(T)$ lies in the thick part, 
and let $h_T X_0$ be a projection of $\rho_w(T)$ to the Teichm\"uller lattice. Since $t_n \to \infty$, 
there exists an index $n = n(T)$ such that $t_n \leqslant T \leqslant t_{n+1}$.
By Proposition \ref{coarse-mono2}, there exist constants $C_1 > 0$, $C_2$ such that
$$ d_G(h_n, h_T) \leqslant C_1 d_G(h_n, h_{n+1}) + C_2.$$ 
Moreover, by Proposition \ref{sublinear} and triangle inequality,
$$\lim_{n \to \infty} \frac{d_G(h_n, h_{n+1})}{n} \leqslant 
\lim_{n \to \infty} \frac{d_G(h_n, w_n) + d_G(w_n, w_{n+1}) +
  d_G(w_{n+1}, h_{n+1})}{n} = 0$$
(where we used the finite first moment condition to ensure $d_G(w_n, w_{n+1})/n \to 0$). Thus, we also have
$$ \lim_{n \to \infty} \frac{d_G(h_n, h_T)}{n} = 0 $$ 
and again by Proposition \ref{sublinear}
$$ \lim_{n \to \infty} \frac{d_G(w_n, h_T)}{n} \leqslant \lim_{n \to \infty} \frac{d_G(w_n, h_n) + d_G(h_n, h_T)}{n} = 0 .$$ 
Similarly, since the relative metric is bounded above by the word metric, 
$$ \lim_{n \to \infty} \frac{d_{rel}(w_n, h_T)}{n} = 0 .$$ 
Finally, by computing the ratio between the word and relative metric,
$$ \lim_{\stackrel{T \to \infty}{\rho_w(T) \notin \T_\epsilon}} \frac{d_G(1, h_T)}{d_{rel}(1, h_T)} = 
\lim_{\stackrel{T \to \infty}{\rho_w(T) \notin \T_\epsilon}} \frac{\frac{d_G(1, h_T)}{n(T)}}{\frac{d_{rel}(1, h_T)}{n(T)}} = 
\lim_{n \to
  \infty} \frac{\frac{d_G(1, w_n)}{n}}{\frac{d_{rel}(1, w_n)}{n}} =
\frac{c_1}{c_2} > 0 .$$
This completes the proof of Theorem \ref{mcg}.
\end{proof}

\section{Fuchsian groups} \label{section:fuchsian}

Let $G$ be a Fuchsian group, i.e. a discrete subgroup of $SL(2,
\mathbb{R})$, with the further property that the quotient $X = G
\backslash \mathbb{H}^2$ is a finite area non-compact orbifold. Such a
subgroup is also known as a nonuniform lattice in $SL(2,
\mathbb{R})$. Given $\epsilon > 0$, the 
\emph{thin part} of $X$ is the set of points with injectivity radius
smaller than $\epsilon$. The complement of the thin part is a compact set 
called \emph{thick part} of $X$, and denote by $N$. 
If $\epsilon$ is sufficiently small, then the thin part is the union of
disjoint neighbourhoods 
$c_1, \cdots, c_p$ of the cusps of $X$. The universal cover of $X$ is the hyperbolic plane 
$\mathbb{H}^2$, and the lift of the union $c_1 \cup \dots \cup c_p$ of the cusp neighbourhoods in 
the universal cover is the union of countably many disjoint horoballs, which we shall denote by $\mathcal{H}$.

The group $G$ is finitely generated, and a finite choice of generators $\mathcal{A}$ for $G$ defines a 
proper {\em word metric} on $G$. Different choices of generators produce quasi-isometric metrics. 
For each cusp neighbourhood $c_i$ in $X$, let us choose a lift $\tilde{c}_i$ in the universal cover, 
and denote by $G_i$ the stabiliser of $\tilde{c}_i$. The group $G_i$ is infinite cyclic and is a maximal parabolic subgroup;
 let $g_i$ be a generator of $G_i$.
We may also define a {\em relative metric} on $G$ by taking the word metric with respect to the larger (infinite)
generating set 
$$\mathcal{A}' := \mathcal{A} \cup G_1 \cup \dots \cup G_p;$$
that is, along with the generators of $G$, the set $\mathcal{A}'$ includes all powers of all the parabolic generators $g_i$. 
The metric space $(G, d_{rel})$ is not proper, but it is Gromov hyperbolic. In fact,
as proven by Farb, $G$ is strongly hyperbolic relative to the parabolic subgroups $G_i$ \cite{Far}*{Theorem 4.11}.

Recall that a subgroup $G$ of $SL(2, \mathbb{R})$ is called \emph{non-elementary}
if it contains a pair of hyperbolic isometries with different fixed points. 
Let $\mu$ be a  measure on $G$, such that the
support of $\mu$ generates a non-elementary subgroup of $SL(2, \mathbb{R})$ as a semigroup, 
and consider the random walk generated by $\mu$. That is, the space $G^\mathbb{N}$
of sequences $(g_1, g_2, \dots)$ is endowed with the product measure $\mu^\mathbb{N}$, 
and we define the random walk as the process $\{w_n \}_{n \geq 0}$ with 
$w_0 = id$ and 
$$w_{n+1} =  w_n g_{n+1}.$$
Given a basepoint $x_0 \in \mathbb{H}^2$, one can consider the orbit map $G
\to \mathbb{H}^2$ which sends $g \mapsto g(x_0)$, so each sample path
in $G$ projects to a sample path in $\mathbb{H}^2$. Furstenberg
\cite{Furstenberg} showed that for almost all sequences 
the random walk converges to some point $p_{\infty} \in S^1 = \partial
\mathbb{H}^2$. The harmonic measure $\nu$ on the boundary records the
probability that the random walk hits a particular part of $\partial
\mathbb{H}^2$, i.e.
\[
\nu(A) = \textup{Prob}\left(\lim_{n \to \infty} w_n (x_0) \in
A\right).
\]

The unit tangent bundle $T^1 \mathbb{H}^2$ carries a natural 
$SL(2, \mathbb{R})$-invariant measure,  which in the upper half-plane model is given by $d \ell=
\frac{dx\ dy\ d \theta}{y^2}$. 
This measure descends to a measure on the unit tangent bundle to $X = G\backslash \mathbb{H}^2$ 
which is invariant for the geodesic flow, and is called \emph{Liouville measure}. 
Moreover, it is a classical result due to Hopf \cite{Hopf} 
that this flow is ergodic, and indeed mixing. The conditional
measure on the unit circle in the tangent space at any point is the
pullback via the visual map of the standard Lebesgue measure on
$\partial \mathbb{H}^2 = S^1$.

By studying the collection $\mathcal{H}$ of horoballs, Sullivan \cite{Sul} showed
that a generic geodesic ray with respect to Lebesgue measure is
recurrent to the thick part of $X$, and ventures into the cusps
infinitely often with maximum depth in the cusps of about $\log t$,
where $t$ is the time along the geodesic ray. Sullivan's theorem is a
precursor of Masur's approach in Teichm\"uller space \cite{MasurLog}.

Given a horoball $H$ and a geodesic $\gamma$ that
enters and leaves $H$, we define the excursion $E(\gamma, H)$ to be
the distance in the path metric on $\partial H$ between the entry and
exit points. 
Sullivan's theorem implies that a lift in $\mathbb{H}^2$ of a
Lebesgue-typical geodesic ray enters and leaves infinitely many
horoballs in the packing. We use this setup to estimate from below the
word length along a Lebesgue-typical geodesic in terms of the sum of
the excursions in these horoballs.

We say a basepoint $x_0 \in \mathbb{H}^2$ is generic if the stabilizer
of $x_0$ in $G$ is trivial.  The $G$-orbit of the basepoint $x_0$ is
called a lattice, and if $x_0$ is a generic basepoint, then each
lattice point corresponds to a unique group element. We shall assume
that we have chosen a generic basepoint, and then each point $\gamma_t$
along the geodesic has at least one closest lattice point $h_t x_0$,
and in fact this closest point is unique for almost all points along
the geodesic.

\subsection{Projected paths are quasigeodesic}

Let us now fix some thick part $N$ of $X$, and let $\widetilde{N}$ be 
its preimage in the universal cover. The space $\widetilde{N}$
is a geodesic metric space with the following path metric. 
Every two points $x, y$ in $\widetilde{N}$ are connected by some arc, and 
the \emph{path metric} between $x$ and $y$ is defined as the infimum 
of the (hyperbolic) lengths of all rectifiable arcs connecting $x$ and $y$. 
We shall denote this distance as $d_{\widetilde{N}}(x, y)$.
Since the quotient $G\backslash\widetilde{N} = N$ is compact, then by the \v{S}varc-Milnor
lemma the space $\widetilde{N}$ with the path metric is quasi-isometric
to the group $G$ endowed with the word metric. 
A geodesic for the metric $d_{\widetilde{N}}$ will be called a \emph{thick geodesic}.

In order to have a better control on the geometry of the thick part, we shall now define a canonical 
way to connect two points in the thick part, and prove that these canonical paths (which we call 
\emph{projected paths}) are quasigeodesic for the path metric on $\widetilde{N}$.

Each point of $\mathbb{H}^2$ has a unique closest point in the thick part $\widetilde{N}$, 
hence we can define the closest point projection map $\pi_{\widetilde{N}} : \mathbb{H}^2 \to \widetilde{N}$.
Any two points $x, y$ in the thick part $\widetilde N$ are connected by a hyperbolic geodesic segment
$\gamma$ in $\mathbb{H}^2$, which may pass through a number of horoballs in $\mathcal{H}$. 
The \emph{projected path} $p(x, y)$ between $x$ and $y$ is the closest point projection of the geodesic segment 
between $x$ and $y$ to the thick part:
$$p(x, y) := \pi_{\widetilde{N}}(\gamma).$$
More explicitly, the geodesic $\gamma$ intersects a finite number $r$ (possibly zero) of horoballs of the collection $\mathcal{H}$, 
which we denote as $H_1, \dots, H_r$, and the intersection of $\gamma$ with $\widetilde{N}$ is the union 
of $r+1$ geodesic segments 
$$[x, x_1] \cup [x_2, x_3] \cup \dots \cup [x_{2r}, y].$$  
The projected path $p(x, y)$
follows the geodesic segment $[x, x_1]$ in the thick part, then follows the boundary of the horoball
$H_1$ from $x_1$ to $x_2$, then again the geodesic segment $[x_2, x_3]$ and so on, alternating
paths on the boundary of the horoballs $H_i$ with hyperbolic geodesic segments in the thick part
until it reaches $y$. 
Given $x$ and $y$ in $\widetilde{N}$, we shall denote as $L(x, y)$ the length of the projected path 
$p(x, y)$ joining $x$ and $y$.

The usefulness of projected paths arises from the fact that they are quasigeodesic, 
as proven in the following lemma.

\begin{lemma} \label{lemma:projected path qg} %
There are positive constants $L, K$ and $c$, such that if the distance
between the horoballs is at least $L$, then the projected path $p$ is
a $(K, c)$-quasigeodesic in the thick part $\widetilde N$.
\end{lemma}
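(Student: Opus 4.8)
The plan is to show directly that the projected path $p(x,y)$ has length comparable (up to the desired multiplicative and additive constants) to the path-metric distance $d_{\widetilde N}(x,y)$, and then observe that the same comparison applied to subarcs gives the quasigeodesic property. First I would note the easy inequality: since $p(x,y)$ is \emph{a} path in $\widetilde N$ from $x$ to $y$, we have $d_{\widetilde N}(x,y) \leqslant L(x,y)$, so only the reverse inequality $L(x,y) \lesssim d_{\widetilde N}(x,y)$ needs work. Write the hyperbolic geodesic $\gamma$ from $x$ to $y$ as the concatenation of the thick segments $[x,x_1]\cup[x_2,x_3]\cup\cdots\cup[x_{2r},y]$ together with the horoball-crossing arcs $[x_{2i-1},x_{2i}]$ inside $H_i$. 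The projected path replaces each crossing arc $[x_{2i-1},x_{2i}]$ by the boundary arc of $\partial H_i$ between the same endpoints, and keeps the thick segments unchanged. So $L(x,y) = \sum_i \ell_{\mathrm{hyp}}([x,x_1]) + \cdots + \sum_i \ell_{\partial H_i}(x_{2i-1},x_{2i})$, and I must bound the boundary-arc lengths.

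The key geometric estimate is: for a hyperbolic geodesic segment crossing a horoball $H$ with entry/exit points $a,b$, the length of the boundary arc of $\partial H$ joining $a$ and $b$ is at most an exponential function of the depth $\ell_{\mathrm{hyp}}(a,b)$ of the crossing. Concretely, in the upper half-plane with $H = \{y \geqslant 1\}$, a geodesic dipping to height $y_{\min}$ has $\ell_{\partial H} \asymp e^{\ell_{\mathrm{hyp}}/2}$, which is \emph{not} controlled by $\ell_{\mathrm{hyp}}$ alone — so a naive comparison fails. The point is that the \emph{distance $d_{\widetilde N}$} between $a$ and $b$ is \emph{also} at least comparable to this boundary-arc length, because any path in $\widetilde N$ from $a$ to $b$ must go around the horoball $H$ (it cannot enter $H$'s interior), and the shortest such detour along $\partial H$ realizes, up to bounded multiplicative error, that exponential length. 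So I would prove: (i) $\ell_{\partial H}(a,b) \leqslant K_0\, d_{\widetilde N}(a,b)$ for a universal constant $K_0$ (the round-trip-around-the-horoball lower bound on $d_{\widetilde N}$); (ii) for the thick segments, $\ell_{\mathrm{hyp}}$ on $[x_{2i},x_{2i+1}] \leqslant d_{\widetilde N}(x_{2i},x_{2i+1})$ trivially since the segment lies in $\widetilde N$. Combining over all pieces and using that consecutive pieces concatenate (and that $d_{\widetilde N}$ restricted to the pieces adds up to at most $d_{\widetilde N}(x,y)$ plus a bounded error, using the $L$-separation hypothesis so that the geodesic $\gamma$ is itself a quasigeodesic for $d_{\widetilde N}$ and the nearest-point projections of its breakpoints are coarsely ordered along it), I get $L(x,y) \leqslant K\, d_{\widetilde N}(x,y) + c$.

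For the quasigeodesic property itself: parametrize $p(x,y)$ by arc length. For two parameter values $u \leqslant v$, the subpath of $p$ between them is \emph{itself} a projected path $p(x',y')$ for the endpoints $x' = p(u)$, $y' = p(v)$ lying in $\widetilde N$ — this requires checking that the portion of $\gamma$ between the corresponding points, reprojected, agrees with the subpath, which holds because closest-point projection to a convex-complement region is well-behaved along $\gamma$ and the horoball-crossing structure is inherited. Hence the length bound above applies to the subpath, giving $|v - u| = L(x',y') \leqslant K\, d_{\widetilde N}(x',y') + c = K\, d_{\widetilde N}(p(u),p(v)) + c$, and the reverse $d_{\widetilde N}(p(u),p(v)) \leqslant |v-u|$ is immediate; together these are exactly the $(K,c)$-quasigeodesic inequalities.

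The main obstacle I anticipate is step (i): showing that $d_{\widetilde N}(a,b) \gtrsim \ell_{\partial H}(a,b)$ for the entry/exit points of a single horoball crossing, i.e. that you genuinely cannot shortcut around a deep horoball more cheaply than following its boundary. This needs the $L$-separation hypothesis (so no neighboring horoball lets you "cut the corner" and shorten the detour by more than a bounded factor) and a comparison of the boundary-arc metric with the ambient path metric on $\widetilde N$; I would handle it by an explicit upper-half-plane computation for one horoball, plus a packing/separation argument to rule out savings through nearby horoballs. The rest is bookkeeping with the concatenation structure and the standard fact that a hyperbolic geodesic, viewed in the path metric of the horoball complement, is a quasigeodesic when the horoballs are uniformly separated.
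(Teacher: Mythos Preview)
Your decomposition into piece-wise bounds is fine --- indeed, step (i) is easier than you suggest: nearest-point projection from $\mathbb{H}^2 \setminus \mathrm{int}(H)$ onto $\partial H$ is distance-decreasing (a one-line computation in the upper half-plane), so $d_{\widetilde N}(a,b) \geqslant \ell_{\partial H}(a,b)$ holds with $K_0 = 1$, and no separation or packing argument is needed. The real gap is in what you call ``bookkeeping'': you need $\sum_i d_{\widetilde N}(x_i, x_{i+1}) \leqslant K\, d_{\widetilde N}(x,y) + c$, i.e.\ a reverse triangle inequality for the breakpoints. The triangle inequality only gives the opposite direction. Your justification for this step is either false or circular: the hyperbolic geodesic $\gamma$ is \emph{not} a quasigeodesic for $d_{\widetilde N}$ (when $\gamma$ crosses a horoball deeply, the $d_{\widetilde N}$-distance between entry and exit is exponentially larger than the hyperbolic length of the crossing), and the ``standard fact that a hyperbolic geodesic, viewed in the path metric of the horoball complement, is a quasigeodesic'' is precisely the lemma you are trying to prove (after projection). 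So the entire content of the lemma lives in the step you have waved away.

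The paper avoids this by reversing the logic: instead of bounding pieces of $p$ and summing, it bounds the thick geodesic $q$ from below directly. Through each breakpoint $\gamma_{t_i}$ draw the geodesic $P_i$ perpendicular to $\gamma$; these $P_i$ are pairwise disjoint and each separates the endpoints $x,y$, so $q$ must cross them in order, and the length of $q$ decomposes as a sum over the regions between consecutive $P_i$. In a thick region, nearest-point projection onto $\gamma$ is distance-decreasing, giving $\mathrm{length}(q \cap \text{region}) \geqslant d_{2i}$. In a horoball region, nearest-point projection onto $\partial H_{i+1}$ is distance-decreasing (the same fact underlying your step (i)), and the image of $P_{2i+1} \cap \widetilde N$ under this projection has diameter $\leqslant 1$, giving $\mathrm{length}(q \cap \text{region}) \geqslant d_{2i+1} - 2$. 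Summing yields $\mathrm{length}(q) \geqslant \mathrm{length}(p) - 2n$, and the separation hypothesis $L \geqslant 4$ absorbs the $2n$. The perpendicular ``gates'' are what make the sum legitimate --- they force $q$ to pay the cost of each region separately, which is exactly what your approach lacks.
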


\begin{proof}
Let $\gamma$ be a geodesic ray in $\mathbb{H}^2$, both of whose
endpoints lie in the thick part $\widetilde N$. Let $p$ be the
projected path, and let $q$ be the thick geodesic in $\widetilde N$
connecting the endpoints of $\gamma$.  As $q$ is a thick geodesic, the
length of $q$ is at most the length of the projected path $p$. We now
show that the length of the thick geodesic $q$ is at least the length
of the projected path $p$, minus $2n$, where $n$ is the number of
horoballs the geodesic $\gamma$ intersects. As long as the distance
between the horoballs is at least $4$, this implies that $p$ is a $(2,
2)$-quasigeodesic.

Label the intersecting horoballs $H_i$, in the order in which they
appear along $\gamma$. The hyperbolic geodesic $\gamma$ intersects the
boundary of each horoball twice, and we shall label these
intersections $\gamma_{t_{2i-1}}$ and $\gamma_{t_{2i}}$, as
illustrated below in Figure \ref{pic:chunk}.

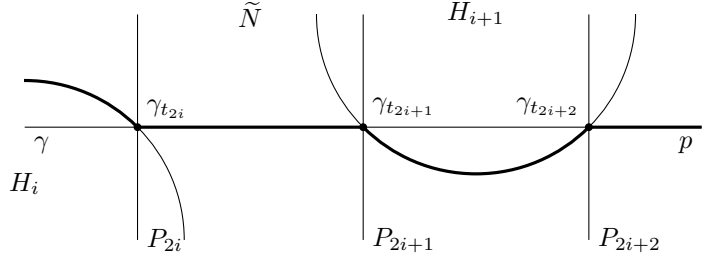
\begin{figure}[H]
\centering
\begin{tikzpicture}[scale=1.5]

\draw (0, 0) node [below right] {$\gamma$} -- (6, 0) node [below left] {$p$};

\draw (1, -1) node [right] {$P_{2i}$} -- (1, 1);
\filldraw [black] (1, 0) circle (0.03) node [above right] {$\gamma_{t_{2i}}$};

\draw (3, -1) node [right] {$P_{2i+1}$} -- (3, 1);
\filldraw [black] (3, 0) circle (0.03) node [above right] {$\gamma_{t_{2i+1}}$};

\draw (5, -1) node [right] {$P_{2i+2}$} -- (5, 1);
\filldraw [black] (5, 0) circle (0.03) node [above left] {$\gamma_{t_{2i+2}}$};

\begin{scope}
	\clip (0, -1) rectangle (6, 1);
	\draw (0, -1) circle (1.414);
	\draw (4, 1) circle (1.414);
\end{scope}

\path (0, -0.5) node {$H_i$};
\path (4, 1) node {$H_{i+1}$};

\path (2, 1) node {$\widetilde N$};

\draw [very thick] (1, 0) -- (3, 0);
\draw [very thick] (5, 0) -- (6, 0);

\begin{scope}
	\clip (0, -1) rectangle (1, 1);
	\draw [very thick] (0, -1) circle (1.414);
\end{scope}

\begin{scope}
	\clip (3, -1) rectangle (5, 1);
	\draw [very thick] (4, 1) circle (1.414);
\end{scope}

\end{tikzpicture}
\caption{Perpendicular geodesics through intersections of $\gamma$ and
  $\partial H_i$.}
\label{pic:chunk}
\end{figure}

For each point of intersection $\gamma_{t_i}$, let $P_i$ be the
perpendicular geodesic to $\gamma$ through $\gamma_{t_i}$. Each
perpendicular geodesic $P_i$ separates the endpoints of $\gamma$, so
any path connecting the endpoints must pass through each perpendicular
plane. Furthermore, the perpendicular geodesics are all disjoint, so
they divide the hyperbolic plane into regions, each of which contains
a subsegment of $\gamma$ which is either entirely contained in the
thick part $\widetilde N$, or else is entirely contained in a single
horoball. As the regions are disjoint, the length of any path is the
sum of the lengths of its intersections with each region. We now show
that the length of the thick geodesic $q$ in each region is bounded
below by the length of the projected path in that region, up to a
bounded additive error.

First consider a region between an adjacent pair $P_{2i}$ and
$P_{2i+1}$ of perpendicular geodesics containing a segment of $\gamma$
of length $d_{2i}$ in the thick part $\widetilde N$. The length of the
projected path $p$ inside this region has length exactly $d_{2i}$.  As
nearest point projection onto the geodesic is distance decreasing in
$\mathbb{H}^2$, any path from $P_{2i}$ to $P_{2i+1}$ has length at
least $d_{2i}$ in the hyperbolic metric, and hence also in the thick
metric. Therefore the intersection of the thick geodesic $q$ with this
region has length at least $d_{2i}$, i.e. at least the length of the
projected path.

Now consider a region between an adjacent pair $P_{2i+1}$ and
$P_{2i+2}$ of perpendicular geodesics containing a segment of $\gamma$
of length $d_{2i+1}$ in the boundary of a horoball $H_{i+1}$. The
length of the projected path $p$ in this region has length exactly
$d_{2i+1}$. The image of the part of the perpendicular geodesic
$P_{2i+1}$ in the thick part $\widetilde N$ projected onto the
horoball $H_{i+1}$ has diameter at most $1$. Similarly, image of the
part of the perpendicular geodesic $P_{2i+2}$ in the thick part
$\widetilde N$ projected onto the horoball $H_{i+1}$ also has
diameter at most $1$. Therefore, as the nearest point projection from
$\mathbb{H}^2 \setminus H_{i+1}$ onto the boundary of the horoball
$H_{i+1}$ is distance decreasing, the length of any path in
$\widetilde N$ between $P_{2i+1}$ and $P_{2i+2}$ has length at least
$d_{2i+1} - 2$.

This implies that the length of the thick geodesic $q$ is at least the
length of the projected path, minus $2n$, where $n$ is the number of
horoballs the geodesic $\gamma$ passes through. If we assume that the
horoballs are distance at least $L \ge 4$ apart, then the length of
the thick geodesic is at least half the length of the projected path,
up to an additive error of at most $2$.
\end{proof}

\subsection{The word metric for Fuchsian groups}

We now show that word length is coarsely monotonic along
geodesics. Recall that we write $h_t$ to denote the closest lattice
point to $\gamma_t$.

\begin{proposition} \label{prop:fuchsian word metric}
There are constants $c_1>0$ and $c_2$ such that for any geodesic
$\gamma$ and for any 
$0 \leqslant s \leqslant t$ 
\[
d_G(1, h_s) \leqslant c_1 d_G(1, h_t) + c_2.
\]
\end{proposition}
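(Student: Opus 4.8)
The plan is to reduce this statement for Fuchsian groups to the geometry of projected paths, mirroring what was done for the mapping class group via Rafi's reverse triangle inequality, but here exploiting the much simpler structure of the hyperbolic plane with a horoball packing. First I would use the \v{S}varc--Milnor lemma (already invoked) to replace the word metric $d_G(1,h_t)$ by the path metric $d_{\widetilde N}(x_0, h_t x_0)$ in the thick part, up to quasi-isometry. By Lemma \ref{lemma:projected path qg} this is in turn comparable (up to additive and multiplicative constants) to the length $L(x_0, \pi_{\widetilde N}(h_t x_0))$ of the projected path. Since $h_t x_0$ is a closest lattice point to $\gamma_t$, and closest lattice points lie within bounded Teichm\"uller/hyperbolic distance of $\gamma_t$ as long as $\gamma_t$ is in the thick part --- and if $\gamma_t$ is in the thin part, its closest lattice point is within bounded distance of the point where the geodesic last exited (or is about to enter) a horoball, hence within bounded distance of a thick-part point on $\gamma$ --- I can replace $h_t x_0$ by $\gamma_t$ up to bounded error. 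So it suffices to compare $L(x_0, \pi_{\widetilde N}(\gamma_s))$ with $L(x_0, \pi_{\widetilde N}(\gamma_t))$.

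The heart of the matter is then a monotonicity statement for projected path length along a single hyperbolic geodesic: if $0 \le s \le t$, then $L(x_0, \pi_{\widetilde N}(\gamma_s)) \lesssim L(x_0, \pi_{\widetilde N}(\gamma_t))$. Here the key observation is that a hyperbolic geodesic is a globally minimizing path, and the projected path follows $\gamma$ except inside the horoballs it crosses, where it replaces a geodesic chord of a horoball by an arc along the horocyclic boundary. The excursion along the boundary of a horoball the geodesic crosses with depth $D$ has length $\asymp e^{D}$ (comparable, up to an additive constant, to $\log$ of the visual-angle ratio, as in the excursion estimates recalled earlier), while the chord it replaces has length $\asymp 2D$. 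Crucially, both of these are \emph{monotone increasing} in the portion of the geodesic traversed: as we extend the geodesic from $\gamma_s$ to $\gamma_t$, we only add new thick segments and new (possibly deeper, but each completed) horoball excursions, and we never \emph{shorten} the projected path restricted to $[0,s]$. The only subtlety is the last horoball: if $\gamma_s$ or $\gamma_t$ falls inside a horoball, the projected path to that point is ambiguous, but we chose $h_s, h_t$ in the thick part (or truncate at the horoball boundary), so up to bounded additive error we may assume $\gamma_s,\gamma_t$ lie in $\widetilde N$. Concretely: write the projected path $p(x_0,\gamma_t)$ as a concatenation of the projected path $p(x_0, \gamma_s)$ (or a bounded modification of it near $\gamma_s$) followed by $p(\gamma_s, \gamma_t)$; then $L(x_0,\gamma_t) = L(x_0,\gamma_s) + L(\gamma_s,\gamma_t) + O(1) \ge L(x_0,\gamma_s) - O(1)$, which gives the desired inequality with $c_1 = 1$ after accounting for the quasi-isometry constants (so that $c_1$ becomes a genuine multiplicative constant $>1$).

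Assembling: $d_G(1,h_s) \asymp L(x_0, \pi_{\widetilde N}(\gamma_s)) + O(1) \le L(x_0, \pi_{\widetilde N}(\gamma_t)) + O(1) \asymp d_G(1,h_t) + O(1)$, where each $\asymp$ absorbs the \v{S}varc--Milnor and projected-path quasi-isometry constants, yielding $d_G(1,h_s) \le c_1 d_G(1,h_t) + c_2$ with constants depending only on the generating set, the basepoint, and $\epsilon$. The main obstacle I anticipate is the careful bookkeeping at the endpoints --- making precise the claim that the closest lattice point $h_s x_0$ to a possibly-thin-part point $\gamma_s$ is still within bounded distance of the \emph{thick-part} truncation of the projected path, so that concatenation works cleanly. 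This is where one needs the standard fact that consecutive powers of a parabolic fixing a cusp move the basepoint bounded $d_{rel}$-distance but do \emph{not} give closest lattice points to deep cusp excursions; equivalently, the closest lattice point to a geodesic point deep in a horoball lies near the horoball boundary. Once that is pinned down, the rest is the elementary geometry of horoball packings in $\mathbb{H}^2$ together with Lemma \ref{lemma:projected path qg}.
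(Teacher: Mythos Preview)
Your approach is essentially the same as the paper's: pass via \v{S}varc--Milnor and Lemma~\ref{lemma:projected path qg} to the projected-path length $L(x_0, p_t)$ with $p_t = \pi_{\widetilde N}(\gamma_t)$, and then use monotonicity in $t$. The paper is more direct on the monotonicity step---it simply observes that $t \mapsto L(x_0, p_t)$ is continuous and nondecreasing because $p_t$ traces out the fixed projected path of $\gamma$, so your concatenation argument is unnecessary; the endpoint bookkeeping you flag (relating $h_t x_0$ to $p_t$ when $\gamma_t$ lies deep in a horoball) is glossed over in the paper's proof as well.
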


\begin{proof}
Let $p_t := \pi_{\widetilde{N}}(\gamma_t)$ be the point on the projected path that is closest to $\gamma_t$. Recall that $L(x,y)$ is the length of the projected path joining $x$ and $y$. The function $t \mapsto L(x_0, p_t)$ is continuous and for any $0 \leqslant s \leqslant t$ it satisfies $L(x_0, p_s) \leqslant L(x_0, p_t)$. The proposition then follows as the projected path is a $(K, c)$-quasi geodesic in the thick part $\widetilde{N}$, and the thick part with its path metric is quasi-isometric to $G$ with the word metric.
\end{proof}

In the Fuchsian case, we shall define the excursion of $\gamma_t$ with
respect to the horoball $H$ to be the length (in $\widetilde N$) of
the intersection of the projected path $p(0, t)$ from $p_0$ to $p_t$
with the horoball $H$, i.e.
\[ E(\gamma_t, H) := L_{\widetilde N}(p(0, t) \cap H ), \]
where $L_{\widetilde N}$ denotes the length of the path in the
$\widetilde N$-metric. We shall just write $E(\gamma, H)$, for
$\lim_{t \to \infty} E(\gamma_t, H)$, and this limit is finite for
each horoball for almost all geodesic rays. This definition of the
excursion $E(\gamma, H)$ differs from the definition in the case of
Teichm\"uller geodesics, but the two definitions are equivalent up to
additive error.

We now show that the sum of the excursions along the geodesic gives a
lower bound on the word length, using the cutoff function $\lfloor x
\rfloor_A$, as defined previously in \eqref{floor}.

\begin{proposition}  \label{prop:wm}
There are constants $A > 0, c > 0$ and $d$ such that 
\begin{equation} \label{eq:wm} %
d_G(1,h_t) \geqslant \sum_{H \in \mathcal{H}} c \lfloor E(\gamma_t, H)
\rfloor_A - d.
\end{equation}
\end{proposition}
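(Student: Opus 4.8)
The plan is to combine the quasi-isometry between the thick part $\widetilde N$ (with its path metric) and the group $G$ (with the word metric) from the \v{S}varc--Milnor lemma, together with the fact that the projected path $p(p_0, p_t)$ is a $(K,c)$-quasigeodesic in $\widetilde N$ by Lemma \ref{lemma:projected path qg}. Concretely, $d_G(1,h_t)$ is, up to multiplicative and additive constants, the length $L(x_0, p_t)$ of the projected path from $p_0 = \pi_{\widetilde N}(x_0)$ to $p_t = \pi_{\widetilde N}(\gamma_t)$; this follows exactly as in the proof of Proposition \ref{prop:fuchsian word metric}, using that the closest lattice point $h_t x_0$ is within bounded $\widetilde N$-distance of $p_t$ (since $\gamma_t$ lies outside the horoballs whenever it is close to a lattice point, or can be moved boundedly to do so). So it suffices to bound $L(x_0, p_t)$ below by $\sum_{H} c\lfloor E(\gamma_t, H)\rfloor_A - d$.

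The key structural point is that the projected path $p(p_0, p_t)$ decomposes, by construction, as an alternating concatenation of geodesic segments in $\widetilde N$ and arcs along the boundaries of the finitely many horoballs $H_1,\dots,H_r$ met by the hyperbolic geodesic from $x_0$ to $\gamma_t$. Its total length is therefore the sum of the lengths of the thick segments plus $\sum_{i=1}^r E(\gamma_t, H_i)$, where $E(\gamma_t, H_i)$ is precisely the length of the boundary arc of $p(p_0,p_t)$ inside $H_i$. Since the thick segments contribute a nonnegative amount, we immediately get
\[
L(x_0, p_t) \;\geqslant\; \sum_{H \in \mathcal{H}} E(\gamma_t, H)
\;\geqslant\; \sum_{H \in \mathcal{H}} \lfloor E(\gamma_t, H)\rfloor_A ,
\]
the sum being finite as only finitely many horoballs contribute. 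Feeding this into the quasi-isometry $d_G(1,h_t) \geqslant c_1^{-1} L(x_0,p_t) - c_2$ (valid because $p$ is a $(K,c)$-quasigeodesic and $\widetilde N$ is quasi-isometric to $G$) yields \eqref{eq:wm} with $c = (c_1 K)^{-1}$ and a suitable additive constant $d$ absorbing $c_2$, $c$, and the bounded discrepancy between $h_t x_0$ and $p_t$.

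The main point requiring care — the "obstacle" — is the passage between $d_G(1,h_t)$ and the path length $L(x_0,p_t)$: one must check that the closest lattice point $h_t x_0$ to $\gamma_t$ is within uniformly bounded Teichm\"uller (here hyperbolic) distance of the projected point $p_t$, even when $\gamma_t$ is deep inside a cusp. This is handled by observing that if $\gamma_t$ lies in a horoball then $p_t$ is its projection to the horoball boundary, at bounded distance from the nearest point of $\widetilde N$, and the nearest lattice point lies in the thick part at bounded distance from there; the relevant bound depends only on $\epsilon_0$ and $\mathcal{H}$. Apart from this bookkeeping, the argument is essentially the observation that the projected path is built out of horoball-boundary arcs whose lengths are, by definition, the excursions, and lengths of quasigeodesics in $\widetilde N$ are comparable to word length.
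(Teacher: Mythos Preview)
Your approach is precisely that of the paper: bound the sum of excursions by the length of the projected path (since the excursions are by definition the horocyclic segments of that path), invoke Lemma~\ref{lemma:projected path qg} to say the projected path is quasigeodesic in $\widetilde N$, and finish with the \v{S}varc--Milnor quasi-isometry between $\widetilde N$ and $(G,d_G)$. The paper's proof is two sentences to this effect.

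One caveat on the ``obstacle'' you flag, which the paper's proof simply glosses over. Your resolution --- that $h_t x_0$ lies within uniformly bounded distance of $p_t$ --- is not correct when $\gamma_t$ is deep in a horoball: if $\gamma_t$ has depth $d$ in $H$, then every point of $\partial H$ within horocyclic distance $\sim e^d$ of $p_t$ is at hyperbolic distance $d + O(1)$ from $\gamma_t$, so the closest lattice point can sit anywhere in that window and need not be near $p_t$ in either the hyperbolic or the $\widetilde N$-metric. This is not fatal: in the application (Proposition~\ref{Fuchsian-lim}, via Proposition~\ref{prop:exc}) the inequality is only needed at times $T$ just after the geodesic exits a horoball, where $\gamma_T \in \widetilde N$, $p_T = \gamma_T$, and the comparison $d_{\widetilde N}(p_T, h_T x_0) = O(1)$ is immediate from compactness of $G\backslash\widetilde N$.
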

 
\begin{proof}
The excursion $E(\gamma_t, H)$ is the length of the horocyclic segment
of the projected path in $\partial H$, and so the sums of the lengths
of the excursions is a lower bound on the length of the projected
path.  The projected path $p$ is quasi-geodesic in $\widetilde N$, and
$\widetilde N$ is quasi-isometric to the word metric, and so the
result follows.
\end{proof}

\subsection{The geodesic flow}

Let $\mathcal{H}_n$ be the subset of the horoballs $\mathcal{H}$
consisting of those points which are at least distance $\log n$ from
the boundary of the horoballs in the hyperbolic metric, i.e.

$$ \mathcal{H}_n := \{ x \in \mathbb{H}^2 \ : \ d(x, \partial
\mathcal{H}) \geqslant \log n \}. $$

Let us denote as $X_n$ the quotient of $\mathcal{H}_n$ under the
action of $G$, so $X_n \subset X$.  We will write $T^1 X$ for the unit
tangent bundle to $X$, and we will write $T^1 Y$ for the restriction
of the unit tangent bundle to any subset $Y \subset X$. Given a
geodesic ray $\gamma$, we will write $v(\gamma_t)$ for the unit
tangent vector to $\gamma$ at the point $\gamma_t$. 
Let $\ell$ denote the Liouville measure on $T^1 X$. Since the geodesic flow
 on $T^1 X$ is ergodic, for any function $\psi \in L^1(T^1 X, \ell)$, and 
for almost every geodesic ray $\gamma$, we have the equality
\[ \lim_{T \to \infty} \frac{1}{T} \int_0^T \psi(v(\gamma_t)) dt = \int_X \psi(v)
d \ell.  \]
In particular, the proportion of time that a geodesic ray spends in
$X_n$ is asymptotically the same as the volume of $T^1 X_n$, and an
elementary calculation in hyperbolic space shows that this volume is
$1/n$, up to a multiplicative constant depending on the choice of cusp
horoballs.  Let $\chi_n$ be the characteristic function of
$T^1 X_{2^n}$, and let $\psi : T^1 X \to \mathbb{R}$ be
\[
\psi(v) := \sum_{n=1}^\infty 2^n \chi_n (v).
\]
This function is not in $L^1(T^1 X, \ell)$, but it is well defined, since
each $v$ lies in finitely many $X_n$. We now show that,
as a consequence of the $1/n$ decay of volumes, the ergodic average of
$\psi$ is infinite.

\begin{proposition} \label{prop:div} %
For almost every tangent vector $v \in T^1 X$ with respect to Liouville measure, we have
\begin{equation} \label{divergence}
\lim_{T \to \infty} \frac{1}{T} \int_0^T \psi(v(\gamma_t)) d t = \infty.
\end{equation}
\end{proposition}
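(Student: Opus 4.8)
The plan is to follow the same truncation strategy used in the proof of Theorem \ref{theorem:asylength}, but here the argument is noticeably simpler: the geodesic flow on $T^1 X$ is ergodic directly with respect to the Liouville measure $\ell$, which is finite since $X$ has finite volume, so there is no need to disintegrate over angular directions or to invoke a counting result like Theorem \ref{theorem:QuadAsymp}. All that is needed is the volume estimate recalled just before the proposition, namely $\ell(T^1 X_m) \asymp 1/m$ up to a multiplicative constant depending only on the choice of cusp horoballs.

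First I would truncate $\psi$ by setting, for each integer $N \geqslant 1$,
$$\psi_N(v) := \sum_{n=1}^N 2^n \chi_n(v).$$
Since the sum is finite and each $\chi_n$ is a characteristic function, $\psi_N$ is bounded, hence $\psi_N \in L^1(T^1 X, \ell)$. Applying the (continuous-time) Birkhoff ergodic theorem to the ergodic geodesic flow, for $\ell$-almost every $v \in T^1 X$ one has
$$\lim_{T \to \infty} \frac{1}{T}\int_0^T \psi_N(v(\gamma_t))\, dt = \int_{T^1 X} \psi_N\, d\ell.$$
Next I would evaluate the right-hand side using the $1/m$-decay of volumes:
$$\int_{T^1 X} \psi_N\, d\ell = \sum_{n=1}^N 2^n\, \ell(T^1 X_{2^n}) \asymp \sum_{n=1}^N 2^n \cdot 2^{-n} = N,$$
so there is a constant $c > 0$, independent of $N$, such that $\int_{T^1 X}\psi_N\, d\ell \geqslant c N$ for all $N$ (restricting to large $N$ or enlarging an additive constant if necessary). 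Since $\psi \geqslant \psi_N$ pointwise, it follows that for $\ell$-almost every $v$,
$$\liminf_{T \to \infty}\frac{1}{T}\int_0^T \psi(v(\gamma_t))\, dt \;\geqslant\; \lim_{T\to\infty}\frac{1}{T}\int_0^T \psi_N(v(\gamma_t))\, dt \;=\; \int_{T^1 X}\psi_N\, d\ell \;\geqslant\; cN.$$

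Finally I would intersect over $N \in \mathbb{N}$: the exceptional null set depends on $N$, but the countable union of these null sets is still null, and off this set the displayed inequality holds for \emph{every} $N$, forcing the liminf to equal $+\infty$, which is exactly \eqref{divergence}. (Equivalently, one can phrase the last step as: monotone convergence gives $\int_{T^1 X}\psi\, d\ell = \infty$, and then the conclusion follows from the ergodic theorem applied to the increasing sequence $\psi_N \uparrow \psi$.) The one point that requires care — and which plays the role of the main obstacle — is that $\psi \notin L^1(T^1 X, \ell)$, so Birkhoff cannot be invoked for $\psi$ itself; the truncation together with the precise $1/n$-decay of $\ell(T^1 X_n)$ is exactly what makes the integrals $\int \psi_N\, d\ell$ grow linearly in $N$ and thereby circumvents this. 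One should also note in passing that $\psi$ is finite everywhere, since each $v$ lies in only finitely many $X_n$, so the statement is meaningful.
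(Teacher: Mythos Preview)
Your proof is correct and follows essentially the same approach as the paper: truncate to $\psi_N$, apply Birkhoff for the ergodic geodesic flow to get the time average equal to $\int \psi_N\, d\ell \asymp N$ from the $1/m$ volume decay, and then use $\psi \geqslant \psi_N$ together with a countable intersection over $N$ to force the $\liminf$ to be infinite. Your additional remarks about the countable union of null sets and the non-integrability of $\psi$ are accurate and simply make explicit what the paper leaves implicit.
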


\begin{proof}
Let $\psi_N$ be the truncation
\[
\psi_N(v) = \sum_{n=1}^N 2^n \chi_n (v),
\]
which does lie in $L^1(T^1 X, \ell)$, and is a lower bound for $\psi$. Up to
a uniform multiplicative constant,
\[
\int_{T^1 X} \psi_N  \ d\ell \asymp N.
\]
By ergodicity, along $\ell$-almost every geodesic ray $\gamma$ 
\[
\lim_{T \to \infty} \frac{1}{T} \int_0^T \psi_N(v(\gamma_t)) d t =
\int_{T^1 X} \psi_N \ d\ell \asymp N
\]
where $v(\gamma_t)$ is the unit tangent vector to $\gamma$ at the
point $\gamma_t$.  As a consequence, along $\ell$-almost every
geodesic ray $\gamma$ the inequality
\[
\liminf_{T \to \infty} \frac{1}{T} \int_0^T \psi(v(\gamma_t)) d t
\geqslant \lim_{T \to \infty} \frac{1}{T} \int_0^T \psi_N(v(\gamma_t))
d t \asymp N
\]
holds for all $N$, which yields the claim.
\end{proof}

\begin{proposition} \label{prop:exc} %
Let $H$ be a horoball in $\mathcal{H}$, and let $t_1<t_2$ be the entry
and exit times in $H$ for a geodesic ray $\gamma$, and let $A > 0$ be
a constant. Then up to uniform additive and multiplicative constants,
which depend on $A$,
\[
\int_{t_1}^{t_2} \psi(v(\gamma_t)) \ dt \asymp \lfloor E(\gamma, H)
\rfloor_A,
\]
where $\lfloor x \rfloor_A$ is the cutoff function defined in
\eqref{floor}.
\end{proposition}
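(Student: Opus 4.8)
\emph{Proof plan.} The function $\psi$ is designed so that $\psi(v)$ is comparable to $e^{d(v)}$, where $d(v)$ is the hyperbolic distance from the footpoint of $v$ to $\partial\mathcal{H}$. Indeed, $v\in T^1X_{2^n}$ exactly when $d(v)\geqslant\log 2^n=n\log 2$, so summing the geometric series gives, for $d(v)\geqslant\log 2$,
\[ \psi(v)=\sum_{n=1}^{\lfloor d(v)/\log 2\rfloor}2^n = 2^{\lfloor d(v)/\log 2\rfloor+1}-2, \]
and $\psi(v)=0$ otherwise; since $2^{d(v)/\log 2}=e^{d(v)}$, this yields $\tfrac12 e^{d(v)}\leqslant\psi(v)\leqslant 2e^{d(v)}$ whenever $d(v)\geqslant\log 2$, and $\psi(v)\leqslant 2e^{d(v)}$ in all cases. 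Moreover, since the horoballs of $\mathcal{H}$ are pairwise disjoint, $\partial H$ separates any point of $H$ from every other $\partial H'$, so for $t\in[t_1,t_2]$ one has $d(\gamma_t,\partial\mathcal H)=d(\gamma_t,\partial H)$; write $d(\gamma_t)$ for this depth into $H$. It therefore suffices to prove $\int_{t_1}^{t_2}e^{d(\gamma_t)}\,dt\asymp E(\gamma,H)$ and then push the cutoff through these elementary comparisons.

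For the geometric estimate I would normalise, in the upper half-plane model, so that $H=\{y\geqslant 1\}$, where depth is $\log y$; the lifted geodesic is a Euclidean semicircle, and parametrising it by hyperbolic arc length with $t_m$ the time of maximal depth $D$, a direct computation gives the exact depth profile
\[ d(\gamma_t)=D-\log\cosh(t-t_m). \]
In particular $d(\gamma_t)=D-|t-t_m|+O(1)$ uniformly, the geodesic leaves $H$ at $t_2-t_m=t_m-t_1=D+O(1)$ once $D$ exceeds a universal constant, and the entry and exit points of the geodesic on $\partial H$ lie at horocyclic distance $2\sqrt{e^{2D}-1}$, which by the definition of the projected path is exactly $E(\gamma,H)$. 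Hence $e^{d(\gamma_t)}\asymp e^De^{-|t-t_m|}$ and $\int_{t_1}^{t_2}e^{d(\gamma_t)}\,dt\asymp e^D\asymp E(\gamma,H)$; when $D=O(1)$ both $\int_{t_1}^{t_2}e^{d(\gamma_t)}\,dt$ and $E(\gamma,H)$ are $O(1)$, so the comparison still holds up to constants.

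Finally I would feed in the cutoff. The upper bound is immediate: $\int_{t_1}^{t_2}\psi(v(\gamma_t))\,dt\leqslant 2\int_{t_1}^{t_2}e^{d(\gamma_t)}\,dt\lesssim E(\gamma,H)\leqslant\lfloor E(\gamma,H)\rfloor_A+A$, so $\int_{t_1}^{t_2}\psi(v(\gamma_t))\,dt\lesssim\lfloor E(\gamma,H)\rfloor_A$ with an additive constant depending on $A$. For the lower bound, the profile $d(\gamma_t)=D-\log\cosh(t-t_m)$ shows that $\{t\in[t_1,t_2]:d(\gamma_t)<\log 2\}$ is a union of two sub-arcs of bounded length (or all of $[t_1,t_2]$ when $D<\log 2$), hence contributes only $O(1)$ to $\int_{t_1}^{t_2}e^{d(\gamma_t)}\,dt$; on the complement $\psi(v(\gamma_t))\geqslant\tfrac12 e^{d(\gamma_t)}$, so $\int_{t_1}^{t_2}\psi(v(\gamma_t))\,dt\geqslant\tfrac12\bigl(\int_{t_1}^{t_2}e^{d(\gamma_t)}\,dt-O(1)\bigr)\gtrsim E(\gamma,H)-O(1)$. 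Together with the trivial bound $\int_{t_1}^{t_2}\psi(v(\gamma_t))\,dt\geqslant 0\geqslant\lfloor E(\gamma,H)\rfloor_A-A_0$, valid whenever $E(\gamma,H)$ stays below the universal threshold $A_0$ at which the maximal depth reaches $\log 2$, this gives $\int_{t_1}^{t_2}\psi(v(\gamma_t))\,dt\asymp\lfloor E(\gamma,H)\rfloor_A$.

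The main obstacle is the geometric step: pinning down the depth profile $d(\gamma_t)=D-\log\cosh(t-t_m)$ precisely enough, and verifying that the definition of $E(\gamma,H)$ as the $\widetilde N$-length of $p(0,t)\cap H$ really equals the horocyclic distance $2\sqrt{e^{2D}-1}$ between the entry and exit points; everything after that is bookkeeping with the geometric series and the cutoff function.
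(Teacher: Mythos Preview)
Your argument is correct and complete; the depth profile $d(\gamma_t)=D-\log\cosh(t-t_m)$ and the identification $E(\gamma,H)=2\sqrt{e^{2D}-1}$ are both exact in the normalisation $H=\{y\geqslant 1\}$, and the rest is indeed bookkeeping.

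The paper takes a more combinatorial route. Rather than collapsing $\psi$ to $e^{d}$ and integrating the explicit depth profile, it stratifies the horoball into dyadic shells $H_{2^k}\setminus H_{2^{k+1}}$ (each of fixed hyperbolic thickness $\log 2$), observes that for $k$ strictly below the maximal depth index $N$ the time $s_k$ the geodesic spends in each shell is bounded above \emph{and below} by uniform constants, and then sums $\sum_{k=1}^N s_k\bigl(\sum_{j\leqslant k}2^j\bigr)\asymp 2^N\asymp E(\gamma,H)$. Your approach trades this shell accounting for an exact formula and a one-line integral $\int e^D/\cosh(t-t_m)\,dt\asymp e^D$, which is analytically cleaner; the paper's version has the mild advantage that it never needs the precise depth profile, only the qualitative fact that each shell has bounded hyperbolic width and so is traversed in bounded time.
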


\begin{proof}
Let $N$ be the smallest number such that $\psi(v(\gamma_t)) =
\psi_N(v(\gamma_t))$ for $t \in [t_1, t_2]$, so up to a uniform
additive constant $2^{N} \leqslant E(\gamma, H) \leqslant 2^{N+1}$. We
shall write $H_n$ for the intersection of the horoball $H$ with
$\mathcal{H}_n$, so $H_n$ consists of all points of $H$ that are
distance at least $\log n$ from $\partial H$.  In the upper half-plane
model for hyperbolic space, we may assume that the boundaries of the
$H_n$ are given by horizontal lines, and the geodesic $\gamma$ is part
of a circle perpendicular to the real line. The hyperbolic distance
between $H_{2^k}$ and $H_{2^{k+1}}$ is independent of $k$, and the
shortest geodesic running between them is a vertical line, and the
longest geodesic segment is given by a semicircle tangent to the upper
horizontal line. This implies that for $k \leqslant N-1$, there are
uniform lower and upper bounds independent of $k$ and $N$ for the
amount of time $s_k$ that the geodesic ray $\gamma$ can spend in
$H_{2^k} \setminus H_{2^{k+1}}$. There is also a uniform upper bound
independent of $N$ for the amount of time $s_N$ that the ray $\gamma$
can spend in $H_{2^N} \setminus H_{2^{N+1}}$. These bounds imply
\[
\int_{t_1}^{t_2} \psi_N (v(\gamma_t))\ dt \asymp \sum_{k=1}^{N} s_k
\left(\sum_{j=1}^{k} 2^j\right) \asymp 2^N \asymp E(\gamma,H).
\]
Finally, we observe that the function $x$ is equivalent to $\lfloor x
\rfloor_A$, up to a suitably chosen additive constant, and so the
result follows.
\end{proof} 

Combining Propositions \ref{prop:wm}, \ref{prop:exc} and Equation
\eqref{divergence} we obtain the
\begin{proposition}\label{Fuchsian-lim}
For Lebesgue-almost every $\gamma$ we have 
\[
\lim_{T \to \infty} \frac{1}{T} d_G(1, h_T) = \infty.
\]
\end{proposition}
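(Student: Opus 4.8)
The plan is to obtain the statement by chaining together the three ingredients already established. Fix once and for all a geodesic ray $\gamma$ for which the ergodic conclusion \eqref{divergence} of Proposition~\ref{prop:div} holds (a full-measure condition with respect to Lebesgue measure on the boundary), and recall that $h_T$ denotes a closest lattice point to $\gamma_T$. For a given large $T$, let $H_1,\dots,H_{n(T)}$ be the horoballs of $\mathcal H$ that $\gamma|_{[0,T]}$ enters, in the order they are visited, and let $[s_i,s_i']$ be the (pairwise disjoint) time intervals during which $\gamma$ lies in $H_i$, with the convention that $s_{n(T)}'$ may exceed $T$ if $\gamma$ is inside a horoball at time $T$. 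Since $\epsilon$ is small enough that the cusp neighbourhoods of $X$ admit disjoint collars, distinct horoballs of $\mathcal H$ are pairwise at distance at least some $L_0>0$; hence the unit-speed geodesic travels at least $L_0$ between consecutive excursions, and therefore $n(T)\le 1+T/L_0$.

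First I would apply Proposition~\ref{prop:wm}, together with the elementary inequality $\lfloor x\rfloor_A\ge x-A$ (valid for all $x\ge 0$), to get
\[ d_G(1,h_T)\ \ge\ c\sum_{H\in\mathcal H}\lfloor E(\gamma_T,H)\rfloor_A - d\ \ge\ c\sum_{i=1}^{n(T)} E(\gamma_T,H_i)\ -\ cA\,n(T)\ -\ d. \]
Next I would compare $\sum_i E(\gamma_T,H_i)$ with $\int_0^T\psi(v(\gamma_t))\,dt$. The function $\psi$ is supported on $T^1X_2$, which is contained in the cusp neighbourhoods, so $\int_0^T\psi(v(\gamma_t))\,dt=\sum_{i=1}^{n(T)}\int_{[s_i,s_i']\cap[0,T]}\psi(v(\gamma_t))\,dt$. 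For each $i$ with $s_i'\le T$ (a completed excursion, so $E(\gamma_T,H_i)=E(\gamma,H_i)$) Proposition~\ref{prop:exc} gives $\int_{s_i}^{s_i'}\psi(v(\gamma_t))\,dt\le K\lfloor E(\gamma,H_i)\rfloor_A+c'\le K\,E(\gamma_T,H_i)+c'$; for the at most one horoball still being crossed at time $T$, the same nested-shell computation that proves Proposition~\ref{prop:exc} bounds $\int_{s_i}^{T}\psi(v(\gamma_t))\,dt$ by $K\,E(\gamma_T,H_i)+c'$ as well, since up to time $T$ the integral only sees the shells down to the depth reached so far. Summing over $i$,
\[ \int_0^T\psi(v(\gamma_t))\,dt\ \le\ K\sum_{i=1}^{n(T)}E(\gamma_T,H_i)+c'\,n(T), \]
and feeding this into the previous display yields
\[ d_G(1,h_T)\ \ge\ \frac{c}{K}\int_0^T\psi(v(\gamma_t))\,dt\ -\ \Big(cA+\frac{cc'}{K}\Big)n(T)\ -\ d. \]

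Finally I would divide by $T$ and use $n(T)\le 1+T/L_0$: the subtracted terms stay bounded as $T\to\infty$, while by \eqref{divergence} the term $\frac1T\int_0^T\psi(v(\gamma_t))\,dt$ tends to $+\infty$, so $d_G(1,h_T)/T\to\infty$, as claimed. The only genuinely delicate point — everything else being bookkeeping — is this last comparison: the additive corrections produced by the cutoff $\lfloor\cdot\rfloor_A$ and by Proposition~\ref{prop:exc} accumulate \emph{linearly} in $T$, since there can be on the order of $T$ excursions by time $T$, so a priori they could cancel the main term; the estimate survives precisely because $\int_0^T\psi(v(\gamma_t))\,dt$ grows superlinearly, i.e.\ $\frac1T\int_0^T\psi(v(\gamma_t))\,dt\to\infty$, which renders every $O(T)$ error negligible. (If one prefers to avoid discussing the incomplete excursion, one can instead prove the limit first along the times $T$ with $\gamma_T$ in the thick part and then upgrade to all $T$ by the coarse monotonicity of Proposition~\ref{prop:fuchsian word metric}, using Sullivan's bound of $\log t$ on the maximal cusp depth to see that the gaps between such times are $o(T)$.)
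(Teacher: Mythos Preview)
Your proposal is correct and follows essentially the same approach as the paper, which simply says ``Combining Propositions \ref{prop:wm}, \ref{prop:exc} and Equation \eqref{divergence}'' without further details. You have carefully filled in exactly those details, including the crucial observation that the number of horoballs visited by time $T$ is $O(T)$ (so the accumulated additive errors from the cutoff and from Proposition~\ref{prop:exc} are negligible compared to the superlinear growth of $\int_0^T\psi$), and you have handled the possibly incomplete final excursion; this is precisely what the paper's one-line proof is implicitly invoking.
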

On the other hand, the relative length of $h_T$ is up to a uniform
multiplicative constant bounded above by $T$. In fact, by ergodicity,
the ray $\gamma$ spends a definite proportion of its time in the thick
part of $X$. This implies that the relative length of $h_T$ grows
linearly in $T$. Combining this observation with the limit above
proves the first part of Theorem \ref{h2}.

\subsection{Random walks} \label{section:h2random}

In this section, we prove the second part of Theorem \ref{h2}. We
start by verifying the linear progress properties that we require.
Since $G$ is non-amenable, a random walk makes linear progress in the
word metric as shown by Kesten and Day (see Theorem \ref{linword}). 
Moreover, the random walk makes linear progress in the relative
metric, too:

\begin{proposition}[Maher-Tiozzo \cite{MT}] \label{prop:linear rel fuchs}
Let $\mu$ be a probability distribution on a non-compact finite
covolume Fuchsian group $G$ which has finite first moment in the 
word metric, and
such that the semigroup generated by its support is 
a non-elementary subgroup of $G$. 
Then there is a constant $c > 0$ such that
\[ \lim_{n \to \infty} \frac{d_{rel}(1, w_n)}{n} = c. \]
\end{proposition}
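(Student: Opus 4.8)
The plan is to obtain the existence of the limit from Kingman's subadditive ergodic theorem and its positivity from the general theory of random walks on groups acting on Gromov hyperbolic spaces. For existence, put $a_n := d_{rel}(1, w_n)$. Left-invariance of $d_{rel}$ and the triangle inequality give $a_{n+m}(\omega) \leqslant a_n(\omega) + d_{rel}(1, w_n^{-1} w_{n+m})$, and $w_n^{-1} w_{n+m} = g_{n+1}\cdots g_{n+m}$, so $a_{n+m} \leqslant a_n + a_m \circ \sigma^n$, where $\sigma$ is the measure-preserving ergodic shift on the step space $(G^{\mathbb N}, \mu^{\mathbb N})$. Since $\mathcal A \subseteq \mathcal A'$ we have $d_{rel} \leqslant d_G$, so $\mathbb E[a_1] \leqslant \mathbb E[d_G(1, g_1)] < \infty$ by the finite first moment hypothesis. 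Kingman's theorem then produces a constant $c \geqslant 0$ with $a_n/n \to c$ almost surely and in $L^1$; the content of the proposition is $c > 0$.

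For positivity I would use relative hyperbolicity. By Farb's theorem (already invoked above), $G$ is strongly hyperbolic relative to the maximal parabolic subgroups $G_1, \dots, G_p$; in particular the Cayley graph $Y$ of $G$ with respect to $\mathcal A'$, which is quasi-isometric to $(G, d_{rel})$, is a separable (non-proper) Gromov hyperbolic space on which $G$ acts by isometries. The action is non-elementary: a non-elementary Fuchsian group contains two independent hyperbolic elements $g, h$, a hyperbolic element of a discrete group is loxodromic on $Y$ with fixed points that are not parabolic fixed points, hence $g$ and $h$ act on $\partial Y$ as independent loxodromics; and since the support of $\mu$ generates a non-elementary subgroup of $G$, the associated subgroup acts non-elementarily on $Y$. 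Finally $\mu$ has finite first moment with respect to $d_Y \asymp d_{rel}$, again because $d_{rel} \leqslant d_G$. The positive-drift theorem of Maher--Tiozzo \cite{MT} (cf. Maher \cite{Mah2}) for finite-first-moment random walks acting non-elementarily on a Gromov hyperbolic space then gives $\lim_n d_{rel}(1, w_n)/n = \ell > 0$, which is the constant $c$.

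The main obstacle is this last positivity step: the reductions above are routine bookkeeping, whereas the input from \cite{MT} --- that non-degeneracy of the action together with a first-moment bound forces strictly positive drift --- is the genuinely substantive ingredient. For a self-contained proof one would instead show that the hitting measure on $\partial Y$ is non-atomic (there is no finite $G$-orbit on $\partial Y$, by the ping-pong pair $g, h$), pass to bi-infinite sample paths so that the forward and backward limits in $\partial Y$ are almost surely distinct, deduce from a bounded-Gromov-product estimate that $w_n$ tracks a bi-infinite geodesic of $Y$ up to bounded error, and then upgrade ``tracks a geodesic'' to ``moves linearly along it'' by applying the ergodic theorem to the Busemann cocycle of that geodesic --- which is precisely where the finite first moment is used. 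I would present the short argument via \cite{MT} and only indicate this alternative.
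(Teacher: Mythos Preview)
Your proposal is correct and matches the paper's treatment: the paper does not give an independent proof of this proposition but simply cites \cite{MT}, noting in one sentence that the general positive-drift result there for random walks on (non-proper) Gromov hyperbolic spaces applies because $(G, d_{rel})$ is $\delta$-hyperbolic. Your write-up is a fleshed-out version of exactly that reduction --- Kingman for existence of the limit, then verification that the hypotheses of \cite{MT} (hyperbolicity of the relative Cayley graph, non-elementary action, finite first moment in $d_{rel}$ via $d_{rel} \leqslant d_G$) hold --- so there is nothing to compare.
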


The result in \cite{MT} is stated in general for random walks on (not necessarily proper) Gromov
hyperbolic spaces, and it applies here since it is well-known that the Fuchsian group $G$ 
with the relative metric is $\delta$-hyperbolic. An earlier result, under the additional 
hypothesis of convergence to the boundary and finite support, is proven in \cite{Mah2}.

\medskip

Let us now turn to the proof of Theorem \ref{h2}. 
As the random walk makes linear progress in both the word metric and
the relative metric, by taking the quotient, the limit
\[
\lim_{n \to \infty} \frac{d_G(1,w_n)}{d_{rel}(1,w_n)} 
\]
exists and is finite along almost every sample path $w = (w_1, w_2,
\dots)$.
As before, we wish to obtain a limit for points along the geodesic
$\gamma$, and so we need to relate the sample path locations $w_n x_0$
to the geodesic $\gamma$. In order to do so, we can apply exactly the 
same sublinear tracking argument of section \ref{section:distance}; 
it turns out that the Fuchsian group case is a bit easier, since it 
is not necessary to worry about the thick part. Indeed, exactly the 
same proof as in Proposition \ref{sublinear} yields the following analogue for Fuchsian groups:

\begin{proposition} \label{sublinear fuchsian}
For almost every sample path $(w_n)_{n \in \mathbb{N}}$, with corresponding geodesic ray
$\rho_w$, there exists a sequence of times $t_n \to \infty$ 
such that
\[ \lim_{n \to \infty} \frac{d_G(w_n, h_n)}{n} = 0 \]
for any $h_n \in \textup{proj}(\rho_w({t_n}))$.
\end{proposition}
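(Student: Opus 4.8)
The plan is to mimic the proof of Proposition \ref{sublinear} verbatim, dropping the bookkeeping about the thick part that was needed in the Teichm\"uller setting. First I would record the basic geometric setup: for almost every bi-infinite sample path $w \in G^{\mathbb{Z}}$, Furstenberg's theorem (together with the analogue for the reflected walk) gives two distinct limit points $p^{\pm}(w) \in S^1$, which determine a unique bi-infinite geodesic $\gamma_w$ in $\mathbb{H}^2$, and a geodesic ray $\rho_w$ from $x_0$ with the same forward endpoint. Since $\mathbb{H}^2$ is $\delta$-hyperbolic and the endpoints are distinct, $\rho_w$ and $\gamma_w$ are a bounded Hausdorff distance apart; in fact, after reparameterizing by unit speed, $d_{\mathbb{H}^2}(\rho_w(t),\gamma_w(t)) = O(1)$. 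The function $D(w) := d_{\mathbb{H}^2}(x_0,\gamma_w)$ is measurable and finite for a.e. $w$. As the basepoint is generic, every point of $\gamma_w$ has a (coarsely unique) closest lattice point, and we set $P(w) := \bigcup_{X \in \gamma_w}\textup{proj}(X)$, the set of group elements realizing closest-lattice-point projections from points on $\gamma_w$. Crucially, since $G$ acts by isometries for both the hyperbolic metric (hence the word metric, via \v{S}varc--Milnor) and the word metric itself, $P$ is equivariant: $P(\sigma^n w) = w_n^{-1} P(w)$.

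Next I would set up the ergodic-theoretic argument. Define $\varphi : G^{\mathbb{Z}} \to \mathbb{R}$ by $\varphi(w) := d_G(1, P(w))$. By equivariance, $\varphi(\sigma^n w) = d_G(w_n, P(w))$ for all $n$. The shift $\sigma$ is measure-preserving and ergodic on $(G^{\mathbb{Z}}, \mu^{\mathbb{Z}})$. To apply Lemma \ref{L:st} with $T = \sigma$ and $f = \varphi$, I only need to check that $g(w) := \varphi(\sigma w) - \varphi(w)$ lies in $L^1$. Using the identity above,
\[
|\varphi(\sigma w) - \varphi(w)| = |d_G(w_1, P(w)) - d_G(1, P(w))| \leqslant d_G(1, w_1),
\]
and $\int_G d_G(1,w_1)\, d\mu = \int_G d_G(1,g)\,d\mu(g) < \infty$ precisely by the finite first moment hypothesis. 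Lemma \ref{L:st} then gives, for a.e. $w$,
\[
\lim_{n \to \infty} \frac{d_G(w_n, P(w))}{n} = 0.
\]
By the definition of $P(w)$, for each $n$ there is a point $\gamma_w(t_n)$ on $\gamma_w$ and a group element $p_n \in \textup{proj}(\gamma_w(t_n))$ with $d_G(w_n, p_n)/n \to 0$; moreover $t_n \to \infty$ since $d_G(1,w_n)/n \to c_1 > 0$ by Theorem \ref{linword}, so infinitely many distinct $t_n$ occur and we may take a subsequence with $t_n \to \infty$.

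Finally I would transfer the conclusion from $\gamma_w$ to the ray $\rho_w$. Since $\rho_w$ and $\gamma_w$ are a uniformly bounded hyperbolic distance apart with matching parameterizations, $d_{\mathbb{H}^2}(\rho_w(t_n), \gamma_w(t_n)) \leqslant M$ for some constant $M$, so for any $h_n \in \textup{proj}(\rho_w(t_n))$ we get $d_{\mathbb{H}^2}(h_n x_0, p_n x_0) \leqslant d_{\mathbb{H}^2}(h_n x_0, \rho_w(t_n)) + M + d_{\mathbb{H}^2}(\gamma_w(t_n), p_n x_0)$. Here the outer two terms are bounded: $h_n x_0$ is the closest lattice point to $\rho_w(t_n)$ and $p_n x_0$ is the closest lattice point to $\gamma_w(t_n)$, and in the Fuchsian case (unlike Teichm\"uller space) there is a global constant $K$ with $d_{\mathbb{H}^2}(X, \textup{proj}(X)\cdot x_0) \leqslant K$ for \emph{every} $X \in \mathbb{H}^2$, since the $G$-orbit of $x_0$ is a net in $\mathbb{H}^2$ (no thin part needs to be avoided). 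Hence $d_{\mathbb{H}^2}(h_n x_0, p_n x_0) \leqslant 2K + M$, which bounds $d_G(h_n, p_n)$ by a constant $K'$ by \v{S}varc--Milnor. Therefore
\[
\lim_{n \to \infty} \frac{d_G(w_n, h_n)}{n} \leqslant \lim_{n \to \infty} \frac{d_G(w_n, p_n) + K'}{n} = 0,
\]
as desired. The main obstacle, such as it is, is verifying the $L^1$ bound on $g$ cleanly — everything else is a routine simplification of the Teichm\"uller argument, with the simplification coming precisely from the fact that in $\mathbb{H}^2$ the lattice is a uniform net, so one never has to restrict $t_n$ to a thick part or invoke sublinear tracking between a ray and a geodesic with the same endpoint beyond the elementary bounded-Hausdorff-distance fact.
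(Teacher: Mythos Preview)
Your overall strategy is exactly the paper's --- define $P(w)$, apply Lemma~\ref{L:st} to $\varphi(w)=d_G(1,P(w))$ using the finite first moment to verify the $L^1$ condition, then transfer from $\gamma_w$ to $\rho_w$ --- and the ergodic part is carried out correctly. The gap is in the transfer step: your claim that ``the $G$-orbit of $x_0$ is a net in $\mathbb{H}^2$'' is false for a \emph{nonuniform} lattice. The Dirichlet fundamental domain has cusps of infinite diameter, so points deep in a horoball are at arbitrarily large hyperbolic distance from every lattice point. Consequently the two ``outer'' terms $d_{\mathbb{H}^2}(h_n x_0,\rho_w(t_n))$ and $d_{\mathbb{H}^2}(\gamma_w(t_n),p_n x_0)$ in your triangle inequality are \emph{not} uniformly bounded, and the bound on $d_{\mathbb{H}^2}(h_n x_0,p_n x_0)$ collapses. (Relatedly, the \v{S}varc--Milnor lemma does not apply to the $G$-action on $\mathbb{H}^2$, since that action is not cocompact; the word metric is quasi-isometric to the path metric on the thick part $\widetilde N$, not to $\mathbb{H}^2$.)

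The fix is simply \emph{not} to drop the thick-part bookkeeping. Define $P(w):=\bigcup_{X\in\gamma_w\cap\widetilde N}\textup{proj}(X)$ exactly as in Proposition~\ref{sublinear}; the equivariance and the $L^1$ estimate are unchanged, and Lemma~\ref{L:st} still gives $d_G(w_n,P(w))/n\to 0$. Now the times $s_n$ with $p_n\in\textup{proj}(\gamma_w(s_n))$ satisfy $\gamma_w(s_n)\in\widetilde N$ by construction, so $d_{\mathbb{H}^2}(p_n x_0,\gamma_w(s_n))\le K(\epsilon)$; since $d_{\mathbb{H}^2}(\gamma_w(t),\rho_w(t))\to 0$, the corresponding $\rho_w(t_n)$ also lies in (a slightly larger) thick part, whence $d_{\mathbb{H}^2}(h_n x_0,\rho_w(t_n))$ is bounded as well. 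From the resulting uniform bound on $d_{\mathbb{H}^2}(h_n x_0,p_n x_0)$ you conclude $d_G(h_n,p_n)\le K'$ by proper discontinuity of the $G$-action (only finitely many $g$ satisfy $d_{\mathbb{H}^2}(x_0,gx_0)\le C$), not by \v{S}varc--Milnor. The statement of Proposition~\ref{sublinear fuchsian} omits the phrase ``$\rho_w(t_n)\in\widetilde N$'' only because the $t_n$ are existentially quantified and the downstream application (Proposition~\ref{prop:fuchsian word metric}) needs no thick-part hypothesis; the proof itself still produces thick-part times.
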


Theorem \ref{h2} now follows from this Proposition using the same argument as in the proof of Theorem \ref{mcg}
in section \ref{section:intermediate}; the only thing which needs changing is that in this case we apply 
Proposition \ref{prop:fuchsian word metric} instead of Proposition \ref{coarse-mono2}. For this reason, 
since the statement of Proposition \ref{prop:fuchsian word metric} has no restriction to the thick part, 
we get for Fuchsian groups the stronger statement that the limit 
$$\lim_{t \to \infty} \frac{d_G(1, h_t)}{d_{rel}(1, h_t)}$$ 
exists for $\nu$-almost every geodesic without any restriction to subsequences, completing the proof of Theorem \ref{h2}.

\section{Lyapunov expansion exponent}\label{section:lyap}

We consider the Lyapunov expansion exponent defined by Deroin-Kleptsyn-Navas in \cite{Der-Kle-Nav}. 
For a Fuchsian group $G$, let $d_G$ be the word metric with respect to a finite set of generators. 
Let $B(R)$ denote the ball of radius $R$ in $G$ for the word metric $d_G$. Given a point $ \in S^1$, the {\em Lyapunov expansion exponent} at $p$ is defined as:
\[
\lambda_{exp}(p) = \limsup_{R \to \infty} \max_{g \in B(R)}  \frac{1}{R} \log \vert g'(p) \vert.
\]
As an application of Theorem \ref{h2}, we prove:
\begin{theorem}\label{Lyap}
If $G$ is a Fuchsian group with parabolic elements, then for Lebesgue-almost every $p \in S^1$
the Lyapunov expansion exponent is zero:
\[
 \lambda_{exp}(p) = 0.
\]
\end{theorem}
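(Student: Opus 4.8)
The plan is to show that for each $p$ the group elements of the word ball $B(R)$ that come close to maximising $|g'(p)|$ must carry lattice points lying near the geodesic ray $[x_0,p)$, and then to invoke Theorem \ref{h2} --- in the form of Proposition \ref{Fuchsian-lim} --- to see that the derivatives of these elements grow subexponentially in their word length.

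I would begin by fixing the centre $x_0$ of the disc as a generic basepoint in the thick part, so that lattice points correspond bijectively to group elements and every $G$-translate of $x_0$ lies outside the horoballs of $\mathcal{H}$. Since changing the smooth metric on $S^1$ changes $\log|g'(p)|$ by an amount bounded uniformly in $g$ and $p$ (the ratio of two such metrics is continuous and positive on the compact circle), I may compute $|g'(p)|$ with respect to the visual metric seen from $x_0$, for which the Poisson kernel formula gives $\log|g'(p)| = \beta_p(x_0,g^{-1}x_0)$, with $\beta_p(x,y)=\lim_{z\to p}\big(d_{\mathbb{H}^2}(x,z)-d_{\mathbb{H}^2}(y,z)\big)$ the Busemann function at $p$. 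Writing $\gamma$ for the geodesic ray from $x_0$ to $p$ and $\pi_\gamma$ for nearest-point projection onto $\gamma$, the fact that $\pi_\gamma$ is $1$-Lipschitz and fixes $\gamma$ pointwise yields, with $\gamma_s := \pi_\gamma(g^{-1}x_0)$,
\[
\log|g'(p)| = \lim_{T\to\infty}\big(T - d_{\mathbb{H}^2}(g^{-1}x_0,\gamma_T)\big) \leqslant \lim_{T\to\infty}\big(T - d_{\mathbb{H}^2}(\gamma_s,\gamma_T)\big) = s .
\]
Thus $\log|g'(p)| \leqslant s_g$, the parameter along $\gamma$ of the projection of $g^{-1}x_0$.

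The hard part will be the following comparison of word lengths: there is a constant $C$ such that $d_G(1, h_{s_g}) \leqslant C\, d_G(1,g) + C$ for every $g \in G$, where $h_t$ is a closest lattice point to $\gamma_t$. The idea is that $[x_0, g^{-1}x_0]$ and $\gamma = [x_0,p)$ issue from the same point and $\delta$-fellow-travel up to hyperbolic distance $(g^{-1}x_0\mid p)_{x_0} = s_g + O(\delta)$ (the Gromov product based at $x_0$), so $\gamma_{s_g}$ lies boundedly close to a point $z$ of $[x_0, g^{-1}x_0]$; projecting everything to the thick part, the projected path $p(x_0, g^{-1}x_0)$ is a quasigeodesic in $\widetilde{N}$ (Lemma \ref{lemma:projected path qg}) of length comparable to $d_G(1,g)$ by the \v{S}varc--Milnor lemma, and $\pi_{\widetilde{N}}(\gamma_{s_g})$ lands boundedly close to $\pi_{\widetilde{N}}(z)$ on this path, so that $d_{\widetilde{N}}(x_0, \pi_{\widetilde{N}}(\gamma_{s_g})) \leqslant C\, d_G(1,g) + C$; since $d_{\widetilde{N}}$ is quasi-isometric to $d_G$ and $h_{s_g}x_0$ is a closest lattice point to $\gamma_{s_g}$, the claim follows. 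I expect the delicate point to be the behaviour near the horoball boundaries, to be controlled as in the proof of Lemma \ref{lemma:projected path qg} using that nearest-point projection onto a horocycle from its outside is distance non-increasing.

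Granting this, I would finish as follows. Fix $p$ so that $\gamma = [x_0,p)$ satisfies Proposition \ref{Fuchsian-lim} (Lebesgue-almost every $p$), so that $d_G(1, h_t)/t \to \infty$ as $t\to\infty$. Set $t_R := \sup\{\, t \geqslant 0 : d_G(1, h_t) \leqslant CR + C \,\}$, finite by coarse monotonicity (Proposition \ref{prop:fuchsian word metric}) and tending to $\infty$ with $R$. Every $g \in B(R)$ then satisfies $d_G(1, h_{s_g}) \leqslant CR + C$, hence $s_g \leqslant t_R$ and $\max_{g\in B(R)}\log|g'(p)| \leqslant t_R + O(1)$. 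Moreover $t_R = o(R)$: if instead $t_{R_k} \geqslant \delta R_k$ along some $R_k \to \infty$, then $d_G(1, h_{t_{R_k}}) \leqslant CR_k + C \leqslant (C/\delta)\, t_{R_k} + C$ with $t_{R_k} \to \infty$, contradicting $d_G(1, h_t)/t \to \infty$. Therefore
\[
0 \leqslant \lambda_{exp}(p) = \limsup_{R\to\infty}\frac1R\max_{g\in B(R)}\log|g'(p)| \leqslant \limsup_{R\to\infty}\frac{t_R + O(1)}{R} = 0 ,
\]
which is the assertion.
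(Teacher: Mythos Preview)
Your overall strategy matches the paper's: bound $\log|g'(p)|$ by the position of $g^{-1}x_0$ relative to the ray $\gamma$, establish a word-length comparison $d_G(1, h_{s_g}) \lesssim d_G(1,g)$, and then invoke Proposition~\ref{Fuchsian-lim}. Your Busemann-function bound $\log|g'(p)|\leqslant s_g$ is in fact cleaner than the paper's route through the explicit derivative formula, half-spaces, and the angle estimate of Lemma~\ref{angle} and Proposition~\ref{der-estimate}; and your $t_R$ device neatly replaces the bounded-jump-size argument of Lemma~\ref{jump-size}.

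There is, however, a real gap in your sketch of the key comparison $d_G(1, h_{s_g}) \leqslant C\,d_G(1,g)+C$. You argue that $\gamma_{s_g}$ lies within $O(\delta)$ of a point $z\in[x_0,g^{-1}x_0]$, and then project both to the thick part. But closest-point projection from \emph{inside} a horoball to its boundary is distance \emph{expanding}: two points at depth $D$ and hyperbolic distance $O(\delta)$ apart project to points at horocyclic distance $O(\delta\,e^{D})$ on $\partial H$. So the assertion that ``$\pi_{\widetilde N}(\gamma_{s_g})$ lands boundedly close to $\pi_{\widetilde N}(z)$'' fails exactly when $\gamma_{s_g}$ sits deep in a horoball, and the fix you propose---that projection onto a horocycle from its outside is non-increasing---addresses the wrong direction. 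This is precisely the content of the paper's Proposition~\ref{shadow}, whose proof is a case analysis: when $\gamma_T$ lies in a horoball $H$, one does not compare the projected points directly but instead compares the \emph{excursions} in $H$ of the two geodesics, using that $[x_0,g^{-1}x_0]$ must itself enter $H$ and reach comparable depth (since it comes within $O(\delta)$ of $\gamma_T$), hence has horocyclic excursion at least of the order of $d_{\partial H}(\gamma_u,p_T)$. Once you supply that argument for your inequality, the rest of your proof goes through.
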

\noindent This answers Question 3.3 in \cite{Der-Kle-Nav} in the affirmative. 

Here is the rough idea of the proof of Theorem \ref{Lyap}. Suppose $p$
is a point in $S^1$ and let $\gamma$ be the hyperbolic geodesic ray
that connects the origin $x_0$ in $\mathbb{D}$ to $p$. Let $h_T$ be
the approximating group element for $\gamma_T$. We will show that for
every group element in a ball of radius $R = d_G(1, h_T)/2K^2$ where
$K$ is some uniform constant, the derivative at $p$ has a coarse upper
bound of $e^{2T}$. As $T$ increases, the word length of the
approximating group elements is monotonically increasing with bounded
jump size. Finally, for Lebesgue-almost every $p$, Proposition
\ref{Fuchsian-lim} says that the ratio $T/R$ goes to zero, which
proves Theorem \ref{Lyap}.
 
\subsection{Derivatives of isometries}

We shall use the unit disc model $\mathbb{D}$ of hyperbolic plane. An isometry of $\mathbb{D}$ is of the form 
\[
f(z) = e^{i \theta} \frac{z-a}{1-\overline{a}z}
\]
where $a \in \mathbb{D}$. Write $a$ as $a= Ae^{i \phi}$ and suppose
$f(e^{it}) = e^{i g(t)}$. Differentiation with respect to $t$, and an
elementary calculation, shows that
\begin{equation} \label{eq:der}
\vert g'(t) \vert = \frac{1-A^2}{1+A^2- 2A \ \textup{Re}(e^{i\phi}e^{-it})}.
\end{equation}
It follows that $\vert g'(t) \vert$ is maximum with value $(1+A)/(1-A)$ when $t =  \phi$. Denoting the origin in $\mathbb{D}$ as $x_0$, note that $(1+A)/(1-A) = e^{d_{\mathbb{H}^2}(x_0, f(x_0))}$
 and so in particular, the calculation shows that the maximum value of the logarithm of the derivative on $S^1$ is equal to the hyperbolic distance that $f$ moves the origin $x_0$. To summarize, we get

\begin{lemma}\label{bounded-der}
If $g$ is an isometry of $\mathbb{D}$ such that $d_{\mathbb{H}^2}(x_0, gx_0) \leqslant T$ then for any $p \in S^1$,
\[
\vert g'(p) \vert \leqslant e^T.
\]
\end{lemma}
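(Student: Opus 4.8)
The plan is to turn the computation recorded just before the statement into a clean bound: I will show that $\max_{p\in S^1}|g'(p)| = e^{d_{\mathbb{H}^2}(x_0,gx_0)}$, after which the lemma is immediate from the hypothesis $d_{\mathbb{H}^2}(x_0,gx_0)\leqslant T$.

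First I would put $g$ in the standard form $g(z)=e^{i\theta}\frac{z-a}{1-\overline{a}z}$ with $a=Ae^{i\phi}$, $A\in[0,1)$, and note that $gx_0=g(0)=-e^{i\theta}a$, so $|gx_0|=A$. Writing $g(e^{it})=e^{ig(t)}$ as in the derivation of \eqref{eq:der}, that formula gives $|g'(t)|=\frac{1-A^2}{1+A^2-2A\,\mathrm{Re}(e^{i\phi}e^{-it})}$; since $\mathrm{Re}(e^{i(\phi-t)})$ ranges over $[-1,1]$ and equals $1$ at $t=\phi$, the denominator attains its minimum value $(1-A)^2$ there, so
\[
\max_{p\in S^1}|g'(p)|=\frac{1-A^2}{(1-A)^2}=\frac{1+A}{1-A}.
\]
Next I would identify the right-hand side with a hyperbolic distance: integrating the disc line element $\frac{2\,|dz|}{1-|z|^2}$ along the radial segment from $0$ to the point $gx_0$ of modulus $A$ gives $d_{\mathbb{H}^2}(x_0,gx_0)=\int_0^A\frac{2\,dr}{1-r^2}=\log\frac{1+A}{1-A}$, i.e. $\frac{1+A}{1-A}=e^{d_{\mathbb{H}^2}(x_0,gx_0)}$. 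Combining the two displays, for every $p\in S^1$ we obtain $|g'(p)|\leqslant\frac{1+A}{1-A}=e^{d_{\mathbb{H}^2}(x_0,gx_0)}\leqslant e^T$, as claimed.

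There is no real obstacle here: the entire content lies in the elementary identity \eqref{eq:der} together with the standard formula for hyperbolic distance in the disc. The only subtlety worth flagging is the normalization of the metric on $\mathbb{D}$ — one must use the curvature $-1$ line element $\frac{2\,|dz|}{1-|z|^2}$, which is exactly the convention under which the identity $(1+A)/(1-A)=e^{d_{\mathbb{H}^2}(x_0,gx_0)}$ stated before the lemma holds; with a different normalization the constant in front of $T$ would change, but the qualitative statement — and its use in the proof of Theorem \ref{Lyap} — would be unaffected.
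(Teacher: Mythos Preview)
Your proposal is correct and follows exactly the approach the paper takes: the paper derives \eqref{eq:der}, observes the maximum value $(1+A)/(1-A)$ is attained at $t=\phi$, identifies this with $e^{d_{\mathbb{H}^2}(x_0,g x_0)}$, and then simply states the lemma as a summary. You have just spelled out the same steps with slightly more detail (the explicit integration for the distance formula and the remark on normalization).
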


\subsection{Bounding derivative over a ball in the word metric }

Let $p \in S^1$, and $\gamma$ be the geodesic ray from the origin $x_0$ to $p$. Let 
$p_T = \pi_{\widetilde{N}}(\gamma_T)$ denote the point in the thick part closest to $\gamma_T$ 
and let $h_T$ be the approximating group element. Let
\[
H(x_0, \gamma_{2T}) = \{ x \in \mathbb{D} : d_{\mathbb{H}^2}(x_0, x) \geqslant d_{\mathbb{H}^2}(\gamma_{2T}, x) \}.
\]
Thus, $H(x_0, \gamma_{2T})$ is the half-space with $\partial H(x_0, \gamma_{2T})$ orthogonal to $\gamma$ at the point $\gamma_{T}$. 

\begin{proposition}\label{shadow}
There exists constants $K$, $K'$ such that, if $gx_0$ lies in $H(x_0, \gamma_{2T})$, then 
\[
d_G(1, g) \geqslant \frac{1}{K} d_G(1, h_T) - K'.
\]
\end{proposition}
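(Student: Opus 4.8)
The plan is to compare $d_G(1,g)$ with the hyperbolic distance $d_{\mathbb{H}^2}(x_0, gx_0)$ along the projected path, using the quasi-isometry between the thick part $\widetilde{N}$ (with its path metric) and $G$ (with its word metric). The point is that the word length of $g$ is not directly controlled by the hyperbolic distance $d_{\mathbb{H}^2}(x_0, gx_0)$ — excursions into the cusps can inflate the word length dramatically — so we must instead compare it with the \emph{length of the projected path}, which does see these excursions. Since $gx_0$ lies in the half-space $H(x_0,\gamma_{2T})$, any path from $x_0$ to $gx_0$ must cross the perpendicular geodesic $\partial H(x_0,\gamma_{2T})$ through $\gamma_T$; the key geometric fact to exploit is that this forces the projected path from $x_0$ to $gx_0$ (or the projection of $gx_0$ to the thick part) to travel at least as far as the projected path from $x_0$ to $p_T = \pi_{\widetilde{N}}(\gamma_T)$, up to a bounded additive error.

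Concretely, I would proceed as follows. First, replace $g x_0$ by its closest-point projection $\pi_{\widetilde{N}}(gx_0)$ onto the thick part; this moves $gx_0$ a bounded hyperbolic distance, hence changes word length by a bounded amount (using the \v{S}varc--Milnor quasi-isometry). Next, consider the hyperbolic geodesic from $x_0$ to $gx_0$ and its projected path $p(x_0, gx_0)$, which by Lemma \ref{lemma:projected path qg} is a $(K,c)$-quasigeodesic in $\widetilde{N}$ (after arranging the horoballs to be sufficiently far apart, which we may assume since $\epsilon$ is small). Because $gx_0 \in H(x_0,\gamma_{2T})$, the geodesic $[x_0, gx_0]$ must pass the perpendicular plane $\partial H(x_0,\gamma_{2T})$, and a nearest-point-projection / monotonicity argument (the same mechanism used in Proposition \ref{prop:fuchsian word metric}, where $t \mapsto L(x_0, p_t)$ is monotone) shows that $L(x_0, \pi_{\widetilde{N}}(gx_0)) \geqslant L(x_0, p_T) - O(1)$. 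Finally, the length of the projected path from $x_0$ to $p_T$ is quasi-isometric to $d_G(1, h_T)$ (since $h_T x_0$ is the closest lattice point to $\gamma_T$, hence a bounded distance in $\widetilde{N}$ from $p_T$), and $L(x_0, \pi_{\widetilde{N}}(gx_0))$ is quasi-isometric to $d_G(1,g)$. Chaining these quasi-isometries gives $d_G(1,g) \geqslant \tfrac{1}{K}\, d_G(1,h_T) - K'$ for appropriate constants.

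The main obstacle I anticipate is the monotonicity step: showing that a point in the half-space $H(x_0, \gamma_{2T})$ really cannot have a short projected path back to $x_0$. The subtlety is that the projected path metric is not the hyperbolic metric, so "crossing a perpendicular plane" in $\mathbb{H}^2$ must be translated into a statement about projected-path lengths in $\widetilde{N}$. The cleanest way around this is to note that the perpendicular plane $\partial H(x_0,\gamma_{2T})$, being disjoint from the horoballs near $\gamma_T$ if $\gamma_T$ is chosen in the thick part (or intersecting only horoballs in a controlled way), separates $x_0$ from $gx_0$, so \emph{any} rectifiable path in $\widetilde{N}$ joining them — in particular a thick geodesic — must at least reach the plane; combined with the fact that the projected path $p(x_0,p_T)$ is itself quasigeodesic in $\widetilde{N}$, one gets the length lower bound. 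One should be careful that the "$2T$" versus "$T$" discrepancy in the statement is exactly what absorbs the quasi-isometry multiplicative constant coming from the $(K,c)$-quasigeodesic property, which is presumably why the half-space is taken at parameter $2T$ rather than $T$.
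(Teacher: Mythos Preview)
Your overall strategy --- compare word length to projected-path length in $\widetilde{N}$ via the \v{S}varc--Milnor quasi-isometry and Lemma~\ref{lemma:projected path qg} --- is exactly the paper's approach, and your identification of the monotonicity step as the crux is correct. However, your proposed resolution of that step is where the gap lies.

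You argue that since the perpendicular geodesic $\partial H(x_0,\gamma_{2T})$ separates $x_0$ from $gx_0$, any thick path must cross it, and this should force $L(x_0,\pi_{\widetilde{N}}(gx_0)) \geqslant L(x_0,p_T)-O(1)$. The problem is that when $\gamma_T$ lies inside a horoball $H$, the perpendicular geodesic through $\gamma_T$ meets $\partial H$ at two points, and one of them can be much closer (in the $\widetilde{N}$-metric) to $x_0$ than $p_T$ is. A thick path from $x_0$ to $gx_0$ may cross the separating line near that point, or go around the horoball on the ``wrong'' side, without ever accumulating horocyclic length comparable to $d_{\partial H}(\gamma_u, p_T)$. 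So ``reaching the plane'' does not by itself give the needed lower bound on projected-path length; the parenthetical ``intersecting only horoballs in a controlled way'' is precisely where the real work is hidden.

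The paper handles this by a case analysis. If $\gamma_T$ is in the thick part (Case~1), $\delta$-hyperbolicity forces the geodesic $[x_0,gx_0]$ to pass within $3\delta$ of $\gamma_T$, giving a thick point $x'$ on $[x_0,gx_0]$ with $L(x_0,x') \asymp L(x_0,p_T)$. If $\gamma_T$ is in a horoball $H$ (Case~2), the paper uses an elementary lemma (Lemma~\ref{basic}: the entry point of any geodesic from $y$ into $H$ is within horocyclic distance $1$ of $\pi_H(y)$) to pin down where $[x_0,gx_0]$ enters and exits $H$, and then splits further according to whether $p_T$ lies in the first or second half of the excursion of $\gamma$ in $H$. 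In each subcase one checks directly that the horocyclic segment of the projected path $p(x_0,gx_0)$ in $\partial H$ has length at least $\tfrac{1}{2} d_{\partial H}(\gamma_u,p_T) - O(1)$, which is what yields $L(x_0,gx_0) \gtrsim L(x_0,p_T)$. This horoball bookkeeping is the missing ingredient in your sketch.

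Two small side remarks. First, since $x_0$ is a basepoint in the thick part and $G$ preserves the thick/thin decomposition, $gx_0$ already lies in $\widetilde{N}$, so the projection step $gx_0 \mapsto \pi_{\widetilde{N}}(gx_0)$ is unnecessary. Second, the ``$2T$'' in the definition of the half-space is not there to absorb a quasi-isometry constant: $H(x_0,\gamma_{2T})$ is simply the half-space whose boundary is the perpendicular bisector of $[x_0,\gamma_{2T}]$, i.e.\ the perpendicular through $\gamma_T$.
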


Before proving Proposition \ref{shadow}, we state a basic lemma in hyperbolic geometry. 
If $H$ is a horoball, we shall denote as $\pi_H$ the closest point projection map 
onto the boundary of $H$; moreover, if $x, y$ lie on $\partial H$, we denote 
as $d_{\partial H}(x, y)$ the length of the path along the boundary of $H$ between $x$ and $y$.

\begin{lemma}\label{basic} 
Fix a point $y \in \mathbb{D}$ and let $H$ be a horoball that does not contain $y$. 
Let $\gamma_0$ be the hyperbolic geodesic that goes from $y$ to the point at infinity of $H$.
 Let $\pi_H(y)$ denote the point of entry of $\gamma_0$ into $H$. 
Let $\gamma$ be any geodesic ray from $y$ that enters $H$, and let $\gamma_u$ be its point of entry. Then 

\[d_{\partial H}(\gamma_u, \pi_H(y)) \leqslant 1.\]
\end{lemma}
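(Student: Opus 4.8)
The plan is to pass to the upper half-plane model of $\mathbb{H}^2$, normalized so that $H$ becomes a horizontal slab, and then carry out a short explicit computation.

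First I would apply an isometry of $\mathbb{H}^2$ carrying the point at infinity of $H$ to $\infty$, rescaled so that $H = \{z : \operatorname{Im} z \ge 1\}$. Then $\partial H$ is the horizontal line $\operatorname{Im} z = 1$, along which hyperbolic arc length agrees with Euclidean length, so $d_{\partial H}(a+i,\, b+i) = |a-b|$. Write $y = x_0 + i y_0$; since $H$ does not contain $y$ we have $0 < y_0 \le 1$, and the case $y_0 = 1$ is trivial (then $\pi_H(y) = y = \gamma_u$), so assume $y_0 < 1$. The geodesic $\gamma_0$ from $y$ to $\infty$ is the vertical ray above $y$, so it enters $H$ at $\pi_H(y) = x_0 + i$.

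Next, take an arbitrary geodesic ray $\gamma$ from $y$ that enters $H$. If $\gamma$ is vertical it coincides with $\gamma_0$ and the distance is $0$, so assume $\gamma$ lies on a Euclidean semicircle $|z-c| = r$ with $c \in \mathbb{R}$; then $(x_0-c)^2 + y_0^2 = r^2$, and since the ray attains height $1$ we must have $r \ge 1$. The point $y$ splits the semicircle into two rays, and exactly the one running ``up and over'' the apex $c + ir$ enters $H$. By symmetry we may assume $x_0 < c$; then this ray meets $\partial H$ first at the left intersection point, so $\gamma_u = \bigl(c - \sqrt{r^2-1}\bigr) + i$.

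Finally I would just compute, using $x_0 - c = -\sqrt{r^2 - y_0^2}$:
\begin{align*}
d_{\partial H}(\gamma_u, \pi_H(y)) &= \left| x_0 - \bigl(c - \sqrt{r^2-1}\bigr) \right| = \sqrt{r^2 - y_0^2} - \sqrt{r^2-1} \\
&= \frac{1 - y_0^2}{\sqrt{r^2 - y_0^2} + \sqrt{r^2-1}} \le \sqrt{1-y_0^2} \le 1,
\end{align*}
where the penultimate inequality uses $r \ge 1$ (so the denominator is at least $\sqrt{1 - y_0^2}$) and the last uses $\sqrt{t} \ge t$ for $t \in [0,1]$. The only real point of care — and the place a reader is most likely to stumble — is correctly identifying which of the two intersections of the semicircle with $\partial H$ is the actual entry point of the ray $\gamma$, i.e.\ keeping track of the direction of travel along $\gamma$ and the fact that ``entering $H$'' forces $r \ge 1$; once that is pinned down, everything is elementary Euclidean bookkeeping.
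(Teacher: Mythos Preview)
Your proof is correct. The paper does not actually supply a proof of this lemma; it is stated there as a ``basic lemma in hyperbolic geometry'' and used without justification, so your upper half-plane computation is exactly the kind of verification a reader would be expected to carry out. One cosmetic point: the final inequality $\sqrt{1-y_0^2}\leqslant 1$ follows simply from $1-y_0^2\leqslant 1$, not from the fact $\sqrt{t}\geqslant t$ that you cite; this does not affect the argument.
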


\begin{proof}[Proof of Proposition \ref{shadow}]
Let $x = g x_0$ and let $\delta$ be the hyperbolicity constant for the hyperbolic metric $d_{\mathbb{H}^2}$. 

\vskip 10pt
\noindent {\em Case 1:} Suppose $\gamma_T$ is in the thick part. The hyperbolic geodesic from $x_0$ to $x$ must pass 
through a $3\delta$ neighborhood of $\gamma_T$ (See Proposition 3.2 of \cite{Mah2}). This means that there is a point $x'$
 on the hyperbolic geodesic from $x_0$ to $x$ that also lies in the thick part. So the projected path from $x_0$ to $x$ 
necessarily passes through $x'$. 
Recall $L(y,y')$ is the distance along the projected path between the points $y, y'$. 
It follows that 
$$L(x_0, x) = L(x_0, x') + L(x', x) \geqslant L(x_0, x')$$
hence passing to the word metric we get 
$$d_G(1, g) \asymp L(x_0, x) \geqslant L(x_0, x') \asymp d_G(1, h_T).$$

\noindent {\em Case 2:} Suppose $\gamma_T$ is in some horoball $H$ and let $\gamma_u$ and $\gamma_v$ be the points where
$\gamma$ enters and leaves $H$. We may assume that a ball of hyperbolic radius $3\delta$ about $\gamma_T$ is contained in $H$.
 Then the hyperbolic geodesic $\gamma'$ from $x_0$ to $x$ must enter and leave $H$. Denote its entry and exit points 
by $\gamma'_r$ and $\gamma'_s$. Moreover, let $p_T = \pi_{\widetilde{N}}(\gamma_T)$ be the projection 
of $\gamma_T$ to the boundary of the horoball, and denote by 
$E:= d_{\widetilde{N}}(\gamma_u, \gamma_v)$ the excursion of $\gamma$ in $H$, and $D := d_{\widetilde{N}}(\gamma_u, p_T)$.  

There are two sub-cases to consider.

\noindent {\em Case 2a:} If $D \geqslant E/2$, then we are in 
the situation of Figure \ref{case1} and $x$ must lie in the shaded region.

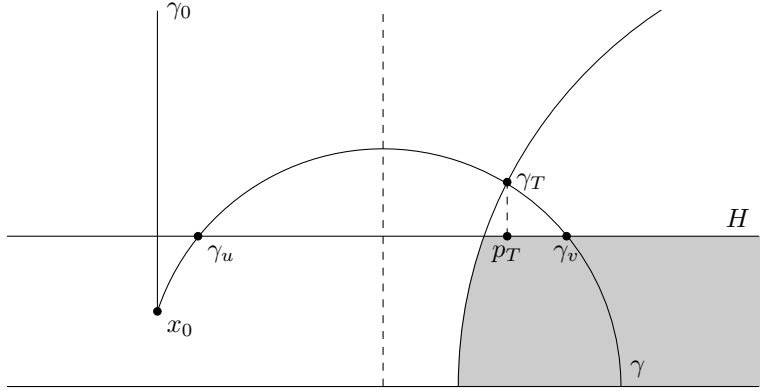
\begin{figure}[H]
\centering
\begin{tikzpicture}[scale=2]

\begin{scope}
	\clip (0, 0) rectangle (5, 1);
	\filldraw [black!20] (6, 0) +(180:3) arc (180:0:3);
\end{scope}

\draw (0, 0) -- (5, 0);
\draw (0, 1) -- (5, 1) node [above left] {$H$};

\draw (1, 0.5) node [below right] {$x_0$} -- (1, 2.5) node [right] {$\gamma_0$};

\filldraw [black] (1, 0.5) circle (0.025);

\draw (2.5, 0) +(161.6:1.5811) arc (161.6:0:1.5811) node [above right] {$\gamma$};

\draw [dashed] (2.5, 0) -- (2.5, 2.5);

\begin{scope}
	\clip (0, 0) rectangle (5, 2.5);
	\draw (6, 0) +(180:3) arc (180:0:3);
\end{scope}

\filldraw [black] (3.325, 1.36) circle (0.025) node [right] {$\gamma_T$};
\filldraw [black] (3.325, 1) circle (0.025) node [below] {$p_T$};
\filldraw [black] (1.27, 1) circle (0.025) node [below right] {$\gamma_u$};
\filldraw [black] (3.72, 1) circle (0.025) node [below] {$\gamma_v$};

\draw [dashed] (3.325, 1) -- (3.325, 1.36);

\end{tikzpicture}
\caption{Perpendicular geodesics through intersections of $\gamma$ and
  $\partial H$.}
\label{case1}
\end{figure}

In this case, let $\pi_H(x)$ be the closest points projection of $x$ onto the boundary of $H$; 
then by  
Lemma \ref{basic}, the entry point $\gamma'_r$ is within distance $1$ of $\gamma_u$ and the exit point $\gamma'_s$ is 
within distance $1$ of $\pi_H(x)$. So we get 
\[
d_{\partial H}(\gamma'_r, \gamma'_s)  \geqslant d_{\partial H}(\gamma_u, \pi_H(x)) - 2 \geqslant \frac{E}{2} - 2.
\]
On the other hand, $d_{\partial H}(\gamma_u, p_T) \leqslant E$, so we have 
\[
d_{\partial H}(\gamma'_r, \gamma'_s) \geqslant \frac{1}{2}d_{\partial H}(\gamma_u, p_T) - 2.
\]
Moreover, by Lemma \ref{basic} and Lemma \ref{lemma:projected path qg}, 
$$L(x_0, \gamma_r') \asymp d_{\widetilde{N}}(x_0, \gamma_r') \geqslant d_{\widetilde{N}}(x_0, \gamma_u) - 1 \asymp L(x_0, \gamma_u).$$
Consequently, the distances along respective projected paths satisfy
\begin{eqnarray*}
L(x_0, x) &\geqslant& L(x_0, \gamma'_r) + d_{\partial H}(\gamma'_r, \gamma'_s)\\
&\gtrsim& L(x_0, \gamma_u) + d_{\partial H}(\gamma_u, p_T)\\
&=& L(x_0, p_T).
\end{eqnarray*}
Thus, passing to the word metric we get
\[
d_G(1, g) \asymp L(x_0, x) \gtrsim L(x_0, p_T) \asymp d_G(1, h_T).
\]

\noindent {\em Case 2b:} If $D \leqslant E/2$, then we are in the situation of Figure \ref{case2} and $x$ must lie in the shaded regions.

\begin{figure}[H]
\centering
\begin{tikzpicture}[scale=1.7]

\begin{scope} [even odd rule]
    \clip (-3, 0) rectangle (4.5, 1)
    	  (0, 0) +(180:2) arc (180:0:2);
    \filldraw [black!20] (-3, 0) rectangle (4.5, 1);
\end{scope}

\draw (-3, 0) -- (4.5, 0);
\draw (-3, 1) -- (4.5, 1) node [above left] {$H$};

\draw (1, 0.5) node [below right] {$x_0$} -- (1, 2.5) node [right] {$\gamma_0$};

\filldraw [black] (1, 0.5) circle (0.025);

\draw (2.5, 0) +(161.6:1.5811) arc (161.6:0:1.5811) node [above right] {$\gamma$};

\draw [dashed] (2.5, 0) -- (2.5, 2.5);

\begin{scope}
	\clip (-3, 0) rectangle (4.5, 2.5);
	\draw (0, 0) +(180:2) arc (180:0:2);
\end{scope}

\filldraw [black] (1.55, 1.265) circle (0.025) node [above] {$\gamma_T$};
\filldraw [black] (1.55, 1) circle (0.025) node [below] {$p_T$};

\filldraw [black] (1, 1) circle (0.025) node [below left] {$p_0$};
\filldraw [black] (1.28, 1) circle (0.025) node [above] {$\gamma_u$};
\filldraw [black] (0.35, 1) circle (0.025) node [below] {$p'_T$};

\draw [dashed] (1.55,1) -- (1.55, 1.265);

\end{tikzpicture}
\caption{Perpendicular geodesics through intersections of $\gamma$ and
  $\partial H$.}
\label{case2}
\end{figure}
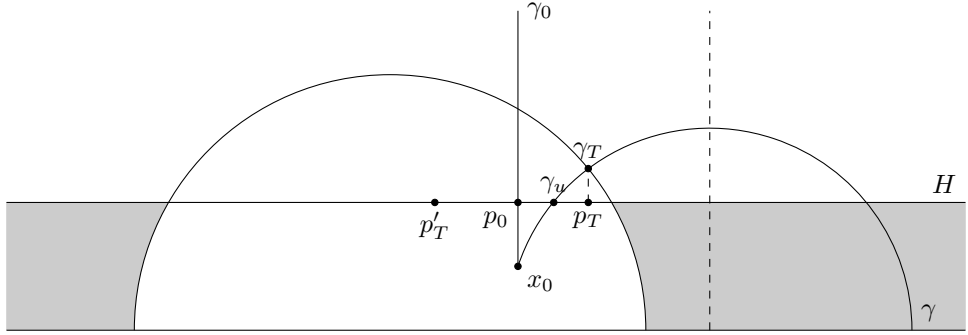

If $x$ is in the shaded region on the right, then note that 
$$d_{\partial H}(\gamma_u, \pi_H(x)) \geqslant d_{\partial H}(\gamma_u, p_T),$$
 which by Lemma \ref{basic} implies 
$$d_{\partial H}(\gamma'_r, \gamma'_s) \geqslant d_{\partial H}(\gamma_u, p_T) - 2,$$
and the required estimate for $d_G(1,g)$ then follows by estimates on distances along 
respective projected paths similar to {\em Case 2a}. 
If $x$ is in the shaded region on the left, let $p'_T$ be the point on $\partial H$ such that $p_T$ and $p'_T$ are symmetric about $\gamma_0$,
 the geodesic ray from $x_0$ to the point at infinity for $H$, and denote $p_0 = \pi_H(x_0)$. Observe that 
$d_{\partial H}(p_0, \pi_H(x)) \geqslant d_{\partial H}(p_0, p'_T)$. Hence, by Lemma \ref{basic},
\begin{eqnarray*}
d_{\partial H}(\gamma'_r, \gamma'_s) &\geqslant& d_{\partial H}(p_0, \pi_H(x)) - 2\\
&\geqslant& d_{\partial H}(p_0, p'_T) - 2\\
&=& d_{\partial H}(p_0, p_T) - 2 \\
&\geqslant& d_{\partial H}(\gamma_u, p_T) - 3, 
\end{eqnarray*}
and the required estimate for $d_G(1,g)$ then follows by estimates on distances along respective projected paths similar to {\em Case 2a}.

\end{proof}

Let $K$, $K'$ be the constants in Proposition \ref{shadow}, and for each $T$ let
$$R_T := d_G(1, h_T)/K - K'.$$ 
Consider the ball $B(R_T)$ of radius $R_T$ in $G$ in the word metric; our goal is to prove an upper bound 
on the derivatives of the elements in the ball. Let us first establish another elementary lemma in hyperbolic geometry.

\begin{lemma}\label{angle}
Let $L > T$ a constant, and $y_1$ and $y_2$ be points on $\partial H(x_0, \gamma_{2T})$ such that $d_{\mathbb{H}^2}(y_i, \gamma_u) = L-T$.
The points $y_1$ and $y_2$ are symmetric about $\gamma$, and let $\psi$ be the angle between the ray $\gamma'$ from 
$x_0$ through $y_1$ and the original ray $\gamma$. 
Then there exists a constant $C >0$ such that, if $T$ is sufficiently large (say when $\tanh T > 1/2$) and 
$L \geqslant 2T$, then  
\[
\psi \geqslant C e^{-T}.
\]
\end{lemma}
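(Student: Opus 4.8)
The plan is to realize $\psi$ as an angle of an explicit hyperbolic right triangle and then estimate it with the standard right-triangle trigonometric identities. The first step is to identify $\partial H(x_0,\gamma_{2T})$ geometrically. Since $H(x_0,\gamma_{2T})=\{x:d_{\mathbb{H}^2}(x_0,x)\ge d_{\mathbb{H}^2}(\gamma_{2T},x)\}$ is a half-space, its boundary is the perpendicular bisector of the geodesic segment $[x_0,\gamma_{2T}]$; as $\gamma$ is the geodesic through $x_0$ and $\gamma_{2T}$, and $\gamma_T$ is the midpoint of that segment, this perpendicular bisector is precisely the complete geodesic meeting $\gamma$ orthogonally at $\gamma_T$. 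Thus $y_1$ (and likewise $y_2$, on the other side of $\gamma$) lies on this orthogonal geodesic at distance $L-T$ from $\gamma_T$, so the segment $[\gamma_T,y_1]$ is a genuine leg of the right triangle with vertices $x_0,\gamma_T,y_1$ whose right angle is at $\gamma_T$, and it suffices to treat $y_1$.

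Next I would apply hyperbolic trigonometry to this right triangle: the leg $[x_0,\gamma_T]$ has length $T$, the leg $[\gamma_T,y_1]$ has length $L-T$, and the angle at $x_0$ is $\psi$. The identity expressing an angle through its opposite and adjacent legs gives
\[
\tan\psi=\frac{\tanh(L-T)}{\sinh T}.
\]
From $L\ge 2T$ we get $L-T\ge T$, hence $\tanh(L-T)\ge\tanh T>\tfrac12$ by hypothesis; also $\sinh T=\tfrac12(e^T-e^{-T})<\tfrac12 e^{T}$. Combining these two bounds, $\tan\psi>e^{-T}$.

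Finally I would pass from this lower bound on $\tan\psi$ to one on $\psi$. The hypothesis $\tanh T>\tfrac12$ is equivalent to $T>\tfrac12\log 3$, which is in turn equivalent to $\sinh T>1/\sqrt 3$; since $\tanh(L-T)<1$, this forces $\tan\psi<\sqrt 3$, i.e. $\psi<\pi/3$. On the interval $(0,\pi/3]$ convexity of $\tan$ (which vanishes at $0$) gives $\tan\psi\le\tfrac{3\sqrt 3}{\pi}\,\psi$, so $\psi\ge\tfrac{\pi}{3\sqrt 3}\tan\psi>\tfrac{\pi}{3\sqrt 3}e^{-T}$, and the lemma follows with $C=\pi/(3\sqrt 3)$ (or any smaller positive constant). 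The computations are entirely routine; the one place that needs care is the first step — recognizing that $\partial H(x_0,\gamma_{2T})$ is orthogonal to $\gamma$ at $\gamma_T$ rather than at $\gamma_{2T}$, and that $[\gamma_T,y_1]$ is therefore a leg of a right triangle sharing the leg $[x_0,\gamma_T]$ — after which the inequality is forced.
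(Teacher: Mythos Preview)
Your proof is correct and follows essentially the same approach as the paper: both identify the right triangle $\Delta(x_0,\gamma_T,y_1)$ with right angle at $\gamma_T$, invoke the hyperbolic identity $\tan\psi=\tanh(L-T)/\sinh T$, and then bound the two factors using $L\ge 2T$ and $T$ large. Your version is in fact a bit more explicit than the paper's---you spell out why $\partial H(x_0,\gamma_{2T})$ meets $\gamma$ orthogonally at $\gamma_T$ (and hence why $\gamma_u$ in the statement should be read as $\gamma_T$), and you give a clean quantitative argument ($\tan\psi<\sqrt3$, convexity on $(0,\pi/3]$) for the passage from $\tan\psi$ to $\psi$, where the paper simply asserts $\tan\psi\asymp\psi$.
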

\begin{proof}
By a hyperbolic trigonometric identity for the right triangle $\Delta(x_0, \gamma_T, y_1)$ we have
\[
\tan \psi = \frac{\tanh (L-T)}{\sinh T} = \frac{2e^T}{e^{2T}-1} \cdot \frac{e^{2(L-T)} -1}{e^{2(L-T)}+1}.
\]
If $L>2T$ and $T$ is large enough, then the second fraction on the right hand side is at least $1/2$. 
Also with $T$ large enough (greater than a uniform threshold) the approximation $\tan \psi \asymp \psi$ is 
true up to a fixed multiplicative constant that depends only on the threshold. This proves the lemma.
\end{proof}

\begin{proposition} \label{der-estimate}
Any $g \in B(R_T)$ satisfies
\[
\vert g'(p) \vert \lesssim e^{2T}
\]
for each $p \in S^1$.
\end{proposition}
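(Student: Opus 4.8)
The plan is to combine Proposition~\ref{shadow} with Lemma~\ref{bounded-der} via a contrapositive/dichotomy argument. Fix $p \in S^1$, let $\gamma$ be the geodesic ray from $x_0$ to $p$, and let $h_T$ be the approximating group element at $\gamma_T$. Recall $R_T = d_G(1, h_T)/K - K'$, where $K, K'$ are the constants of Proposition~\ref{shadow}. First I would take an arbitrary $g \in B(R_T)$ and ask where $gx_0$ sits relative to the half-space $H(x_0, \gamma_{2T})$. Proposition~\ref{shadow} says precisely that if $gx_0 \in H(x_0, \gamma_{2T})$, then $d_G(1,g) \geqslant \frac{1}{K} d_G(1, h_T) - K' = R_T$; so unless $g$ lies on the boundary sphere $\partial B(R_T)$ itself (a harmless edge case absorbed into additive constants), $g \in B(R_T)$ forces $gx_0 \notin H(x_0, \gamma_{2T})$.

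Next I would extract the geometric consequence of $gx_0 \notin H(x_0, \gamma_{2T})$. By the definition of the half-space, $gx_0 \notin H(x_0, \gamma_{2T})$ means $d_{\mathbb{H}^2}(x_0, gx_0) < d_{\mathbb{H}^2}(\gamma_{2T}, gx_0)$, i.e. $gx_0$ is on the $x_0$-side of the perpendicular bisector plane. This is not quite a bound on $d_{\mathbb{H}^2}(x_0, gx_0)$, so the real content is to bound $|g'(p)|$ directly. Here I would use the identity \eqref{eq:der}: the log-derivative $\log|g'(p)|$ equals, up to the hyperbolic geometry worked out before Lemma~\ref{bounded-der}, the ``horospherical displacement'' of $gx_0$ seen from $p$ — more precisely, $\log |g'(p)| = \beta_p(x_0, gx_0)$ is a Busemann-type quantity (the signed distance along $\gamma$). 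When $gx_0 \notin H(x_0, \gamma_{2T})$, the foot of the perpendicular from $gx_0$ to $\gamma$ lies on the $x_0$-side of $\gamma_{T}$ plus a bounded error, hence $\beta_p(x_0, gx_0) \lesssim 2T$; invoking Lemma~\ref{bounded-der}'s computation (which identifies $\log$ of the maximal derivative with a hyperbolic distance) applied not to the full displacement but to this horospherical component yields $|g'(p)| \lesssim e^{2T}$.

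The cleanest way to package this is: let $x' = \pi_\gamma(gx_0)$ be the nearest-point projection of $gx_0$ onto the geodesic $\gamma$; then by the standard formula for derivatives of Möbius maps, $\log|g'(p)|$ equals (up to a uniformly bounded error) the signed distance from $x_0$ to $x'$ along $\gamma$ toward $p$. The condition $gx_0 \notin H(x_0, \gamma_{2T})$ places $x'$ within bounded distance of $\gamma_T$ on the near side, giving signed distance $\le 2T + O(1)$, whence the claim. The main obstacle is making the relationship ``$\log|g'(p)| \asymp$ signed distance of the projection of $gx_0$ to $\gamma$'' precise and uniform: one must verify that the additive error is genuinely bounded independently of $g$ and $T$, which requires a short computation with \eqref{eq:der} (writing $a = gx_0$ in the disc model, $p = e^{it}$, and comparing $\frac{1+A}{1-A}$-type quantities with the Busemann function). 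Everything else is bookkeeping with the half-space definition and the constants from Proposition~\ref{shadow}.
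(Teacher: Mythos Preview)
Your approach is correct and genuinely different from the paper's. The paper argues by a dichotomy on $L = d_{\mathbb{H}^2}(x_0, gx_0)$: if $L \leqslant 2T$ it invokes Lemma~\ref{bounded-der} directly; if $L > 2T$ it constructs a sector $U$ around $\gamma$ of half-angle $\psi \asymp e^{-T}$ (via Lemma~\ref{angle}), shows $gx_0 \notin U$ using Proposition~\ref{shadow} plus a triangle-inequality argument, and then plugs the resulting lower bound $\phi \geqslant \psi$ into the explicit formula~\eqref{eq:der} to get $|g'(p)| \lesssim e^{2T-L} \leqslant 1$. Your route bypasses the angular estimate entirely by recognising that $\log|g'(p)|$ is exactly (minus) a Busemann cocycle at $p$, and that the half-space condition from Proposition~\ref{shadow} directly bounds the Busemann function by the parameter of the perpendicular foot on $\gamma$. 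This is cleaner and, carried out carefully, actually yields the sharper bound $|g'(p)| \lesssim e^{T}$ rather than $e^{2T}$, since the foot lies at parameter $s < T$ (not $2T$) and $-\beta_p(y) = s - \log\cosh r + O(1) \leqslant s + O(1)$.

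One point to watch: from $f(z) = e^{i\theta}\frac{z-a}{1-\bar a z}$ one has $a = g^{-1}x_0$, so $|g'(p)| = P(g^{-1}x_0, p) = e^{-\beta_p(g^{-1}x_0)}$, i.e.\ the Busemann function is evaluated at $g^{-1}x_0$, not $gx_0$. This is harmless here because $d_G(1,g) = d_G(1,g^{-1})$, so $g \in B(R_T)$ gives $g^{-1} \in B(R_T)$ and Proposition~\ref{shadow} applied to $g^{-1}$ puts $g^{-1}x_0 \notin H(x_0, \gamma_{2T})$; but you should say so explicitly. (The paper's own proof has the same silent identification of $\phi$ with the angle to $gx_0$.)
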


\begin{proof}
Fix $p \in S^1$, and let $\gamma$ be the geodesic ray from the origin $x_0$ of the unit disc to $p$.
Fix $T > 0$ and $g \in B(R_T)$, and let $L := d_{\mathbb{H}^2}(x_0, gx_0)$.
By Lemma \ref{bounded-der}, if $L \leqslant 2T$, then $\vert g'(p)
\vert \leqslant e^{2T}$ which implies the proposition. Hence, we may
assume $L \geqslant 2T$. 
Let $y_1$ and $y_2$ be points on $\partial H(x_0, \gamma_{2T})$ such that $d_{\mathbb{H}^2}(y_i, \gamma_u) = L-T$, and 
let $U$ be the sector subtended at $x_0$ by rays from $x_0$ passing through $y_1$ and $y_2$. We claim that the point
$g x_0$ cannot be in $U$. 
Indeed:
\begin{itemize}
 \item the point $g x_0$ cannot lie in $H(x_0, \gamma_{2T})$, because otherwise
(by Proposition \ref{shadow} and the definition of $R_T$) the word length of $g$ satisfies $d_G(1, g) > R_T$, 
contradicting the fact that $g$ is in $B(R_T)$;
\item  $g x_0$ cannot lie in $U \setminus H(x_0, \gamma_{2T})$, because otherwise it belongs to the geodesic triangle 
$\Delta(x_0, y_1, y_2)$, hence $d_{\mathbb{H}^2}(x_0, gx_0) < (L-T) + T = L$.
\end{itemize}

Now, by the derivative calculations (equation \eqref{eq:der})
\[
\vert g'(p) \vert = \frac{1-A^2}{1+ A^2 - 2A \cos \phi}
\]
where $\phi$ is the angle between $\gamma$ and the geodesic ray joining $x_0$ with $g x_0$, and $A = (e^{L} -1)/(e^{L}+1)$.
By Lemma \ref{angle}, the angle $\phi$ satisfies $\phi \geqslant \psi \geqslant C e^{-T}$. Hence,
\begin{eqnarray*}
\vert g'(p) \vert &\leqslant& \frac{4e^{L}}{2e^{2L}(1- \cos \psi) + 2(1+ \cos \psi)}\\
&\leqslant& \frac{e^{-L}}{\sin^2 (\psi/2) }\\
&\asymp& e^{2T-L}\\
&\leqslant& 1 \leqslant e^{2T}
\end{eqnarray*} 
where the second to last inequality follows from the assumption $L \geqslant 2T$. This proves the proposition.
\end{proof}

\subsection{Proof of Theorem \ref{Lyap}}

Before proving the theorem, we still need to 
 show that the function $T \to d_G(1, h_T)$ has bounded jump size, in the following sense. 

\begin{lemma}\label{jump-size}
For $\gamma$ a hyperbolic geodesic ray, let us define the set
$$\mathcal{R}(\gamma) := \{ r \in \mathbb{Z}_{\geqslant 0} : r = d_G(1, h_T) \text{ for some } T\}.$$ 
If $\gamma$ is recurrent to the thick part, then the set $\mathcal{R}(\gamma)$ is infinite, 
and we can index its elements in increasing order $r_1 < r_2 < \dots$.
Then there exists a constant $k > 0$ such that for any recurrent geodesic ray $\gamma$ and any $i$, we have 
$$r_{i+1} - r_i < k.$$ 
\end{lemma}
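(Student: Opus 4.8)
The plan is to control the function $f(T):=d_G(1,h_T)$ through the Dirichlet tessellation of $\mathbb{H}^2$ based at $x_0$, and then read off the gap bound combinatorially. Since $G$ is a non-uniform lattice in $SL(2,\mathbb{R})$ it is geometrically finite, so the Dirichlet domain $D=\{x\in\mathbb{H}^2 : d_{\mathbb{H}^2}(x_0,x)\leqslant d_{\mathbb{H}^2}(gx_0,x)\ \text{for all }g\in G\}$ is a convex polygon with finitely many sides and finite area. As $x_0$ is generic, its translates $\{gD\}_{g\in G}$ tile $\mathbb{H}^2$ with $gD\neq g'D$ for $g\neq g'$, and $gx_0$ is the closest lattice point to every interior point of $gD$; thus, whenever $\gamma_T$ lies in the interior of a tile, $h_T$ is the element $g$ with $\gamma_T\in gD$. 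Let $s_1,\dots,s_m$ be the finitely many side-pairing transformations of $D$ and set $k_0:=(\max_v\mathrm{val}(v))\cdot\max_j d_G(1,s_j)$, where $v$ runs over the finitely many $G$-orbits of vertices of the tessellation and $\mathrm{val}(v)$ is the number of tiles meeting at $v$; all of these quantities are finite, and $k_0$ depends only on $G$ and the chosen generating set. (No cusp of the tessellation can equal $\gamma_T$ for finite $T$, since a geodesic ray reaches a parabolic fixed point only in the limit.)

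First I would verify that $\mathcal{R}(\gamma)$ is infinite when $\gamma$ is recurrent. The set $D\cap\widetilde N$ is bounded, say of diameter at most $d_0$, so whenever $\gamma_T$ lies in the thick part we have $\gamma_T\in h_T(D\cap\widetilde N)$, whence $d_{\mathbb{H}^2}(x_0,h_Tx_0)\geqslant d_{\mathbb{H}^2}(x_0,\gamma_T)-d_0=T-d_0$. Because $\gamma$ returns to the thick part at a sequence of times tending to infinity, $d_{\mathbb{H}^2}(x_0,h_Tx_0)$, and hence $d_G(1,h_T)$ by the \v{S}varc-Milnor lemma, is unbounded; so $\mathcal{R}(\gamma)\subset\mathbb{Z}_{\geqslant0}$ is infinite and can be listed as $r_1<r_2<\cdots$.

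Next I would bound the jumps of $f$. The geodesic $\gamma_T$ meets the $1$-skeleton of the tessellation only at a discrete set of times, and $f$ is constant between consecutive such times. If $\gamma_{T^*}$ lies in the interior of a side then the two adjacent tiles are $h^-D$ and $h^-s\,D=h^+D$ for some $s\in\{s_1^{\pm1},\dots,s_m^{\pm1}\}$, so the left and right limits of $f$ at $T^*$ differ by at most $\max_j d_G(1,s_j)$; if $\gamma_{T^*}$ passes through a vertex $v$ then $h^+=h^-w$ with $w$ a product of at most $\mathrm{val}(v)$ side-pairings, so the one-sided limits differ by at most $k_0$. Moreover $f(T^*)$ itself is the word length of one of the tiles meeting $\gamma_{T^*}$, so it lies within $k_0$ of both one-sided limits. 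In summary, at every time the value of $f$ and its two one-sided limits pairwise differ by at most $k_0$.

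Finally I would extract the gap bound. Let $r_i<r_{i+1}$ be consecutive in $\mathcal{R}(\gamma)=f([0,\infty))$ and pick $T_i<T_{i+1}$ (the reverse case being symmetric) with $f(T_i)=r_i$, $f(T_{i+1})=r_{i+1}$. Put $T^*:=\sup\{T\in[T_i,T_{i+1}]:f(T)\leqslant r_i\}$. On a right-neighbourhood of $T^*$ one has $f>r_i$, hence $f\geqslant r_{i+1}$ because $(r_i,r_{i+1})$ is a gap; and either $f(T^*)\leqslant r_i$, or $f$ takes values $\leqslant r_i$ arbitrarily close to $T^*$ from the left. Thus across $T^*$ a value or one-sided limit of $f$ that is $\leqslant r_i$ sits next to one that is $\geqslant r_{i+1}$, and the jump bound forces $r_{i+1}-r_i\leqslant k_0$; the lemma holds with $k:=k_0+1$. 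The only delicate point is the behaviour at the non-generic times $T^*$ where $\gamma$ meets a vertex of the tessellation — which is precisely why the valence factor is built into $k_0$; the rest is soft geometry of the Dirichlet tiling.
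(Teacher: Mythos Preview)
Your proof is correct and takes a genuinely different route from the paper's. The paper works entirely in the thick part $\widetilde N$: it considers the projected path $p_T=\pi_{\widetilde N}(\gamma_T)$, which by Lemma~\ref{lemma:projected path qg} is a continuous $(K,c)$-quasigeodesic in $\widetilde N$, then picks times $T_n$ at unit arclength increments along this path and invokes the \v{S}varc--Milnor quasi-isometry between $(\widetilde N,d_{\widetilde N})$ and $(G,d_G)$ to conclude that consecutive values $d_G(1,h_{T_n})$ differ by a uniformly bounded amount. Your approach instead exploits the Dirichlet tessellation directly: $h_T$ is constant on the interior of each tile and changes by a side-pairing transformation (or a bounded product of them, at a finite vertex) whenever $\gamma$ crosses a wall, so $f(T)=d_G(1,h_T)$ is a step function whose jumps are bounded by $k_0$, and you then read off the gap bound on the image by a short combinatorial argument. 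The paper's version has the virtue of recycling machinery already established in Section~\ref{section:fuchsian}; yours is self-contained, bypasses Lemma~\ref{lemma:projected path qg} and the thick-part quasi-isometry entirely, and makes the constant $k$ explicit in terms of the side-pairings and vertex valences of the Dirichlet domain. Your handling of the ideal vertices (which have infinite valence but cannot be hit at finite time) is the one genuinely delicate point, and you deal with it correctly.
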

\begin{proof}[Proof of Lemma \ref{jump-size}]
For a geodesic ray $\gamma$, recall that $p_T = \pi_{\widetilde{N}}(\gamma_T)$ is the point on the projected path 
of $\gamma$ that is the closest to $\gamma_T$. By Lemma \ref{lemma:projected path qg}, the image of the function 
$T \mapsto p_T$ is a continuous path which is $(K,c)$-quasigeodesic in $\widetilde{N}$.
Let us choose times $T_n$ along the geodesic such that $L(x_0, p_{T_n}) = n$.
Since the thick part $\widetilde{N}$ is quasi-isometric to the group $G$, then, 
up to multiplicative constants which depend only on the quasi-isometry constants, we have
$$|d_G(1, h_{T_{n+1}}) - d_G(1, h_{T_n})| \lesssim d_{\widetilde{N}}(p_{T_n}, p_{T_{n+1}}) \lesssim 1.$$
\end{proof}

Let us now turn to the proof of Theorem \ref{Lyap}. Recall that the Lyapunov expansion exponent is defined as
\[
\lambda_{exp}(p) = \limsup_{R \to \infty} \max_{g \in B(R)} \frac{1}{R} \log \vert g'(p) \vert.
\]
Lemma \ref{jump-size} implies that along geodesic rays recurrent to the thick part the corresponding values of $R$ 
given by $R_T= d_G(1, h_T)/K - K'$ are infinite and have a bounded jump size. 
So the $\limsup$ in the above definition can be replaced by a $\limsup$ over values given by $R_T$. 
By Proposition \ref{der-estimate}, for almost every $p \in S^1$, 
\[
\max_{g \in B(R_T)} \frac{1}{R_T} \log \vert g'(p) \vert \leqslant \frac{1}{R_T} \log (e^{2T}) = \frac{2T}{R_T} \asymp \frac{2T}{d_G(1, h_T)}.
\]
Hence, by Proposition \ref{Fuchsian-lim} for Lebesgue-almost every $p$
\[
\lambda_{exp}(p) = 0
\]
proving Theorem \ref{Lyap}.


\begin{bibdiv}
\begin{biblist}
\bibselect{bibliography}
\end{biblist}
\end{bibdiv}

\vskip 20pt
\noindent Vaibhav Gadre \\
Warwick Mathematics Institute \\
Zeeman Building, University of Warwick, Coventry CV4 7AL, UK\\
\url{v.gadre@warwick.ac.uk; gadre.vaibhav@gmail.com} \\

\noindent Joseph Maher \\
CUNY College of Staten Island and CUNY Graduate Center \\
2800 Victory Boulevard, Staten Island NY 10314 USA \\
\url{joseph.maher@csi.cuny.edu} \\

\noindent Giulio Tiozzo \\
Yale University \\
10 Hillhouse Avenue, New Haven CT 06511 USA \\
\url{giulio.tiozzo@yale.edu}


\end{document}